\documentclass[11pt,a4paper]{article}
\usepackage[a4paper]{geometry}
\bibliographystyle{plainurl}


\usepackage[british]{babel}
\usepackage[utf8]{inputenc}

\usepackage{a4, graphicx,sidecap,splitidx,multicol
}

\usepackage{verbatim} 

\usepackage{amsmath,amssymb,amsthm}
\usepackage{algpseudocode,algorithm}
\usepackage{extpfeil}

\usepackage{tikz}
\usetikzlibrary{calc}
\usetikzlibrary{math}
\usetikzlibrary{shapes.geometric}
\usetikzlibrary{patterns}
\usetikzlibrary{arrows.meta}
\makeatletter
\long\def\@makecaption#1#2{%
  \vskip\abovecaptionskip
  \sbox\@tempboxa{
					\colorbox{gray!10}{#1: #2}
					}%
  \ifdim \wd\@tempboxa >\hsize
    \colorbox{gray!10}{\parbox{\dimexpr\linewidth-2\fboxsep-2\fboxrule}{#1: #2}}\par
  \else
    \global \@minipagefalse
    \hb@xt@\hsize{\hfil\box\@tempboxa\hfil}%
  \fi
  \vskip\belowcaptionskip}
\makeatother

\newcommand{\changes}[1]{}

\providecommand{\printonly}[1]{}
\providecommand{\changes}[1]{#1}

\makeindex

\newcommand{\ra}{\rightarrow}

\newcommand{\Ra}{\Rightarrow}

\newcommand{\LR}{\Leftrightarrow}

\newcommand{\ar}[1]{\xrightarrow{#1}}
\newcommand{\arsquare}[1]{\xrightarrow[]{#1\square}}


\newcommand{\Rq}{\Rightarrow\quad}
\newcommand{\LRq}{\Leftrightarrow\quad}

\newcommand{\fa}{\forall}
\newcommand{\ex}{\exists}
\newcommand{\leer}{\emptyset}
\newcommand{\eps}{\varepsilon}
\newcommand{\vi}{\varphi}

\newcommand{\N}{\mathbb{N}}

\newcommand{\Z}{\mathbb{Z}}

\newcommand{\R}{\mathbb{R}}

\newcommand{\inv}{^{-1}}
\newcommand{\id}{\operatorname{id}}


\newcommand{\Int}{\operatorname{int}}


\newcommand{\diam}{\operatorname{diam}}



\newcommand{\aff}{\operatorname{aff}}


\newcommand{\conv}{\operatorname{conv}}

\newcommand{\rel}{\operatorname{rel}}
\newcommand{\relint}{\operatorname{rel\,int}}

\newcommand{\ba}{\begin{align*}}

\newcommand{\m}{Menge }

\newcommand{\T}{\mathcal T}

\newcommand{\subsetmarking}{\tilde}


\newcommand{\Tu}{\operatorname{Tw}}
\newcommand{\tu}{\operatorname{tw}}
\newcommand{\Wald}{\operatorname{fo}}

\newcommand{\parent}{\operatorname{pa}}
\newcommand{\pa}{\parent}

\newcommand{\children}{\operatorname{children}}
\newcommand{\leaves}{\operatorname{leaves}}

\newcommand{\est}{\operatorname{est}}
\newcommand{\Simplexe}{\mathbb T}
\newcommand{\Sub}{\operatorname{Sub}}

\newcommand{\refine}{\operatorname{refine}}
\newcommand{\Vnew}{V_{\operatorname{new}}}
\newcommand{\Eref}{E_{\operatorname{ref}}}
\newcommand{\Vertices}{\mathcal V}
\newcommand{\Edges}{\mathcal E}
\newcommand{\rstr}{\operatorname{rstr}}
\newcommand{\Rstr}{\operatorname{Rstr}}
\newcommand{\Descext}{\operatorname{Desc_{ext}}}
\newcommand{\Cubes}{\mathcal C}
\newcommand{\CoN}{\Cubes_{\frac1N}}
\newcommand{\Help}{\mathcal H}
\newcommand{\Hull}{V}

\renewcommand{\mid}{\operatorname{mid}}

\renewcommand{\S}{\mathcal S}

\newcommand{\emphindex}[1]{\index{#1}\emph{#1}}
\newcommand{\normalindex}[1]{\index{#1}{#1}}

\newcommand{\quasiuniform}[1]{%
for every $j\in\N_{\geq 1}$, 
the quasi-uniform refinement $\Simplexe^{n,j}%
$ is regular,
}\index{quasi-uniform refinement!IsoCoChange}\index{T^{n,j}@$\Simplexe^{n,j}$ -- quasi-uniform refinement}
\newcommand{\towerstep}[1]{%
for all $S\in\Simplexe$ with $hS > h_0:=2+\lfloor\log_2(n)\rfloor$ and for all $j\in \{h_0{+}1,\allowbreak\dots,\allowbreak hS\}$, there is a vertex $p_j$ of a hyperlevel $(j{-}h_0)$ ancestor of $S$, such that the tower layer $\Tu^j S
$ is included in 
\ifnum#1=1
the union of the simplices of the patch $
\else
$\bigcup
\fi
\Simplexe^{n,j-h_0}p_j$.
\ifnum#1=1

\fi
}
\newcommand{\milestone}[1]{%
Suppose 
ReTaHyCo
for $\T_0$. Then
\begin{enumerate}
\item
\ifnum#1=1{\label{it:quasi-uniform}}\fi
\quasiuniform{#1}
\item
\ifnum#1=1{\label{it:towerstep}\index{h_0@$h_0$}}\fi
\towerstep#1
\end{enumerate}
}

\usepackage{hyperref}

\newcommand{\lemma}{Lemma}

\theoremstyle{plain}
\newtheorem{thm}{Theorem}[section]
\newtheorem{pps}[thm]{Proposition}
\newtheorem{lem}[thm]{\lemma}
\newtheorem{kor}[thm]{Corollary}
\newtheorem{folg}[thm]{Corollary}

\newenvironment{lemmanonum}[1]
{\innerlemmanonum}
{\endinnerlemmanonum}

\newenvironment{cornonum}[1]
{\innercornonum}
{\endinnercornonum}

\theoremstyle{definition}
\newtheorem{defn}[thm]{Definition}

\newtheorem*{bem}{Remark}
\newtheorem*{bemn}{Remarks}

\theoremstyle{remark}

\newenvironment{dedication}
  {\clearpage           
   \thispagestyle{empty}
   \vspace*{\stretch{1}}
   \itshape             
   \raggedleft          
  }
  {\par 
   \vspace{\stretch{3}} 
   \clearpage           
  }

\babelhyphenation[british]{Iso-Co-Change Lem-ma}

\setcounter{tocdepth}{2}

\begin{document}
\newgeometry{margin=2.5cm}
\titlepage
\noindent
\begin{minipage}{9cm}
\large{
Humboldt-Universität zu Berlin\\
Mathematisch-Naturwissenschaftliche Fakultät\\
Institut für Mathematik}
\end{minipage}
\hspace{\fill}
	\begin{minipage}{3cm}
		\flushright
		{\includegraphics[width=\linewidth]{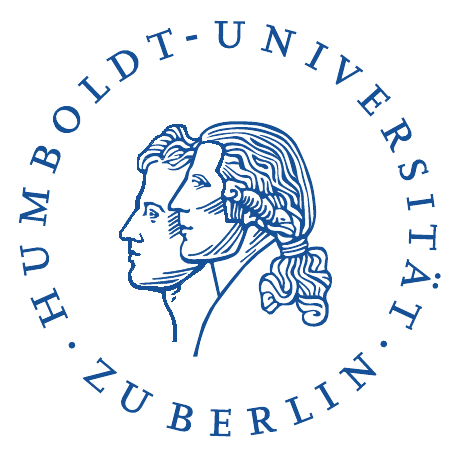}} 
	\end{minipage}

		\vspace{1 cm}
\begin{center} 
		\LARGE{The Constant in the Theorem of Binev--Dahmen--DeVore--Stevenson \\and a Generalisation of it}\\
\vspace{.6cm}
\vspace*{\fill}
	
\begin{tikzpicture}[scale=.8]
\def\square{(-2,2) rectangle ++(2,2)	 -- ++(-1,-1) -- ++(-1,+1)};
\def\twosquares{\square[xshift=2cm] \square [xshift=-2cm] };
\foreach \x in {0,...,5} {
\foreach \y in {0,...,3}	{
		\draw[scale=2^(-\x),rotate=90*\y] \twosquares [cm={.5,.5,-.5,.5,(0,0)}] \twosquares;}	}
\end{tikzpicture}
\end{center}
\vspace*{\fill}

\begin{flushleft}
\begin{minipage}{\textwidth}
\raggedright{
\noindent\makebox[\linewidth]{\rule{\textwidth}{0.4pt}}
\vspace{0cm}
\\
\noindent\begin{tabular}{ll}
\large{\textbf{Masterarbeit}}&\normalsize zur Erlangung des akademischen Grades		\emph{Master of Science} 
\vspace*{.25cm}
\\
\emph{eingereicht von} & \text{Lukas Gehring%
,}\\
\emph{geboren} & am 18.10.1992 in Zittau,\\
				& Institut für Mathematik, Friedrich-Schiller-Universität Jena\\
				& Ernst-Abbe-Platz 2, 07743 Jena, Germany,\\
				& lukas.gehring@uni-jena.de
\vspace*{.25cm}\\
\emph{Gutachter/innen} & \text{Prof.~Dr.~Carsten \textsc{Carstensen}} \\ 
				 & \text{Dr.~\textsc{Ma} Rui}
\vspace*{.25cm}\\
\emph{Eingereicht am} & Institut für Mathematik der Humboldt-Universität zu Berlin\\
						& im August~2020.
\vspace*{.25cm}\\
\emph{2020 Mathematics} & Primary 65N50
, 05E45
, 52C22
, 65D18
, 68R10
\\
\emph{Subject Classification.}
\vspace*{.25cm}\\
 \emph{Key words} & Standard bisection, newest vertex bisection, mesh refinement, \\
  \emph{and phrases.} & adaptive finite element method, regular triangulations, \\
  						& Binev--Dahmen--DeVore theorem, closure estimate
\end{tabular}
}
\end{minipage}
\end{flushleft}	
\restoregeometry
\printonly{
	\newpage
	\thispagestyle{empty}
	\mbox{}
	}
\newpage
\begin{dedication}
Glory to God alone!
\vspace*{1.5 Em}
\\

For we know in part, \\and we prophesy in part: \\but when that which is perfect is come, \\that which is in part shall be done away. \\(Paul)
\end{dedication}
\printonly{\newpage
	\thispagestyle{empty}
	\mbox{}
	}
\newpage
%


\begin{abstract}
A triangulation of a polytope into simplices is refined recursively. In every refinement round, some simplices which have been marked by an external algorithm are bisected and some others around also must be bisected to retain regularity of the triangulation.
The ratio of the total number of marked simplices and the total number of bisected simplices is bounded from above. 
Binev, Dahmen and DeVore \cite{BDV} proved under 
a certain initial condition 
a bound that depends only on the initial triangulation. 
This thesis proposes a new way to obtain a better 
bound 
in any dimension.
Furthermore, the result is proven for a weaker initial condition, invented by Alkämper, Gaspoz and Klöfkorn \cite{Gaspoz}, who also found an algorithm to realise this condition for any regular initial triangulation. Supposably, it is the first proof for a Binev--Dahmen--DeVore theorem in any dimension with always practically realiseable initial conditions without an initial refinement. Additionally, the initialisation refinement proposed by Kossaczký and Stevenson \cite{Kossaczky,Stevenson} is generalised, and the number of recursive bisections of one single simplex in one refinement round is bounded from above by twice the dimension, sharpening a result of Gallistl, Schedensack and Stevenson \cite{GSS}.
\end{abstract}
\newpage
\tableofcontents
\newpage
\section{Introduction}
\subsection{Motivation}
The questions discussed in this thesis arise from a more famous one: What is the most efficient method to solve a partial differential equation (PDE) numerically? It is one of the fundamental questions of applied mathematics, how to get a most exact possible solution with least possible computation 
cost.
A
general answer to this question 
likely will not be attainable anytime soon%
, but
within a certain class of finite element methods, the \emph{Adaptive Finite Element Methods (AFEM)}, there are partial answers for these questions, the \emph{theory of optimality} \cite{axioms}.

Finite Element Methods are based on a partition of the domain $\Omega$ of the PDE into pieces of simple shapes as simplices or cuboids. In this thesis, the pieces are simplices. Given such a partition, both the function space where a solution of the PDE is searched and the PDE are \emph{discretised}. The functions considered to be numerical solutions are simple on each simplex, e.g.\ piecewise affine or polynomial functions. Then instead of the PDE, a finite system of equations is solved, but this discretisation process will not be explained in detail here. Numerical solutions are evaluated by an \emph{error estimator }that quantifies how far the numerical solution is from the exact solution. This error estimation is calculated by summing up \emph{residuals} for each simplex. In areas where the exact solution or its derivatives change rapidly, especially near singularities, the simple functions on the simplices cannot approximate it well. To get a good numerical solution, the partition needs to be fine in these areas. Having a fine mesh everywhere would result in high computation cost. Prediction where these areas of oscillation of the exact solution occur may be difficult. A remedy is, to use the numerical solutions on a coarse mesh and special estimators to decide where these areas are and this coarse mesh should be refined. This leads to Adaptive Finite Element Methods which follow the loop: 

\begin{figure*}[!h]
\centering{
\includegraphics*[width=\textwidth]{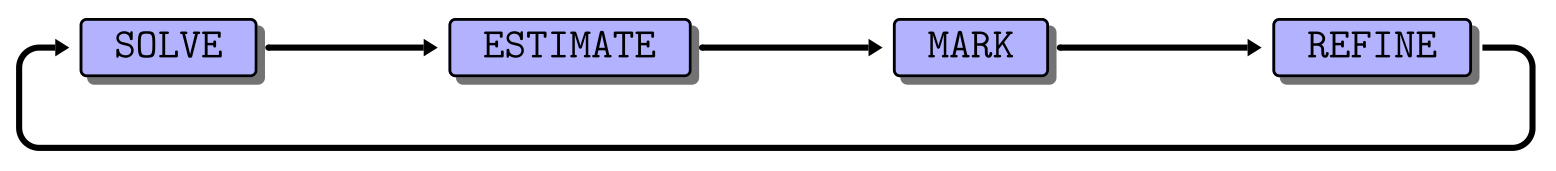}
\small{(Figure from \cite{axioms} by courtesy of C.~Carstensen)}
}
\end{figure*}
 SOLVE the discrete PDE on a coarse mesh numerically, ESTIMATE the contribution of each element to the error, MARK the elements with large estimates, REFINE the mesh dividing at least the marked elements, before starting the next loop, until the error becomes as small as desired.
In this thesis, the solution and estimation step are not discussed at all, so the marking algorithm is supposed 
to be unpredictable.

A critical part of 
the theory of optimality
consists of 
a theory
about mesh refinement.
The refinement algorithm should satisfy three properties:
\begin{itemize}
\item
\textbf{Regularity} 
(also 
known as
\emph{conformity }or interdiction of \emph{hanging nodes}): 
The intersection of two 
simplices
has to be a common subsimplex, i.e.\ the empty set, or exactly one vertex, or a whole edge, etc. 
\item
\textbf{Shape regularity}: There should be a lower bound for $|S|/\diam (S)^n$ for all appearing simplices $S$ of all appearing triangulations.
(This is needed 
for convergence/good convergence rates etc. 
Small values for $|S|/\diam (S)^n$
may cause high gradients in interpolation.)
\item
\textbf{Locality}:
If a simplex is divided, usually some additional simplices around have to be divided, to retain conformity. However, this should not lead to a uniform refinement of the whole mesh, these additional divided simplices should lie \emph{around} the first one. 
\end{itemize}
This thesis exclusively treats the bisection method by Kossaczk\'y and Mau\-bach \cite{Kossaczky, Maubach} which ensures 
shape regularity.

During the refinement process, 
some simplices \emph{should} be subdivided to reduce the residual on them and others \emph{must} be subdivided
to retain regularity. For efficiency of the computation, their \emph{numbers} are important. That is why 
the 
principal aim 
here 
is to find a bound for the 
number of the latter in dependence of the number of the former. 
In 2 dimensions, Binev, Dahmen and DeVore \cite{BDV} proved under the assumption of a certain initial condition that 
the ratio of the total number of bisected simplices and the number of marked ones
is bounded by a number dependent only on the initial triangulation (often called a \emph{closure estimate}). This number is called the \emph{BDV constant} here. 
In their original proof, 
every marked simplex spends dollars to all simplices, which could be possibly created owing to the bisection of the marked one. 
The ingenious key step, which explains why every cell of the final triangulation gets at least $c>0$ dollars, does not pay attention to tightness of the estimate.
However, 
it
has been used and modified by several authors
: Stevenson \cite{Stevenson} generalised the BDV result for the bisection method described by Kossaczký and Maubach in $n$ dimensions; Karkulik, Pavlicek and Praetorius \cite{Karkulik,KarkulikE} proved it in 2D without an initial condition; and Holst, Licht and Lyu proved in \cite{Holst} a purely combinatorial version of the BDV theorem whose constant depends only on the mesh topology but not on geometric quantities. Meanwhile,
the author of this thesis has not found any new proof neither doing without this key step nor targeted on reduction of the constant.
 
In this work, 
the BDV
theorem is proven in any dimension in a new way leading to a better bound for the ratio. Furthermore, it is proven without initial conditions 
but with the initialization invented by Alkämper, Gaspoz and Klöfkorn \cite{Gaspoz} instead.

Note the
paper \cite{Atalay} of Atalay and Mount
, 
who solved a very similar but different problem. In their problem definition, in every round a simplex is bisected, but the refinement to regularity is done only at the end of the process.

\subsection{Schedule}
This thesis is structured as follows: Section 2 briefly introduces common notions and some notation, Section 3 the bisection method of Kossaczký and Maubach. One aim of Section 4 is to 
recall
the
binary forest structure (in terms of graph theory) on the set of simplices which can be generated by the refinement process introduced 
in \cite{BDV}. 
The second one is to derive that all appearing simplices lie in a finite number of similarity classes.
The 
third
one is 
to characterise
the set of regular triangulations made of these simplices leading to a lattice structure (in terms of set theory) on these triangulations (i.e.\ a coarsest common refinement function, also known as overlay from \cite{Cascon}).
With these preparations, Section~5 is devoted to prove the classical theorem of Binev--Dahmen--DeVore 
(BDV). At first, the idea of the proof is presented 
for
a 1-dimensional analogue, the \emph{pile game}.
Sections 6 and 7 
do not deal directly with the BDV theorem.
It is not possible to impose the initial condition on an arbitrary initial triangulation. Kossaczk\'y and Stevenson proposed a preparing refinement of an arbitrary mesh 
which provides
a mesh satisfying the initial condition, but this preparation 
deteriorates the shape regularity ${\lvert S\rvert}/{(\diam S)^n}$ by factor $2/(n{+}1)!$. A more general preparation refinement is presented in Section 6.


Section 7 generalises
a theorem of Gallistl, Schedensack and Stevenson \cite{GSS}. 
They proved under the usual initial conditions that in a single refinement step, the number of recursive bisections of one simplex is bounded.
In fact, 
even
weaker 
initial conditions
already imply that for each positive integer $k$, there always exists a 
\emph{quasi-uniform refinement}, i.e.\ a 
triangulation obtained by the 
regular bisections where all simplices are at least $k{\cdot} n$ 
but less than
$(k{+}1)n$ times bisected recursively, where $n$ is the dimension of the simplices.

To avoid an adverse preparing refinement of the initial triangulation, Al\-käm\-per, Gaspoz and Klöfkorn \cite{Gaspoz} 
weakened the initial condition 
to initialise the adaptive mesh refinement without a preparing refinement. 
Section 8 proves the BDV result for this modification.

Unfortunately, the text has become extensive compared to the model proofs, but a thorough setup of a theory requires some pages. Moreover, it provides some shortcuts.
\newpage
\section
{Notation specialities and common notions}\label{sec:preliminaries}
\paragraph{Syntax of quantified expressions.} To separate a logical formula from its quantification, a baseline dot is used here, e.g.\ $\fa n{\in}\N\,\ex m{\in} \N.m>n$.
\paragraph{Round brackets are sometimes left out.}
Functions like $l$, $h$, $\Tu$ are sometimes used without round brackets: $lS,hS,\Tu S$ means the same as $l(S)$, $h(S)$, $\Tu(S)$.
\paragraph{$\boldsymbol{n=\dim \Omega\geq m\geq 0}$.}\index{n@$n$ -- $\dim\Omega$.}\index{m@$m$ -- integer in $\{0,\dots,n\}$}
Throughout this thesis, an $n$-dimensional polytope $\Omega$ is fixed and $n$ 
mostly denotes
its dimension. As needed for 
lower-dimensional objects, $m$ will usually be an integer between 0 and $n$.
\paragraph{Cardinality $\boldsymbol{\#}$.}\index{$\#$ -- cardinality}
The function $\#$ denotes the cardinality of a set.
\paragraph{Lebesgue measure $\boldsymbol{\lvert\bullet\rvert}$.}
\index{$\lvert\bullet\rvert$ -- Lebesgue measure}
The function $\lvert\bullet\rvert$ denotes 
the Lebesgue measure of a set.
\paragraph{Affine space.}
\index{affine space}
Let be given a set $A$, an $m$-dimensional real vector space $V$ and a mapping $-:A\times A\ra V$, such that the following properties hold true:
\begin{itemize}
\item
$\fa p,q,r\in A.(r{-}q)+(q{-}p)=r{-}p$.
\item
$\fa p\in A, v\in V\ex q\in A.v=q{-}p$.
\end{itemize}
Then the triple $(A,V,-)$ is an affine space. If it is clear, what $V$ and $-$ are, the affine space is sometimes denoted by $A$. If $q-p=v$, one also writes $p+v=q$. Based on these definitions, every expression $\sum_{i=1}^M a_i p_i$ with $a_i\in\R$, $\sum_{i=1}^M a_i=0$ means a unique vector in $V$ and with $\sum_{i=1}^M a_i=1$ a unique point in $A$. The latter is called an \emph{affine combination} of $p_1,\dots,p_M$. The \emph{dimension of the affine space} $A$ is the dimension of its vector space $V$.

\paragraph{Affine hull $\boldsymbol\aff$.}
\index{affine hull}\index{aff@$\aff$ -- affine hull}
The \emph{affine hull} of a subset $S$ of an affine space is
\[
\aff(S):=\left\{\sum_{i=1}^M a_is_i ~\middle|~s_i\in S,\sum_{i=1}^M a_i=1\right\}.
\] 
\paragraph{Convex set, convex hull $\boldsymbol{\conv}$, line segment $\overline{xy}$, dimension of $\sim$.}
\index{convex set}\index{convex hull}\index{conv@$\conv$ -- convex hull}
A subset $C$ of an affine space is \emph{convex}, if for any two points $p,q\in C$, the line segment $\overline{pq}:=p+[0,1](q{-}p)$ is contained in $C$.
The \emph{convex hull} of a subset $S$ of an affine space, i.e.\ the smallest convex set including $S$, is denoted by $\conv(S)$. 
For two points $x,y$ of an affine space, 
$\conv\{x,y\}$ is the line segment $\overline{xy}$. The \emph{dimension of a convex set} is the dimension $\dim(\aff C)$ of its affine hull.
\paragraph{Extreme point.}
\index{extreme point}
A point $x$ of a subset $U$ of an affine space is 
an \emph{extreme point} of $U$,
if it cannot be obtained by a convex combination of other points of $U$.
\paragraph{Relative interior.}
\index{relative interior}\index{rel@$\rel\Int$ -- relative interior}
The \emph{relative interior} of a convex set $C$ is its interior within the affine hull of $C$ and denoted by $\rel\Int C$.
\paragraph{Face of a convex set.}
\index{face}
A \emph{face} $F$ of a convex set $C$ is a convex subset with the property
$$\fa x,y\in C. x\in F\text{ or }F\cap\relint\overline{xy}=\leer.$$

An equivalent definition
for a face 
is 
provided in Definition \ref{def:face} in the appendix.
\paragraph{Simplex, subsimplex, vertex, edge, hyperface, $\boldsymbol{\Sub}$.}
\index{simplex}\index{subsimplex}\index{hyperface}\index{vertex}\index{vertices}\index{edge}\index{simplex@$m$-simplex}\index{Sub@$\Sub$ -- set of the subsimplices}\index{V@$\Vertices$ -- set of the vertices}\index{E@$\Edges$ -- set of the edges}
An $m$-\emph{simplex} 
is the convex hull of $m{+}1$ affinely independent points 
$p_0, \dots, p_m$
in $\R^n$ for some $m\in\{-1,\allowbreak\dots,\allowbreak n\}$, which are its \emph{vertices}.
(Thus, the $(-1)$-simplex is the empty set.) 
It is denoted by $\triangle p_0\dots p_m$\index{$\triangle$ -- simplex spanned by}, alternatively to $\conv\{p_0,\dots,p_m\}$.
A \emph{subsimplex} of it is the convex hull of a subset of these points, a 1-subsimplex is an \emph{edge}, an $(m{-}1)$-subsimplex a \emph{hyperface}. The set of subsimplices of a simplex $S$ is denoted by $\Sub(S)$, the set of its vertices by $\Vertices(S)$ and the set of its edges by $\Edges(S)$.

The faces of a simplex are its subsimplices. (This is Theorem \ref{SeitenUntersimplexe} in the appendix.)

\paragraph{Patch.}
\index{patch}\index{T@$\T p$ -- patch of $p$}
The \emph{patch} of a point $p$ in a triangulation $\T$ is the set of all simplices containing this point:
\begin{align*}
\T p:=\{S\in\T~|~p\in S\}.
\end{align*}

\paragraph{Interior of a patch.}
\index{int@$\Int(\T p)$ -- interior of the patch $\T p$}
The \emph{interior of a patch} $\Int (\T p)$ is $\Omega\setminus\bigcup(\T\setminus\T p)$, the complement of the union of all simplices \emph{not} in $\T p$. (Note: While the patch is a set of simplices, its interior is a subset of $\Omega$.)


\paragraph{Binary tree.}
\index{binary tree}
A \emph{binary tree} is a directed graph with the following properties:
\begin{itemize}
\item
It has no cycles.
\item
For each node, either 0 or 1 edge points towards it.
\item
There is exactly one node, the \emph{root}, such that for every node, there is a (directed) path from the root to it.
\end{itemize}
Additionally to these axioms, a binary tree is \emph{full}\index{full binary tree}, if for each node, either 0 or 2 edges point away from it.

\begin{figure}[ht]
\centering{
\begin{tikzpicture}[information text/.style={fill=gray!10,inner sep=1ex},
edge from parent/.style={draw,->},
level distance=10mm,
level 1/.style={sibling distance=28mm},
level 2/.style={sibling distance=14mm},
]
\node [rectangle, draw]{root}
	child {node [rectangle,draw] {parent $\pa(S)$}	
		child {node [draw,circle] {}}			
		child {node (S) [draw,diamond] {{\color{white}$S$}}
			child {node [draw,diamond] {$S_1$}
				child {node (S3) [draw,diamond] {}}			
				child {node (S4) [draw,diamond] {}}
			}			
			child {node (S2) [draw,diamond] {\color{white}$S_2$}}
		}
	}
	child {node [coordinate] {}
		child {node [draw,circle] {}}			
		child {node [draw,circle] {}}
	}
	;
\draw (S) node [rectangle, draw]{$S$};
\draw (S2) node [circle, draw]{$S_2$};
\draw (S3) node [circle, draw]{};
\draw (S4) node [circle, draw]{};

\begin{scope}[xshift=3.5cm]
\filldraw[gray!10] (-.8,.5) rectangle (4,-2.3);
\draw node [circle, draw]{};
\draw (1,0) 	node [right] {leaf};
\draw (0,-.6) 	node (Menge) {$\{S_1,S_2\}$};
\draw (1,-.6) 	node [right] {$\operatorname{children}(S)$};
\draw (0,-1.2) 	node [rectangle, draw]{};
\draw (1,-1.2) 	node [right] {ancestor of $S$};
\draw (0,-1.8) 	node [diamond, draw]{};
\draw (1,-1.8) 	node [right] {descendant of $S$};
\end{scope}
\end{tikzpicture}
}
\caption{Terminology of binary tree}
\end{figure}
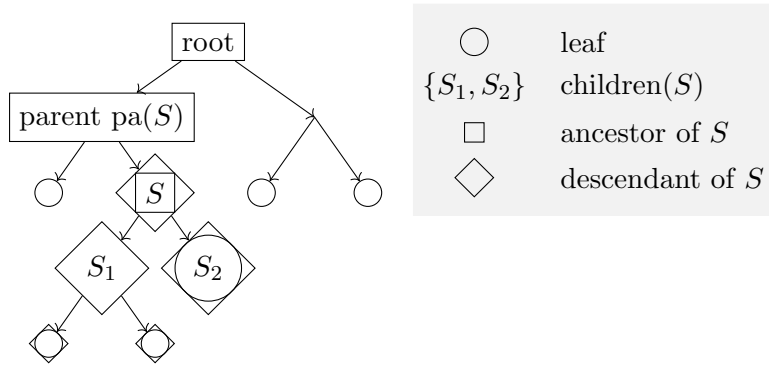

The 3 properties imply that the root is the only node with indegree 0. (If it had an ingoing edge, there would be a cycle. If there would be another such node, there could not be a path from the root to it.)

A node with outdegree 0 is called \emph{leaf}.\index{leaf} If there is an edge from a node $A$ to a second $B$, the first is the \emph{parent}\index{parent}\index{pa@$\pa$ -- parent} of the second, writing $A=\pa(B)$, and the second is a \emph{child}\index{child} of the first. The set of the children of $A$ is denoted by $\children(A)$.\index{children@$\children$} If $A$ has two (distinct) children $B$ and $C$, $B$ is the \emphindex{sibling} of $C$.
If there is a path from a node to another (and a single node counts among the paths), 
the first is an \emphindex{ancestor} of the second and the second is a \emphindex{descendant} of the first. As convenient, a \emph{proper}\index{ancestor!proper}\index{descendant!proper} ancestor/descendant of a node is an ancestor/descendant of it which is different from the node itself. 

An \emphindex{infinite perfect binary tree} is a full binary tree without leaves. A \emphindex{binary forest} is a union of disjoint binary trees. 

Throughout this thesis,
a forest $F$ is always associated with its set of nodes, $x\in F$ means that $x$ is a node of it. 
\newpage
\section{The bisection algorithm of Kossaczký and Maubach}
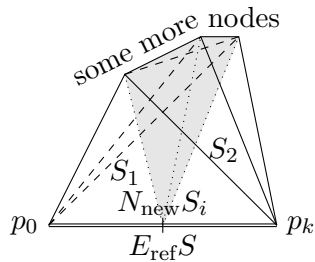
\begin{SCfigure}[1.1][ht]
\begin{tikzpicture}[information text/.style={fill=gray!10,inner sep=1ex}]
\coordinate (a) at (0,0);
\coordinate (b) at (3,0);
\coordinate (c) at (1,2);
\coordinate (d) at (2,2.5);
\coordinate (e) at (2.5,2.5);
\coordinate (f) at (1.5,0);

\filldraw[gray!20] (c) -- (d) -- (e) -- (f);
\draw[double] (0,0) node[anchor=east] {$p_0$}-- node[below]{$E_{\text{ref}}S$} (3,0) node[anchor=west] {$p_k$};
\draw (3,0) -- (1,2)  -- (0,0);
\draw (3,0) -- (2,2.5) -- node[near end, sloped,above] {some more} (1,2);
\draw[dashed] (0,0) -- (2,2.5);
\draw (3,0) -- (2.5,2.5) --node[near start, sloped,above]{\quad nodes} (2,2.5);
\draw[dashed] (0,0) -- (2.5,2.5);
\draw[dashed] (1,2) -- (2.5,2.5);
\draw (1.5,-.1) -- (1.5,.1);
\draw[dotted] (1,2) -- (1.5,0) node[above] {
$N_{\text{new}}S_i$} -- (2,2.5);
\draw[dotted] (1.5,0) -- (2.5,2.5);
\draw (1,.7) node {$S_1$};
\draw (2.3,1) node {$S_2$};
\end{tikzpicture}
\caption{Bisection of a simplex into two: $E_{\text{ref}}S$ is the refinement edge, $N_{\text{new}}S_i$ the new node of the children.}
\end{SCfigure}

One can easily show that bisection of a simplex into two only works, if all but two vertices are shared by both parts, and the edge between those two is bisected{}
.

The question is, which edge should be chosen. 
If a simplex is bisected successively and these edges are chosen without consideration, shape regularity 
might get lost
as shown in Figure \ref{bad choice good choice}.

\begin{figure}[ht]
\centering{
\begin{tikzpicture}[information text/.style={fill=gray!10,inner sep=1ex}]
\foreach \y in {0,1,1.5,1.75,2}
\draw (0,0) -- (2,\y);
\draw (2,0) -- (2,2);
\draw (1,0) node[below=0mm,text width=4cm,
information text]
  {
  Bad choice of bisection edges: long edges, very acute angles
  };
\end{tikzpicture}
\begin{tikzpicture}[information text/.style={fill=gray!10,inner sep=1ex}, scale=.5]
\draw (4,0) -- (0,0) -- (4,4) -- (4,0) --(2,2) -- (2,0) -- (1,1) -- (1,0);
\draw (2,0) node[below=0mm,text width=4cm,
information text]
  {
  Good choice: Edge length converges to 0, finite number of angles
  };
\end{tikzpicture}
}
\caption{}\label{bad choice good choice}
\end{figure}

To avoid this, some additional structure is demanded for a simplex (this will be the \emph{T-array}) determining firstly the edge to bisect and secondly the structure of the two parts for recursion. 
Characteristic of Maubach's algorithm is that this edge is chosen based only on some arrangement of its vertices, while lengths or any geometrical data of the simplex are not taken into account for its choice. (Contrastingly, the so called \emph{longest edge bisection} always chooses the longest edge of a simplex for its bisection. Unexpectedly, the theory of longest edge bisection is not simpler.)
\begin{defn}[tagged simplex, T-array, type, horizontal and vertical part]
\index{tagged simplex}\index{T-array}\index{type}\index{horizontal part}\index{vertical part}
\index{simplex!tagged}
A \emph{tagged simplex} $S$ is an $m$-simplex 
with its vertices $p_0,\dots,p_m$ 
arranged into a \emph{T-array} depicted as
\[
\begin{pmatrix}
p_0&\dots& p_{k}\\
&p_{k+1}\\
&\vdots\\
&p_m
\end{pmatrix}
\]
for 
a variable 
$k\in\{0,\dots,m\}$, called the \index{t@$t$ -- type of a simplex} \emphindex{type} $t(S)$ of the simplex; with the \emph{horizontal part} $\begin{pmatrix}p_0 &\dots& p_k\end{pmatrix}$ in the first row and the \emph{vertical part} $\begin{pmatrix}p_{k+1} &\dots& p_m\end{pmatrix}^T$ below. (The vertical part is not linked to a certain horizontal vertex but should be imagined as centred below the horizontal part.) If the vertical part is empty, i.e.\ if the type equals $m$, the T-array has \emphindex{full type}.
Roughly speaking,
the T-array itself is considered as a tagged simplex, and the horizontal and the vertical parts as T-arrays and thereby also as simplices. Relations and operations between sets like $S\subset T$ and the volume $\lvert S\rvert$ always refer to the simplices. For a T-array $S$, the expression $\conv(S)$\index{conv@$\conv$ -- forget the T-array} or the figure of speech “to forget the T-array”\index{forget the T-array} refers explicitly to the simplex \emph{without} the T-array. 
\end{defn}

\begin{defn}[Maubach's bisection, transposition, refinement edge $\Eref$, new vertex $\Vnew$]\label{Maubach}
\index{bisection}\index{transposition of a T-array}
If the tagged simplex above has type $k\neq 0$, it can be bisected into the two \emph{children}
\[
\begin{pmatrix}
\quad&p_1\quad\dots& p_{k}\\
&\boldsymbol{\frac12 (p_0+p_{k})}\\
&p_{k+1}\\
&\vdots\\
&p_m
\end{pmatrix}
\quad\text{and}\quad
\begin{pmatrix}
p_0&\dots\quad p_{k-1}&\quad\\
&\boldsymbol{\frac12 (p_0+p_{k})}\\
&p_{k+1}\\
&\vdots\\
&p_m
\end{pmatrix}
\]
(the whitespace at the margin of the horizontal part and the boldface of $\frac12 (p_0+p_k)$ should only emphasize the difference from the parent here),
so their type has been decremented. 

If the type $k$ is 0, the T-array, being a column vector, can be \emph{transposed} into a row vector (i.e.\ $\begin{pmatrix}p_0 &\dots& p_m\end{pmatrix}^T\mapsto \begin{pmatrix}p_0 &\dots& p_m\end{pmatrix}$) and $k$ jumps onto $m$. (The geometry remains unchanged during such a transposition.) After transposition, the transposed T-array can be bisected into two like the other T-arrays. A type 0 simplex and its transposed simplex are distinguished only for technical reasons. When T-arrays are not important, they are identified.

Figures, how this bisection “looks like”, are provided in Subsection \ref{RefCo}, together with some theory about the shape of these simplices.

If $k\neq 0$, the edge $\overline{p_0p_{k}}$ is called \emphindex{refinement edge} $\Eref(S)$\index{E_ref@$\Eref$ -- refinement edge} of the parent simplex $S$. A simplex of type 0 does not have a refinement edge. For 
each
of the child simplices $T$, $\frac12 (p_0+p_{k})$ is the only one of its vertices, which is not 
a vertex of the parent, hence the \emphindex{new vertex} $\Vnew T$\index{V_new@$\Vnew$ -- new vertex}.

The definition directly implies the following: If one bisects a simplex $S$ of type $k$ successively into a simplex $T$ of type $j$ without a transposition in between, the horizontal part of $T$ will be a centrepiece of the horizontal part of $S$, 
and the vertical part of $T$ consists of centres between horizontal vertices of $S$, succeeded by the vertical part of $S$.
\end{defn}
\begin{defn}[reflected T-array]
For such a tagged simplex $T$, the T-array with the same vertical part, but with the \index{reflected T-array}\emph{reflected} horizontal part $\begin{pmatrix}p_{k} &\dots& p_0\end{pmatrix}$
is the \emph{reflected T-array} $T_R$. 
The children of $T$ and $T_R$ coincide up to reflexion.
\end{defn}
\subsection{Special case: 2D (newest vertex bisection, NVB)}\label{sec:NVB}
For 2-simplices, Maubach's algorithm is equivalent to 
Mitchell's newest vertex bisection \cite{Mitchell}: One edge of a triangle is marked as its refinement edge. This edge must be bisected first. The refinement edges of the children are defined as the other two edges of the parent triangle.

To see the equivalence to Maubach's algorithm,
look at Figure \ref{NVB}. There is no difference between types, if the refinement edges coincide. In higher dimensions, they do differ.

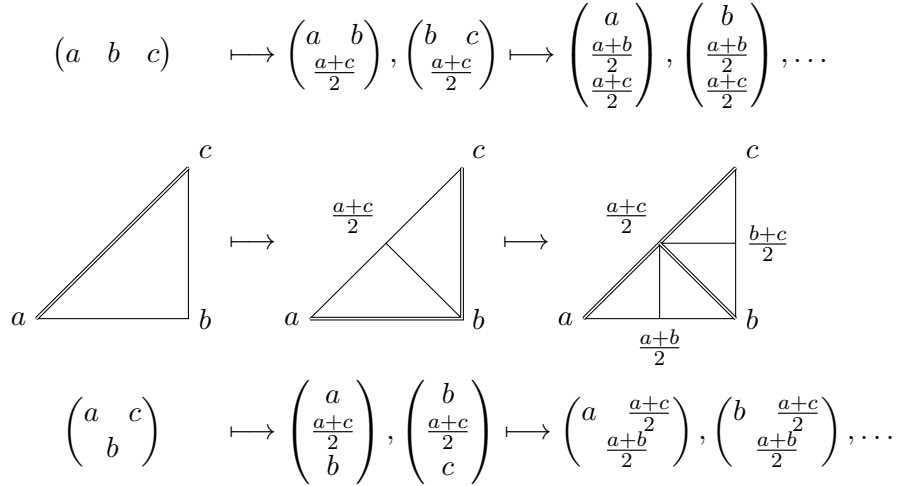
\begin{figure}[ht]
\centering{
\begin{tikzpicture}[information text/.style={fill=gray!10,inner sep=1ex}]
\tikzmath{\x = 3.6;}
\def\m{-1.2}
\begin{scope}[yshift=-.5cm]
\draw (1,4) node{$\begin{pmatrix} a&b&c \end{pmatrix}$};
\draw (\x+\m,4) node[right]{$\longmapsto\begin{pmatrix} a\quad b\\ \frac{a+c}{2} \end{pmatrix},\begin{pmatrix} b\quad c\\ \frac{a+c}{2} \end{pmatrix}
\longmapsto\begin{pmatrix} a\\ \frac{a+b}{2}\\ \frac{a+c}{2} \end{pmatrix},\begin{pmatrix} b\\ \frac{a+b}{2}\\ \frac{a+c}{2} \end{pmatrix}, \dots$};
\end{scope}

\def\mytriangle{(0,0) node[left] {$a$} -- (2,0) node[right] {$b$} -- (2,2)  node[above right] {$c$} -- cycle};
\draw \mytriangle;
\draw[double] (0,0) -- (2,2);

\def\mytriangles{
					\draw (\m,1) node[right]{$\longmapsto$};
					\draw \mytriangle;
					\draw (2,0)--(1,1) node[above left]{$\frac{a+c}2$};
					};
					
\begin{scope}[xshift=\x cm]
	\mytriangles;
	\draw[double] (0,0) -- (2,0) -- (2,2);
\end{scope}

\begin{scope}[xshift=2*\x cm]
	\mytriangles;
	\draw (1,0)node[below]{$\frac{a+b}2$} --(1,1) -- (2,1)node[right]{$\frac{b+c}2$};
	\draw[double] (0,0) -- (2,2);
	\draw[double] (2,0) -- (1,1);
\end{scope}

\begin{scope}[yshift=-1.5cm]
\draw (1,0) node{$\begin{pmatrix} a\quad c\\b \end{pmatrix}$};
\draw (\x+\m,0) node[right]{$\longmapsto\begin{pmatrix} a\\ \frac{a+c}{2} \\ b \end{pmatrix},\begin{pmatrix} b\\ \frac{a+c}{2}\\ c \end{pmatrix}
\longmapsto\begin{pmatrix} a\quad \frac{a+c}{2}\\ \frac{a+b}{2} \end{pmatrix},\begin{pmatrix} b\quad \frac{a+c}{2}\\ \frac{a+b}{2} \end{pmatrix}, \dots$};
\end{scope}
\end{tikzpicture}
}
\caption{Newest vertex bisection: Both the above and the below T-arrays match for the recursive bisection of the depicted triangles. Type does not matter, if 
the (doubly drawn) refinement edges coincide.}\label{NVB}
\end{figure}

\subsection{How it is interlinked with the \emph{tagged simplex} in literature}\label{connection to literature}

Maubach \cite{Maubach} presented the following bisection algorithm:

``The bisection of an $n$-simplex presented below only involves the ordering of the vertices
of this simplex. Initially, all coarse grid $n$-simplices $T$ are said to be of level $l(T)= 0$. If a
simplex $T$ is bisected, the two created simplices are called its descendants, and the ordering
of their vertices is defined by the following bisection step:
\vspace{\baselineskip}\\
Bisect (simplex):
\\
BEGIN

Let $k := n - l(\text{simplex}) \mod n$;

Get simplex vertices: $x_0, x_1,\dots,x_{n-1}, x_n$;

Create the new vertex: $z = \frac12\{x_0 + x_k\}$;

Create descendant$_0$: $x_0, x_1,\dots,x_{k-1}, z, x_{k+1},x_n$;

Create descendant$_1$: $x_1,x_2,\dots,x_{k}, z, x_{k+1},x_n$;

Let $l(\text{descendant}_0) := l(\text{simplex}) + 1$;

Let $l(\text{descendant}_1) := l(\text{simplex}) + 1$;
\\
END.''

It is quite obvious that the ``simplex'' in that algorithm corresponds to the T-Array
$\begin{pmatrix}
x_0 & \dots & x_k\\
&\vdots\\
&x_n
\end{pmatrix}$
of type $k$.

Traxler \cite{Traxler}, whose notation was adopted by Stevenson \cite{Stevenson} and Alkämper, Gaspoz and Klöfkorn \cite{Gaspoz}, rearranged the vertices:

``%
$S := (x_0,\dots,x_n)_g$ 
[\dots],
new node $y := (x_0 + x_n)/2$.
Using the simplex type $\gamma := (g \mod n) \in \{0,\dots, n - 1\}$, the two children simplices
are defined as
\begin{align*}
C_0 := (x_0,y, 
\underbrace{x_1,\dots,x_\gamma}_{\rightarrow}, 
\underbrace{x_{\gamma+1},\dots,x_{n-1}}_{\rightarrow})_{g+1}\\
C_n := (x_0,y, 
\underbrace{x_1,\dots,x_\gamma}_{\rightarrow}, 
\underbrace{x_{n-1},\dots,x_{\gamma+1}}_{\leftarrow})_{g+1}.
\end{align*}
The arrows point in the direction of increasing indices.''

$S$ corresponds to the T-array
$
T=
\begin{pmatrix}
x_0 & x_{\gamma+1} \quad \dots & x_n\\
&x_1\\
&\vdots\\
&x_{\gamma}
\end{pmatrix}
$
of level $g$ (called \emph{generation} there).
This means that the type $\gamma$ in that language there and the type $t(T)$ in this language here are associated by $t(T)=n-\gamma$.

Consequently, the children $C_0$ and $C_n$ above correspond to
\begin{align*}
\begin{pmatrix}
x_0&x_{\gamma+1}\quad\dots & x_{n-1}\\
&y\\
&x_1\\
&\vdots\\
&x_{\gamma}
\end{pmatrix}
\text{ and }
\begin{pmatrix}
x_n&\dots & x_{\gamma+1}\\
&y\\
&x_1\\
&\vdots\\
&x_{\gamma}
\end{pmatrix}.
\end{align*}
While $C_0$ coincides with the second child in Definition \ref{Maubach}, $C_n$ differs from the first child 
by reflexion, but Stevenson identifies a T-arrays and its reflected.
\begin{bem}[on the designation “Newest Vertex Bisection” and on history]
Normally the term ``Newest Vertex Bisection'' only fits the two-dimensional case.
In recent years, however, it has also been used for higher dimensional generalisations 
\cite{Gaspoz}. Once a triangle has been bisected into two triangles, NVB bisects one of these  triangles by cutting straight through the newest vertex (result of the previous bisection) and through the midpoint of the opposite edge. In higher dimensions, a cut bisecting a simplex always runs through multiple (all but 2) vertices and thus cannot be defined by a \emph{newest} vertex whatsoever.


The bisection method is classical. Already in 1957, Whitney \cite{Whitney} delineated a subdivision of a simplex into $2^{n}$ simplices, which he designated \emph{standard subdivision}. It represents an $n$-fold recursive bisection of a type $n$ simplex by Maubach's algorithm.

During the 1990s several bisection algorithms for tetrahedrons and eventually for $n$-simplices were found. They all had one thing in common: for a simplex with a specific structure, they define two direct descendants and corresponding structures. The definition of these direct descendants depends -- disregarding initial refinement steps -- exclusively on combinatorial data including specific tags and orders of vertices and not, however, on geometrical data, edges etc.\ (as for Longest Edge Bisection). Furthermore, the only determining factor for this definition is the structure of the simplex itself, not surrounding simplices or any other factors.

Apart from initial refinement steps all of these algorithms are equivalent: From the second or third generation, for every simplex with a structure $\Sigma_1$ in the language of Algorithm 1 there is a structure $\Sigma_2$ in the language of Algorithm 2 such that both algorithms generate the same binary tree of descendants.

Already in 1992,
Kossaczk\' y \cite{Kossaczky} proposed bisection of a tetrahedron by embedding it into a parallelepiped and partitioning it in the same manner as the cube in Subsection \ref{RefCo}.
Apparently, this can be generalised for $n$ dimensions, and this is what Maubach did the same year 1992. Maubach's structure though is not an embedding into the parallelepiped. 
The parallelepiped and its diagonals are represented only indirectly (in the form of coordinates).

The cited authors do not give their ``universal'' algorithm a name. 
The titles mostly include keywords 
like ``bisection'', ``refinement'', ``local'' and ``simplex'', which are suitable for Longest Edge Bisection as well.

In the literature of fixed-point and homotopy methods, the bisection algorithm is known as 
``Whitney's policy''.
Other sensible names would be ``cube-'' or ``parallelotope-induced bisection'', ``Kossaczký--Maubach bisection'' or, reviving Whitney's designation, ``standard bisection''.
\end{bem}
\newpage
\section{Basic structures}
Having introduced common notions in Section \ref{sec:preliminaries}, further general tools are developed now: Bisection generates a binary tree of tagged simplices from an initial tagged simplex. 
Level and hyperlevel of a T-array $S$ count how many times an initial simplex had to be bisected and transposed, respectively, to get $S$. Reference simplices, which are so to speak the standard case, are used to prove the shape regularity of all simplices generated by bisection in Subsection \ref{RefCo}, a result which was already shown by Kossaczký. 
The forest $\Simplexe$ consists of all simplices which can be generated from an initial partition by the bisection algorithm (the so called \emph{admissible} simplices) (Subsection \ref{sec:forest}), the forest of a partition is the set of all ancestors of the simplices of this partition, as they were introduced by 
Binev, Dahmen and DeVore in \cite{BDV}.
Admissible triangulations are regular sets consisting of admissible simplices and generalise what is familiar as interdiction of \emph{hanging} vertices in the 2D case. The purpose of Subsection \ref{sec:regular triangulations} is to characterise the forests of admissible triangulations among all subsets of $\Simplexe$. This characterisation directly leads to a lattice structure (in terms of set theory) on the set of admissible triangulations and implies the result of Cascon, Kreuzer, Nochetto and Siebert \cite[Lemma 3.7]{Cascon} on the overlay of triangulations.
Lastly, under the assumption of so called \emph{compatibility conditions}, the forests of admissible triangulations are characterised alternatively in Subsection \ref{sec:IC}, which is essential for the proof of the BDV theorem afterwards.
\subsection{The binary tree of a T-array}

\begin{defn}[binary tree and extended binary tree of a T-array, $\Descext$]\label{defn:binary tree}
Starting with an $n$-dimensional T-array $R$ (with $n\geq 1$) two directed graphs without multiple edges are defined, the extended binary tree of the T-array and its (normal) binary tree. 

The \emphindex{extended binary tree} has two different types of edges, namely bisection and transposition edges and is constructed as follows:
\begin{itemize}
\item
Start with the node $R$.
\item
Repeat the following steps ad infinitum:
\begin{itemize}
\item
For each (new) node $S$ of type $t(S)\geq 1$, add the children of the T-array $S$ (as defined in Definition \ref{Maubach}) to the set of nodes and (the edges from $S$ to its children) to the set of bisection edges.
\item
For each (new) node $S$ of type $t(S)=0$, add $S^T$ to the set of nodes and the edge from $S$ to $S^T$ to the set of transposition edges.
\end{itemize}
\end{itemize}
Ancestors and descendants in the extended binary tree are briefly called \emphindex{extended ancestors} and \emphindex{extended descendants} respectively. The set of extended descendants of a T-array $T$ is denoted by $\Descext T$\index{Descext@$\Descext$ -- set of extended descendants}.

In the \emph{binary tree}\index{binary tree!of a T-array} of a T-array, the T-arrays of type $n$ are skipped 
by transposing and bisecting the type 0 simplices in \emph{one} step.
Thence the instructions read here:
\begin{itemize}
\item
Start with the node $R$. If $R$ has the type
$n$, 
start with $R^T$.
\item
Repeat the following steps ad infinitum:
\begin{itemize}
\item
For each (new) node $S$ of type $t(S)\geq
1$, add its children to the set of nodes and (the edges from $S$ to its children) to the edge set.
\item
For each (new) node $S$ of type $t(S)=
0$, add 
the children of $S^T$
to the node set and (the edges from $S$ to the children of $S^T$) to the set of edges.
\end{itemize}
\end{itemize}
\end{defn}

\begin{lem}\label{binary tree well-defined}
In the binary tree and in the extended binary tree, 
for each node there is only one node pointing towards it; they are indeed binary trees.

In the binary tree, two distinct nodes represent two distinct simplices (without T-arrays).
\end{lem}
\begin{proof}[Sketch of proof]
The lemma is proved for the binary tree first.
Within this proof, the notions of a parent and a child are used for arbitrary directed graphs. One after the other, one proves the successive statements:\\
For the children $S_1,S_2$ of a T-array $S$, 
it holds that
%
$S_1,S_2\subsetneq S$, 
$|S_1|=|S_2|=\frac12\lvert S\rvert$, 
 and $|S_1\cap S_2|=0$.

Per induction one concludes that:\\
If $T$ is a proper descendant of $S$, then 
$|T|\leq \frac12\lvert S\rvert$.\\
If $T$ is neither an ancestor nor a descendant of $S$, then $|S\cap T|=0$.\\
A proper descendant of $S$ never has the same volume as $S$.

After these preparations one argues as follows: Assume that $T$ had two distinct parents $S_1,S_2$ in the graph. Then $T\subset S_1\cap S_2$, so $|S_1\cap S_2|>0$. Hence $S_1$ is a proper descendant of $S_2$ or vice versa, 
contradicting $|S_1|=2|T|=|S_2|$. 

The extended binary tree forms from the binary tree by duplicating each type 0 node into a type 0 node and a type $n$ node, drawing a transposition edge from the first to the last and turning the edge pointing towards the former node into an edge pointing towards the type 0 copy and the edges pointing away from it into edges pointing starting at the type $n$ copy.
Hence, a node cannot have several parents, either.
\end{proof}
The binary tree of a T-array is an infinite perfect binary tree. Unless otherwise specified, 
the text always refer%
s to the binary tree. In the binary tree, all simplices have refinement edges. Although type 0 T-arrays have not got a refinement edge, in the binary tree, they get the refinement edge of their transposed, such that all simplices in the binary tree have a refinement edge.

The extended binary tree is rather used for some technical proofs. If some property has to be shown for all T-arrays in the extended binary tree, it is convenient to 
prove that property for the root and to show that both bisection of a type ${\geq} 1$ simplex and transposition conserves the property.
\begin{defn}[level and hyperlevel]
\index{level}\index{hyperlevel}
Fix a tagged simplex $R$ as root. For a descendant $S$ of 
$R$, its \emph{level} $l(S)$\index{l@$l$ -- level} is the number of edges in the path from the root to it in the 
binary tree. Consider also the path from the root to a T-array $S$ in the extended tree. The 
number of transposition edges
in this path 
is called \emph{hyperlevel} \index{h@$h$ -- hyperlevel}$h(S)$. 
\end{defn}
Bisection decrements the type, while transposition increases it by $m$, so $h(T)\approx l(T)/m$.

\subsection{Reference coordinates and shape regularity}\label{RefCo}
This subsection illustrates Maubach's rule.

\begin{defn}[Kuhn simplex]
\index{Kuhn simplex}
Let $(e_1,\dots,e_n)$ be the canonical basis of $\R^n$. 
A \emph{Kuhn $m$-simplex} is a tagged $m$-simplex of type $m$, $\begin{pmatrix}p_0&\dots&p_m\end{pmatrix}$, such that $p_j-p_{j-1}=\epsilon_je_{\sigma(j)}$ for some signs $\epsilon_j\in\{1,-1\}$ and some permutation $\sigma$ of $\{1,\dots,n\}$.
\end{defn}
\begin{defn}[reference simplex]\index{reference simplex}
A \emph{$(k,h)$ reference simplex} is a hyperlevel $h$ descendant of type $k$ of a Kuhn simplex 
in 
the
extended binary tree
of the Kuhn simplex.
\end{defn}
\begin{defn}[reference coordinates]\index{reference coordinates}
\emph{Reference coordinates}
of 
a T-array 
$S
=\begin{pmatrix}
p_0&\dots& p_{k}\\
&\vdots\\
&p_n
\end{pmatrix}
$ of type $k$ and hyperlevel $h$ 
are an affine mapping $\vi: \R^n\rightarrow \R^n$, such that the tagged simplex
\[
\vi(S):=
\begin{pmatrix}
\vi(p_0)&\dots& \vi(p_{k})\\
&\vdots\\
&\vi(p_n)
\end{pmatrix}
\]
is a $(k,h)$ reference simplex.
\end{defn}
\begin{bem}
Reference coordinates of a simplex are also reference coordinates for its children and (in case of type 0) for the transposed simplex.
\end{bem}

\begin{thm}[The cube-related geometry of reference simplices]\label{Cubetheorem}
Given a Kuhn simplex, 
let $T=\begin{pmatrix}
p_0&\dots& p_{k}\\
&p_{k+1}\\
&\vdots\\
&p_n
\end{pmatrix}
$ be 
a
T-array
of its extended tree
of type $k$ and hyperlevel $h$. 


Then 
\begin{enumerate}
\item
the horizontal part is a $k$-dimensional Kuhn simplex scaled by factor $2^{-h}$,
\item
for $j=k,\dots,n{-}1$ there are $j$-dimensional subcubes $C_j$ of a 
$2^{-h}$-scaled
unit cube $C_n=a+2^{-h}[0,1]^n$ (for some $a\in\R^n$)
, such that: 
\begin{itemize}
\item
each $C_j$ is a subcube (i.e.\ a hyperface) of $C_{j+1}$,
\item
for $j=k{+}1,\dots,n$, the vertical vertex $p_j$ is the centre of $C_j$,
\item
$p_0,\dots,p_{k}$ are vertices of $C_k$.
\end{itemize}
\end{enumerate}

\end{thm}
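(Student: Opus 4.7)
The plan is to prove both parts simultaneously by induction on the extended binary tree, following the suggestion made just after Definition~\ref{defn:binary tree}: verify the claim at the root and show that it is preserved by each bisection (type ${\geq}1$) and each transposition (type $0$). At the root, a Kuhn $n$-simplex of type $n$ and hyperlevel $0$, part~(i) is tautological, while in part~(ii) the index range $j\in\{k,\dots,n-1\}=\emptyset$ is empty; the ambient unit cube plays the role of $C_n$, and by definition of a Kuhn simplex its vertices include $p_0,\dots,p_n$.

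For the bisection step, assume $T$ of type $k\geq 1$ satisfies the claim with cubes $C_k\subseteq C_{k+1}\subseteq\dots\subseteq C_n$. Since the horizontal vertices $p_0,\dots,p_k$ trace a Hamiltonian path of edges on $C_k$ (a scaled Kuhn $k$-simplex), the endpoints $p_0$ and $p_k$ are antipodal corners of $C_k$, so the new vertex $\tfrac12(p_0+p_k)$ is the centre of $C_k$. For the first child, the vertices $p_1,\dots,p_k$ share a common value in the $\sigma(1)$-coordinate and hence lie on a single hyperface of $C_k$, which one takes as the new $C'_{k-1}$; the larger cubes $C'_j=C_j$ are unchanged. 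The horizontal part $p_1,\dots,p_k$ is immediately a Kuhn $(k{-}1)$-simplex at the same scale $2^{-h}$, since $p_j-p_{j-1}=\epsilon_j 2^{-h}e_{\sigma(j)}$ for $j=2,\dots,k$. The second child is handled symmetrically with the opposite hyperface of $C_k$.

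For the transposition step from type $0$, hyperlevel $h$, to type $n$, hyperlevel $h{+}1$, the chain $\{p_0\}=C_0\subseteq C_1\subseteq\dots\subseteq C_n$ has $p_j$ as the centre of $C_j$ for $j\geq 1$. The geometric heart is that each consecutive difference $p_j-p_{j-1}$ equals $\pm 2^{-h-1}e_{i_j}$, where $e_{i_j}$ is the axis of $C_j$ perpendicular to $C_{j-1}$: $p_{j-1}$ is the centre of the $(j{-}1)$-subcube $C_{j-1}$ (or simply $p_0$ when $j=1$), $p_j$ is the centre of the $j$-subcube $C_j$, and their separation lies in the perpendicular axis with length equal to half an edge of $C_n$. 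The nesting $C_1\subsetneq\dots\subsetneq C_n$ forces the indices $i_1,\dots,i_n$ to be pairwise distinct, hence $(p_0,\dots,p_n)$ is a Kuhn $n$-simplex at scale $2^{-h-1}$; the cube $C'_n$ with opposite corners $p_0$ and $p_n$ has side length $2^{-h-1}$ and contains the whole Kuhn path among its vertices.

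The main obstacle is the transposition case, where the size of the enclosing cube must halve. One has to track carefully how the axis-aligned nesting of the $C_j$ interacts with the centering of the $p_j$, in order to conclude both that consecutive directions $e_{i_j}$ are pairwise orthogonal and that each step has length exactly $2^{-h-1}$. The bisection step, by comparison, amounts to book-keeping on the scaled Kuhn coordinates.
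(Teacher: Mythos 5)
Your proof follows essentially the same route as the paper's: induction along the extended binary tree, with a base case at the root, a bisection case in which the cubes $C_k,\dots,C_n$ are carried over unchanged and a new hyperface $C_{k-1}$ is extracted, and a transposition case in which the differences $p_j-p_{j-1}$ are identified as $\pm 2^{-h-1}$ times pairwise distinct canonical unit vectors, yielding a $2^{-h-1}$-scaled Kuhn simplex. One small terminological slip: the horizontal vertices $p_0,\dots,p_k$ trace a monotone lattice path through only $k{+}1$ of the $2^k$ vertices of $C_k$, so it is not a Hamiltonian path; but the conclusion you draw (that $p_0$ and $p_k$ are antipodal) remains correct because $p_k-p_0=\sum_{j=1}^k\epsilon_j 2^{-h}e_{\sigma(j)}$ is a signed main diagonal of $C_k$.
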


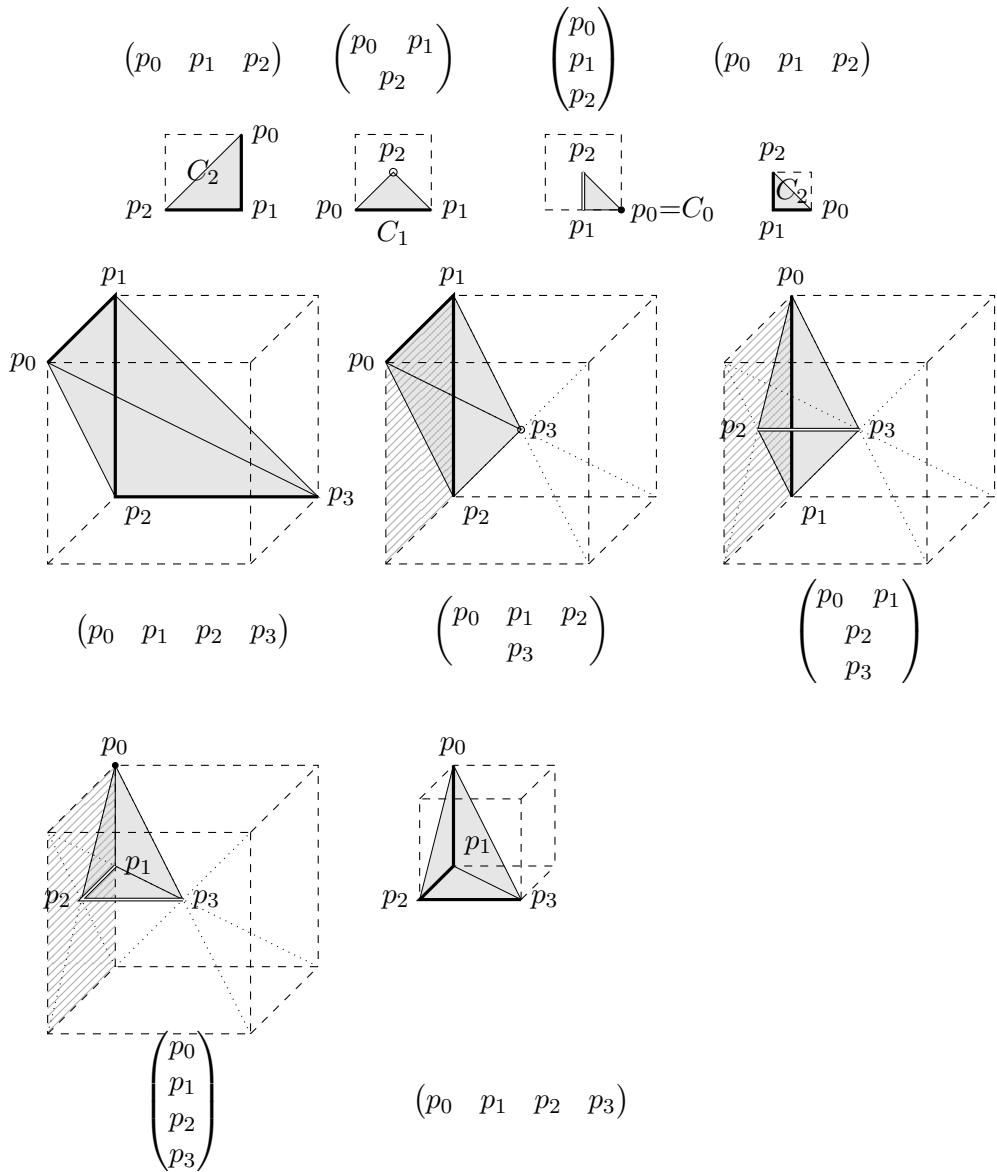
\begin{figure}
\centering{
\begin{tikzpicture}
					[information text/.style={fill=gray!10,inner sep=1ex}, 
					cube1/.style=dashed, 
					simplex/.style={very thin},
					diag/.style=dotted,
					canonical1/.style={very thick},
					canonical2/.style=double,
					]
\begin{scope}[yshift=0.2cm]
\draw[cube1] (1,1) rectangle (0,0);
\filldraw[simplex, fill=gray!20] (0,0) -- (1,1) -- (1,0) -- cycle;
\draw[canonical1] (0,0) node[left]{$p_2$}-- (1,0) node[right]{$p_1$}--(1,1) node[right]{$p_0$};
\draw (.5,.5) node {$C_2$};
\draw (0.5,2) node{$\begin{pmatrix}p_0&p_1&p_2\end{pmatrix}$};

\begin{scope}[xshift=2.5cm]
\draw[cube1] (1,1) rectangle (0,0);
\filldraw[simplex, fill=gray!20] (0,0) -- (.5,.5) -- (1,0) -- cycle;
\draw[canonical1] (0,0) node[left]{$p_0$} -- node[below]{$C_1$} 
					(1,0) node[right]{$p_1$}
;
\draw (.5,.5) node[above]{$p_2$} circle [radius=.05];
\draw (0.5,2) node{$\begin{pmatrix}p_0\quad p_1\\p_2\end{pmatrix}$};
\end{scope}

\begin{scope}[xshift=5cm]
\filldraw[simplex, fill=gray!20] (1,0) 
node[right]{$p_0{=}C_0$} 
-- (.5,.5) -- (.5,0) -- cycle;
\draw[cube1] (1,1) rectangle (0,0);
\draw[canonical2] (.5,0) node[below]{$p_1$}--(.5,.5) node[above]{$p_2$}
;
\filldraw (1,0) circle [radius=.04];
\draw (0.5,2) node{$\begin{pmatrix}p_0\\p_1\\p_2\end{pmatrix}$};
\end{scope}

\begin{scope}[xshift=7.5cm]
\filldraw[simplex, fill=gray!20] (1,0) -- (.5,.5) -- (.5,0) -- cycle;
\draw[cube1] (1,0) rectangle (.5,0.5);
\draw[canonical1] (1,0)node[right]{$p_0$} -- (.5,0) node[below]{$p_1$}--(.5,.5) node[above]{$p_2$}
;
\draw (.75,.25) node {$C_2$};
\draw (0.75,2) node{$\begin{pmatrix}p_0&p_1 & p_2\end{pmatrix}$};
\end{scope}
\end{scope}
\end{tikzpicture}




\begin{tikzpicture}[scale=.89,
					information text/.style={fill=gray!10,inner sep=1ex}, 
					cube1/.style=dashed, 
					simplex/.style={very thin},
					diag/.style=dotted,
					canonical1/.style={very thick},
					canonical2/.style=double,
					]
\def\ys{-1};
\def\cube{
\draw[cube1] (3,3) rectangle (0,0);
\draw[cube1] (1,1) rectangle (4,4);
\draw[cube1] (0,0) -- (1,1); 
\draw[cube1] (3,3) -- (4,4); 
\draw[cube1] (3,0) -- (4,1); 
\draw[cube1] (0,3) -- (1,4);
}


\begin{scope}[xshift=0cm]
	\coordinate (p0) at (0,3);
	\coordinate (p1) at (1,4);
	\coordinate (p2) at (1,1);
	\coordinate (p3) at (4,1);

	\draw (2,\ys) node {$\begin{pmatrix} p_0&p_1&p_2&p_3 \end{pmatrix}$};
	\fill[gray!20] (p0)--(p2)--(p3)--(p1)--cycle;
	\cube;
	\draw[canonical1, 
	]  
	(0,3) -- (1,4)--(1,1) -- (4,1); 
	
	\draw[style=simplex] 
	(1,4) node [above]{$p_1$}-- (4,1)node [right]{$p_3$} -- (0,3)node [left]{$p_0$} -- (1,1)node [below right]{$p_2$};
\end{scope}

\begin{scope}[xshift=5cm]
	\coordinate (p0) at (0,3);
	\coordinate (p1) at (1,4);
	\coordinate (p2) at (1,1);
	\coordinate (p3) at (2,2);
	
	\fill[fill=gray!20] (p0)--(p1)--(p3)--(p2)--cycle;
	\fill[pattern=north east lines,pattern color=gray!60] (p0)--(p1)--(p2)--(0,0)--cycle;

	\draw (2,\ys) node {$\begin{pmatrix} p_0&p_1&p_2\\&p_3 \end{pmatrix}$};

	\cube;
	\draw[canonical1]  
	(p0) -- (p1) -- (p2); 
	
	\draw[style=simplex] 
	(p0) node [left]{$p_0$
	}-- (p2) node [below right]{$p_2$
	} 
	;
	\draw[simplex] (p0) -- (p3) node [right]{$p_3$};
	\draw[simplex] (p1) node [above]{$p_1$} -- (p3);
	\draw[simplex] (p2) -- (p3);
	
	\draw[diag] (0,3) -- (4,1);
	\draw[diag] (3,0) -- (1,4);
	\draw[diag] (1,1) -- (3,3);
	
	\draw (p3) circle [radius=.05];
\end{scope}

\begin{scope}[xshift=10cm]
	\coordinate (p0) at (1,4);
	\coordinate (p1) at (1,1);
	\coordinate (p2) at (.5,2);
	\coordinate (p3) at (2,2);
	
	\fill[fill=gray!20] (p0)--(p2)--(p1)--(p3)--cycle;
	\fill[pattern=north east lines,pattern color=gray!60] (0,0)--(1,1)--(1,4)--(0,3)--cycle;
	\cube;
	\draw[canonical1]  
	(p0) -- (p1); 
	
	\draw (2,\ys) node {$\begin{pmatrix} p_0\quad p_1\\p_2\\p_3 \end{pmatrix}$};

	\draw[simplex] (p0) node [above]{$p_0$} -- (p3) node [right]{$p_3$};
	\draw[simplex] (p1) node [below right]{$p_1$} -- (p3);
	\draw[simplex] (p0) -- (p2) node [left]{$p_2$};
	\draw[simplex] (p1) -- (p2);
	
	\draw[diag] (0,3) -- (4,1);
	\draw[diag] (3,0) -- (1,4);
	\draw[diag] (1,1) -- (3,3);
	
	\draw[diag] (0,0) -- (1,4);
	\draw[diag] (1,1) -- (0,3);
	
	\draw[canonical2] (p2)--(p3);
\end{scope}

\begin{scope}[yshift=-7cm]
\begin{scope}[xshift=0cm]
	\coordinate (p0) at (1,4);
	\coordinate (p1) at (1,2.5);
	\coordinate (p2) at (.5,2);
	\coordinate (p3) at (2,2);
	
	\fill[fill=gray!20] (p0)--(p2)--(p3)--cycle;
	\fill[pattern=north east lines,pattern color=gray!60] (0,0)--(1,1)--(1,4)--(0,3)--cycle;
	\cube;
	\fill (p0) circle [radius=.05]; 

	\draw (2,\ys) node {$\begin{pmatrix} p_0\\ p_1\\p_2\\p_3 \end{pmatrix}$};
	
	\draw[simplex] (p0) node [above]{$p_0$} -- (p1) node [right]{$p_1$};
	\draw[simplex] (p0) -- (p2) node [left]{$p_2$};
	\draw[simplex] (p0) -- (p3) node [right]{$p_3$};
	\draw[simplex] (p1) -- (p3);
	
	\draw[diag] (0,3) -- (4,1);
	\draw[diag] (3,0) -- (1,4);
	\draw[diag] (1,1) -- (3,3);
	
	\draw[diag] (0,0) -- (1,4);
	\draw[diag] (1,1) -- (0,3);
	
	\draw[canonical2] (p1)--(p2)--(p3);
\end{scope}
\begin{scope}[xshift=5cm]
	\coordinate (p0) at (1,4);
	\coordinate (p1) at (1,2.5);
	\coordinate (p2) at (.5,2);
	\coordinate (p3) at (2,2);
	
	\fill[fill=gray!20] (p0)--(p2)--(p3)--cycle;
	\draw[cube1] (2,2) rectangle (.5,3.5);
	\draw[cube1] (1,4) rectangle (2.5,2.5);
	\draw[cube1] (.5,2) -- (1,2.5); 
	\draw[cube1] (2,2) -- (2.5,2.5); 
	\draw[cube1] (.5,3.5) -- (1,4); 
	\draw[cube1] (2,3.5) -- (2.5,4);

	\draw (2,\ys) node {$\begin{pmatrix} p_0& p_1&p_2&p_3 \end{pmatrix}$};
	
	\draw[simplex] (p0) node [above]{$p_0$} -- (p2) node [left]{$p_2$};
	\draw[simplex] (p0) -- (p3) node [right]{$p_3$};
	\draw[simplex] (p1) -- (p3);
	
	\draw[canonical1] (p0)-- (p1) node [above right]{$p_1$}--(p2)--(p3);
\end{scope}
\end{scope}

\end{tikzpicture}
\caption{Reference simplices (descendants of Kuhn simplices) of different dimensions and types with their unit (sub-)\-cubes. 
The thick edges are between successive horizontal vertices, the double ones between successive vertical ones. They are signed canonical unit vectors and bisected canonical unit vectors respectively. Note that the vertical vertices build up a scaled Kuhn simplex step by step.}
\label{refsim2}
}
\end{figure}
See Figure \ref{refsim2} for an illustration of Theorem \ref{Cubetheorem}.
\begin{proof}[Proof by mathematical induction
]

For
$2n$ numbers $a_1,b_1,\dots,a_n,b_n\in \R$ with $|a_j-b_j|=c,$ 
$C_{a_1,b_1,\dots,a_n,b_n}=[a_1,b_1]\times\dots\times [a_n,b_n]$ is a cube with the vertices
\begin{align*}
\Vertices C_{a_1,b_1,\dots,a_n,b_n}=\{a_1,b_1\}\times\dots\times\{a_n,b_n\}=\left\{x\in\R^n~\middle|~ \fa j. x_j\in\{a_j,b_j\}\right\}.
\end{align*}
\emph{Base case: $l(T)=0$.} 1. The root $R$ is a Kuhn simplex by definition.
2. 
It suffices
to find $C_n$ having $p_0,\dots,p_n$ 
as
its vertices. W.l.o.g.\ assume that $\sigma=\id$ for 
the above
Kuhn simplex. 
Since $p_{i}-p_{i-1}=\epsilon_ie_i$, 
$\fa j=1,\dots,n$ the $j$-th coordinate $p_{ij}$ of $p_i$ changes only between $p_{j-1}$ and $p_j$:
\begin{align*}
\underbrace{p_{0j}=\dots=p_{(j-1)j}}_{=:a_j},\underbrace{p_{jj}=\dots=p_{nj}}_{=:b_j},
\end{align*}
and $|p_{jj}-p_{(j{-}1)j}|=|\epsilon_j|=1$,
so $C_n=C_{a_1,b_1,\dots,a_n,b_n}$ fulfils 
the 
request.

\emph{Induction assumption.} For some reference simplex $T$, the statement is true for the parent $\pa (T)$ in the extended binary tree of a reference simplex.

\emph{Induction step.} 1st case: $t(\pa T)\geq 1$. 

1st statement: One gets the horizontal part of 
$T$ by removing the first or the last vertex from the horizontal part of $\pa (T)$. By induction assumption, the horizontal part of $\pa (T)$ is a Kuhn $k$-simplex. The oddment $\begin{pmatrix}p_1 &\dots& p_k\end{pmatrix}$ and $\begin{pmatrix}p_0 &\dots& p_{k-1}\end{pmatrix}$ is a Kuhn $(k{-}1)$-simplex (with edge vectors $\epsilon_2e_{\sigma (2)},\dots,\epsilon_ke_{\sigma (k)}$ and $\epsilon_1e_{\sigma (1)},\dots,\epsilon_{k-1}e_{\sigma(k-1)}$), respectively.

2nd statement: 
For 
$T$, the centres $p_{k+1}, \dots, p_n$ stay the same, as they 
are
for $\pa (T)$, so the cubes $C_{k+1},\dots, C_n$ also stay the same, and for their indices nothing has to be proven. Since $p_k-p_0=\sum_{j=1}^k\epsilon_je_{\sigma j}$ 
is
the diagonal of $C_k$ in the parent, the new vertex $\frac{p_0+p_k}2$ is its centre, so $C_k$, too, stays the same. The aforementioned edge vectors of the horizontal part span a hyperface $C_{k-1}$ of $C_k$, which finally proves the statement for $k{-}1$.

2nd case: $t(\pa T)=0$, i.e.\ $T=(\pa T)^T$.

1st statement: The horizontal part of $
\pa (T)$ is $(p_0)$, its vertical part is $\begin{pmatrix}p_1 &\dots& \allowbreak p_n\end{pmatrix}^T$. $C_0$ is a 0-dimensional cube, i.e.\ a point. So $p_0$ is at the same time also centre of $C_0$, 
and all $p_j$ are centres of $C_j$. Let $2^{-h}e_{\sigma' j}$ be the scaled canonical basis vector being edge of $C_j$ but not of $C_{j-1}$ 
(see the 
Figure \ref{fig:canonical basis vector}
).  Then
\begin{align*}
p_j-p_{j-1}=2^{-h-1}\epsilon'_je_{\sigma' j}
\end{align*}
for some $\epsilon'_j\in\{\pm 1\}$. Hence, the transposed simplex $T$ is a $2^{-h-1}$-scaled Kuhn simplex. The second statement has already been proven in the base case (for an unscaled Kuhn simplex).
\begin{figure}[ht]
\centering{
\begin{tikzpicture}[information text/.style={fill=gray!10,inner sep=1ex}
]
\draw[dashed] 
(2,0) 
-- (2,2) 
-- (0,2) 
;
\draw (1,1) node {$C_j$} 
;
\draw[thick] (0,0) -- node [left] {$C_{j-1}$}(0,2);
\draw[thin,gray,->] (0,0) -- node [black,below] {$\pm2^{-h}e_{\sigma' j}$}(2,0);
\draw (2,1) node[right=
2mm
,text width=3cm,
	information text]
	  {\caption{\newline Definition of the canonical basis vector $e_{\sigma' j}$.}\label{fig:canonical basis vector}

	  };
\end{tikzpicture}
}
\end{figure}
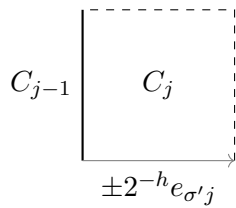
\end{proof}
\begin{folg}\label{similarity classes}\index{similarity classes}\index{minimal angle condition}
The edge vectors of descendants of a tagged simplex point into only finitely many directions altogether. Thus, the number of 
similarity classes and
the number of values for $\diam(T)^n/|T|$ 
are finite in the binary tree.
\end{folg}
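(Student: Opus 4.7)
The plan is to reduce everything to the reference setting via reference coordinates and then exploit the periodic cube-structure provided by Theorem~\ref{Cubetheorem}. First I would fix reference coordinates $\varphi$ for the root $R$. Since $\varphi$ is an invertible affine map, finitely many edge directions among descendants of $\varphi(R)$ in the extended binary tree of a Kuhn simplex pull back to finitely many directions among descendants of $R$; likewise, $\varphi$ preserves similarity classes and the ratio $\diam(T)^n/|T|$ up to a global constant depending only on $\varphi$. Thus the whole problem is transferred to the reference world of a single Kuhn simplex.

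Next I would argue that the extended binary tree of a Kuhn simplex is, up to scaling and translation, periodic with period $n{+}1$ in the hyperlevel. By Theorem~\ref{Cubetheorem}, every hyperlevel-$h$ T-array of type $n$ is itself a Kuhn simplex scaled by factor $2^{-h}$. Going from hyperlevel $h$ to hyperlevel $h{+}1$ consists of $n$ bisections inside this scaled Kuhn simplex (ending at a type-$0$ descendant) followed by one transposition, so the whole combinatorial and geometric picture recurs at scale $2^{-(h+1)}$. Hence within each hyperlevel only boundedly many (at most $2^{n+1}{-}1$) distinct T-array shapes occur, all of them $2^{-h}$-scaled translates of the shapes that occur at hyperlevel~$0$.

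Consequently, every edge vector of every descendant of a Kuhn simplex is a positive scalar multiple of one of the finitely many edge vectors arising from the finitely many T-arrays at hyperlevel~$0$, and so the set of edge \emph{directions} is finite. Since two T-arrays with the same shape at different hyperlevels are homothetic, there are only finitely many similarity classes; the quantity $\diam(T)^n/|T|$, being a similarity invariant, then takes only finitely many values. Pulling back through $\varphi$ retains each finiteness conclusion and finishes the proof.

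The step I expect to be the main obstacle is the periodicity in the hyperlevel: one has to verify carefully that the geometric data of Theorem~\ref{Cubetheorem}, namely the $2^{-h}$-scaled enclosing unit cube together with its nested subcube chain $C_n \supset C_{n-1} \supset \dots \supset C_k$, really recur identically (up to a uniform scaling and a translation) when moving from one hyperlevel to the next, so that no new T-array shape -- and hence no new edge direction -- can appear arbitrarily deep in the tree.
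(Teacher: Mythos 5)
Your overall strategy (transfer to the Kuhn/reference picture via reference coordinates, then use the self-similar structure of the extended tree) is close to how the paper obtains this corollary, which it reads off Theorem \ref{Cubetheorem} directly: every vertex of a hyperlevel-$h$ descendant is a vertex of the subcube $C_k$ or the centre of one of the nested subcubes of a cube of side $2^{-h}$, so every edge vector, rescaled by $2^{h+1}$, is an integer vector with entries in $\{-2,\dots,2\}$; this gives at once finitely many directions and finitely many classes up to scaling and translation. However, two of your intermediate claims are false as stated. First, a fixed affine map $\varphi$ does \emph{not} preserve similarity classes, and it does not change $\diam(T)^n/\lvert T\rvert$ by a global constant: it multiplies $\lvert T\rvert$ uniformly by $\lvert\det A\rvert$, but it distorts $\diam$ by a direction-dependent factor between the extreme singular values of $A$, and the conjugate $A^{-1}QA$ of a rotation is in general no similarity. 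So the sentence ``$\varphi$ preserves similarity classes and the ratio up to a global constant'' cannot carry the finiteness of similarity classes and of the ratio back to the original simplex. The transfer does work if you argue with \emph{homothety} (scaling plus translation) classes instead: in reference coordinates the descendants fall into finitely many homothety classes (up to the caveat below), conjugating a homothety by $\varphi$ is again a homothety with the same ratio, and finitely many homothety classes of the pullbacks yield finitely many similarity classes and finitely many values of the similarity invariant $\diam(T)^n/\lvert T\rvert$. (The directions part of your transfer is fine, since $A^{-1}$ maps a finite set of lines to a finite set of lines.)

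Second, the periodicity across hyperlevels is not by translations: a transposed type-$0$ descendant is a $2^{-(h+1)}$-scaled Kuhn simplex, but in general with a different sign vector and permutation than the root -- already in 2D one of the two type-$0$ children transposes to a \emph{reflected} Kuhn simplex. Hence shapes and edge vectors recur only up to the finite group of signed permutation matrices, not as ``$2^{-h}$-scaled translates of the shapes at hyperlevel $0$'', and an edge vector deep in the tree need not be a positive multiple of an edge vector occurring in the root's hyperlevel-$0$ portion. Your finiteness conclusions survive because this group is finite (quantify over all $2^n\,n!$ Kuhn types, each contributing at most $2^{n+1}{-}1$ period shapes), or more simply by replacing the periodicity argument with the lattice-containment argument from Theorem \ref{Cubetheorem} sketched above, which is the route the paper intends.
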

Corollary \ref{similarity classes} was already known to Whitney \cite{Whitney}.
The Kuhn simplices in Theorem \ref{Cubetheorem} give rise to the following corollary directly.
\begin{folg}\label{edgelengths}
\begin{enumerate}
\item
Two distinct
horizontal vertices 
of a $(k,h)$ reference simplex
are vertices of 
a 
$2^{-h}$-scaled 
Kuhn $k$-simplex,
and thus have pairwise Chebyshev distances of $2^{-h}$. 
\item For $k\neq 0$, the refinement edge of 
a $(k,h)$ reference simplex equals the refinement edge of that Kuhn $k$-simplex and has Manhattan length $2^{-h}k$. 
\item Every \emph{other} pair of vertices appears in one of the descendants after one transposition as a pair of horizontal vertices, 
hence it has Chebyshev distance $2^{-h-1}$.
\end{enumerate}
\end{folg}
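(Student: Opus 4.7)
My strategy is to reduce all three parts to Theorem~\ref{Cubetheorem}(1), which identifies the horizontal part of a $(k,h)$ reference simplex with a Kuhn $k$-simplex scaled by factor $2^{-h}$. For part~1, I would exploit the explicit form of a Kuhn simplex: its consecutive vertices satisfy $p_l - p_{l-1} = \epsilon_l e_{\sigma(l)}$ for signs $\epsilon_l \in \{\pm 1\}$ and a permutation $\sigma$ of $\{1,\dots,n\}$. Hence two distinct vertices $p_i, p_j$ (say $i<j$) differ by $\sum_{l=i+1}^{j} \epsilon_l e_{\sigma(l)}$; because $\sigma$ is a bijection, the summands lie on pairwise distinct coordinate axes, so each coordinate of the difference is $0$ or $\pm 1$ with at least one nonzero. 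After rescaling by $2^{-h}$ this yields Chebyshev distance exactly $2^{-h}$.

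Part~2 is a direct specialisation of the same calculation. By Definition~\ref{Maubach}, the refinement edge of a type-$k$ simplex is $\overline{p_0 p_k}$, so in the scaled Kuhn $k$-simplex its difference vector equals $2^{-h}\sum_{l=1}^{k}\epsilon_l e_{\sigma(l)}$. These $k$ signed unit vectors lie in $k$ distinct coordinate directions, so the Manhattan length is $k\cdot 2^{-h}$.

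Part~3 is where the real work lies. Given a pair $(v,w)$ of vertices of the $(k,h)$ reference simplex $S$ which are not both horizontal, I would construct a descendant $T$ obtained by bisecting $S$ down to type~$0$ and then transposing exactly once, so that $T$ is a reference simplex of type~$n$ and hyperlevel $h{+}1$, all of whose vertices are horizontal. Applying part~1 to $T$ then produces Chebyshev distance $2^{-(h+1)} = 2^{-h-1}$ between any two vertices of $T$, and in particular between $v$ and $w$ provided they survive the chain of bisections. The main (mild) obstacle is this survival argument. Inspecting Definition~\ref{Maubach}, each bisection of a type-$j$ simplex keeps the entire vertical part intact and keeps every interior horizontal vertex; only the leftmost horizontal vertex is dropped from the first child and only the rightmost from the second. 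Thus every vertical vertex of $S$ survives unconditionally, while any one chosen horizontal vertex can be retained throughout by always taking the child containing it (second child while it is leftmost, first child while it is rightmost, either child while it is interior). Since the hypothesis of part~3 forces $(v,w)$ to contain at most one horizontal vertex, a single such consistent choice of children suffices, and a type-$0$ descendant containing both $v$ and $w$ exists; transposing it completes the argument.
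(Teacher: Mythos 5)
Your proposal is correct and takes essentially the same route as the paper: the paper states Corollary~\ref{edgelengths} as following \emph{directly} from Theorem~\ref{Cubetheorem} (the scaled Kuhn horizontal part), and your elaboration -- the signed-unit-vector computation for parts~1 and~2, and for part~3 the observation that vertical vertices survive every bisection while a single chosen horizontal vertex can be kept by always selecting the child containing it, until a type-$0$ descendant is transposed into a full-type, hyperlevel-$(h{+}1)$ reference simplex -- is exactly the intended argument, just spelled out.
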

%
%
\subsection{The forest of admissible simplices 
}\label{sec:forest}
\begin{defn}[initial partition $\T_0$, forest of admissible simplices $\Simplexe$, forest of a partition]
\index{initial partition}\index{T_0@$\T_0$ -- initial partition}\index{forest of admissible simplices}\index{T@$\Simplexe$ -- forest of admissible simplices}\index{forest of a partition}\index{admissible simplices}
As a general assumption a fixed finite 
\emph{initial partition} $\T_0$ of an $n$-dimensional polytope $\Omega\subset \R^n$ into 
tagged
$n$-simplices is given,
i.e.\ their union is $\Omega$ and the intersection of two has Lebesgue measure 0. 
The union of the infinite complete binary trees of the simplices of the initial partition will be called \emph{binary forest of admissible simplices $\Simplexe$}.

If 
now 
each of these trees is restricted to a finite full subtree still containing the initial simplex (the original root) (in other words if 
the ongoing bisection is broken off sometime), the leaves of this subforest form a (not necessarily 
regular) partition of $\Omega$. Conversely, add to an arbitrary finite partition into admissible simplices all their ancestors to get such a full subforest with these simplices as leaves. 
These partitions 
are the \emph{admissible} ones and 
the corresponding subforest simply 
the \emph{forest (of the partition $P$)}, 
denoted 
by $\Wald(P)$\index{fo@$\Wald$ -- forest (of a partition)}.
\end{defn}
\begin{defn}[finer, coarser, refinement, strictly/proper $\sim$]
\index{finer}\index{$\leq$ -- finer}\index{coarser}\index{$\geq$ -- coarser}\index{refinement}\index{strictly finer}\index{proper refinement}
A set of simplices $A$ 
is
\emph{finer} than a second $B$, 
denoted by
$A\leq B$, if
\begin{align*}
\fa S\in A, T\in B.|S\cap T|>0\Ra S\subset T.
\end{align*}
Applying this relation to singletons $\{T\}$ of simplices, 
also 
$T$ is 
said to be
finer than\ \ldots
\ and 
\ldots\ 
to be
finer than $T$
, respectively. $A$ is \emph{coarser} than $B$ (denoted by $A\geq B$), if $B\leq A$. 
A 
\emph{refinement }of a triangulation of $\Omega$ 
is 
a finer triangulation. (See Definition \ref{def:regular} below for the definition of a triangulation.) 
A partition (triangulation) $A$ is called \emph{strictly finer} (\emph{a proper refinement}) than (of) $B$, if it is finer than (a refinement of) $B$ and $\ex S\in A, T\in B.S\subsetneq T$. 
\end{defn}
\begin{bem}
The relation finer is \emph{not} a partial order on the power set of admissible simplices, 
but it is a partial order on the set of admissible partitions.
\end{bem}
\begin{lem}\label{feinerObermenge}
An admissible partition $P$ is finer than a second $Q$ if and only if its forest is a superset of the second.
\end{lem}
\begin{proof}
If the forest of $P$ is a superset of the forest of $Q$, then every leaf of $P$ is a subset of the leaf of $Q$, which is the ancestor of the leaf. Conversely, $T\subset S$ implies for admissible simplices that $T$ is a descendant of $S$. Hence a forest of an admissible partition containing $T$ must contain $S$ as well.
\end{proof}

\begin{thm}\label{count forest not triangulation}
For admissible partitions $\T$, it holds that:
\begin{align*}
\#\T-\#\T_0=\#\operatorname{\text{non-leaves}}(\Wald\T)=\frac12\#\left(\Wald(\T)\setminus \T_0\right).
\end{align*}
\end{thm}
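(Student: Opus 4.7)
The plan is to use the structural fact that $\Wald(\T)$ is a full binary forest whose roots form exactly $\T_0$ and whose leaves form exactly $\T$, and then exploit the standard node-counting identity for full binary forests.

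First I would verify the structure. By construction $\Wald(\T)$ is obtained from $\T$ by adjoining all ancestors in the binary forest $\Simplexe$, and every node of $\Simplexe$ is by definition a descendant of some simplex in $\T_0$; hence each simplex of $\T$ has a unique $\T_0$-ancestor and every such $\T_0$-ancestor lies in $\Wald(\T)$. In particular $\T_0 \subseteq \Wald(\T)$ and the simplices in $\T_0$ are precisely the roots of $\Wald(\T)$. The leaves of $\Wald(\T)$ coincide with $\T$: by Lemma \ref{feinerObermenge} a proper descendant of some $S\in\T$ cannot belong to $\Wald(\T)$, and by the ``full subforest'' requirement every non-leaf in $\Wald(\T)$ has both of its binary-tree children in $\Wald(\T)$.

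Next I would count. Let $L:=\#\text{leaves}(\Wald\T)=\#\T$, let $N:=\#\text{non-leaves}(\Wald\T)$, and let $R:=\#\T_0$ be the number of roots. Since $\Wald(\T)$ is a full binary forest, every non-leaf has exactly $2$ children, and every non-root has exactly one parent; consequently the total number of non-root nodes equals $2N$, which gives
\[
\#\bigl(\Wald(\T)\setminus \T_0\bigr) = 2N.
\]
This establishes the second equality $N = \tfrac{1}{2}\#(\Wald(\T)\setminus\T_0)$.

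Finally, for the first equality I partition the nodes of $\Wald(\T)$ in two ways:
\[
\#\Wald(\T) = R + \#\bigl(\Wald(\T)\setminus\T_0\bigr) = R + 2N,
\qquad
\#\Wald(\T) = L + N.
\]
Equating the two expressions yields $L = R + N$, i.e.\ $\#\T - \#\T_0 = N = \#\text{non-leaves}(\Wald\T)$, completing the proof. No step looks especially hard; the only mild subtlety is checking that every $S\in\T_0$ really appears in $\Wald(\T)$ (which requires that $\T$ partitions all of $\Omega$, so that each $S\in\T_0$ has at least one descendant in $\T$) and that $\Wald(\T)$ is full as a binary subforest, both of which are immediate from the definitions.
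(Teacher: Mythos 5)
Your proof is correct, but it takes a different route from the paper. The paper argues dynamically: it observes that all three quantities vanish for $\T_0$ itself and that each bisection (adding the two children of a leaf to the forest) increments $\#\T-\#\T_0$ and the number of non-leaves by one while adding two nodes to $\Wald(\T)\setminus\T_0$ — an induction over refinement steps, which implicitly uses that every admissible forest is reached from $\T_0$ by successively bisecting leaves. You instead give a static double count on the full binary forest: pairing each non-root node with its unique parent and each non-leaf with its two children gives $\#\bigl(\Wald(\T)\setminus\T_0\bigr)=2N$, and the two partitions $\#\Wald(\T)=R+2N=L+N$ yield $L-R=N$. Your version has the small advantage of not relying (even implicitly) on the buildability of an arbitrary admissible forest by single bisections; its only structural inputs — that $\Wald(\T)$ is a full binary subforest whose roots are exactly $\T_0$ and whose leaves are exactly $\T$ — are already asserted in the paper's definition of $\Wald$, and you correctly flag the one genuine check (each $S\in\T_0$ has a descendant in $\T$ because $\T$ covers $\Omega$). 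The paper's induction, on the other hand, is shorter and matches the refinement-sequence viewpoint used later. Both arguments are sound.
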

\begin{proof}
For the initial triangulation and its forest, all three terms vanish. Bisecting a simplex is equivalent to adding two children of a leaf to the forest. This leaf becomes a non-leaf and there are two new leaves. This bisection increments the cardinality of the triangulation as well as the set of non-leaves in the forest, while the total number of nodes in the forest increases by 2.
\end{proof}

\subsection{
Regular 
triangulations and their characterisation}\label{sec:regular triangulations}
\subsubsection{Regular triangulations}
There is a particular interest in 
regular triangulations\index{regular (triangulation)}\index{conforming} (also known as \emph{conforming triangulations} or \emph{simplicial decompositions}\index{simplicial decomposition}) from admissible simplices. Here, both words \emph{regular} and \emph{triangulation} alike refer to this notion individually.
\begin{defn}\label{def:regular}
[regular, triangulation, admissible triangulation, admissible forest]
A finite set of simplices is 
\emph{regular}%
, if 
the intersection $S\cap T$ of any two of these simplices is a common subsimplex of $S$ and $T$. A regular set of simplices is also called a \emphindex{triangulation}.
A 
triangulation $\T$\index{T@$\T$ -- admissible triangulation} of $\Omega$ into admissible simplices 
is
an \emphindex{admissible triangulation} and the forest of an admissible triangulation is an \emphindex{admissible forest}.
\end{defn}
\begin{bem}
The admissible triangulations are the meshes for the discretisation of the PDEs. Consideration of the corresponding forests will 
develop the structure of this partial ordered set.
\end{bem}

\subsubsection{Characterisation of admissible forests.}
Characterisation of the admissible forests requires some further 
lemmas%
, which are proved in the appendix.

\begin{lemmanonum}{\ref{UntermengeTeilmenge}}
Let $C$ be a convex set with a face $F$. If $T$ is a convex subset of $C$, then $F\cap T$ is a face of $T$.
\end{lemmanonum}
Instead of this lemma, the following corollary is almost exclusively used.
\begin{cornonum}{\ref{NachkommeUntersimplex}}
Suppose $T$ and $S$ are simplices, $T'$ is a descendant of $T$ and $T\cap S$ is a subsimplex of $T$. Then $S\cap T'$ is a subsimplex of $T'$.
\end{cornonum}

Hereafter, some binary relations on $\Simplexe$, denoted by arrows $T\ra S$ will occur, which are vividly worded by “$T$ demands\index{demand} $S$”.
\begin{defn}[closed with respect to a relation]
\index{closed with respect to a relation}
Let $\ra$ be a binary relation on $\Simplexe$. 
A
subset $W\subset \Simplexe$ 
is
\emph{closed} 
with respect to $\ra$ (or briefly $\ra$-closed\index{->-closed@$\ra$-closed}), if 
\begin{align*}
(T\ra S)\Ra (T\in W \Ra S\in W).
\end{align*} 
\end{defn}
\begin{defn}[$\ra$]
Let $\ra$ be the relation on 
$\Simplexe\times\left(\Simplexe{\setminus} \T_0\right)$
defined by
\begin{align*}
T\ra S \text{ (in words: ``$T$ \emph{demands }$S$.") }\quad:\LRq T\cap\parent S\notin\Sub (\parent S).
\end{align*}
(To say it in layman's terms: If and only if $T$ meets $\pa S$ in a \emph{fraction }of a subsimplex of $\pa S$, it demands bisection of $\pa S$, i.e.\ the children of $\pa S$.)
\end{defn}
\begin{thm}[First characterisation of admissible forests]\label{TpaS}
\index{characterisation of admissible forests!first}
A
finite subgraph $W\subset\Simplexe$ containing $\T_0$ is an admissible forest if and only if it is closed w.r.t.\ $\ra$.
\end{thm}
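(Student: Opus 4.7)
The plan is to prove the two implications separately, with the reverse direction being the more routine one.

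\textbf{($\Leftarrow$).} Assume $W \supseteq \T_0$ is finite and $\ra$-closed; the goal is to deduce the three defining conditions of an admissible forest. First, $W$ is closed under parents in $\Simplexe$: if $S \in W$ with $\pa(S) \notin \T_0$, then $S \subset \pa(\pa(S))$ and $S$ carries an extra vertex (the midpoint produced by bisecting $\pa(\pa(S))$) that is not among the $n{+}1$ vertices of $\pa(\pa(S))$; hence $S \notin \Sub(\pa(\pa(S)))$, i.e.\ $S \ra \pa(S)$, and closure yields $\pa(S) \in W$. Second, $W$ is a full subforest: the same vertex-count argument shows a bisection child is never a subsimplex of its own parent, so $S \ra S^s$ for the sibling $S^s$, forcing $S^s \in W$ whenever $S \in W \setminus \T_0$. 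Third, the leaves $\T$ of $W$ form a regular triangulation; coverage and measure-disjointness of $\Omega$ follow from $W$ being a finite full subforest containing $\T_0$, and for regularity I would suppose that two leaves $T_1, T_2 \in \T$ satisfy $T_1 \cap T_2 \notin \Sub(T_2)$, observe that the children of $T_2$ in $\Simplexe$ lie in $\Simplexe \setminus \T_0$ and have $T_2$ as parent so that $T_1 \ra$ (each child of $T_2$), and conclude that both children lie in $W$ --- contradicting $T_2$ being a leaf. Symmetry handles $T_1 \cap T_2 \notin \Sub(T_1)$.

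\textbf{($\Ra$).} Assume $W = \Wald(\T)$ for an admissible triangulation $\T$ and let $T \in W$, $T \ra S$. Argue by contradiction: suppose $S \notin W$. Walk up the ancestor chain of $S$ until it first meets $W$ and call the resulting ancestor $P$; since the child of $P$ towards $S$ is outside $W$, fullness of the admissible forest $W$ forces the other child outside $W$ as well, so $P$ is a leaf of $W$, hence $P \in \T$ with $P \supseteq \pa(S)$. Decompose $T$ into the leaves $T_1^*, \dots, T_k^* \in \T$ contained in it, a nonempty disjoint-in-measure decomposition. Regularity of $\T$ gives $T_i^* \cap P \in \Sub(P)$, and since $\pa(S)$ is a descendant of $P$, Corollary \ref{NachkommeUntersimplex} upgrades this to $T_i^* \cap \pa(S) \in \Sub(\pa(S))$.

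It then remains to collapse $T \cap \pa(S) = \bigcup_i (T_i^* \cap \pa(S))$ to a single subsimplex of $\pa(S)$. Set $V := \bigcup_i \Vertices(T_i^* \cap \pa(S)) \subseteq \Vertices(\pa(S))$. Each piece $T_i^* \cap \pa(S) = \conv(\Vertices(T_i^* \cap \pa(S))) \subseteq \conv(V)$, so $\bigcup_i (T_i^* \cap \pa(S)) \subseteq \conv(V)$; conversely $V \subseteq T \cap \pa(S)$ and $T \cap \pa(S)$ is convex as an intersection of two simplices, so $\conv(V) \subseteq T \cap \pa(S)$. Hence $T \cap \pa(S) = \conv(V) \in \Sub(\pa(S))$, contradicting $T \ra S$.

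The main obstacle is precisely this last collapse: a finite union of faces of a simplex need not be a face, but the convexity of the simplex-intersection $T \cap \pa(S)$ forces the involved vertex sets to jointly span it, so the union is in fact a single face. Everything else is definition-chasing combined with Corollary \ref{NachkommeUntersimplex} and the vertex-count observation that a bisection child is never a subsimplex of its own parent or grandparent.
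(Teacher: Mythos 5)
Your proof is correct and follows the same overall architecture as the paper's: for $(\Leftarrow)$ you show $W$ is parent- and sibling-closed (hence a full subforest rooted at $\T_0$) and that leaves are regular via the contrapositive, and for $(\Ra)$ you walk up from $S$ to the leaf $P$ of $W$ (the paper's $\tilde S$) and push the regularity at $P$ down to $\pa(S)$ via Corollary~\ref{NachkommeUntersimplex}. The one point where you diverge is worth noting: the paper asserts outright that ``the intersection of $\tilde S$ with $T$ would be a subsimplex of $\tilde S$,'' but $T\in W$ need not be a leaf of $\T$, so regularity of $\T$ does not apply directly to the pair $(T,\tilde S)$. You close this gap by decomposing $T$ into the leaves $T_i^*\subseteq T$, applying regularity and Corollary~\ref{NachkommeUntersimplex} leaf-by-leaf, and then using the convexity of $T\cap\pa(S)$ to collapse the union of faces $\bigcup_i(T_i^*\cap\pa(S))$ to a single face $\conv(V)$. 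That collapse argument is exactly what the paper's terse claim secretly requires, so you have actually supplied a more complete proof than the one in the paper rather than a different one.
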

\begin{proof}
\emph{1. The forest $W$ of an admissible triangulation is $\ra$-closed.} Let $T\in W$, $S\in \Simplexe$ with $T\ra S$. If $S$ were not in $W$, there would be a proper ancestor $\tilde S$ of $S$ in the triangulation. But 
the
intersection 
of $\tilde S$
with $T$ would be a subsimplex of $\tilde S$. Hence according to Corollary \ref{NachkommeUntersimplex}, $\parent(S)\cap T$ 
would be a subsimplex of $\parent(S)$, contradicting $T\ra S$.

\emph{2. A finite $\ra$-closed set $W$ containing $\T_0$ is a forest of a 
regular 
triangulation $\T$.} 

\emph{2.1 $W$ is a full binary subforest of $\Simplexe$ with roots $\T_0$.}\\
$T\ra \operatorname{sibling} T$, because $T\cap\underbrace{\pa(\operatorname{sibling} T)}_{\pa T}=T\notin\Sub \pa T.$\\
$T\ra \pa T$, because $T\cap\pa\pa T=T\notin\Sub \pa\pa T$.
Consequently, $W$ is a full binary subforest of $\Simplexe$ and if $T$ is a root of $W$, i.e.\ does not have a parent in $W$, it does not have a parent in $\Simplexe$ either and is an initial simplex.

\emph{2.2 The corresponding set of leaves (partition) is 
regular}, 
because whenever $S\cap T\notin\Sub S$, then $T\ra S'$ for each child $S'$ of $S$ and hence: If $T\in W$, $S$ cannot be a leaf of $W$.
\end{proof}
\begin{bemn}
\begin{itemize}
\item
The proof works also under more general preconditions: Simplices could be replaced by convex polytopes and the initial simplices (and polytopes respectively) even do not need to partition the domain.
\item
On the other hand, the theorem does not show that there are admissible triangulations at all (which are for example finer than a certain simplex). This requires further \emph{initial conditions} 
presented
in 
the next subsection.
\end{itemize}
\end{bemn}
\begin{folg}
[Overlay and underlay]
\label{overlay}
The set of admissible triangulations is a \emphindex{lattice} (in terms of set theory) respecting the finer relation. Concretely, for each two admissible triangulations $\T_1$ and $\T_2$, 
\emph{the leaves of the intersection of the corresponding forests}
are the finest 
common recoarsement (underlay)\index{finest common recoarsement}\index{underlay}
and \emph{the leaves of the union of the corresponding forests}
the coarsest common refinement\index{coarsest common refinement}\index{overlay} (overlay).
\end{folg}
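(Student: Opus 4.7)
The plan is to read off the lattice structure directly from the first characterisation of admissible forests (Theorem \ref{TpaS}) by performing the obvious set-theoretic operations on $W_i:=\Wald(\T_i)$, and then translating the resulting inclusions back to the finer/coarser relation on triangulations via Lemma \ref{feinerObermenge}.

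The first step is to check that both $W_1\cup W_2$ and $W_1\cap W_2$ are again admissible forests. Both are finite subsets of $\Simplexe$ containing $\T_0$, so by Theorem \ref{TpaS} it only remains to verify $\ra$-closedness. This is immediate from the definition: if $T\ra S$ and $T\in W_1\cup W_2$, then $T\in W_i$ for some $i$, hence $S\in W_i\subset W_1\cup W_2$; if $T\in W_1\cap W_2$, then $T\in W_i$ for both $i$, hence $S\in W_i$ for both $i$, so $S\in W_1\cap W_2$. Thus Theorem \ref{TpaS} supplies admissible triangulations $\T_\cup$ and $\T_\cap$ as the leaf sets.

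The second step verifies the two universal properties. For the overlay: from $\Wald(\T_\cup)=W_1\cup W_2\supset W_i=\Wald(\T_i)$, Lemma \ref{feinerObermenge} gives $\T_\cup\leq \T_i$ for $i=1,2$, so $\T_\cup$ is a common refinement. Conversely, any common refinement $\T'$ satisfies $\Wald(\T')\supset W_1$ and $\Wald(\T')\supset W_2$, hence $\Wald(\T')\supset W_1\cup W_2=\Wald(\T_\cup)$, and the lemma yields $\T'\leq\T_\cup$; this exhibits $\T_\cup$ as the coarsest common refinement. For the underlay: the inclusions $\Wald(\T_\cap)=W_1\cap W_2\subset W_i$ give $\T_\cap\geq \T_i$ by the lemma, and for any common recoarsement $\T''$ the inclusions $\Wald(\T'')\subset W_i$ combine to $\Wald(\T'')\subset W_1\cap W_2=\Wald(\T_\cap)$, whence $\T''\geq \T_\cap$, so $\T_\cap$ is finer than every common recoarsement.

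The only conceptual subtlety is keeping the direction of the order straight — a \emph{larger} forest corresponds to a \emph{finer} triangulation — but Lemma \ref{feinerObermenge} handles this bookkeeping once and for all. Consequently there is no substantive obstacle beyond the two lines of $\ra$-closure checking; the corollary is essentially a repackaging of Theorem \ref{TpaS} together with the forest/partition dictionary.
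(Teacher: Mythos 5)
Your proof is correct and follows essentially the same route as the paper: convert triangulations to forests via Lemma \ref{feinerObermenge}, observe that the union/intersection of finite $\ra$-closed sets containing $\T_0$ is again such a set (hence an admissible forest by Theorem \ref{TpaS}), and read off the universal properties from the inclusion order. You merely spell out the $\ra$-closure check and both universal properties in more detail than the paper, which states the intersection case and declares the union case analogous.
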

\begin{proof}
According to Lemma \ref{feinerObermenge}, finer triangulations correspond to greater forests. So the forest of the finest common recoarsement can at most include the intersection $\Wald (\T_1)\cap \Wald (\T_2)$, for being a recoarsement. The intersection of $\ra$-closed finite sets including $\T_0$ is finite, $\ra$-closed and contains $\T_0$, thereby an admissible forest as well. Hence the leaves of this intersection form the finest common recoarsement. 
The argumentation for the coarsest common refinement is analogue.
\end{proof}
\begin{bem}
Together with Theorem \ref{count forest not triangulation}, this implies \cite[Lemma 3.7]{Cascon}. The proof in \cite{Cascon} implicitly assumes that a non-regular partition would contain a non-regular vertex.
\end{bem}
\subsection{
General refineability, compatibility conditions
and 
revision of the characterisation theorem
}\label{sec:IC}

The existence of arbitrarily fine finite triangulation is formulated as \emph{general refineability} in Definition \ref{GR}.
Initial conditions on $\T_0$ are supposed to ensure 
it. 
The following initial conditions appear in this thesis: 
\begin{itemize}
\item
\emph{strong initial conditions} (\hyperlink{SIC}{SIC}, Definition \ref{SIC}), 
\item
the much weaker \emph{restricted T-array coincidence} (\hyperlink{ReTaCo}{ReTaCo}, Definition \ref{def:ReTaCo}) 
and 
\item
\emph{restricted T-array and hyperlevel coincidence} (\hyperlink{ReTaHyCo}{ReTaHyCo}, Definition \ref{def:ReTaHyCo}) which implies 
\item
\emph{isometric coordinate changes} (\hyperlink{IsoCoChange}{IsoCoChange}, Definition \ref{def:IsoCoChange}). 
\end{itemize}
Additionally, pairwise compatibility (\hyperlink{PC}{PC}, Definition \ref{def:PC}), a very weak condition on $\Simplexe$, is used to prove the second characterisation of admissible forests (Theorem \ref{Knotenschritt}).
\subsubsection{General refineability
}

\begin{defn}[tower, $\ra$-closure]
\index{tower}\index{->-closure@$\ra$-closure}
The \emph{\hypertarget{tower}{tower} }of an admissible simplex $S\in\Simplexe$, $\Tu(S)$\index{Tw@$\Tu$ -- tower} 
is the \emph{$\ra$-closure }of 
$S$, 
i.e.\ the smallest $\ra$-closed subgraph of $\Simplexe$ containing $S$.
\end{defn}
\begin{defn}[generally refineable]\label{GR}\index{generally refineable}
If the tower $\Tu(T)$ is finite and therefore an admissible forest for each $T\in \Simplexe$, 
$\Simplexe$
is 
\emph{generally refineable}.
\end{defn}
\begin{bem}
The lattice structure on the admissible triangulations (Corollary \ref{overlay}) directly implies that for any finite subset $M$ of a generally refineable $\Simplexe$, there is a (finite) admissible triangulation finer than $M$, namely the triangulation with the forest $\bigcup_{S\in M} \Tu S$.
\end{bem}

\subsubsection{Strong initial conditions (SIC)}
\begin{defn}[strong initial conditions (\hypertarget{SIC}{SIC})]\label{SIC}
\index{initial conditions!strong}\index{strong initial conditions}\index{SIC -- strong initial conditions}
The \emph{strong initial conditions (
SIC)} are:
\begin{enumerate}
\item
$\T_0$ is 
regular.
\item
All tagged simplices in $\T_0$ are of the same type.
\item
For each 2 tagged
simplices $S$ and $T$ in $\T_0$, there are reference coordinates $\vi_S$ and $\vi_T$ coinciding on $S\cap T$.
\end{enumerate}
\end{defn}
Since 
reference coordinates 
can be reused
for children, the 3rd property is transferred for each \emph{admissible} simplex.
\begin{thm}[SIC allow uniform refinements]\label{thm:uniform refinements}\index{uniform refinements for SIC}
The strong initial conditions imply that also uniform refinements, i.e.\ the set of all level $l$ simplices, fulfil the strong initial conditions.
\end{thm}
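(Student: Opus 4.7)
The plan is to proceed by induction on the refinement level $l$. The base case $l=0$ is precisely the SIC hypothesis on $\T_0$. For the inductive step, assume the level-$l$ uniform refinement $\T_l$ satisfies SIC and derive SIC for $\T_{l+1}$.

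Conditions~(2) and~(3) are quick consequences of the inductive hypothesis. For (2): by induction all simplices of $\T_l$ share one common type $k$, and by Definition~\ref{Maubach} every bisection step either decrements the type by one (if $k\geq 1$) or, in the case $k=0$, transposes and produces children of type $n-1$; either way, all children at level $l+1$ again share one common type. For (3): the remark following the definition of reference coordinates states that reference coordinates of a simplex are also reference coordinates of its children. So for $S' \subset S$ and $T' \subset T$ with $S, T \in \T_l$, the coordinates $\vi_S, \vi_T$ furnished by the inductive hypothesis remain reference coordinates for $S', T'$ and coincide on $S \cap T \supseteq S' \cap T'$.

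The main obstacle is regularity (condition~(1)). Consider distinct $S', T' \in \T_{l+1}$ with parents $S, T \in \T_l$. If $S = T$, then $S'$ and $T'$ are sibling children and meet exactly in the hyperface introduced by the bisection cut, which is a common subsimplex. If $S \neq T$, the inductive regularity yields that $F := S \cap T$ is a common subsimplex of $S$ and $T$, and~(3) supplies reference coordinates $\vi_S, \vi_T$ agreeing on $F$. Passing to reference coordinates, $\tilde S := \vi_S(S)$ and $\tilde T := \vi_T(T)$ are reference simplices (of the same type $k$ by~(2)) sharing the common subsimplex $\tilde F := \vi_S(F) = \vi_T(F)$.

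The problem thus reduces to a purely geometric statement about reference simplices: the one-step bisections of $\tilde S$ and $\tilde T$ induce the same subdivision on their common subsimplex $\tilde F$. I expect to prove this via the cube-theoretic description of Theorem~\ref{Cubetheorem}: the refinement edge of a reference simplex is the segment between the first and last horizontal vertices, hence is determined by the horizontal part and the type alone; since $\tilde S$ and $\tilde T$ have the same type $k$ and their horizontal parts agree on $\tilde F$, the refinement edges intersect $\tilde F$ in the same way, and the bisections induce identical subdivisions on $\tilde F$. The main bookkeeping difficulty will be the case analysis on how many of the distinguished vertices $p_0, p_k$ of each simplex lie in $\tilde F$, using Corollary~\ref{edgelengths} to control edge lengths and the cube picture to control which midpoints lie in $\tilde F$. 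Once this lemma is established, pulling back via $\vi_S^{-1}, \vi_T^{-1}$ shows that $S' \cap T'$ is a common subsimplex of $S'$ and $T'$, completing the induction and yielding SIC for $\T_{l+1}$.
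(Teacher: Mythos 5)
Your proposal is correct and follows essentially the paper's own route: reduce to a single refinement step, dispose of conditions (2) and (3) immediately, and obtain regularity by showing that in coinciding reference coordinates a common edge is the refinement edge of both simplices or of neither, which is exactly the metric characterisation of Corollary~\ref{edgelengths} that the paper invokes. The only cosmetic difference is that the paper outsources your ``bookkeeping'' case analysis of how the cut interacts with the common face to case~2.2 in the proof of Lemma~\ref{PAF}, whereas you sketch it directly.
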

\begin{proof}
It suffices to show one induction step, i.e.\ 
to prove it for the uniform refinement of level 1. The 2nd and the 3rd condition hold obviously. 
For a regular set of simplices $\{S,T\}$, the children of the simplices $S$ and $T$ form a 
regular 
triangulation if 
every common edge is either the refinement edge of both or of neither of them.
(This is explained in detail in the proof of Lemma \ref{PAF} in the case 2.2.) 
If the simultaneous type of the simplices is 0, transpose them all, to have well-defined refinement edges.

According
to Corollary \ref{edgelengths}, the refinement edge of a $(k,h)$-simplex is unique\-ly determined as the preimage of the unique one in the reference simplex with Chebyshev length $2^{-h}$ and Manhattan length $2^{-h}k$. Since the reference coordinates coincide on $S\cap T$, this property is independent of the simplex, so every common edge is either the refinement edge of both or of neither of them.
\end{proof}
\begin{folg}
The SIC imply that the tower of a simplex $S$ cannot contain more simplices than all of level $\leq lS$. Thus, they imply general refineability.
\end{folg}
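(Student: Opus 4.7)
The plan is to invoke Theorem~\ref{thm:uniform refinements} to produce, for each admissible $S$, an explicit finite admissible forest that already contains $S$ and consists only of simplices of level $\leq l(S)$; the tower then sits inside it by minimality.

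More precisely, first I would fix an admissible simplex $S$ and consider the uniform refinement $P$ of level $l(S)$, i.e.\ the set of all admissible simplices of level exactly $l(S)$. By Theorem~\ref{thm:uniform refinements}, the SIC are inherited by $P$, in particular $P$ is regular, and hence $P$ is an admissible triangulation. Its forest $\Wald(P)$ therefore contains $P$ together with all ancestors of these level-$l(S)$ simplices; these ancestors have levels strictly less than $l(S)$, so $\Wald(P)$ is exactly the set of admissible simplices of level $\leq l(S)$. In particular $S\in\Wald(P)$.

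Next, I would apply Theorem~\ref{TpaS}: $\Wald(P)$ is $\ra$-closed and contains $\T_0$. Since the tower $\Tu(S)$ is by definition the smallest $\ra$-closed subgraph of $\Simplexe$ containing $S$, and since $\Wald(P)$ is one such $\ra$-closed set containing $S$, one concludes $\Tu(S)\subseteq \Wald(P)$. Consequently every element of $\Tu(S)$ has level at most $l(S)$, which is the first claim; in particular $\Tu(S)$ is finite.

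Finally, to obtain general refineability it suffices to observe that this argument works for every $S\in\Simplexe$, so every tower is finite, which is the definition of generally refineable in Definition~\ref{GR}. I do not expect any substantial obstacle: the only subtlety is to make sure that $\Wald(P)$ really equals the set of admissible simplices of level $\leq l(S)$ (each such simplex is either in $P$ or a proper ancestor of some element of $P$, the latter because its binary tree, being an infinite perfect binary tree, contains descendants at any prescribed level), but this is immediate from the definitions of level and of the forest of a partition.
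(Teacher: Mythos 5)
Your proof is correct and is exactly the intended argument: the paper states this as a corollary of Theorem~\ref{thm:uniform refinements} without writing it out, and the implicit reasoning is precisely your chain — the level-$l(S)$ uniform refinement is admissible by that theorem, its forest is the set of all simplices of level $\leq l(S)$, this forest is $\ra$-closed by Theorem~\ref{TpaS}, and hence contains the tower $\Tu(S)$ by minimality, giving finiteness and thus general refineability.
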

\begin{lem}[Levels of vertices]\label{eckenlevel}\index{level!of a vertex}\index{l@$l$ -- level!of a vertex}
If all uniform refinements are regular, the assignment $l(\Vnew T):=l(T)$ defines a function (which 
is
also call%
ed 
\emph{level}) on $\Vertices\Simplexe\setminus\Vertices\T_0$.
\end{lem}
\begin{proof}
Any vertex in  $\Vertices\Simplexe\setminus\Vertices\T_0$ is the new vertex of some simplex, so is assigned to at least one number. It remains to prove uniqueness. Let $p=\Vnew(S)=\Vnew(T)$. It 
has to be shown
that $l(T)=l(S)$. Note that $p$ is not a vertex of a proper ancestor of $S$. If for example 
$l(T)< l(S)$, 
then in the uniform refinement of level $l(T)$ $p$ would be a hanging node on the ancestor of $S$ of level $l(T)$.
\end{proof}


\begin{bemn}

In Section \ref{sec:BDV}, SIC are assumed throughout. 
An example for a 3D triangulation, which cannot be fit with T-arrays for each simplex
complying the strong initial conditions is given in \cite[Lemma 1.7.14, p.~26]{Schoen}.
Only for the 2-dimensional case, Biedl, Bose, Demaine and Lubiw showed that 
SIC is realisable effectively \cite{biedl}.

For certain weaker initial conditions, Section \ref{sec:ReTaCo} shows general refineability and the existence of quasi-uniform refinements and Section \ref{sec:IsoCoChange} shows the BDV theorem.

\end{bemn}
The remainder of this subsubsection is devoted to deduce 
the strong initial conditions from those given by Stevenson in
\cite[at the beginning of Section 4 on page 232]{Stevenson}.
\begin{lem}\label{reference parent}
\begin{enumerate}
\item\label{it:reference parent}
Let $S$ be a tagged simplex and $T$ be a reference simplex which is a child of $S$ in the extended binary forest. Then also $S$ is a reference simplex.
\item\label{it:reference coordinates parent}
Reference coordinates for a child of a T-array $S$ are also reference coordinates for $S$ itself.
\end{enumerate}
\end{lem}
\begin{proof}[Proof of \ref{it:reference parent}]
As a reference simplex, $T$ is the child of a reference simplex $S'$. 

\emph{1st case: $t(T)=n$.} There is only one simplex, whose child can be $T$, namely $S=T^T=S'.$

\emph{2nd case: $t(T)<n$.}
As in Lemma \ref{possibleancestorequalhyperlevel} below on page \pageref{possibleancestorequalhyperlevel}, there is an $\|\bullet\|_\infty$-isometry $\phi$ mapping signed canonical unit vectors to each other and the vertices of $S'$ to the vertices of $S$. $\phi$ maps the root $R'$ of $S'$, which is a Kuhn simplex, to a Kuhn simplex $R$, which has $S$ as a descendant. This shows that also $S$ is a reference simplex.
\end{proof}
\begin{proof}[Proof of \ref{it:reference coordinates parent}]
Let $\vi$ be reference coordinates for a child $T$ of $S$. Since $\vi(T)$ is a reference simplex and a child of $\vi(S)$, statement \ref{it:reference parent} shows that $\vi(S)$ is also a reference simplex. Consequently, $\vi$ are also reference coordinates for $S$.
\end{proof}

Stevenson assumes 3 initial conditions: that all initial simplices are of the same type $\gamma$, that the initial partition is conforming (i.e.\ regular) and the so called condition (b), which reads:
``Any two neighbouring\index{neighbour} [i.e.\ meeting in a hyperface] tagged simplices $T=(x_0,\dots,x_n)_\gamma$, $T'=(x'_0,\dots,x'_n)_\gamma$ from $P_0$ \emph{match} in the sense that if $\overline{x_0x_n}$ or $\overline{x'_0x'_n}$ is on $T\cap T'$, then $T$ and $T'$ are reflected neighbours\index{reflected neighbours} [i.e.\ the T-array of either $T$ or $T_R$ coincides with that of $T'$ in all but one position]. Otherwise, the pair of neighbouring children of $T$ and $T'$ are reflected neighbours.''


\begin{lem}\label{reflected neighbours RefCoCo}
For reflected neighbours $T$, $T'$
and given reference coordinates $\vi_T$ for $T$%
, there exists 
reference coordinates
$\vi_{T'}$ for $T'$
coinciding with $\vi_T$ on $T\cap T'$.
\end{lem}
\begin{proof}
Let 
$\begin{pmatrix}
p_0&\dots&p_k\\
	&\vdots\\
	&p_n
\end{pmatrix}\in\left\{T,T_R\right\}$ 
and
$T'=\begin{pmatrix}
p'_0&\dots&p'_k\\
	&\vdots\\
	&p'_n
\end{pmatrix}$ be reflected neighbours, i.e.\ $p_j=p'_j$ for all but one $j\in\{0,\dots,n\}$ which are the vertices $\Vertices T\cap \Vertices T'$.
Choose reference coordinates 
$
\vi_{T'}$ 
with 
$\vi_{T'}(p'_j)=\vi_T(p_j)$ for all $j\in\{0,\dots,n\}$. These reference coordinates coincide on $T\cap T'=\conv\left(\Vertices T\cap \Vertices T'\right)$.
\end{proof}
Since Stevenson postulates an open domain $\Omega$ instead of the polytope $\Omega$ here and uses a slightly different definition of a regular triangulation, the following condition is needed to connect the theory in this thesis here with his one. 
\begin{defn}[hyperface-connected]\index{hyperface-connected}
Two simplices $S,T\in\T$ are \emph{hyperface-connected}, if
there is a sequence $S=S_1,\dots,S_N=T$ such that any two successive simplices 
$S_j,S_{j+1}$ 
of the sequence meet in a hyperface
and the intersection $\bigcap_{j=1}^N S_j$
equals $S\cap T$.
\end{defn} 
In his definition of regularity, Stevenson asks for regularity only of hyperface-connected pairs of simplices.
\begin{lem}
For a hyperface-connected initial partition, Stevenson's initial conditions imply SIC
for all pairs of hyperface-connected simplices.
\end{lem}
\begin{proof}
Since the first and the second SIC are postulated also by Stevenson, only the third SIC, the existence of coinciding reference coordinates has to be concluded yet.
If two neighbouring simplices are reflected neighbours, then Lemma \ref{reflected neighbours RefCoCo} gives reference coordinates coinciding on their intersection. If otherwise their neighbouring children are reflected neighbours, then according to Lemma \ref{reference parent}.\ref{it:reference coordinates parent}, the reference coordinates for these children are also reference coordinates for the neighbouring simplices itself.

Now let $S$ and $T$ be two arbitrary simplices with a sequence $S=S_1$, $\dots$, $S_N=T$ as in the definition of hyperface-connectivity. Let $\vi_{S}$ be reference coordinates for $S$ and for each $j$ choose reference coordinates $\vi_{S_{j+1}}$ coinciding with $\vi_{S_j}$ on $S_j\cap S_{j+1}$. 
Then all $\vi_{S_j}$ coincide on $S_1\cap\dots\cap S_N=S\cap T$.
\end{proof}

\subsubsection{Pairwise compatibility}
Another compatibility condition is formulated, which is even weaker than general refineability. The second characterisation of admissible forests (Theorem \ref{Knotenschritt} below) will be needed for further compatibility conditions later on, so it is proved for the absolutely necessary compatibility condition already here.

A reader who is not interested in this extended generality is recommended to read Lemma \ref{PAF}, its proof for SIC and Corollaries \ref{SubUSub} and \ref{hanging->new} and to continue with Section \ref{Revision of the characterisation theorem}.
\begin{defn}
[rooted path in a forest]
A 
finite or infinite
path $S_0\supset\dots$ 
in the forest of admissible simplices $\Simplexe$
is \emphindex{rooted}, if node $S_0$ is a root (in other words an initial simplex). Without further explanation, $S_1\supset\dots$ denotes an infinite, $S_1\supset\dots\supset S_k$ a finite path.
\end{defn}
\begin{defn}[pairwise compatible]\label{def:PC}
\index{pairwise compatible}\index{PC -- pairwise compatible}
The forest of admissible simplices $\Simplexe$ is \emph{pairwise compatible} (\hypertarget{PC}{PC}), if the following conditions are satisfied:
\begin{enumerate}
\item
The initial partition $\T_0$ is regular.
\item
If for an arbitrary regular pair $\{S,T\}\subset\Simplexe$ the refinement edges $\Eref S$ and $\Eref T$ both lie in $S\cap T$, they coincide.
\item
For any arbitrary edge $e$ in $\Edges(\Simplexe)$, there is no infinite path $S_1\supset\dots$ in the forest $\Simplexe$ with $e$ being an edge of each $S_j$.
\end{enumerate}
\end{defn}
The third condition is satisfied whenever Maubach bisection is used. The first two are implied by SIC as well as by ReTaCo (see Corollary \ref{ReTaCo=>PC} below on page \pageref{ReTaCo=>PC}).
\begin{bem}
Nevertheless, this notion is not restricted to Maubach bisection, but could be used for example also for Longest Edge Bisection\index{Longest Edge Bisection}.
\end{bem}

\begin{lem}
\label{PAF}
Suppose that the forest of admissible simplices $\Simplexe$ is pairwise compatible and let $S_0\supset\dots$, $T_0\supset\dots$ be two rooted paths in 
$\Simplexe$. 
Then there exists a sequence of pairs $(a_0,b_0)
,(a_1,b_1),\dots\in\N^2$ such that:
\begin{itemize}
\item
$\{S_{a_j},T_{b_j}\}$ is regular.
\item
$j\mapsto a_j$ and $j\mapsto b_j$ are both monotonic increasing surjective maps from $\N$ to $\N$.
\end{itemize}
\end{lem}
\begin{proof}[Proof assuming SIC additionally]
The SIC imply the regularity of uniform refinements (Theorem \ref{thm:uniform refinements}), thus $(a_j,b_j):=(j,j)$ fulfil the request.
\end{proof}
\begin{proof}[Proof by mathematical induction]

$\{S_0,T_0\}$ is regular
by definition.
Suppose that $\left\{S_{a_j},T_{b_j}\right\}$ is a regular pair.
It suffices to give $a_k$ and $b_k$ for a few successors $k\in\{j{+}1,\dots,l\}$ with
\begin{align}
a_{j+1}=\dots=a_{l-1}=a_j,\quad
a_l=a_j{+}1\label{eq:asuccessors}
\end{align}
 and
\begin{align}
b_k\in b_{k-1}+\{0,1\} \text{ each}.\label{eq:bsuccessors}
\end{align}
Then it is clear that $(a_j,\dots,a_l)$ and $(b_j,\dots,b_l)$ are monotonic increasing without gaps (i.e.\ $\{a_j,\dots,a_l\}=[a_j,a_l]\cap\N$ and $\{b_j,\dots,b_l\}=[b_j,b_l]\cap\N$) and that the sequence $k\mapsto a_k$ exceeds $a_j$. (The unboundedness of $j\mapsto b_j$ is proved by interchanging $a$ and $b$ in the argumentation.)

Equations \eqref{eq:asuccessors} and \eqref{eq:bsuccessors} are equivalently expressed using less indices as follows: 
\emph{If $\{S_a,T_b\}$ is regular, then there is a sequence of regular pairs $\{S_a,T_{b+1}\},\allowbreak\dots,\allowbreak\{S_a,T_{c-1}\},\allowbreak\{S_{a+1},T_c\}$ with $c\geq b$.}

This latter formulation is going to be proved now.
Let $\Vertices\Eref S_a=
\{p,q\}
$
and $\{q\}:=\Vertices\Eref S_a\setminus \Vertices S_{a+1}$, so
$$S_{a+1}=\conv\left(\left\{\frac{p+q}{2}\right\}\cup\Vertices S_a\setminus\{q\}\right).$$

Starting with $j=b$, let $\{S_a,T_j\}$ be regular. Several cases are distinguished:

1st case: $\Eref S_a$ is not an edge of $T_j$. It follows that $\Vnew S_{a+1}=\frac{p+q}2\notin T_j$. Therefore, 
$$S_{a+1}\cap T_j=\conv(\Vertices S_a\setminus \{q\})\cap T_j=\conv(\Vertices S_a\cap\Vertices T_j\setminus \{q\})\in\Sub S_{a+1}\cap \Sub T_j,
$$ 
hence, $c=j$.

2nd case: $\Eref S_a\in\Edges T_j$.

2.1 $\Eref T_j\notin\Edges S_a$. As above (with $S_a$ and $T_j$ interchanged) this implies regularity of $\{ S_a,T_{j+1}\}$. Then the distinction of cases is repeated with incremented $j$. But according to the third condition of pairwise compatibility, applied to the edge $\Eref S$ and the path $T_j\supset\dotsb$, this case cannot happen arbitrarily often consecutively, but sometime the first case or $\Eref T_j\in \Edges S_a$ will happen.

2.2 $\Eref T_j\in\Edges S_a$. Pairwise compatibility constrains
the refinement edges of $T_j$ and $ S_a$ to coincide. Thus, $\left\{S_{a+1},T_{j+1}\right\}$ is regular, because one of the two following cases arises: Either $T_{j+1}$ is the child of $T_j$ containing $p$, i.e.\ $\conv\left(\left\{\frac{p+q}2\right\}\cup\Vertices T_{j}\setminus\{q\}\right)$. Then the intersection 
$$S_{a+1}\cap T_{j+1}=\conv\left(\left\{\frac{p+q}2\right\}\cup\Vertices( S_a\cup T_j)\setminus \{q\}\right).
$$ 
Or $T_{j+1}$ is the child of $T_j$ containing $q$, i.e.\ $\conv\left(\left\{\frac{p+q}2\right\}\cup\Vertices T_j\setminus\{p\}\right)$ and then $S_{a+1}\cap T_{j+1}=\conv\left(\left\{\frac{p+q}2\right\}\cup\Vertices( S_a\cup T_j)\setminus \{p,q\}\right).$ (There is not even need for an exception, if $S_a=T_j$.)
Therefore, $c=j{+}1$.
\end{proof}

\begin{kor}\label{SubUSub}
For two arbitrary subsimplices $U,V\in \Sub\Simplexe$ of a pairwise compatible forest of admissible simplices $\Simplexe$, it holds that $U\cap V\in\Sub U\cup \Sub V$.
\end{kor}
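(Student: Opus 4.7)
The plan is to reduce the claim to the case where the full-dimensional pair $\{S,T\}$ (with $U\in\Sub S$, $V\in\Sub T$) is regular, via Lemma~\ref{PAF}, and then handle that regular case by a direct vertex-combinatorial computation.

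First I would establish the following sublemma: \emph{if $\{S,T\}$ is a regular pair and $U\in\Sub S$, $V\in\Sub T$, then $U\cap V=\conv(\Vertices U\cap\Vertices V)\in\Sub U\cap\Sub V$.} By regularity, $S\cap T$ is a common subsimplex with vertex set $\Vertices(S\cap T)=\Vertices S\cap\Vertices T$. Since both $U$ and $S\cap T$ are subsimplices of the single simplex $S$, their intersection is $\conv(\Vertices U\cap\Vertices(S\cap T))$; doing the same for $V$ and $S\cap T$ inside $T$, then intersecting the two resulting subsimplices of $S\cap T$, yields $U\cap V=\conv(\Vertices U\cap\Vertices V\cap\Vertices(S\cap T))$. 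Because $\Vertices U\subset\Vertices S$ and $\Vertices V\subset\Vertices T$, the last term drops out, giving the combinatorial formula.

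For the general case, extend $S=S_a$ and $T=T_b$ to infinite rooted paths in $\Simplexe$ (every admissible simplex has an infinite binary tree of descendants) and apply Lemma~\ref{PAF}. Surjectivity of the index maps furnishes indices $j$ with $a_j\ge a$ and $b_j\ge b$, giving a regular pair $\{S',T'\}$ with $S'\subseteq S$ and $T'\subseteq T$. Corollary~\ref{NachkommeUntersimplex} then ensures that $U\cap S'\in\Sub S'$ and $V\cap T'\in\Sub T'$, so the sublemma applies and describes $(U\cap S')\cap(V\cap T')=U\cap V\cap S'\cap T'$ as $\conv(\Vertices(U\cap S')\cap\Vertices(V\cap T'))$.

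The crux — and the step I expect to be the main obstacle — is lifting this conclusion about the piece $U\cap V\cap S'\cap T'$ to the whole of $U\cap V$: the descent only captures the portion of $U\cap V$ lying in $S'\cap T'$. I would overcome this by arranging the descent cleverly, choosing at every stage the child of $S_{a_j}$ and of $T_{b_j}$ that contains a fixed point $p\in U\cap V$, so that $p\in S_{a_j}\cap T_{b_j}$ throughout and the regular-pair description constrains every extreme point of $U\cap V$; alternatively, an induction on $l(S)+l(T)$ with inductive step decomposing $V=(V\cap T_1)\cup(V\cap T_2)$ for a bisection $T=T_1\cup T_2$ (using Corollary~\ref{NachkommeUntersimplex} to check that each piece is a subsimplex of the relevant child) and then gluing the two resulting subsimplex-intersections along the shared hyperface should force $U\cap V$ into $\Sub U\cup\Sub V$.
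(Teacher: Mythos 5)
Your treatment of the regular case is fine (and is essentially the vertex computation the paper also relies on), but the reduction of the general case to it has a genuine gap, and it is exactly the one you flag yourself. By descending along \emph{both} paths until you hit a regular pair $\{S',T'\}$ with $S'\subseteq S$, $T'\subseteq T$, you only learn that $U\cap V\cap S'\cap T'$ is a subsimplex of something, and neither of your two proposed repairs closes the distance to a statement about all of $U\cap V$. Tracking a single point $p\in U\cap V$ through the descent keeps $p$ in $S'\cap T'$ but does not ``constrain every extreme point of $U\cap V$'': the regular pair says nothing about the part of $U\cap V$ outside $S'\cap T'$, and different extreme points may require different (incompatible) descents. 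The alternative induction on $l(S)+l(T)$ is not well-founded as stated, since bisecting $T$ into $T_1,T_2$ produces pairs of \emph{larger} level sum; and even granting some termination device, the gluing step is unjustified: from $U\cap V\cap T_1\in\Sub U\cup\Sub(V\cap T_1)$ and $U\cap V\cap T_2\in\Sub U\cup\Sub(V\cap T_2)$ one cannot in general conclude $U\cap V\in\Sub U\cup\Sub V$, because the pieces may be subsimplices of $V\cap T_i$ spanned by new vertices (e.g.\ the midpoint of an edge of $V$) that are not vertices of $V$, so membership in $\Sub V$ is lost and membership in $\Sub U$ has to be argued separately for each piece.

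The fix is available inside Lemma~\ref{PAF} itself, and it is how the paper argues: use surjectivity to pick an index $j$ with $a_j=l(S)$ (so the regular pair is $\{S,T_{b_j}\}$ with the first component \emph{exactly} $S$) and an index $k$ with $b_k=l(T)$; assuming w.l.o.g.\ $j\leq k$, monotonicity gives $b_j\leq b_k$, i.e.\ $T_{b_j}$ is an \emph{ancestor} of $T$. Then $V\subset T\subset T_{b_j}$ in its entirety, so no localisation problem arises: $U\cap T_{b_j}=\conv\bigl(\Vertices(S\cap T_{b_j})\cap\Vertices U\bigr)$ is a subsimplex, hence a face, of $T_{b_j}$, and Lemma~\ref{UntermengeTeilmenge} applied to this face and the convex subset $V\subset T_{b_j}$ yields that $(U\cap T_{b_j})\cap V=U\cap V$ is a face, i.e.\ a subsimplex, of $V$. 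So the asymmetric use of the two surjectivity statements (match one simplex exactly, let the other partner be an ancestor) is the missing idea; with it, your regular-case computation is not even needed in full strength.
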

\begin{proof}
Let $S,T\in\Simplexe$ with $U\in\Sub S,V\in\Sub T$. Let $S_0\supset\dots\supset S_i=S\supset\dots$ and $T_0\supset\dots\supset T_l=T\supset\dots$ be two rooted paths in $\Simplexe$. 
Lemma \ref{PAF} provides 
pairs $(a_j=i,b_j)$ and $(a_k,b_k=l)$ corresponding to regular pairs $\{S,T_{b_j}\}$ and $\{S_{a_k},T\}$. W.l.o.g.\ assume that $j\leq k$ and deduce from the monotonicity of the sequences that 
$b_j\leq b_k=l$, so $T$ is a descendant and a subset of $T_{b_j}$.

$U\cap T_{b_j}=\conv\left(\Vertices(S\cap T_{b_j})\cap\Vertices U\right)$ is a subsimplex of $T_{b_j}$. Since $V$ is a convex subset of $T$ and $T_{b_j}$, Lemma \ref{UntermengeTeilmenge} implies that also $\left(U\cap T_{b_j}\right)\cap V=U\cap V$ is a face (i.e.\ a subsimplex) of $V$.
\end{proof}
\begin{kor}\label{hanging->new}
Suppose a pairwise compatible forest of admissible simplices $\Simplexe$
, let $p\in\Vertices\Simplexe$ and $p\in S\setminus \Vertices S$. Then there exists a proper descendant $S'$ of $S$ such that $p=\Vnew S'$.
\end{kor}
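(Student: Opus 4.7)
Since $p \in \Vertices \Simplexe$, there exists some $T \in \Simplexe$ with $p \in \Vertices T$. Fix rooted paths $S_0 \supset \dotsb \supset S_i = S$ and $T_0 \supset \dotsb \supset T_l = T$ and extend each to an infinite rooted path, as follows. On the $S$-side, at every step past $S_i$ I pick a child of the current simplex that contains $p$; such a child exists because $p$ lies in the union of the two children of its parent. On the $T$-side, at every step past $T_l$ I pick a child that still has $p$ among its vertices; by Definition \ref{Maubach} each child of a Maubach bisection inherits all but one of the parent's vertices (possibly after a transposition, which leaves the vertex set unchanged), so at least one child retains $p$. Consequently $p \in S_{i+k}$ for every $k \geq 0$ and $p \in \Vertices T_{l+m}$ for every $m \geq 0$.

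Next I apply Lemma \ref{PAF} to these two infinite rooted paths to obtain a sequence of regular pairs $\{S_{a_j}, T_{b_j}\}$ with $j \mapsto a_j$ and $j \mapsto b_j$ both monotonic increasing and surjective onto $\N$. Since the two sequences are monotonic and attain arbitrarily large values, there exists an index $j$ with $a_j \geq i$ and $b_j \geq l$ simultaneously. For such $j$, $S_{a_j}$ is a descendant of $S$ (hence contains $p$), and $p$ is a vertex of $T_{b_j}$. Regularity of the pair forces $S_{a_j} \cap T_{b_j}$ to be a common subsimplex of the two. Since $p$ lies in this intersection and is an extreme point of $T_{b_j}$, it is an extreme point---and therefore a vertex---of the subsimplex $S_{a_j} \cap T_{b_j}$, and thus $p \in \Vertices S_{a_j}$.

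Finally, set $a^* := \min\{a \geq i : p \in \Vertices S_a\}$, a minimum that exists by the previous paragraph. From $p \notin \Vertices S = \Vertices S_i$ it follows that $a^* > i$, and by minimality $p \notin \Vertices S_{a^* - 1}$. Since $S_{a^*}$ is a child of $S_{a^* - 1}$ in the path, the vertices of $S_{a^*}$ are $\Vnew S_{a^*}$ together with all but one of the vertices of $S_{a^* - 1}$; consequently $p = \Vnew S_{a^*}$, and $S' := S_{a^*}$ is the desired proper descendant of $S$. The only delicate point in the plan is to verify that the two path extensions can actually be chosen to preserve, respectively, membership and vertex-hood of $p$; once this is checked, Lemma \ref{PAF} together with the extreme-point characterisation of simplex vertices closes the argument.
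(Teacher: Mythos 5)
Your proof is correct and follows essentially the same strategy as the paper's: extend the $S$-path keeping $p \in S_k$, invoke Lemma~\ref{PAF} to produce a regular pair in which $p$ is simultaneously a point of the $S$-side simplex and a vertex of the $T$-side simplex, conclude $p$ is a vertex of the $S$-side simplex via the extreme-point/subsimplex argument, and then take a minimal index to identify $p$ as a new vertex. The only (cosmetic) differences are that you additionally extend the $T$-path so that $p$ stays a vertex and then pick $j$ with $a_j \geq i$ and $b_j \geq l$, whereas the paper keeps $T$ fixed and uses the surjectivity of $j\mapsto b_j$ to land on $T_{b_j} = T$ exactly; and that you build the lower bound $a^* > i$ directly into the definition of the minimum, whereas the paper takes the unrestricted minimum and then rules out $k \leq j$ by noting that a vertex lying in a child remains a vertex there. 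Both variants are sound and the underlying idea is identical.
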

\begin{proof}
Let $p\in\Vertices T$ for some $T\in\Simplexe$. Let $S_0\supset\dots\supset S_j=S\supset\dots$ be a rooted path in $\Simplexe$, such that all $S_k$ contain the point $p$ (not necessarily as vertex). (From 2 children one can always choose one containing $p$.) 

Then 
Lemma \ref{PAF} states that for some $k\in\N$, the pair $\{S_k,T\}$ is regular and since $p$ is a vertex of $T$, it must be a vertex also of $S_k\cap T$ and hence also of $S_k$. For the minimal $k$ with $p\in \Vertices(S_k)$, even $p=\Vnew S_k$. If $k\leq j$, $p$ would have to be a vertex also of $S_j$ (because vertices of simplices stay vertices during bisection). Therefore $S_k$ is a descendant of $S_j$.
\end{proof}
Another result of PC, being used in Section \ref{sec:IsoCoChange}, shall be sketched yet. In a regular triangulation $\T$ (even in a regular convex decomposition), the set 
$\rel\Int\Sub\T$ of relative interiors of subsimplices of $\T$
is a partition of $\Omega$. These partitions of \emphindex{open subsimplices} form an \emphindex{open ternary forest}: When $S$ is bisected into $S_1$ and $S_2$, $\relint S$ is partitioned into $\relint S_1,\relint S_2$ and $\relint (S_1\cap S_2)$. PC ensures the unique bisection for the intersection of two simplices and thus the same partition into three for common subsimplices of two simplices. 

\begin{lem}
If the admissible forest $\Simplexe$ satisfies PC, then in the open ternary forest described above two admissible open subsimplices are either disjoint or one is an ancestor of the other and includes the latter.
\end{lem}
\begin{proof}[Sketch of a proof]
The proof resembles the proof of Lemma \ref{binary tree well-defined} with $\bullet\cap\bullet=\leer$ instead of $\lvert{\bullet\cap\bullet}\rvert=0$ and $\bullet\subsetneq\bullet$ instead of $\lvert\bullet\rvert\leq \frac12\lvert\bullet\rvert$: Roots are disjoint, children are included in parents, siblings are disjoint. Any two admissible simplices either have ancestors which are distinct roots or siblings or one is the ancestor of the other.
\end{proof}
\begin{kor}\label{intersectancestor}
If two admissible subsimplices intersect within their relative interior, the relative interior of one is an ancestor of the relative interior of the other in the open ternary forest and the former includes the latter.
\end{kor}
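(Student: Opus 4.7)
The plan is to extract the corollary as an almost immediate consequence of the preceding lemma on the open ternary forest. The hypothesis that the two admissible subsimplices $U, V$ intersect within their relative interior translates directly into $\relint U \cap \relint V \neq \emptyset$; this rules out the first alternative of the dichotomy (disjointness) offered by the lemma, so the second alternative must hold: one of $\relint U$, $\relint V$ is an ancestor of the other in the open ternary forest, and the ancestor includes the descendant as a set.

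To make the translation tight, I would first recall what "the relative interior of one is an ancestor of the other" actually means in the ternary forest: the roots are the relative interiors of the simplices of $\T_0$, and the children of $\relint S$ are $\relint S_1$, $\relint S_2$, $\relint(S_1\cap S_2)$, where $S_1,S_2$ are the Maubach children of $S$. Pairwise compatibility (PC) is exactly what guarantees that this tree is well-defined on $\Sub\Simplexe$, independently of which admissible simplex in $\Simplexe$ has $U$ (resp.\ $V$) as a subsimplex; without PC, two different ``parents'' of a subsimplex might bisect its common edge differently and produce incompatible children.

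With the forest well-defined, the preceding lemma is the content of the dichotomy, and the corollary is essentially a contrapositive reformulation: ``not disjoint'' $\Longrightarrow$ ``one is an ancestor of the other and contains it.'' The only mild subtlety I anticipate is to be careful that ``intersect within their relative interior'' is read as $\relint U \cap \relint V \neq \emptyset$ and not as the weaker $U \cap V$ meeting $\relint U$ or $\relint V$; under the stronger (and intended) reading, the conclusion follows immediately.

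I do not expect any serious obstacle here: the real work has already been done in the proof sketch of the preceding lemma (mimicking Lemma \ref{binary tree well-defined} with $\subsetneq$ in place of $\lvert\cdot\rvert \leq \tfrac12\lvert\cdot\rvert$). The corollary is just the user-friendly form of that dichotomy that Section \ref{sec:IsoCoChange} will repeatedly invoke, so the proof can be a one- or two-line citation of the lemma.
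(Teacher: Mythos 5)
Your proposal matches the paper exactly: the corollary is stated there without its own proof, being the immediate contrapositive reading of the preceding dichotomy lemma (disjoint, or one an ancestor of and including the other), and your observation that a non-empty intersection of the relative interiors rules out the first alternative is precisely the intended one- or two-line argument.
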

\begin{lem}\label{lem:midinjective}
The function $\mid: \Edges\Simplexe \ra \Omega$\index{mid@$\mid$} mapping an edge to its mid is injective.
\end{lem}
\begin{proof}
Let $e,f$ be two admissible edges with common centre. Corollary \ref{intersectancestor} 
implies that one is an ancestor of the other, say $e$ is an ancestor of $f$. But the only descendant of $e$ with the same centre as $e$ is $e$ itself, because $e$ is bisected at its centre.
\end{proof}

\subsubsection{Revision of the characterisation theorem}\label{Revision of the characterisation theorem}
Other demand relations are going to replace $\ra$ in the characterisation of admissible forests to provide insight in the geometric shape of towers.
\begin{defn}[$\ar{0}, \ar{1},\ar{01},\ar\Vertices,\ar\Edges$]\label{ar}
\index{->0@$\ar{0}$}\index{->1@$\ar{1}$}\index{->01@$\ar{01}$}\index{->V@$\ar\Vertices$}\index{->E@$\ar\Edges$}
$\ar{0}, \ar{1}$ and $\ar{01}$ are the relations
on $\Simplexe\setminus\T_0$ defined by
\begin{align*}
T\ar{0} S\quad&:\LRq \Vnew S =\Vnew T\\
T\ar{1} S\quad&:\LRq \Vnew S =\Vnew \parent T\\
T\ar{01} S\quad&:\LRq T\ar{0} S \text{ or } T\ar{1} S,
\end{align*}
$\ar{\Vertices}$ is the relation on $\Vertices(\Simplexe)\setminus\Vertices(\T_0)$ with
\begin{align*}
q\ar\Vertices p \quad:\LRq \ex T\in\Simplexe \text{ with }q=\Vnew T,p=\Vnew\parent T
\end{align*}
and $\ar\Edges$ is the relation on $\Edges(\Simplexe)$ with 
\begin{align*}
f\ar\Edges e\quad:\LRq \ex S\in\Simplexe\text{ with }f=\Eref S, e=\Eref\pa S.
\end{align*}
Recall that in the binary tree, type 0 T-arrays, too, get a refinement edge. $\ar\Edges$ is only used in Section \ref{sec:IsoCoChange}.
\end{defn}

\begin{bemn}
\begin{enumerate}
\item
$\ar0$ is an equivalence relation.
\item\label{it:-> and levels}
According to Lemma \ref{eckenlevel}, \hyperlink{SIC}{SIC} implies the properties 
\begin{align*}
T\ar0 S&\quad\Rq l(S)=l(T)\\
T\ar1 S&\quad\Rq l(S)=l(T)-1\\
q\ar\Vertices p&\quad\Rq l(p)=l(q)-1.
\end{align*}
That is the reason to write $T\ar{1} S$ contrary to alphabetical order.
\end{enumerate}
\end{bemn}


\begin{lem}\label{ra-closed is ar01-closed}
Suppose 
\hyperlink{PC}{PC} for $\Simplexe$
and $\dim\Simplexe=\{n\}$ with $n\geq 2$. 
Then the following statements hold true
\begin{enumerate}
\item\label{it:01_lem}
\begin{enumerate}
\item\label{it:Hauptaussage_lem}
A set $W\subset \Simplexe$ of admissible simplices
is closed w.r.t.\ $\ra$ if and only if it is closed w.r.t.\ $\ar{01}$.

\item\label{it:Kette_lem}
Moreover, for each chain 
$S_1\ar{01}\cdots\ar{01} S_m$, 
there is a subchain $S_1\ar{0}S_{i_1}\ar{1}S_{i_2}\ar{1}\cdots\ar{1}S_{i_k}\ar{1}S_m$.
\end{enumerate}
\item Alternative 
formulations: The sets closed w.r.t.\ $\ra$
are the sets 
\begin{enumerate}
\item\label{it:Nnew_lem}
$
\Vnew\inv V$, where 
sets of admissible vertices
which are closed w.r.t.\ $\ar\Vertices$
have to be inserted for $V$%
.
\item\label{it:Eref_lem}
$
\children(\Eref\inv(E))$, where finite sets of admissible edges which are closed w.r.t.\ $\ar\Edges$ have to be inserted for $E$.
\end{enumerate}
\end{enumerate}
\end{lem}
\begin{bem}
Interpreting $T\ar{01}S$ as “$T$ demands $S$”,
a chain like in 
\ref{it:Kette_lem} is something like an “argumentation”:
The tower $\Tu T$, the $\ar{01}$ closure of $T$, is the set
\begin{align*}
\left\{S~\middle|~\ex\,\text{chain } T\ar{01}\cdots\ar{01}S\right\}\stackrel{\ref{it:Kette_lem}}{=}\left\{S~\middle|~\ex\,\text{chain } T\ar{0}\cdots\ar{1}S\right\}.
\end{align*}
\end{bem}

\begin{proof}
Since $n\geq 2$, $\Vnew(\parent T)$ is always a vertex of $T$.

\emph{Proof of \ref{it:Hauptaussage_lem}.}
Let $W$ be closed w.r.t.\ $\ra$. To prove that $W$ is also closed w.r.t.\ $\ar{01} 
$, let $T\in W$ and 
$T\ar{01} S$. 
To prove $S\in W$, it suffices to show 
$T\ra S$. 
If
$T\ar{0} S$ or $T\ar1 S$, 
then the vertex $p=\Vnew S$ must be a vertex also of 
$T$, and is not a vertex of the parent of $S$. Consequently, $\parent( S)\cap T$ is a 
set with extreme point $p\in\Vertices(T)$, so it cannot be a subsimplex of $\parent(S)$. That is $T\ra S$ by definition.

Conversely, let $W$ be an $\ar{01}$-closed set. 
To
show that $W$ is also $\ra$-closed, 
let $T\in W$ and $T\ra S$, so $S\notin\T_0$. It suffices to show $S\in W$. 
A chain $T\ar{01}S_1\ar{01}\cdots\ar{01}S_n\ar{01} S$ is wanted. By definition, $T\ar1\parent(T)$, if $\parent(T)\notin\T_0$. Hence a good ansatz is a chain $T=\pa^0 T\ar1\cdots\ar1\pa^k T\ar0 \hat S\ar1\cdots\ar1\pa^j \hat S=S$ and 
an ancestor 
of $T$ and a descendant 
of $S$ with common new vertex 
is needed
yet. 
Recall that $T\ra S$ means that $U:=T\cap\parent(S)\notin \Sub(\parent S)$. 
According to Corollary $\ref{SubUSub}$, $U$ is a subsimplex of $T$.
Not all vertices of $U$ can be vertices of $\parent(S)$, otherwise $U$ would be a subsimplex of $\parent(S)$, say $p\in \Vertices(U)\setminus\Vertices(\parent S)$. 
According to Corollary \ref{hanging->new}, there is a proper descendant of $\pa S$, i.e.\ a descendant of $S$ with new vertex $p$, and $p\notin\Vertices(\T_0)$. 
$U$ is a subsimplex of $T$, i.e.\ $p\in\Vertices U\subset \Vertices T$, so there exists an ancestor of $T$ with new vertex $p$. 
\qed

\emph{On \ref{it:Kette_lem}.} $S_1\ar0 S_2\ar0 S_3$ means $\Vnew S_1=\Vnew S_2=\Vnew S_3$ and implies $S_1\ar0 S_3$. $S_1\ar1 S_2\ar0 S_3$ means $\Vnew \parent S_1=\Vnew S_2=\Vnew S_3$ and implies the contraction $S_1\ar1 S_3$. Thus, all $\ar0$ not leading a chain $p_1\ar{01}\cdots\ar{01}p_m$ can be eliminated from it preserving a valid chain from $p_1$ to $p_m$.

\emph{\ref{it:Nnew_lem}}\ 
follows from
\ref{01}
solely by set operations. Note that preimages of $\Vnew$ are always $\ar0$-closed.\qed

\emph{On \ref{it:Eref_lem}.} 
Since $\mid\circ\Eref\circ \pa=\Vnew$ and according 
to Lemma \ref{lem:midinjective}, a $\ar\Edges$-closed set $E$ is in bijection with an $\ar\Vertices$-closed set $V$ of vertices by the function $\mid$. 
So
again 
mere set operations provide a robust solution for turning
\ref{it:Nnew_lem} into \ref{it:Eref_lem}.
\end{proof}

\begin{thm}[Second characterisation of admissible forests]\label{Knotenschritt}
\index{characterisation of admissible forests!second}
Suppose 
\hyperlink{PC}{PC} for $\Simplexe$
and $\dim\Simplexe=\{n\}$ with $n\geq 2$. 
Then the following statements hold true.
\begin{enumerate}
\item\label{01}
A subset $W$ of $\Simplexe$ is an admissible forest if and only if 
\begin{itemize}
\item
$W$ is finite,
\item
$\T_0\subset W$,
\item 
$W$ is closed w.r.t.\ $\ar{01}$.
\end{itemize}

%
\item\label{it:Nnew}
The
admissible forests 
are the sets $\T_0\cup \Vnew\inv V$, where finite 
sets of admissible vertices
which are closed w.r.t.\ $\ar\Vertices$
have to be inserted for $V$%
.
\item\label{it:Eref}
The admissible forests are the sets ${\T_0\cup \children(\Eref\inv(E))}$, where finite sets of admissible edges which are closed w.r.t.\ $\ar\Edges$ have to be inserted for $E$.
\end{enumerate}
\end{thm}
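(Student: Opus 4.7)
My plan is to derive all three statements as direct reformulations of the first characterisation (Theorem \ref{TpaS}) using the equivalences established in Lemma \ref{ra-closed is ar01-closed}. Theorem \ref{TpaS} already says that $W$ is an admissible forest precisely when $W$ is a finite $\ra$-closed subset of $\Simplexe$ containing $\T_0$. So the task reduces to rewriting the $\ra$-closure condition in the three ways claimed here. Both hypotheses—\hyperlink{PC}{PC} and $\dim\Simplexe = \{n\}$ with $n \geq 2$—are exactly what Lemma \ref{ra-closed is ar01-closed} requires, so its conclusions are available without further work.

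For part \ref{01}, I would simply substitute: Lemma \ref{ra-closed is ar01-closed}(\ref{it:Hauptaussage_lem}) shows that $\ra$-closure and $\ar{01}$-closure coincide on subsets of $\Simplexe$, and the assertion follows verbatim. For parts \ref{it:Nnew} and \ref{it:Eref}, I would first note that $\ar{01}$ is defined on $\Simplexe \setminus \T_0$ and initial simplices never appear on the left of any $\ar{01}$ relation; hence a set $W$ with $\T_0 \subset W$ is $\ar{01}$-closed if and only if $W \setminus \T_0$ is. Applying Lemma \ref{ra-closed is ar01-closed}(\ref{it:Nnew_lem}) then writes $W \setminus \T_0 = \Vnew^{-1}(V)$ for a $\ar{\Vertices}$-closed vertex set $V$, giving $W = \T_0 \cup \Vnew^{-1}(V)$; applying \ref{it:Eref_lem} instead gives $W = \T_0 \cup \children(\Eref^{-1}(E))$ for a $\ar{\Edges}$-closed edge set $E$.

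The one point that needs a separate argument is the correspondence between finiteness of $W$ and finiteness of $V$ or $E$. If $W$ is finite, then $V := \Vnew(W \setminus \T_0)$ and $E := \Eref(\pa(W \setminus \T_0))$ are trivially finite. Conversely, to show that finite $V$ yields finite $\Vnew^{-1}(V)$, I would prove that for each admissible vertex $p$ the fibre $\Vnew^{-1}(p)$ is finite: every such simplex is the child of a parent whose refinement edge has midpoint $p$, this edge is uniquely determined by Lemma \ref{lem:midinjective}, and only finitely many admissible simplices can carry a prescribed edge as refinement edge—for otherwise one would obtain an infinite rooted path with that edge present in every member, violating the third axiom of \hyperlink{PC}{PC}. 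An analogous argument bounds $\children(\Eref^{-1}(e))$ for each edge $e$, so finite $E$ produces a finite forest as well.

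I do not expect any genuine obstacle. The theorem is essentially a bookkeeping consolidation of Theorem \ref{TpaS}, Lemma \ref{ra-closed is ar01-closed} and Lemma \ref{lem:midinjective}; all the delicate combinatorial and geometric content—regularity of pairwise intersections, the construction of the chain $T \ar{0}\cdots\ar{1} S$, and the injectivity of $\mid$—has already been handled in those earlier statements.
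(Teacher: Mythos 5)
Your core argument is the same as the paper's: the paper proves all three items by exactly the reduction you describe, namely Theorem \ref{TpaS} combined with Lemma \ref{ra-closed is ar01-closed}, so for the main content your proposal is correct and matches. You go beyond the paper in one respect: you notice that the converse direction of \ref{it:Nnew} and \ref{it:Eref} needs the fibres $\Vnew\inv(p)$ and $\children(\Eref\inv(e))$ to be finite, a point the paper passes over silently. Flagging this is good; however, your justification of fibre-finiteness does not work as stated.

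You argue that infinitely many admissible simplices with refinement edge $e$ would produce an infinite rooted path with $e$ an edge of every member, contradicting the third axiom of PC. This inference fails: the simplices having $e$ as an edge are not connected in the forest, because a common ancestor of two of them need not have $e$ as an edge (an endpoint of $e$ may be a vertex created only further down, as with the edge joining a new vertex to an opposite old vertex in the 2D bisection). Moreover, two distinct simplices with $\Eref S=e$ are never nested (bisecting $e$ removes one of its endpoints from each child), so they form an antichain, and no path through infinitely many of them exists at all; the infinite path König's lemma would give you consists of ancestors that merely \emph{contain} $e$ as a subset, which PC(3) says nothing about. Trying to repair this by splitting the set $\{S : e\in\Edges S\}$ into components rooted at simplices with new vertex an endpoint of $e$ leads back to the finiteness of $\Vnew\inv$-fibres, i.e.\ is circular. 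The clean fix uses the geometry of the Maubach forest rather than PC(3): by Corollary \ref{similarity classes} (or Lemma \ref{lem:dD}) there are only finitely many similarity classes, so within each of the finitely many trees the diameters at level $l$ tend to $0$ as $l\to\infty$; hence only finitely many admissible simplices can contain the fixed segment $e$ at all, so $\Eref\inv(e)$ is finite, and with Lemma \ref{lem:midinjective} the fibre $\Vnew\inv(\operatorname{mid}e)$ is finite as well. With that substitution your finiteness bookkeeping, and the rest of the proof, is sound.
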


\begin{proof}
Since $n\geq 2$, $\Vnew(\parent T)$ is always a vertex of $T$.

\emph{Proof of \ref{01}.} 
It suffices to show that a subset $W\subset \Simplexe$ 
fulfils the 3 conditions of this theorem if and only if it 
fulfils the 3 conditions from the first 
characterisation of admissible forests (Theorem \ref{TpaS} on page \pageref{TpaS}).
This is done by Lemma \ref{ra-closed is ar01-closed}.
\qed

\emph{\ref{it:Nnew} and \ref{it:Eref}} follow directly from Lemma \ref{ra-closed is ar01-closed}.
\end{proof}

\subsection
{
The refine function
and the languages of partitions and forests}\label{sec:refine and languages}
\begin{defn}[$\refine(\T,S)$]\index{refine(T,S)@$\refine(\T,S)$}
Let $\Simplexe$ be generally refineable. $\refine(\T,S)$ is the coarsest refinement of $\T$ strictly finer than $S\in\Simplexe$.
\end{defn}
The next lemma ensures that there exists a coarsest one among these refinements.
\begin{lem}\label{refineforest}
Let $\T$ be generally refineable. If the simplices $S_1,S_2$ are the children of $S$, then the towers $\Tu(S_1)$ and $\Tu(S_2)$ coincide and $\Wald(\refine(\T,S))=\Wald(\T)\cup\Tu(S_1)$.
\end{lem}
\begin{proof}
%
$S_1\ra S_2
\ra S_1$ causes 
the towers of $S_1$ and $S_2$ to coincide.
Thus, it is easy to recognise that $\Wald \T \cup \Tu S_1 \cup \Tu S_2=\Wald \T \cup \Tu S_1$ is the smallest $\ra$-closed superset of $\Wald\T\cup\{S_1,S_2\}$.
\end{proof}
\begin{bem}\label{refine algorithm}
Algorithms for $\refine(\T,S)$ are well known. Kossaczký \cite[Algorithm 4 on page 283]{Kossaczky} presented a suitable one for forests, where simplices $\ar1$-demand only lower level simplices (as it is the case for SIC, see Remark \ref{it:-> and levels} on Definition \ref{ar}). For general refineability, the algorithm of Arnold, Mukherjee and Pouly \cite[Algorithm on page 6]{Arnold} finds the coarsest refinement of $\T$ which is strictly finer than $S$ reliably. Some authors seem to believe that a simplicial partition without hanging nodes were necessarily regular. On the other hand, if the admissible simplices are pairwise compatible, the absence of hanging nodes is indeed equivalent to regularity and almost every serious bisection algorithm is pairwise compatible.
\end{bem}
\subsubsection{Summary of the setup: the languages of partitions and forests}
There is
\begin{itemize}
\item
an initial triangulation $\T_0$,
\item
a bisection rule, defining a forest of admissible triangulations $\Simplexe$,
\item
the relations $\ar{0}$ and $\ar1$, determining the admissible forests and towers.
\end{itemize}
In the next section, most of the 
further structures as $T$-arrays, types, hyperlevels, reference coordinates, initial conditions, the definition of regular triangulations and others are not used anymore.

Up to here, all the concepts for partitions are converted into concepts for the corresponding forests, which do not refer to the partitions. 
Table \ref{tab:conversion table} 
sums up these conversions.
\begin{table}[ht]
\bgroup
\def\arraystretch{1.5}
\begin{tabular}{p{4.5 cm}|c|p{4.2 cm}}
\emph{language of partitions}		&\emph{conversion}							&\emph{language of forests}\\
\hline
						&$\xrightarrow[\text{(add all ancestors)}]{\Wald }$&\\
						&$\xleftarrow{\text{leaves}}$&\\
\hline
covers $\Omega$					&							&includes $\T_0$\\
\hline
finer							&							&	$\supset$\\
\hline
overlay (coarsest common refinement)
&							&	$\cup$\\
\hline
regular							&							&	$\ar{01}$-closed\\
\hline
$\#\T-\#\T_0$				&$\stackrel{\cdot 2}{\longrightarrow}	$ (Lemma \ref{count forest not triangulation})	&$\#\left(\Wald(\T)\setminus\T_0\right)$\\
\hline
$\refine(\T,S)$ (coarsest re\-fine\-ment of $\T$
strictly finer than $S$)
&	&	
$\Wald(\T)\cup \Tu(\children(S))$ 
\end{tabular}
\egroup
\caption{Conversion of the partition language into the forest language}
\label{tab:conversion table}
\index{conversion table}
\end{table}
%
\newpage
\section{The (classical) theorem of Binev--Dahmen--DeVore 
}\label{sec:BDV}
\subsection{Introduction}
The adaptive finite element method is the following algorithm: Start with an initial (coarse) triangulation $\T_0$. 
Beginning
with $j=0$, repeat \emph{rounds} consisting of:
\begin{itemize}
\item
Solve the PDE discretised on $\T_j$. An error estimator marks elements, whose bisection promises a ponderable improvement of the error.
\item
Let $\T_{j+1}$ be the coarsest refinement of $\T_j$ strictly finer than the marked triangles.
\end{itemize}

For this setup, in 2004 Binev, Dahmen and DeVore proved the following theorem  (Theorem 2.4 in \cite{BDV}). 
\begin{thm}
[BDV theorem for \hyperlink{SIC}{SIC}, triangulation formulation]\label{thm:BDV}
\index{BDV theorem!triangulation formulation} 
\index{BDV theorem!SIC} 
Suppose SIC. Then there is a constant $C$
\index{C@$C$|see {BDV constant}}%
, depending only on $\T_0$, such that whichever elements are marked, 
the total number of simplices in the triangulation 
is bounded
by $\#\T_0$ plus $C$ times the total number of marked ones (in all rounds together).
\end{thm}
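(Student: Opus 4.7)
My plan is to work in the forest language summarised in Table~\ref{tab:conversion table} and reduce the theorem to a charging argument on vertices. First, Lemma~\ref{count forest not triangulation} lets me replace the target by $\#(\Wald(\T_k)\setminus\T_0)\leq 2CM$, where $M$ is the total number of marks in all rounds. Iterating Lemma~\ref{refineforest}, the final forest splits as
\[
\Wald(\T_k)=\T_0\cup\bigcup_{i=1}^M\Tu(\children(S_i)),
\]
so the content of the theorem is that this union has cardinality linear in $M$, even though a single tower $\Tu(\children(S_i))$ may be arbitrarily large.

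Next I would pass from simplices to vertices. Under SIC one has PC (see the remark following Definition~\ref{def:PC}), so the vertex version of the characterisation (Theorem~\ref{Knotenschritt}) identifies admissible forests $W$ bijectively with finite $\ar\Vertices$-closed vertex sets $V=\Vnew(W\setminus\T_0)$, and $\#(W\setminus\T_0)=\sum_{p\in V}d(p)$ with $d(p):=\#\Vnew^{-1}(p)$. By Theorem~\ref{thm:uniform refinements}, every simplex with new vertex $p$ lies in the regular uniform refinement of level $l(p)$, and Corollary~\ref{similarity classes} forces finitely many similarity classes, so $d(p)\leq B$ for a constant $B=B(\T_0)$ bounding patch cardinalities. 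It therefore suffices to prove $\#V\leq C''M$ for the $\ar\Vertices$-closure $V$ of the set $\{m_i\}_{i=1}^M$ of midpoints of the refinement edges of the marked simplices.

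The core of the proof is then a charging/amortisation argument on this $\ar\Vertices$-closure. The plan is to give each marked vertex $m_i$ a unit budget and design a distribution rule along the downward $\ar\Vertices$-chains so that (a) every vertex of $V$ receives a uniformly positive share of some $m_i$'s budget, while (b) the total budget consumed per marking is bounded by a constant depending only on $\T_0$. Along $\ar\Vertices$ the level drops by exactly one and the branching is bounded by $B$, so a purely level-based weight $\gamma^{l(p)}$ is borderline; the trick, following BDV in spirit, is to tie the weight to the geometric scale $2^{-l(p)}$ of admissible vertices (available through Corollary~\ref{edgelengths}), so that the spatial \emph{overlap} of the chains emanating from different markings, and not just their combinatorial depth, controls the final count.

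The main obstacle is exactly this balance between (a) and (b): condition (a) forces the distribution to spread slowly down chains, whereas (b) demands rapid geometric decay, and both rates are governed by the same branching constant $B$. The classical BDV resolution assigns dollars by geometric neighbourhood, exploiting that two markings whose closures meet must be spatially close at the scale where the meeting occurs. A tighter bookkeeping --- working more intrinsically with the $\ar\Vertices$-DAG and the Chebyshev scales of Corollary~\ref{edgelengths} rather than with spatial balls --- is where the ``better bound in any dimension'' promised in the abstract should come from, and where I expect the main technical effort to be, including the careful tracking of the factors $d(p)$, $B$, and the weight-decay ratio that together determine the final explicit constant $C=C(\T_0)$.
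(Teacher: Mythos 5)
Your reduction steps are sound and match the paper's setup: the passage to the forest formulation, the decomposition $\Wald(\T_N)=\T_0\cup\bigcup_i\Tu(\children(S_i))$ via Lemma~\ref{refineforest}, the identification of the forest with the $\ar\Vertices$-closure $V$ of the marked midpoints through Theorem~\ref{Knotenschritt}, and the bound $\#\Vnew^{-1}(p)\leq B(\T_0)$ are all legitimate. But the proof stops exactly where the theorem begins. The entire content of BDV is that $\#V\leq C''M$ even though a single tower can be arbitrarily deep (this is the point of Figure~\ref{Treppe}); your text gives the charging scheme only as a plan: no distribution rule is defined, no weight is fixed, and condition (a) --- every vertex of $V$ receives a uniformly positive share --- is never verified against condition (b) --- bounded budget per mark. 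You yourself name this tension as ``the main obstacle'' and defer its resolution to ``the main technical effort''. The decisive geometric input, namely that an $\ar\Vertices$-chain from a mark down to a vertex at level $k$ forces spatial proximity at scale comparable to $2^{-k/n}$, so that only boundedly many marks can be charged for a given vertex at comparable scales, is invoked only in spirit and never quantified; as written, the estimate $\#V\leq C''M$ is unproven, and with it the theorem.

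For comparison, the paper resolves this step without any dollar distribution at all: it embeds $\iota S=S\times\{lS\}$ into $\R^n\times\N$, dominates the counting measure by a weighted Lebesgue measure $\mu$ using Lemma~\ref{lem:dD}, over-estimates each tower $\Tu T$ by an $\ar{\est}$-closure $\tu T$ consisting of nested Euclidean balls of radii $D_{lS}+\dots+D_k$, and then telescopes: since $\pa T_j$ already belongs to the current forest, $\mu(w_{j+1}\setminus w_j)\leq \mu(\tu T_j)-\mu(\tu\pa T_j)=f(lT_j{-}1)\leq 2C$, because estimated tower layers depend only on the level difference. If you wish to complete your vertex-based route instead, you must supply an argument of exactly this quantitative overlap/telescoping type (or carry out the classical BDV two-index weight construction in full); until then there is a genuine gap at the heart of the proof.
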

It is going to be reformulated in the language of forests
. Since 
only the triangulations interests
here, 
solution and estimation are ignored and 
the marking 
is esteemed
as a black box marking any number of simplices. Additionally, 
%
how many times the loop is repeated, 
is not important, but only 
the maximal possible total number of simplices in 
the 
triangulation in dependence on the total number of marked simplices. Thus, 
assume 
that only one simplex is marked each round -- without loss of generality: Instead of marking several ones in one round, the same simplices could be marked one after the other, only one per iteration, omitting 
these which have already been refined meanwhile.


This is formalised as follows.
\begin{defn}[refinement sequence]
\index{refinement sequence!of forests}\index{refinement sequence!of triangulations}
A \emph{refinement sequence of triangulations} is a sequence $\T_0,\dots,\T_N$
such that for any $j=0,\dots,N{-}1$, there is a simplex $S_j\in\T_j$ with $\T_{j+1}=\refine(\T_j,S_j)$. 
The sequence of corresponding forests $W_0=\Wald \T_0=\T_0,W_1=\Wald \T_1,\dots,W_N=\Wald \T_N$ is a \emph{\hypertarget{refinement sequence of admissible forests}{refinement sequence of admissible forests}}. 
\end{defn}
\begin{bem}
For a refinement sequence of admissible forests $W_0
\subset\dots\subset W_N
$,
$W_{j+1}$ is the smallest set of admissible simplices including $W_j$ and $\children(S_j)$. 
According to Lemma \ref{refineforest}, this is $W_j\cup \Tu(T_j)$ for $T_j$ being a child of $S_j$. Consequently, refinement sequences of admissible forests are characterised by
\begin{align}\label{refseqfor}
\fa j=0,\dots,N{-}1\,\ex T_j\in\children(\leaves W_{j}).W_{j+1}=W_j\cup \Tu T_j.
\end{align}
\end{bem}
\begin{lem}[Forest formulation of the BDV theorem]\label{forest formulation}
\index{forest formulation!of BDV theorem}\index{BDV theorem!forest formulation}
According to the conversion Table \ref{tab:conversion table}, the Binev--Dahmen--DeVore statement converts (for the same $\T_0$ and $C$) into: For every refinement sequence of forests $W_0=\T_0,W_1,\dots,W_N$, it holds 
\begin{align*}
\#\left(W_N\setminus \T_0\right)\leq 2C\cdot N.
\end{align*}
\end{lem}

To introduce a new proof of the theorem, 
the analogous theorem for a toy problem
is proven. 
But at first 
the key problem
is presented%
: 
The additional number of simplices in one single refinement step is unbounded, as shown in  Figure \ref{Treppe}. This problem will be solved by a sequence $a_1,a_2,\dots,a_N$ with a bounded increase $a_k{-}a_{k-1}\leq C$ dominating the number of simplices in the refinement sequence, i.e.\ $a_k\geq \#(W_k\setminus \T_0)$. 
The elements $a_j$ of the sequence will have the form $\mu(\tu T_0\cup\dots\cup \tu T_{j-1})$ for a measure $\mu$ and an \emph{estimated tower $\tu$} defined below.

\begin{figure*}[ht]
\centering{
\begin{tikzpicture}[information text/.style={fill=gray!10,inner sep=1ex}, scale=.5]
\fill[gray] (1,1) --(2,0) -- (2,2) -- cycle;
\draw (0,0) -- (8,8);
\draw (0,0) -- (1,1) --(2,0) -- (2,2) -- (4,0) -- (4,4) -- (8,0) -- (8,8) -- (16,0)-- cycle;
\draw[dotted] (1,1) -- (3,1);
\draw[dotted] (2,2) -- (6,2);
\draw[dotted] (4,4) -- (12,4);
\draw[dotted] (2,0) -- (4,2);
\draw[dotted] (4,0) -- (8,4);
\draw[dotted] (8,0) -- (12,4);
\end{tikzpicture}
}
\caption{In a single refinement step, the number of refined triangles is arbitrary large. The continuous edges are the edges before refinement, the dotted ones have to be added, when the grey triangle is bisected. The figure can be continued on the right arbitrarily often.
}
\label{Treppe}
\end{figure*}
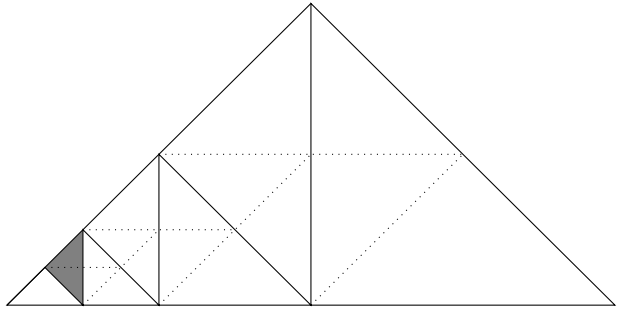

\subsection{A toy problem: the pile game}
\index{pile game}
Impatient readers are kindly recommended to skip this subsection.

Consider the following game (which is not 1D adaptive mesh refinement).
The game consists in building a 2-dimensional pile of bricks.
At the beginning, there is prepared a horizontal row of equally sized bricks without gaps between them. 
Generally all bricks are of the same height, but for every level, starting with the lowest level 0, there is a characteristic breadth, namely a brick one level higher is half as broad. It can be placed exactly on top of either the left or the right half of an underlying admissible position only 
and is called a \emph{child} of the underlying brick.

\begin{center}
\begin{tikzpicture}[scale=2,yscale=.2,information text/.style={fill=gray!10,inner sep=1ex}]
\def\yend{5};\def\xend{5};
\draw[yshift=2cm, xshift=\xend cm-.7cm]
node[right=2mm,text width=5cm,information text]
{
The admissible bricks at their admissible positions.
};
\clip (.7,-.1) rectangle (\xend-.7,\yend-.7);
  \foreach \y in {0,...,\yend}
    {\tikzmath{\b = 2^(-\y);}
    \draw (-.5,\y) -- (\xend+.5,\y);
      \foreach \x in {0,\b,...,\xend}
	\draw (\x,\y) -- +(0,1);
    }
\end{tikzpicture}
\end{center}

This set of admissible bricks is formally described by 
\[
\left\{2^{-l}[k,k{+}1]\times\{l\}\text{ (depicted as } 2^{-l}[k,k{+}1] \times [l,l{+}1]) ~\middle|~k\in\Z, l\in\N\right\}.
\]

A
\emph{
demand
relation}\index{demand} or steepness rule
determines,
how the pile grows. That is to say, every brick of level $\geq 1$ in the pile \emph{demands} the two touching bricks underneath (one touching at the bottom side and one touching at a vertex) to be part of the pile as well
.
Imagine, like on a pile of sand, it would fall down, if the pile were too steep.

\begin{center}
\begin{tikzpicture}[>=stealth,scale=2,information text/.style={fill=gray!10,inner sep=1ex}]
\begin{scope}[yscale=.2]
\draw (0,0) rectangle (1,1);
\draw (1,0) rectangle (2,1);
\filldraw[fill=gray!20] (.5,1) rectangle (1,2);
\draw[->] (.75,1.5) -- (.5,.5);
\draw[->] (.75,1.5) -- (1.5,.5);

\draw[shift={(2,1.5)}]
node[right=2mm,text width=7cm,information text]
{
The steepness relation: 
Every brick expect the basement bricks demands the two touching bricks underneath.
};
\end{scope}
\end{tikzpicture}
\end{center}

In every round, add a new brick 
on some already existing brick. 
The new brick is called
the \emph{chosen} brick. 
After that, 
further bricks 
have to be added
as long as they are demanded by bricks of the pile, see Figure \ref{fig:round}.

\begin{figure}[ht]
\centering{
\begin{tikzpicture}[>=stealth,scale=2,yscale=.2,information text/.style={fill=gray!10,inner sep=1ex}]
\draw
(2.1,1)
node[right,text width=6.2cm,information text,scale=1]
{
\caption{First step of a round: The grey pile is already there. Only one of the dashed 
children of existing bricks can be chosen.}
\label{fig:round}
};
\def\myrectangle#1#2[#3]{
  \tikzmath{\b = 2^(-#1);} 
  \draw[#3] (#2,#1) rectangle +(\b,1);
 };
\def\arrowupwards#1#2[#3]{
  \tikzmath{\b = 2^(-#1);
	  }
  \foreach \s in {-1,1}
    {\begin{scope}[scale around={\s:(#2+.5*\b,0)},yscale=\s]
      \draw[->,#3] (#2+.3*\b,#1+1.3) -- (#2+.35*\b,#1+.7);
    \end{scope}
   }
 };
\clip (-1.2,-1.1) rectangle (2.1,3.1);

%
%

\foreach \x in {-2,0}
\myrectangle{-1}{\x}[fill=gray!20];

\foreach \x in {-1,0}
\myrectangle{0}{\x}[fill=gray!20];

\myrectangle{1}{0}[fill=gray!20];

\foreach \x in {-2,1}
\myrectangle{0}{\x}[dashed];
\foreach \x in {-1,-.5,.5}
\myrectangle{1}{\x}[dashed];
\foreach \x in {0,.25}
\myrectangle{2}{\x}[dashed];

\end{tikzpicture}

\begin{tikzpicture}[>=stealth,scale=2,yscale=.2,information text/.style={fill=gray!10,inner sep=1ex}]
\draw[shift={(2,1)}]
node[right=2mm,text width=6.2cm,information text]
{
Second step of a round: After the dark grey brick has been chosen, the bricks on the right are added, since they are demanded successively.
};
\def\myrectangle#1#2[#3]{
  \tikzmath{\b = 2^(-#1);} 
  \draw[#3] (#2,#1) rectangle +(\b,1);
 };

\clip (-1.2,-1.1) rectangle (2.1,3.1);

\foreach \x in {-2,0}
\myrectangle{-1}{\x}[fill=gray!0];

\myrectangle{0}{-1}[fill=gray!0];
\myrectangle{0}{0}[fill=gray!0];
\myrectangle{1}{0}[fill=gray!0];

\foreach \x in {1}
\myrectangle{0}{\x}[fill=gray!20];
\foreach \x in {.5}
\myrectangle{1}{\x}[fill=gray!20];

\foreach \x in {.25}
\myrectangle{2}{\x}[fill=gray!60];

\draw[->] (.375,2.5) -- (.75,1.5);
\draw[->] (.75,1.5) -- (1.5,.5);
\end{tikzpicture}
\begin{tikzpicture}[>=stealth,scale=2,yscale=.2,information text/.style={fill=gray!10,inner sep=1ex}]
\draw[shift={(1.2,1.5)}]
node[right=2mm,text width=7.6cm,information text]
{
Example for a pile game. The number stands for the round, in which the brick is added. The grey bricks are the chosen ones.
};
\def\myrectangle#1#2[#3]#4{
  \tikzmath{\b = 2^(-#1);} 
  \draw[#3] (#2*\b,#1) rectangle +(\b,1);
  \draw (#2*\b+.5*\b,#1+.5) node{#4};
 };
 
\clip (-1.2,-1) rectangle (1.2,5);
\foreach \x in {-2,-1,0,1}
\myrectangle{0}{\x}[]{};

\myrectangle{1}{-2}[fill=gray!0]{3};
\myrectangle{1}{-1}[fill=gray!20]{1};
\myrectangle{1}{0}[fill=gray!0]{2};
\myrectangle{1}{1}[fill=gray!0]{4};

\myrectangle{2}{-2}[fill=gray!0]{3};
\myrectangle{2}{-1}[fill=gray!20]{2};
\myrectangle{2}{1}[fill=gray!20]{4};

\myrectangle{3}{-2}[fill=gray!20]{3};

\end{tikzpicture}
}
\end{figure}


The relation to the BDV theorem is not hard to spot: The
stages of the growing pile 
form a refinement sequence of forests, if the elements of the structure are replaced as listed in Table \ref{tab: replacement for pile game}.

\begin{table}[ht]
\bgroup
\def\arraystretch{1.5}
\begin{tabular}{p{5cm}|p{7cm}}
\emph{Replace} 					& \emph{by}\\
\hline
$\Omega$						& $\R$\\
\hline
$\T_0$ 							& Basement $\left\{[k,k{+}1]\times
													\{0\}
													~\middle|~k\in\N\right\}$, depicted as bricks $[k,k{+}1]\times[0,1]$
\begin{tikzpicture}[scale=1.9,yscale=.2,information text/.style={fill=gray!10,inner sep=1ex}]
\def\yend{1};\def\xend{5};
\clip (.7,-.1) rectangle (\xend-.7,1);
  \foreach \y in {0,...,\yend}
    {\tikzmath{\b = 2^(-\y);}
    \draw (-.5,\y) -- (\xend+.5,\y);
      \foreach \x in {0,\b,...,\xend}
	\draw (\x,\y) -- +(0,1);
    }
\end{tikzpicture}
\\
\hline
children rule			 			
&
\begin{tikzpicture}[
scale=2,information text/.style={fill=gray!10,inner sep=1ex}]
\begin{scope}[yscale=.2]
\draw[white] (0,2) -- (0,2.5);
\draw (0,0) rectangle (1,1);
\draw (0,1) rectangle (.5,2);
\draw (.5,1) rectangle (1,2);
\draw[->] (.5,.25) -- (.25,1.5);
\draw[->] (.5,.25) -- (.75,1.5);
\end{scope}
\end{tikzpicture}
\\
\hline
$\Simplexe$ with level function $l$ 	& Set of admissible bricks $\left\{2^{-l}[k,k{+}1]\times
															\{l\}
															~\middle|~k\in\Z,l\in\N\right\}$, depicted as 
															$2^{-l}[k,k{+}1]\times[l,l{+}1]$\\

								&
\begin{tikzpicture}[scale=1.9,yscale=.2,information text/.style={fill=gray!10,inner sep=1ex}]
\def\yend{5};\def\xend{5};
\clip (.7,-.3) rectangle (\xend-.7,\yend-.7);
  \foreach \y in {0,...,\yend}
    {\tikzmath{\b = 2^(-\y);}
    \draw (-.5,\y) -- (\xend+.5,\y);
      \foreach \x in {0,\b,...,\xend}
	\draw (\x,\y) -- +(0,1);
    }
\end{tikzpicture}
								(Brick size of level $l$ is $2^{-l}$.)\\
\hline
$\ar0$							& $S\ar0 T \quad:\LRq S=T$\\
								&(trivial equivalence relation)\\
\hline
$\ar1$							&

\begin{tikzpicture}[
scale=2,information text/.style={fill=gray!10,inner sep=1ex}]
\begin{scope}[yscale=.2]
\draw[white] (0,2) -- (0,2.5);
\draw (0,0) rectangle (1,1);
\draw (1,0) rectangle (2,1);
\filldraw[fill=gray!20] (.5,1) rectangle (1,2);
\draw[->] (.75,1.5) -- (.5,.5);
\draw[->] (.75,1.5) -- (1.5,.5);

\end{scope}
\end{tikzpicture} 
\\
								&Every brick expect the basement bricks demands the two touching bricks underneath.\\
\end{tabular}
\egroup
\caption{Replacements to turn adaptive mesh refinement into the pile game.}\label{tab: replacement for pile game}
\index{conversion table!adaptive mesh refinement vs pile game}
\end{table}

The
same notions and variables 
will be used
for the pile game as 
were
used for the simplices. The rounds are the same as defined before: Choose a child of one (surface) brick of the existing pile/forest and unite the existing pile/forest with the tower of this child.

\paragraph{Questions:}
\begin{enumerate}
\item
How to get as many bricks as possible after $N$ rounds? (This question is meant, as a businessman would understand it: 
Do
not ask for a \emph{proved} answer and do not concern about a handful of bricks.)
\item
What is the smallest constant $C$, such that for any possible pile game it holds that
\begin{align*}
\#\left(W_N\setminus \T_0\right)\leq 2C{\cdot} N?
\end{align*}
\end{enumerate}
\paragraph{Answer to the 1st question.}
\emph{1st strategy “tower”:} 
At first, let us consider the ``tower'' strategy:
\emph{Always choose a child of a top brick.}

\begin{center}
\begin{tikzpicture}[>=stealth,scale=3,yscale=.15,information text/.style={fill=gray!19,inner sep=1ex}]
\def\myrectangle#1#2[#3]{
  \tikzmath{\b = 2^(-#1);} 
  \draw[#3] (#2*\b,#1) rectangle +(\b,1);
 };

\foreach \y in {1,2,3,4}
\foreach \x in {-1,0}
\myrectangle{\y}{\x}[fill=gray!0];

\foreach \y in {5}
 \foreach \x in {0}
 \myrectangle{\y}{\x}[fill=gray!0];
%
\begin{scope}[xshift=1.2cm]
\draw[shift={(1,1.5)}]
node[right=2mm,
information text]
  {
  towers
  };
  \def\myrectangle#1#2[#3]{
    \tikzmath{\b = 2^(-#1);} 
    \draw[#3] (#2*\b,#1) rectangle +(\b,1);
  };

  \foreach \y in {1,2,3}
  \foreach \x in {-1,0,1}
  \myrectangle{\y}{\x}[fill=gray!0];

  \foreach \y in {4}
  \foreach \x in {0,1}
  \myrectangle{\y}{\x}[fill=gray!0];
  \myrectangle{5}{1}[fill=gray!0];
  \myrectangle{5}{2}[fill=gray!0];

  \foreach \y in {6}
  \foreach \x in {3}
  \myrectangle{\y}{\x}[fill=gray!0];
\end{scope}
\end{tikzpicture}
\end{center}

We observe that in the growing pile, there are at most 3 bricks per level, because 3 connected bricks in one layer cannot demand more than 3 subjacent ones and the height of the pile equals the number of rounds.
Consequently, $\#\left(W_N\setminus \T_0\right)\leq 3N$.

On the other hand, 
observe that 4 bricks can demand 4 subjacent bricks: 
\begin{tikzpicture}[>=stealth,xscale=4,scale=.34,information text/.style={fill=gray!19,inner sep=1ex}]
\def\myrectangle#1#2[#3]#4{
  \tikzmath{\b = 2^(-#1);} 
  \draw[#3] (#2*\b,#1) rectangle +(\b,1);
  \draw (#2*\b+.5*\b,#1+.5) node{#4};
 };

\foreach \y in {1,2}
\foreach \x in {-2,-1,0,1}
\myrectangle{\y}{\x}[fill=gray!0]{};

  \def\myrectangle#1#2[#3]{
    \tikzmath{\b = 2^(-#1);} 
    \draw[#3] (#2*\b,#1) rectangle +(\b,1);
  };
\end{tikzpicture}
(and these 4 bricks are the largest “fixed point” of $\ra$.)
This leads to the second, the ``quasitower'' strategy: Try to achieve 4 bricks per level 
in a pile that grows up by 1 layer per round approximately.
Figure \ref{fig:quasitower} shows how it can be realised.
\begin{figure}[ht]
\centering{
\begin{tikzpicture}[>=stealth,xscale=14,yscale=.5,information text/.style={fill=gray!10,inner sep=1ex}]
\def\myrectangle#1#2[#3]#4{
  \tikzmath{\b = 2^(-#1);} 
  \draw[#3] (#2*\b,#1) rectangle +(\b,1);
  \draw (#2*\b+.5*\b,#1+.5) node{#4};
 };

\clip (-.51,-1) rectangle (.43,6);
\myrectangle{3}{-1}[draw=white,fill=gray!0]{$\dots$};
\myrectangle{1}{-1}[fill=gray!30]{1};
\myrectangle{2}{-1}[fill=gray!30]{2};

\myrectangle{1}{0}[fill=gray!0]{2};
\myrectangle{2}{0}[fill=gray!0]{3};
\myrectangle{1}{-2}[fill=gray!0]{$m{+}1$};
\myrectangle{1}{1}[fill=gray!0]{$m{+}3$};
\myrectangle{2}{-2}[fill=gray!0]{$m{+}1$};
\myrectangle{2}{1}[fill=gray!0]{$m{+}3$};

\begin{scope}[yshift=1cm]
\myrectangle{3}{-1}[fill=gray!30]{$m{-}1$};
\myrectangle{4}{-1}[fill=gray!30]{$m$};
\myrectangle{4}{-2}[fill=gray!30]{$m{+}1$};
\myrectangle{4}{0}[fill=gray!30]{$m{+}2$};
\myrectangle{4}{1}[fill=gray!30]{$m{+}3$};

\myrectangle{3}{-2}[fill=gray!0]{$m{+}1$};
\myrectangle{3}{0}[fill=gray!0]{$m$};
\myrectangle{3}{1}[fill=gray!0]{$m{+}3$};
\end{scope}

  \def\myrectangle#1#2[#3]{
    \tikzmath{\b = 2^(-#1);} 
    \draw[#3] (#2*\b,#1) rectangle +(\b,1);
  };
\end{tikzpicture}
\caption{Quasitower and its realisation. The number stands for the round, in which the brick is added. The grey bricks are the chosen ones. The first $m$ chosen bricks each demand one further brick. The $(m{+}1)$st and $(m{+}3)$rd brick demand $m{-}1$ bricks each.}\label{fig:quasitower}
}
\end{figure}
Therefore, the second strategy is more efficient.
\paragraph{Answer to the 2nd question.}
\begin{thm}[BDV theorem for the pile game]
\index{BDV theorem!pile game}
For all $N\in\N$ and all refinement sequences of admissible forests for the pile game $\T_0=W_0\subset\dots\subset W_N$, it holds that $\#\left(W_N\setminus\T_0\right)\leq 4N$.
\end{thm}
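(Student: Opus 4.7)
I would prove the bound by amortized analysis, which also realises the dominating-sequence strategy sketched above. Introduce a nonnegative potential $\psi$ on admissible forests with $\psi(\T_0) = 0$, and verify the single-round amortised inequality
\[
\#(W_k \setminus W_{k-1}) + \psi(W_k) - \psi(W_{k-1}) \leq 4 \qquad (k = 1, \dots, N).
\]
Summing over $k$ telescopes to $\#(W_N \setminus \T_0) + \psi(W_N) \leq 4N$, and the nonnegativity of $\psi(W_N)$ yields the theorem. In the paper's notation this corresponds to the dominating sequence $a_k = 4k - \psi(W_k)$.

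The natural candidate for $\psi$ is suggested by the Treppe and quasitower examples: $\psi$ should release potential whenever a round triggers a large cascade and accumulate potential during small-increment rounds. I would try
\[
\psi(W) = 2 \cdot \#\bigl\{ B \in W : l(B) \geq 1 \text{ and no brick at level } l(B) + 1 \text{ sits on } B \text{ in } W\bigr\},
\]
i.e.\ twice the count of non-basement \emph{top-surface} bricks. In the pure tower strategy each round adds two bricks and increases the top-surface count by one (the newly chosen brick becomes top-surface while its parent is demoted), giving $\#(W_k \setminus W_{k-1}) + \Delta\psi = 2 + 2 = 4$ with equality. In the quasitower cascade rounds where many bricks are added at once, the cascade simultaneously fills in many previously surface positions, so $\Delta\psi$ is sufficiently negative to absorb the overshoot.

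The main obstacle is verifying the single-round inequality in the Treppe-type scenarios, where one refinement cascades an unbounded number of bricks down the pile. One must show, by a case analysis following the adj-chain of new demands, that each cascade brick either replaces an existing top-surface brick (releasing potential) or genuinely introduces a new surface position (consuming at most two units of potential), and that the balance always sums to at most $4$. Should this amortisation resist a clean case analysis, a fallback is the direct BDV-style construction: define an estimated tower $\tu T \subseteq \Simplexe$ with $\#(\tu T) \leq 4$ such that $W_N \setminus \T_0 \subseteq \bigcup_{k=0}^{N-1}\tu T_k$ for every refinement sequence, whence $\#(W_N \setminus \T_0) \leq \sum_k \#(\tu T_k) \leq 4N$ follows by subadditivity of the counting measure.
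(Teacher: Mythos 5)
Your amortised scheme fails at the single-round inequality, and the failure is structural, not a matter of a cleverer case analysis. Concrete counterexample (levels written as dyadic intervals, a level-$l$ brick being $2^{-l}[k,k{+}1]$): in two rounds build $W_2=\T_0\cup\bigl\{[\tfrac12,1],\,[\tfrac12,\tfrac34],\,[0,\tfrac12]\bigr\}$ (choose the level-1 brick $[\tfrac12,1]$, then its left child $[\tfrac12,\tfrac34]$, which drags in $[0,\tfrac12]$). Now choose the level-3 brick $[\tfrac58,\tfrac34]$, the right child of $[\tfrac12,\tfrac34]$. Its diagonal demand is $[\tfrac34,1]$ (level 2, missing), whose diagonal demand is $[1,\tfrac32]$ (level 1, missing), whose demands are basement bricks; so the round adds the three bricks $[\tfrac58,\tfrac34],[\tfrac34,1],[1,\tfrac32]$. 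All three are top-surface afterwards (the cascade propagates sideways: each new brick sits on an \emph{old} brick, never on another new one), so they contribute $+6$ to $\psi$; the only old top-surface brick that gets covered is $[\tfrac12,\tfrac34]$, giving $-2$, because $[\tfrac12,1]$ was already covered by its left child and $[1,2]$ is basement. Hence
\begin{align*}
\#(W_3\setminus W_2)+\psi(W_3)-\psi(W_2)=3+4=7>4 .
\end{align*}
Worse, the example scales: build a spine of $m$ left children $[\tfrac12,1]\supset[\tfrac12,\tfrac34]\supset\cdots$ by the tower strategy and then choose the right child of the top spine brick. The cascade runs down the whole spine through the siblings and adds $m{+}1$ bricks, every one of them top-surface, while covering exactly one old top-surface brick (the spine top); the left-hand side of your inequality becomes $3m+1$. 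So the defect per round is unbounded, and no rescaling of a ``top-surface count'' potential can repair it. Your intuition that a cascade ``fills in many previously surface positions'' is exactly backwards: the bricks a cascade lands on are the already-covered spine bricks (that is why the demand comes through the diagonal), and the cascade \emph{creates} exposed bricks rather than burying them.

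The fallback you sketch is not a proof either: a set $\tu T_k\subset\Simplexe$ with $\#\tu T_k\leq 4$ cannot contain the bricks added in round $k$ (a single round adds $m{+}1$ bricks in the example above), so any such family must charge cascade bricks to \emph{earlier} rounds, and you give no construction doing this uniformly over all refinement sequences — that charging problem is precisely the theorem. The paper's proof resolves it differently: it replaces counting by a measure $\mu$ on $\R\times\N$, replaces the demand relation by a purely geometric over-estimate $\ar\est$, and bounds the per-round increment not by a state-based potential but by the difference $\mu(\tu T_j)-\mu(\tu\,\pa T_j)$ of the estimated towers of the chosen brick and of its parent, the latter being already contained in the estimated pile; the shape-similarity of the $\ar\est$-tower layers then gives the bound $4$. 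If you want to salvage an amortised formulation, note that the paper's argument is a potential argument on \emph{histories} (via $w_j$, which depends on the chosen bricks $T_0,\dots,T_{j-1}$), not on the forest $W_j$ alone, and your counterexample above shows why a potential depending only on $W_j$ through surface counts cannot work.
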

\noindent\emph{Plan of the proof:}

\begin{center}
\begin{tabular}{l|c|l}
\emph{estimate} & &\emph{by}\\
\hline
the counting measure $\#$ on $\Simplexe$ 	&$\leq$				&a measure $\mu$ on $\R\times\N$\\
\hline
the relation $\ra$ on $\Simplexe$ 	&$\Rightarrow$ 	&a relation $\ar{\text{est}}$ on $
\R
\times \N$\\
\hline
$\ra$-clos
ed forest $W_j$
&$\subset$ 		&$\ar{\text{est}}$-clos
ed  $w_j\subset \R\times\N$,
\end{tabular}
\end{center}

such that 
\begin{align*}
\#(W_j\setminus \T_0)\leq \mu\left(\bigcup \left(W_j\setminus\T_0\right)\right)&\leq \mu(w_j),\\
\mu(w_0)=0
\quad\text{ and }\quad\mu(w_{j+1})&\leq \mu(w_{j})+4.
\end{align*}
This implies
\begin{align*}
\#(W_N\setminus \T_0)\leq \mu(w_N)\leq 4N.
\end{align*}

\begin{proof}
For a set of bricks $W$, let $W^k:=\{\text{level $k$ bricks of $W$}\}$.\index{W^k@$W^k$ -- $k$th layer of $W$!pile game}

\emph{Step 1: Estimate the counting measure ${\#\bullet - \#\T_0}$ by a measure $\mu$ on $\R\times\N$.} 

For arbitrary Lebesgue measurable $M=M^0{\times}\{0\}\,\cup\, M^1{\times}\{1\}\,\cup\, \dots \subset
\R
\times \N$, let
\index{m@$\mu$ -- measure estimating $\#$!pile game}
\begin{align*}
\mu(M):=
\sum_{k=1}^\infty 2^k\left|M^k\right|.
\end{align*}
T
his measure $\mu$ is closely related to $\#$:
\begin{align}\label{mudef}
\text{For $j\neq 0$, }
\mu\left(\bigcup 
W^j\right)&=2^j\cdot\underbrace{\left|\bigcup W^j\right|}
_{
\tiny{\begin{matrix}\text{total brick length}\\ \text{of level $j$}\end{matrix}}
}
=\#W^j,\\
\mu\left(\bigcup 
W\right)&=\sum_{j=1}^\infty 2^j\left|\bigcup W^j\right|=\#W-\#W^0.
\end{align}
(Note that the sum starts at $j=1$ to exclude the initial bricks.) Note that $\mu$ measures also fractions of bricks.


\emph{Step 2: Estimate $\ra$.
First, find a relation $\ar{\iota}$ on $\R\times\N$ such that for a given brick $T$
\begin{align*}
\bigcup\left\{S~\middle|~T\ra S\right\}=\{s\in\R\times\N~|~\ex t\in T\subset\R\times\N.t\ar{\iota}s\}.
\end{align*}}
Such a $\ar\iota$ is given by:
\index{->i@$\ar\iota$!pile game}
\begin{tikzpicture}[information text/.style={fill=gray!10,inner sep=1ex},yscale=.5,yshift=-1]
\draw (0,0) rectangle (1,1) (1,0) rectangle (2,1);
\draw (.5,1) rectangle (1,2);
\foreach \x in {0,.4,...,2}
	\draw[->] (1,1.5) -- (\x,.75);
\end{tikzpicture}
, which reads
\begin{align*}
(y,l{+}1)\ar\iota (x,l) \quad:\LRq 
&\text{ $(y,l)$ lies at the boundary between}\\ &\text{ two level $l$ bricks
and $x\in y+2^{-l}[-1,1]$.}
\end{align*}
\emph{Now estimate $\ar\iota$ by $\ar\est$:} 
Define $\ar\est$ by
\index{->est@$\ar\est$ -- estimates $\ar\iota$!pile game}
\begin{center}
\begin{tikzpicture}[information text/.style={fill=gray!10,inner sep=1ex},yscale=.7]
\draw[dotted] (0,0) rectangle (1,1) (1,0) rectangle (2,1);
\draw[dotted] (1,1) --(1,2);
\draw[>=stealth,<->] (0,.5)-- node [below] {\tiny
{BS}
} (1,.5);
\draw[>=stealth,<->]  (1,.5) -- node [below] {\tiny
{BS}} (2,.5);
\draw (1,1.5) node [right] {\small
{(arbitrary point)}};
\foreach \x in {0,.4,...,2}
	\draw[->] (1,1.5) -- (\x,.75);
\draw 
(2,.5) node[right=1mm,
]
	  {
\small
{(let BS be the brick size of this level),}
	  };
\end{tikzpicture} 
\end{center}
in formulas
\begin{align}\label{estdef}
(y,l{+}1)\ar{\text{est}}(x,l) \quad:\LRq x\in y+2^{-l}[-1,1].
\end{align}
Since $\fa x,y\in\R\times\N.(y\ar\iota x\Ra y\ar{\text{est}} x)$, the $\ar\iota$-closure of a set $w\subset \R\times\N$ is included in the $\ar{\text{est}}$-closure of $w$. 

Now repeat the game with $\ar{\text{est}}$ instead of $\ar\iota$, but the same choices for $T_j$ as follows: Again, let the tower $\Tu(S)$ of a brick $S$ be the $\ra$-closure of $S$. Let $W_0,\dots,W_N$ be a refinement sequence of forests, with a certain sequence of bricks $T_j\in \children(W_j)$ 
and $W_{j+1}=W_j\cup \Tu T_j$.
For a brick $S$, let \index{tw@$\tu$ -- estimates $\Tu$!pile game}$\tu(S)$ be the $\ar\est$-closure of $S$ and let \index{w@$w$ -- estimates $W$!pile game}$w_0:=\T_0$, $w_{j+1}:=w_j\cup\tu(T_j)$. 
Since $\tu(S)\supset \bigcup\Tu (S)$, it holds that $w_j\supset \bigcup W_j$ and also $\mu (w_j)\geq \mu\left(\bigcup W_j\right)\geq \#W_j$.

Now it is claimed that $\mu (w_{j+1})- \mu (w_j)\leq 4$.

\emph{\hypertarget{Step 3 from the pile game}{Step 3}: $\mu (w_{j+1})- \mu (w_j)\leq \mu(\tu T_j)- \mu(\tu \pa T_j)$.}
This is obvious from 
Figure \ref{HaufendifferenzTurmdifferenz}. 

\begin{figure}[ht]
\centering{
\begin{tikzpicture}[information text/.style={fill=gray!10,inner sep=1ex}]
\draw[thick] (0,0.1) -- (3,3.1) --(4,2.1) --(6,3.1)--(9,0.1);
\filldraw[fill=gray!25]		(.5,0)--(3.5,2.4)--(4,2) --(6,3)--(9,0);
\draw[<->,>=stealth]	(3,2)-- node[left]{1} (3,3);
\filldraw[dashed,fill=gray!50] (1,0) -- (3,2) -- (5,0);
\draw[dashed] (0,0) -- (3,3) -- (6,0);
\draw (7,2.1)--(7.5,2.1) node[right]{new pile};
\draw (7,2)--(6.5,2) node[left]{old pile};
\draw (4.5,1.5)--(5,1.5) node[right]{new tower 
};
\draw (4.5,.5)--(4,.5) node[left]{old tower
};
\end{tikzpicture}
}
\caption{The difference between the piles is 
included in
the difference between the towers. (new tower $=\tu{T}$, old tower $=\tu{\pa T}$)
}
\label{HaufendifferenzTurmdifferenz}
\end{figure}
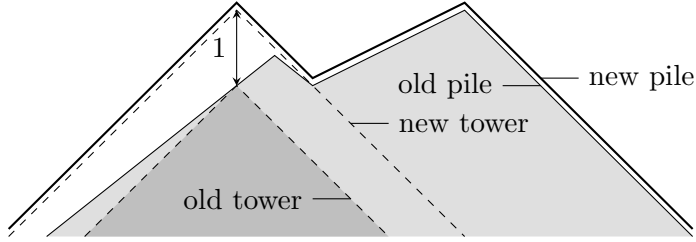

The following calculation proves it formally:
\begin{align*}
w_{j+1}\setminus w_j&
=\left(w_j\cup\tu(T_j)\right)\setminus w_j\\
&=\tu T_j\setminus w_j\\
&\subset \tu T_j\setminus\tu\parent T_j
\end{align*}
and
\begin{align}\label{setminustrick}
\mu(w_{j+1}\setminus w_j)\leq\mu(\tu T_j)-\mu(\tu\parent T_j),
\end{align}
because $T_j\ra\pa (T_j)$ implies $\pa (T_j)\in \Tu (T_j)$ and also $\tu(\pa T_j)\subset \tu T_j$, therefore $\mu\left(\tu T_j\setminus\tu\parent T_j\right)=\mu(\tu T_j)-\mu(\tu\parent T_j)$.

\emph{Step 4: $\mu(\tu S)-\mu(\tu\pa S)\leq 4$.
}
\emph{Idea in a figure: }

\begin{center}
\begin{tikzpicture}[information text/.style={fill=gray!10,inner sep=1ex}]
\fill[pattern=vertical lines, pattern color=gray!30] (1,0) -- (3,2) -- (5,0) -- (6,0) -- (3,3) -- (0,0) -- cycle;
\fill[pattern=horizontal lines, pattern color=gray!30] (0,0) -- (6,0) -- (5.1,.9) -- (.9,.9) -- cycle;
\draw
 (1,0) -- (3,2) -- (5,0);
\draw
 (0,0) -- (3,3) -- (6,0);
\draw[dashed] (1,.9) -- (3,2.9) -- (5,.9) -- cycle;
\draw[dashed,-{Stealth[scale=1.2]}]	(3,2)-- node[left]{1} (3,2.9);
\draw[dashed,-{Stealth[scale=1.2]},
]	(1,0)-- node[left]{1} (1,.9);
\draw[dashed,-{Stealth[scale=1.2]},
]	(5,0)-- node[right]{1} (5,.9);
\draw (4,2)--(4.3,2) node[right]{new tower};
\draw (4.5,.5)--(4.2,.5) node[left]{old tower};
\draw (.0,1.5) node[left=1mm,text width=6cm,
	information text]
	  {
	  Lift the old tower into the top of the new one and note: 
	  The difference of the measures of the new tower and the old tower equals the measure of the first layer of the new tower.
	  };
\end{tikzpicture}
\end{center}

All $\ar\est$-towers have the same shape: 

\begin{center}
\begin{tikzpicture}[scale=.95,information text/.style={fill=gray!10,inner sep=1ex}]
\draw (-6.2,1) -- (-6,1) -- (-6,2) -- (-2,2) -- (-2,3) -- (0,3) -- (0,4) -- (1,4) -- (1,3) -- (3,3) -- (3,2) -- (7,2) -- (7,1) --(7.2,1);
\draw[dotted] (0,3)rectangle(1,4) 
	(-2,2) rectangle (0,3) 
	(1,2) rectangle (3,3) 
	(-6,1) rectangle (-2,2) 
	(3,1) rectangle (7,2) 
	(-6,.5)--(-6,1)
	(7,.5)--(7,1);
\draw (.5,3.5) node {$S$};
\draw[<->] (0,3.1)-- node [below] {\small{BS($lS$)}
} (1,3.1);

\draw[<->] (0,2.3)-- node [above] {BS($lS{-}1$)} (-2,2.3);
\draw[<->] (3,2.3)-- node [above] {BS($lS{-}1$)} (1,2.3);
\draw[<->] (-2,2.1)-- node [below] {$2\frac12$BS($lS{-}1$)=$1\frac14$BS($lS{-}2$)} (3,2.1);

\draw[<->] (-6,1.3)-- node [above] {BS($lS{-}2$)} (-2,1.3);
\draw[<->] (3,1.3)-- node [above] {BS($lS{-}2$)} (7,1.3);
\draw[<->] (-6,1.1)-- node [below] {$3\frac{1}4$BS($lS{-}2$)} (7,1.1);

\draw 
	(1.2,3.2) node[above right,
	text width=5.6cm,
	information text]
	  {
Here, BS$(k)=2^{-k}$ again denotes the size of a brick of level $k$.
	  };
\end{tikzpicture}
\end{center}

The figure shows that $\mu(\tu^k S)=:f(lS{-}k)$ depends only on $lS{-}k$. (Namely $f$ fits the recursion $f(0)=1,f(j{+}1)=\frac {f(j)}2 +2$.)

Hence
\begin{align*}
\mu(\tu S)-\mu(\tu\pa S)	&=f(0)+\dots+f(lS{-}1)-f(0)-\dots-f(\underbrace{l(\pa S){-}1}_{lS-2})\\
						&=f(lS{-}1)\\
						&=\mu(\tu^1(S)).
\end{align*}
It remains to show $\mu(\tu^1(S))\leq 4$. Instead of reflecting on the recursion formula, take four copies of $\tu S$, slightly shifted to the right overlapping each other. This pile has the same shape as the quasitower 
in Figure \ref{fig:quasitower}
above%
: Each of its layers has the size of 4 brick sizes. 

\begin{center}
\begin{tikzpicture}[information text/.style={fill=gray!10,inner sep=1ex},scale=.5]
\filldraw[fill=gray!60, very thin, draw=gray] 
(-6,1) -- (-6,2) -- (-2,2) -- (-2,3) -- (0,3) -- (0,4) -- (1,4) -- (1,3) -- (3,3) -- (3,2) -- (7,2) -- (7,1) --(10,1)--cycle;
\filldraw[fill=gray!40,xshift=1cm, very thin, draw=gray] (-6.5,1) -- (-6,1) -- (-6,2) -- (-2,2) -- (-2,3) -- (0,3) -- 
(0,4) -- (1,4) -- (1,3) -- (3,3) -- (3,2) -- (7,2) -- (7,1)
;
\filldraw[fill=gray!20,xshift=2cm, very thin, draw=gray] (-6.5,1) -- (-6,1) -- (-6,2) -- (-2,2) -- (-2,3) -- (0,3) -- 
(0,4) -- (1,4) -- (1,3) -- (3,3) -- (3,2) -- (7,2) -- (7,1)
;
\filldraw[fill=white,xshift=3cm, very thin, draw=gray] (-6.5,1) -- (-6,1) -- (-6,2) -- (-2,2) -- (-2,3) -- (0,3) -- 
(0,4) -- (1,4) -- (1,3) -- (3,3) -- (3,2) -- (7,2) -- (7,1)
;
\draw[xstep=1,ystep=1,dashed, very thick] (0,3) grid (4,4);
\draw[dashed, very thick] (-2,2) -- (-2,3) -- (0,3) -- (0,2);
\draw[dashed, very thick] (2,2) -- (2,3);
\draw[dashed, very thick] (4,2) -- (4,3) -- (6,3) -- (6,2);
\draw[xshift=2cm,xstep=4,ystep=1,dashed, very thick] (-8,1) grid (8,2);

\draw (2,0) node[below=0mm,text width=8cm,
	information text]
	  {
	  The union of these four $\stackrel{\text{est}}{\rightarrow}$towers has the same shape as a quasitower.
	  };
\end{tikzpicture}
\end{center}

Thus,
\begin{align*}
\mu\tu^1 S\leq \mu(\text{level 1 of the union of the 4 copies of }\tu S)=4.
\end{align*}
\end{proof}
\emph{Summary of the proof.}
Combination of step 3 and step 4 shows $\mu(w_{j+1})\leq \mu(w_j)+4$. This and $\mu(w_0)=0$ lead to $\# (w_N\setminus \T_0)\leq \mu(w_N)\leq 4N$.
\begin{bem}
In preparation for the simplex version of the proof, let
the sizes of initial bricks vary from 1
with a minimum $d$ and a maximum $D$.
Then $\mu$ and $\ar\est$ can be adapted.
The size of a level $j$ brick is at least $2^{-j}d$, so 
let
\begin{align*}
\mu(M):=
\sum_{j=1}^\infty \frac{2^j}d\left|M^j\right|
\end{align*}
for an arbitrary measurable $M=M^0{\times}\{0\}\,\cup\, M^1{\times}\{1\}\,\cup\, \dots \subset
\R
\times \N$. 
Then \eqref{mudef} turns into 
\begin{align*}
\mu\left(\bigcup 
W\right)=\sum_{j=1}^\infty \frac{2^j}d\left|\bigcup W^j\right|&\geq \#W-\#W^0
\end{align*}
and \eqref{estdef} can be adjusted by
\begin{align*}
(y,l{+}1)\ar{\text{est}}(x,l) \quad:\LRq x\in y+2^{-l}[-D,D].
\end{align*}
\end{bem}
\subsection{Proof of the theorem of Binev--Dahmen--DeVore}
Back to simplices. The above proof is going to be adapted into a proof of the statement in Lemma \ref{forest formulation}.

As all existing proofs for the Binev--Dahmen--DeVore Theorem \cite{BDV,Stevenson,Karkulik} except \cite{Holst} by Holst, Licht and Lyu, this one “geometrises” the setup. In the pile game, the only characteristic property of a brick 
is 
its size. For simplices, there are several characteristic sizes. It is simple to use volume $\lvert S\rvert$ and the distance 
\index{D@$D_{\Vnew}$}
\begin{align*}
D_{\Vnew} S:=\max_{x\in S}\|x-\Vnew S\|_2.
\end{align*}
\begin{bem}
Since $D_{\Vnew}\leq \diam$, one could also work with $\diam$ instead of $D_{\Vnew}$.
\end{bem}
The finiteness of the number of equivalence classes in $\Simplexe$ has the following consequence, which was already known and used by Binev, Dahmen and DeVore \cite{BDV} and by Stevenson \cite{Stevenson}:
\begin{lem}\label{lem:dD}
There are only finitely many values for $2^{lS}\lvert S\rvert$ and for $2^{\frac{lS}n} D_{\Vnew} S$ in $\Simplexe$, especially a minimum $d$\index{d@$d$} for the first value and a maximum $D$\index{D@$D$} for the second value, such that
\begin{align}
1&\leq \frac{2^{lS}\lvert S\rvert}d\label{areaestimation}\\
\text{and}\quad D_{\Vnew} S&\leq 2^{-\frac{lS}n}D=:D_{lS}.\label{Destimation}
\end{align}
\end{lem}
\begin{proof}
\begin{enumerate}
\item
Bisection bisects volume, 
and thus
does not change the value of $2^{lS}\lvert S\rvert$. This shows that there are at most $\#\T_0$ distinct values for $2^{lS}\lvert S\rvert$ in $\Simplexe$, one per tree.
\item
Corollary \ref{similarity classes} on page \pageref{similarity classes}
states 
that there are only finitely many similarity classes of admissible simplices (in each tree and thus in $\Simplexe$ as well), so only finitely many values for $\frac{D_{\Vnew}(S)^n}{\lvert S\rvert}$ and thereby for
\begin{align*}
\sqrt[n]{\frac{D_{\Vnew}(S)^n}{\lvert S\rvert}\cdot 2^{lS}\lvert S\rvert}=2^{\frac{lS}n} D_{\Vnew} S.
\end{align*}
\end{enumerate}
\end{proof}
\begin{proof}[Proof of the BDV theorem (Lemma \ref{forest formulation}).]
\index{BDV theorem!SIC!proof}
In the following,
the proof of the toy problem
is imitated. Actually, there are no further ideas needed. The proof is based on the embedding
\begin{SCfigure}[][ht]
\begin{tikzpicture}[information text/.style={fill=gray!10,inner sep=1ex}]
\def\a{1};
\def\b{0};
\def\c{.4};
\def\d{.5};
\def\y{1.6};
\def\yachse{2*\y+.5};
\def\layer{
	\draw[->] (0,0) -- (4,0);
	\draw[->]  (0,0) -- (0,3);
	\draw (1,1) -- (3,1) -- (2,2) -- cycle;
};

\draw[->] (0,0) node[left]{0} -- (0,2*\y+1) node[left]{$l$};

\begin{scope}[cm={\a,\b,\c,\d,(0,0)}]
	\layer;
	\coordinate (A) at (2,1.5){};
\end{scope}

\begin{scope}[yshift=\y cm]
\draw (0,0) node[left]{1};
\begin{scope}[cm={\a,\b,\c,\d,(0,0)}]
	\layer;
	\draw (2,1)--(2,2);
	\coordinate (B) at (1.6,1.3){};
	\coordinate (C) at (2.4,1.3){};
\end{scope}
\begin{scope}[yshift=\y cm]
\draw (0,0) node[left]{2};
\begin{scope}[cm={\a,\b,\c,\d,(0,0)}]
	\layer;
	\draw (2,1)--(2,2);
	\draw (1.5,1.5) -- (2,1) -- (2.5,1.5);
	\coordinate (D) at (1.5,1.25){};
	\coordinate (E) at (2.5,1.25){};
	\coordinate (F) at (1.75,1.5){};
	\coordinate (G) at (2.2,1.6){};
\end{scope}
\end{scope}
\end{scope}

\draw[->](A) -- (B);
\draw[->] (A)--(C);
\draw[->] (B)--(D);
\draw[->] (B)--(F);
\draw[->] (C)--(E);
\draw[->] (C)--(G);

\end{tikzpicture}
\caption{Embedding of the admissible triangles.}\label{fig:embedding}
\end{SCfigure}
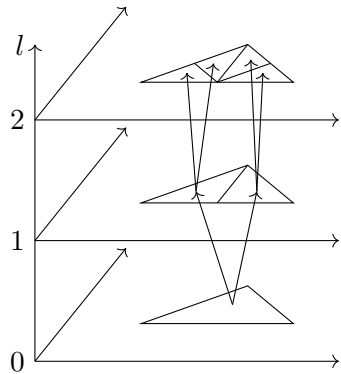
\index{i@$\iota$ -- embedding of $\Simplexe$!}
\begin{align*}
\iota S:=S\times \{l S\}
\end{align*}
of the simplices into 
$\R^n\times \N$
as depicted in Figure \ref{fig:embedding}. Thereby, the following analysis does not strictly distinguish between $\iota W=\{\iota S~|~S\in W\}$ and $\bigcup \iota W=\bigcup_{S\in W}\iota S$.

\index{W^k@$W^k$ -- $k$th layer of $W$}
For a forest $W$, let $W^k$ be the set of level $k$ simplices of $W$.

\emph{Step 1: Estimate and extend $\#\bullet-\#\T_0$.} According to \eqref{areaestimation}, for a non-initial $S$ define
\begin{align*}
1&\leq \frac{2^{lS}\lvert S\rvert}d =:\mu(\iota S).
\end{align*}
Union over all $S\in W^k$ 
and over all $k\in \N_{\geq 1}$ 
yields
\begin{align}\label{mudefinition}
\# W\setminus \#\T_0\leq \frac1d\sum_{k=1}^\infty{2^k\left|\bigcup W^k\right|}=:\mu\left(\bigcup\iota W\right).
\end{align}
For this reason, for arbitrary Lebesgue measurable $M=M^0{\times}\{0\}\cup M^1{\times}\{1\}\cup \dots \subset
\R^n
\times \N$, let
\index{m@$\mu$ -- measure estimating $\#$}
\begin{align*}
\mu(M):=\frac1d\sum_{k=1}^\infty 2^k\left|M^k\right|.
\end{align*}
\emph{Step 2: Replace $\ar1$ by a relation $\ar\iota$ on $
\R^n
\times\N$ and estimate $\ar\iota$ by 
$\ar\est$.}

\begin{figure}
\centering{
\begin{tikzpicture}[information text/.style={fill=gray!10,inner sep=1ex}, scale=2]
\def\a{1};
\def\b{0};
\def\c{.4};
\def\d{.5};
\def\e{0};
\def\y{1.8};
\def\n{1}
\def\layer{
	\draw[->] (0,0) -- (5,0);
	\draw[->]  (0,0) -- (0,3);
	\draw (1,1) -- (3,1) -- (2,2) -- cycle;
};
\clip (-.5,-.2) rectangle (5.1,3.3);
\draw[->] (0,0) node[left]{$lS$} -- (0,\n*\y+1) node[left]{$l$};

\begin{scope}[cm={\a,\b,\c,\d,(0,\e)}]
	\layer;
	\draw (2,2) -- (2.2,2) node [right]{\small{$\iota\Vnew(S)$}};
	\draw (2,1.4) node {\small{$\iota S$}};
	\coordinate (a) at (1,1){};
	\coordinate (b) at (2,1){};
	\coordinate (c) at (3,1){};
	\coordinate (d) at (1.5,1.25){};
	\coordinate (e) at (2,1.25){};
	\coordinate (f) at (2.5,1.25){};
	\coordinate (g) at (1.5,1.5){};
	\coordinate (h) at (2.5,1.5){};
	\coordinate (i) at (2,1.75){};
	\coordinate (j) at (2,1.5){};
	\coordinate (k) at (2,2){};
	
	\draw (2,2) circle [radius=1.41cm];
	\draw[<->,>=stealth] (2,2) -- node[right] {\small{$D_{lS}$}} (2-1.41*.6,2+1.41*.8);
\end{scope}

\begin{scope}[yshift=\y cm]
\draw (0,0) node[left]{$lS{+}1$};
\begin{scope}[cm={\a,\b,\c,\d,(0,\e)}]
	\fill[white] (0,0) rectangle (5,5);
	\layer;
	\draw (2,2) -- (2.2,2) node [right]{\small{$(\Vnew(S),lS{+}1)$}};
	\draw (2,1)--(2,2);
	\coordinate (A) at (2,2){};
\end{scope}
\end{scope}

\begin{scope}[cm={\a,\b,\c,\d,(0,\e)}]
\foreach\x in {(1,1),(2,1),(3,1),(1.5,1.25),
(2.5,1.25),(1.5,1.5),(2.5,1.5),
(2,1.5),(2,2)}
	\draw[->,
	gray] (A) -- \x;
\end{scope}


\end{tikzpicture}
\caption{$\stackrel{\iota}{\rightarrow}$ and $\ar{\text{est}}$
: The point $(\Vnew(S),lS{+}1)$ $\ar{\iota}$-demands all points of $\iota S$ and $\ar{\text{est}}$-demands the circle drawn around.}\label{fig:ariota2}
}
\end{figure}

Let \index{B@$B(p,r)$ -- Euclidean ball}$B(p,r)$ be the Euclidean ball of radius $r$ centred at $p$ and \index{V@$V(r)$ -- volume of $B(p,r)$}$V(r)=r^n V(1)$ its volume. Let 
\begin{align*}
(y,l{+}1)\ar\iota (x,l)\quad:\LRq \ex S\in\Simplexe^l,y=\Vnew S, x\in S,
\end{align*}
\index{->i@$\ar\iota$}%
see Figure \ref{fig:ariota2}.
In other words, if $T\ar1 S$, then the point $(\Vnew S,lT)\in\iota T$ $\ar\iota$-demands all points of $\iota S$. Consequently,
\begin{align*}
\iota\left(\ar1\text{-closure of }U\subset \Simplexe\right)\quad\subset\quad \ar\iota\text{-closure of }\iota U.
\end{align*}
Now estimate $\ar\iota$ as follows: $y=\Vnew S, x\in S$ implies $\|x-y\|_2\leq D_{lS}$. So let, 
in this case
for an \emph{arbitrary} $y\in\Omega$,
\index{->est@$\ar\est$ -- estimates $\ar\iota$}
\begin{align*}
(y,l{+}1)\ar\est (x,l)\quad:\LRq \|x-y\|_2\leq D_{lS}.
\end{align*}
Then $\est$-closures include $\iota$-closures.

Additionally, estimate the embedding of the $\ar0$-equivalence class: By definition of $D_{lT}$, every $S$ with $\Vnew S=\Vnew T$ is included in the ball $B(\Vnew T,D_{lT})$ with the centre $\Vnew T$ and radius $D_{lT}$. 

The remarks on Definition \ref{ar} and Lemma \ref{ra-closed is ar01-closed} on pages \pageref{ar} and \pageref{ra-closed is ar01-closed} imply that
\begin{align*}
S\in\Tu^{lS-k} T\quad\LRq \ex \text{ chain } T=T_0\ar0 S_0\ar1 \dots\ar1S_k=S.
\end{align*}
This shows
\begin{align*}
\iota\Tu^{lT}T=
\iota\left\{S~\middle|~T\ar0 S\right\}
\subset B(\Vnew T,D_{lT})\times \{lT\},
\end{align*}
so let \index{tw@$\tu$ -- estimates $\Tu$}
\begin{align*}
\tu T :=\est\text{-closure of }B(\Vnew T,D_{lT})\times \{lT\}\supset \iota\Tu T.
\end{align*}
For a refinement sequence $W_0=\T_0$, $W_{j+1}=W_j\cup \Tu T_j$, let 
\index{w@$w$ -- estimates $W$}
\begin{align}\label{subest}
w_0:=\R^n\times\{0\}\supset \iota W_0, w_{j+1}:=\underbrace{w_j}_{\supset \iota W_j}\cup \underbrace{\tu T_j}_{\supset \iota \Tu T_j}\supset \iota W_{j+1}
\end{align}
be the estimated forests of the refinement sequence.


\emph{Step 3.} \hyperlink{Step 3 from the pile game}{Step 3 from the pile game} above 
literally 
applies
to this calculation here, so it holds
\begin{align}\label{setminustrick2}
\mu(w_{j+1}\setminus w_j)\leq\mu(\tu T_j)-\mu(\tu\parent T_j).
\end{align}

\emph{Step 4: Shape similarity of the tower layers and $\mu(\tu S)-\mu(\tu\pa S)\leq 2C$.}
Note that $\ar\est$-towers have the very regular shape 
depicted in Figure \ref{fig:simplexesttower}. 
\begin{figure}[ht]
\centering{
\begin{tikzpicture}[information text/.style={fill=gray!10,inner sep=1ex}, scale=2]
\def\a{.5};
\def\b{0};
\def\breite{12};
\def\c{.2};
\def\d{.25};
\def\e{0};
\def\y{1.7};
\def\n{2};
\def\factor{2};
\def\layer{
	\fill[white] (0,0) rectangle (12,8);
	\draw[->] (0,0) -- (\breite,0);
	\draw[->]  (0,0) -- (0,6);
};

\clip (-.5,-.2) rectangle (\a*\breite+.1,3.3+\y);
\draw[->] (0,0) node[left]{$lS{-}2$} -- (0,\n*\y+1) node[left]{$l$};

\begin{scope}[cm={\a,\b,\c,\d,(0,\e)}]
	\layer;
	\draw[fill=gray!20,dotted] (4,5) -- (5,4) --(3,4) -- cycle;
	\draw[fill=gray!30] (5,4) -- (3,4) --(3,2) -- cycle;
	\coordinate (c) at (8.41,5){};
	\coordinate (f) at (3.7,3.3){};
	\coordinate (g) at (1.5,1.5){};
	\coordinate (h) at (2.5,1.5){};
	\coordinate (i) at (2,1.75){};
	\coordinate (j) at (2,1.5){};
	\coordinate (k) at (2,2){};
	
	\draw (5,5) node {$\cdot$};
	\draw (5,5) circle [radius=4.41cm];
	\draw[dotted] (5,5) circle [radius=2.41cm];
	\draw[<->,>=stealth] (5-2.41*1,5+2.41*.0) -- node[above] {\small{$D_{lS{-}2}$}} (5-4.41*1,5+4.41*.0);
\end{scope}

\begin{scope}[yshift=\y cm]
\draw (0,0) node[left]{$lS{-}1$};
\begin{scope}[cm={\a,\b,\c,\d,(0,\e)}]
	\layer;
	\coordinate (b) at (7.41,5){};
	\coordinate (e) at (4,4.3){};
	\draw (5,5) node {$\cdot$};
	\draw (5,5) circle [radius=2.41cm];
	\draw[dotted] (5,5) circle [radius=1cm];
	\draw[<->,>=stealth] (5-1*12/13,5+1*5/13) -- node[above right] {\small{$D_{lS{-}1}$}} (5-2.41*12/13,5+2.41*5/13);
	\draw[fill=gray!20,dotted] (5,5) -- (4,5) --(5,4) -- cycle; 
	\draw[fill=gray!30] (4,5) -- (5,4) --(3,4) -- cycle;
\end{scope}

\begin{scope}[yshift=\y cm]
\draw (0,0) node[left]{$lS$};
\begin{scope}[cm={\a,\b,\c,\d,(0,\e)}]
	\fill[white] (0,0) rectangle (10,10);
	\layer;
	\coordinate (A) at (2,2){};
	\fill (5,5) circle [radius=.7mm];
	\draw[thin,gray,text=black] (5,5)--(6.3,5.2) node [right]{$\iota\Vnew S$};
	\draw (5,5) circle [radius=1cm];
	\draw[<->,>=stealth] (5,5) -- node[right] {\small{$D_{lS}$}} (5-1*.6,5+1*.8);
	\coordinate (a) at (6,5){};
	\coordinate (d) at (4.7,4.7){};
	\draw[fill=gray!30] (5,5) -- (4,5) --(5,4) -- cycle; 
	\filldraw (4.75,4.6) node {\small{$\iota S$}};
\end{scope}
\end{scope}

\begin{scope}[cm={\a,\b,\c,\d,(0,\e)}]
	\foreach \x in {7.41,6.71,...,6}
		\draw[->] (a) -- node [sloped, 
		above] {est} (\x,5);
\end{scope}
\end{scope}

\begin{scope}[cm={\a,\b,\c,\d,(0,\e)}]
	\foreach \x in {9.41,8.41,...,7.41}
		\draw[->] (b) -- node [sloped, 
		above] {est} (\x,5);
\end{scope}

\draw[->] (d) -- node [sloped, above] {1} (e);
\draw[->] (e) -- node [sloped,above
] {1} (f);



\end{tikzpicture}
\caption{$\ar{\text{est}}$-tower of a simplex $S$. The layers are circles with approximately exponentially growing radius. Since $\ar{\est}$ estimates $\ar1$, whenever 
the new vertex of a simplex
lies in the next-level layer of the tower (the dotted circle), the simplex must be included in the layer itself (the continuous circle).}\label{fig:simplexesttower}
}
\end{figure}
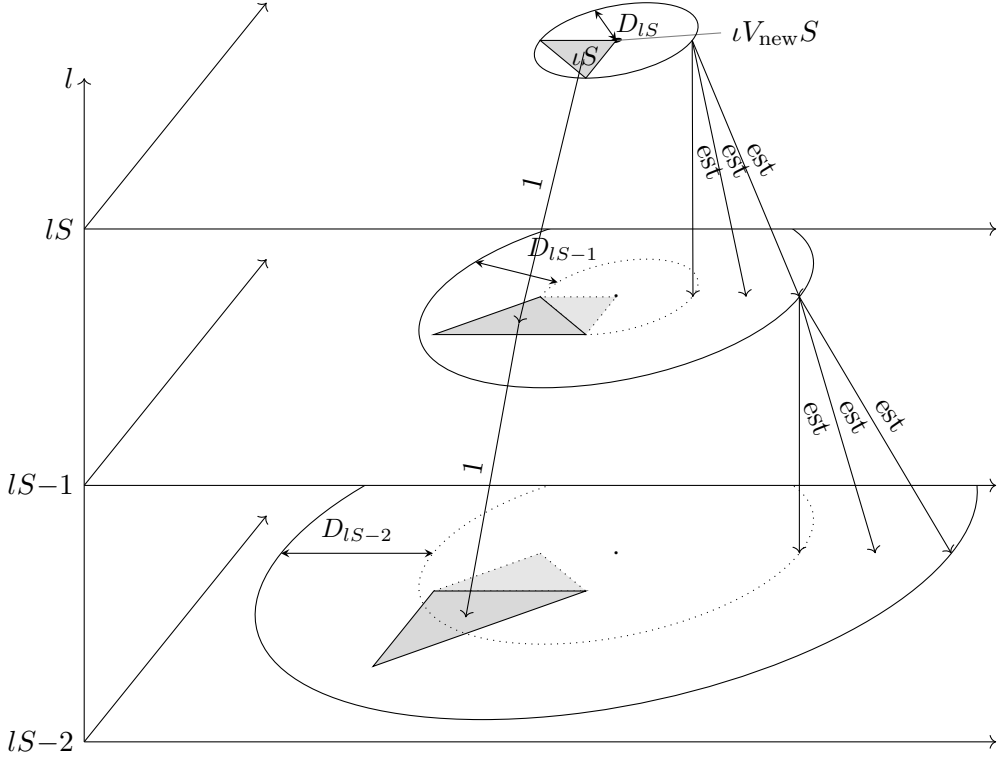
The figure shows that 
\begin{align*}
\tu^k(S)=B\left(\Vnew S,D_{lS}+\dots+D_k\right).
\end{align*}
Consequently,
\begin{align}\label{Turmvolumen}
\mu\tu^k(S)=\frac{2^k}{d}
V\left(\underbrace{D_{lS}}_{2^{-\frac{lS}{n}}D}+\dots+\underbrace{D_k}_{2^{-\frac{k}{n}}D}\right)
&=\frac{D^n}{d}
V\left(2^{-\frac{lS-k}{n}}+\dots+2^{-\frac{0}{n}}\right)\\
&=:f(lS-k)\label{f}
\end{align}
depends only on $lS-k$.

Finally, compute
\begin{align*}
&\mu\tu S-\mu\tu\pa S\\&=\mu\tu^1 S+\dots+\mu\tu^{lS} S-\mu\tu^1\pa S-\dots-\mu\tu^{l(\pa S)}\pa S\\
						&=f(lS{-}1)+\underbrace{f(lS{-}2)+\dots+f(0)-f(lS{-}2)-\dots-f(0)}_0\\
						&=f(lS{-}1)
\end{align*}
This and the equality of $\eqref{f}$ and $\eqref{Turmvolumen}$ lead to 
\begin{align*}
\mu\tu S-\mu\tu\pa S	
						&\leq\frac{D^n}{d}
										V\left(\sum_{j=0}^\infty 2^{-\frac{j}{n}}\right)
						=\frac{D^n}{\left(1-2^{-\frac1n}\right)^n d}V(1)\\
						&=:2C.
\end{align*}
\index{BDV constant!SIC!formula}
\emph{A short summary of the proof.}
\eqref{setminustrick2} shows
$\mu w_{j+1}-\mu w_j 
\leq \mu \tu T_{j}-\mu \pa\tu T_j \leq 2C$%
. 
$\mu w_0= 0$ (because the sum in \eqref{mudefinition} starts at $k=1$).
The equations \eqref{mudefinition}, \eqref{subest} and the latter
imply 
$\#(W_N\setminus\T_0)
\leq \mu(\iota W_N)
\leq\mu w_N\leq 2NC$.
\end{proof}

\paragraph{Example.} 
Some examples for the BDV constants are given in Table \ref{tab:SIC examples}.
\begin{SCtable}[][ht]
\bgroup
\def\arraystretch{1.5}
\begin{tabular}{c|c|c|c|c}
$n$ & $D$ & $d$ & $V(1)$ & $C\leq$\\
\hline
2 & 1 & $\frac12$ & $\pi$ & 37\\
\hline
3 & $\sqrt 2$ & $\frac 16$ & $\frac43\pi$ & 4100\\
\hline
4 & $\sqrt 3$ & $\frac 1 {24}$ & $\frac{\pi^2}2$ & $840{,}000$
\end{tabular}
\caption{BDV constants for an
initial mesh consisting of 
reference simplices of equal hyperlevels and types}
\index{BDV constant!SIC!values}
\label{tab:SIC examples}
\egroup
\end{SCtable}
\newpage
\section{A generalisation of the initialisation refinement
proposed by Kossaczký and Stevenson}\label{sec:initial division}
Starting with an arbitrary triangulation without T-arrays, there is no known algorithm how to arrange the vertices of its simplices into T-arrays, such that the strong initial conditions (\hyperlink{SIC}{SIC}) hold true. Kossaczký \cite[Section 4 starting at page 286]{Kossaczky} (for dimension 3) and Stevenson \cite[Appendix A starting on page 239]{Stevenson} (for arbitrary dimension) remedied this shortage by constructing an initial refinement of tagged simplices of an arbitrary triangulation without T-arrays which satisfies SIC. Namely, each simplex is divided into $(n{+}1)!/2$ simplices of equal volume and each vertex of this refinement is assigned to a position in a T-array independently of the simplex, such that SIC is satisfied.

In the 2-dimensional case, this is done by dividing all triangles by their centroids into 3 triangles each, letting all original edges be horizontal parts (i.e.\ refinement edges) of the small triangles and all centroids of original triangles be the vertical vertex of the T-arrays of the small triangles, see Figure \ref{triangledivision}.

\newsavebox{\tarrays}
\savebox{\tarrays}{$
\begin{pmatrix}
p_2\quad p_3\\
q_1
\end{pmatrix},
\begin{pmatrix}
p_3\quad p_1\\
q_1
\end{pmatrix},
\begin{pmatrix}
p_1\quad p_2\\
q_1
\end{pmatrix},
\begin{pmatrix}
p_3\quad p_4\\
q_2
\end{pmatrix},
\begin{pmatrix}
p_4\quad p_2\\
q_2
\end{pmatrix},
\begin{pmatrix}
p_2\quad p_3\\
q_2
\end{pmatrix}
$}
\newsavebox{\tarray}
\savebox{\tarray}{$
\begin{pmatrix}
p_i\quad p_j\\
q_k
\end{pmatrix}
$}
\begin{SCfigure}[3][ht]
\begin{tikzpicture}[scale=.5]
\draw (0,0) node[left]{$p_1$} -- (6,0) node[right]{$p_2$} -- (9,3) node[right]{$p_4$} -- (3,3) node[left]{$p_3$}-- cycle;
\draw (6,0) -- (3,3);
\draw[dashed] (0,0) -- (3,1) node[below]{$q_1$} -- (3,3) -- (6,2) node[above]{$q_2$} -- (9,3);
\draw[dashed] (3,1) -- (6,0) -- (6,2);

\end{tikzpicture}
\caption{Initial refinement according to Kossaczký and Stevenson of the triangulation $\{\triangle p_1p_2p_3,\triangle p_2p_3p_4\}$ into tagged simplices with the T-arrays 
\usebox{\tarray}.}
\label{triangledivision}
\end{SCfigure}

This section generalises their initial refinement algorithm to generate initial refinements with less simplices.
\subsubsection*{Dividing simplices by single points}
\label{sec:divide}
\begin{center}
\begin{tikzpicture}[scale=2]
\draw[thick] (2,0) -- (1,1);
\draw (0,0) --node{$T_2$} (2,0)node[right]{$p_1$}  --node[near start]{$S$} (1,1)node[above]{$p_2$} --node{$T_1$} cycle;
\filldraw (1.5,.5) node[above right]{$q$} circle [radius=.03];
\draw (1,.3) node {$T$};
\draw[dotted] (0,0) -- (1.5,.5);
\end{tikzpicture}
\end{center}
A point $q$ in the relative interior of an $m$-simplex $S=\conv\{p_0,\dots,p_m\}$ 
regularly \emph{divides} it
into the $m{+}1$ parts $S_j:=\conv(\{q,p_0,\dots,p_m\}\setminus\{p_j\})$ (for $j\in\{0,\dots,m\}$). (If $m=0$, i.e.\ if $S$ is a point, this is indeed not a proper division into several parts.) 
If $S$ is a subsimplex of an $n$-simplex $T$ thereby, $q$ also divides $T$ into $m{+}1$ parts regularly, namely into the convex hulls 
\begin{align*}
&\left\{\conv\left(S_j\cup\left(\Vertices T\setminus \Vertices S\right)\right)~|~j\in\{0,\dots,m\}\right\}\\
&=\left\{\conv\left\{q,T_j\right\}~\middle|~ T_j\text{ is 
a hyperface
of $T$ not containing $q$}\right\}.
\end{align*} 

\subsection*{The generalised algorithm}
\index{initial division}
Let $\T_0$ be a regular triangulation of an $n$-dimensional domain $\Omega$ into $n$-simplices. 
In the following, the set $\T_0$ will just be called \emph{the simplices} and the set of their subsimplices just \emph{the subsimplices}.
\paragraph{Marking the initial triangulation.}
\index{mark}
Some points of \emph{types}\index{type of a point} $n,\dots,2$ (this type is not the type of a T-array from above but a new notion and the marking has nothing to do with the marking in the AFEM loop) are distributed to 
$\Omega$ pursuant to the following algorithm: 

\begin{algorithmic}
\For{$m=n,\dots,2$}
\State Let $\S_m\subset\Sub\T_0$\index{S_m@$\S_m$} be the 
$m$-%
subsimplices 
not containing any point of type $>m$. Distribute points of type $m$ to 
$
\S_m$, such that every $m$-simplex in $\S_m$ contains exactly one. 
\EndFor
\end{algorithmic}
The algorithm is outlined more detailed in Algorithm \ref{initial division} below. For an example, look at Figure \ref{divisionexample}.
\begin{bemn}
\begin{enumerate}
\item
A point can be contained in several $m$-subsimplices.
\item
The points are preferred on subsimplices of low dimensions (optimally solely 
on
vertices) and 
on 
the longest edges.
\item
Note that there can be an $m$-subsimplex containing both type $>m$ and type $m$ points. But then it does not belong to $\S_m$, so a type $m$ point
will also lie in a subsimplex belonging to $\S_m$.
\item
A simple example of such a distribution of points is given by the initialisation of Kossaczký and Stevenson: Let the points of type $m$ be the barycentres of the $m$-subsimplices. The other distributions of points can be obtained by gradual alterations of this one: Choose an $(m{-}1)$-subsimplex $S$. Pull together all the points in the relative interiors of $m$-subsimplices $T\supset S$ and of $S$ itself to one point on $S$. The maximal type of all moved points becomes the type of 
this point.
\end{enumerate}
\end{bemn}
\paragraph{Initial divisions.} 
The
level $k$ descendants of an initial simplex are, differently from Maubach Bisection, defined by the points of type $n,\dots, n{-}k{+}1$ recursively as follows: 
Starting with $k=1$,
assume 
that 
a level $k{-}1$ descendant $T$ of the initial simplex 
contains a unique 
marked point of 
type $n{-}k{+}1$. 
Then divide $T$ by this point as described at the beginning. 
For an example, see Figure \ref{divisionexample}.
Since a simplex can be divided into more than 2 simplices, the trees of initial simplices may be not binary anymore.
The above assumption should be proved 
in the following lemma:

\begin{lem}\label{Sm well-defined}
A simplex $T$ arising through $n{-}m$ recursive divisions of an initial simplex (with $1\leq m\leq n$) is the convex hull of points 
$q_{n},\dots,q_{m+1}$ of type $n,\dots,m{+}1$
 respectively, and an $m$-subsimplex $S_m\in \S_m$. Particularly, 
 it contains
 a unique point $q_m$ of type $m$, if $m\geq2$.
\end{lem}
\begin{proof}[Proof by induction with decreasing $m$.]
It is trivial for $m=n$ (with $S_n:=T$, the initial simplex).
Assume the statement for a certain $m$. By construction, the subsimplex $S_m$ contains exactly one point $q_m$ of type $m$. This divides $T$ into the simplices 
\begin{align*}
\left\{\conv\{q_n,\dots,q_m,S_{m-1}\}~\middle|~
S_{m-1}\text{ is 
hyperface 
of $S_m$ not containing $q_m$}
\right\}.
\end{align*}
Since $S_{m-1}\subset S_m\in\S_m$, it does not contain any point of type $> m$ either. So indeed $S_{m-1}\in\S_{m-1}$ as stated. 
During the marking, 
$S_{m-1}$ has got exactly one point of type $m{-}1$ as stated.
\end{proof}

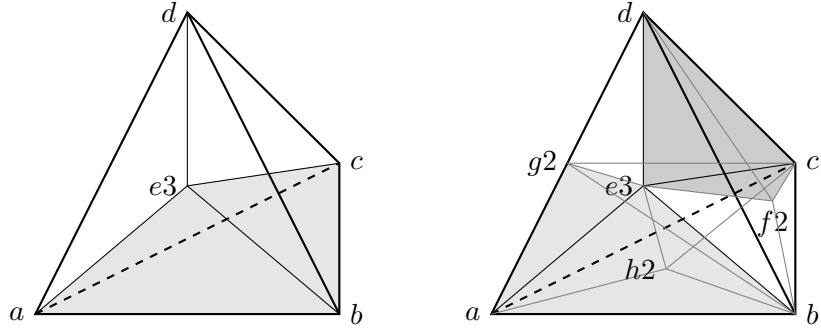
\begin{figure}
\centering{
\begin{tikzpicture}
\def\schritt1{
	\draw[thick] (0,0)node[left]{$a$} -- (4,0)node[right]{$b$} -- (4,2)node[right]{$c$} -- (2,4)node[left]{$d$} -- cycle;
	\draw[thick,dashed] (0,0) -- (4,2);
	\draw[thick] (4,0) -- (2,4);

	\draw[thin] (0,0) -- (2,1.7)node[left]{$e3$};
	\draw[thin] (4,0) -- (2,1.7);
	\draw[thin] (4,2) -- (2,1.7);
	\draw[thin] (2,4) -- (2,1.7);
};

\fill[gray!20] (0,0) -- (4,0) -- (2,1.7) -- (1,2) -- cycle;
\fill[gray!40] (4,2) -- (2,4) -- (2,1.7) -- (3.7,1.5) -- cycle;

\begin{scope}[xshift=-6 cm]
\fill[gray!20] (0,0) -- (4,0) -- (4,2) -- (2,1.7) -- cycle;
\schritt1
\end{scope}
\schritt1

\draw[thin,gray!100] (2,1.7) -- (3.7,1.5)node[below,black]{$f2$};
\draw[thin,gray!100] (4,0) -- (3.7,1.5);
\draw[thin,gray!100] (4,2) -- (3.7,1.5);
\draw[thin,gray!100] (2,4) -- (3.7,1.5);

\draw[thin,gray!100] (2,1.7) -- (1,2)node[left,black]{$g2$};
\draw[thin,gray!100] (4,0) -- (1,2);
\draw[thin,gray!100] (4,2) -- (1,2);

\draw[thin,gray!100] (0,0) -- (2.3,.6)node[left,black]{$h2$};
\draw[thin,gray!100] (4,0) -- (2.3,.6);
\draw[thin,gray!100] (4,2) -- (2.3,.6);
\draw[thin,gray!100] (2,1.7) -- (2.3,.6);

\end{tikzpicture}
}
\caption{
An example for initial division of the tetrahedron 
$\triangle abcd$. The numbers specify the types of the marked points. The left figure shows the tetrahedron after the first division into 4 parts, the right one shows the second division of the 4 parts into 3 parts by $f2$, 2 + 2 parts by $g2$ and 3 parts by $h2$, respectively.
($e3\in\rel\Int\triangle abcd$, $f2\in\rel\Int\triangle bcd$, $g2\in\rel\Int\overline {ad}$ and $h2\in\rel\Int\triangle abc$.)
}
\label{divisionexample}
\end{figure}

After $n{-}1$ recursive divisions the T-arrays are defined as follows.
\paragraph{T-arrays for level $n{-}1$ descendants.}
\label{T-arrays4divided}
Every simplex $\conv\{q_n,\allowbreak\dots,\allowbreak q_2,\allowbreak S_1\}$ (for $S_1$ being a 1-simplex as stated in Lemma \ref{Sm well-defined}) resulting from $n{-}1$ recursive divisions of the initial triangulation is equipped with the T-array
$\begin{pmatrix}
p_0\quad p_1\\
q_2\\
\vdots\\
q_n
\end{pmatrix}$ of type 1, where $p_0$ and $p_1$ are the vertices of $S_1$ (in arbitrary order).
\begin{thm}
The triangulation defined by 
the preceding paragraph
satisfies SIC.
\end{thm}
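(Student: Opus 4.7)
The conditions in SIC split into three requirements: regularity of the refined coarse triangulation, uniform T-array type, and pairwise consistent reference coordinates. The uniform-type requirement is immediate, since the construction explicitly gives every level-$(n{-}1)$ small simplex a T-array of type~$1$. For regularity, I would argue by induction on the recursion depth $k$: the marking is placed on subsimplices of $\T_0$ and is therefore shared between any two original $n$-simplices $T, T'$ of $\T_0$ that meet in a common subsimplex $F$. By induction the restriction to $F$ of the level-$k$ subdivision of $T$ coincides with that of $T'$, so the small simplices fit together regularly along $F$ and the entire refined mesh is regular.

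For the third SIC I use Lemma~\ref{Sm well-defined}: each small simplex has the form $T = \conv\{p_0, p_1, q_2, \ldots, q_n\}$, where $\{p_0, p_1\}$ are the vertices of an edge $S_1$ of $\T_0$ and $q_m$ is the unique type-$m$ marked point attached to a subsimplex $S_m$, yielding a flag $S_1 \subset S_2 \subset \cdots \subset S_n$. The plan is to proceed pairwise, as SIC requires. For two intersecting small simplices $T, T'$, I first reduce via the hyperface-chain argument used earlier in the thesis (when deriving SIC from Stevenson's initial conditions) to the case where $T \cap T'$ is a whole hyperface. For such a hyperface-sharing pair I then verify that $T$ and $T'$ are reflected neighbours, that is, the T-array of $T$ (or of its reflexion $T_R$) differs from that of $T'$ in exactly one position, and invoke Lemma~\ref{reflected neighbours RefCoCo} to produce the matching reference coordinates.

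The main technical obstacle is the reflected-neighbour verification, which I would handle by case analysis on which element of the flag differs between $T$ and $T'$. If $T, T'$ come from the same original $n$-simplex, their flags agree except at one index $j$; for $j \geq 2$ only one vertical position of the T-array differs, while for $j = 1$ the two edges $S_1, S'_1$ lie in a common $2$-subsimplex and share exactly one endpoint, so after possibly reflecting the horizontal part of one T-array only one horizontal position differs. If $T, T'$ come from different original $n$-simplices, their shared hyperface must lie inside the common subsimplex of $\T_0$, which forces the flags to coincide at every level $\leq n{-}1$; only the top-level $S_n \neq S'_n$ differs, so $q_n \neq q'_n$ is the single differing entry in the bottom vertical position. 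Alongside this combinatorial matching I would check its geometric counterpart: the nested-cube structure demanded by the definition of a $(1,0)$ reference simplex is realisable for each $\vi_T(T)$, which ultimately reduces to the observation underlying the classical Kossaczký--Stevenson centroid construction, of which the present algorithm is the direct generalisation to arbitrary markings on subsimplices.
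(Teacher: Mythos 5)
Your treatment of the first two SIC conditions (uniform type $1$, regularity via the marked points being shared subsimplex data) is correct and matches the paper's. The problem lies in your route to the third condition.

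Reducing to hyperface-sharing pairs via the hyperface-chain argument and then checking reflected neighbourhood has a genuine gap: the chain reduction is valid only if the refined mesh is hyperface-connected, i.e.\ for every pair $T,T'$ meeting in a face $F$ there exists a chain $T=T_1,\dots,T_M=T'$ of successively hyperface-sharing small simplices with $\bigcap_i T_i = F$. This is neither assumed of $\T_0$ nor established for the refined mesh, and it can fail: if two initial $n$-simplices meet only in a vertex $v$ and no further initial simplices surround $v$, then after division the small simplices on the two sides containing $v$ intersect only in $\{v\}$ and admit no hyperface chain preserving that intersection. Since Definition~\ref{SIC} requires coinciding reference coordinates for \emph{every} intersecting pair, your argument only covers the hyperface-connected ones and does not establish SIC as stated. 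The paper sidesteps this entirely by exhibiting essentially global reference coordinates: every marked point $q_j$ is shared data and is always sent to $\tfrac12(e_1+\dots+e_j)$ independently of the small simplex, and the only per-simplex freedom is the order in which $\{p_0,p_1\}$ is sent to $\{0,e_1\}$, realised by the two maps $\vi_1,\vi_2$. For any intersecting pair at most one of $p_0,p_1$ lies in $T\cap T'$ unless the whole edge $S_1$ is shared, so choosing $\vi_1$ or $\vi_2$ independently per simplex always makes the coordinates agree on the intersection --- no chain and no reflected-neighbour verification is needed.

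A secondary point: your intermediate claim that hyperface-sharing small simplices from the same initial $n$-simplex have flags differing in exactly one index is false. For example, with $n=3$ and $q_2$ lying on an edge of two different $2$-faces of the tetrahedron, one finds hyperface-sharing pairs whose flags $S_1\subset S_2\subset S_3$ differ at both indices $1$ and $2$. Your final conclusion that such pairs are nonetheless reflected neighbours is still true, but for a simpler reason than the one you give: since horizontal vertices of a small T-array are never marked points (by the definition of $\S_1$) while vertical entries always are and their type fixes their row, two small T-arrays sharing $n$ of $n{+}1$ vertices automatically differ in exactly one canonical position, up to reflexion of the horizontal pair.
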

\begin{proof}
Recall the three properties of SIC from Definition \ref{SIC} on page \pageref{SIC}: \emph{Regularity, all T-arrays must have the same type and for each pair of simplices there must be reference coordinates coinciding on their intersection.}
Indeed, 
all T-arrays have type 1.
Since a point in a subsimplex $S$ defines a regular subdivision of all simplices including $S$, the triangulation is regular.
Reference coordinates for the T-array
$\begin{pmatrix}
p_0\quad p_1\\
q_2\\
\vdots\\
q_n
\end{pmatrix}$
are given by 
$$\vi_1(p_0)=0,\vi_1(p_1)=e_1,\vi_1(q_j)=\frac12(e_1+\dots+e_j) \text{ for $j\geq2$}$$
or alternatively by 
$$\vi_2(p_0)=e_1,\vi_2(p_1)=0,\vi_2(q_j)=\frac12(e_1+\dots+e_j) \text{ for $j\geq2$}.$$
Since the marked points $q_j$ and their types were set globally, their reference coordinates are also global. If two of the level $n{-}1$ simplices intersect in horizontal vertices, an appropriate choice between $\vi_1$ and $\vi_2$ (note that for each pair of simplices the reference coordinates can be chosen individually) ensures coincidence of the reference coordinates also for these vertices and hence for the whole intersection of the two simplices.
\end{proof}
Algorithm \ref{initial division} sums up the result of this chapter.
\begin{algorithm}
\caption{Generalised initial division}\label{initial division}
\paragraph{Distribution of points.}
\begin{algorithmic}
\For{$m:=n, m \geq 2,m${-}{-}}
\State{Let $\S_{m}$ be the $m$-subsimplices of $\T_0$ not containing any point of type $> m$.}
\State{$Remainder:=\bigcup\S_{m}$}
\State{$Points_m:=\leer$}
\While{$Remainder\neq\leer$}
\State{Choose $p\in Remainder$}\\
\Comment{prefer to choose a point $p$ on a subsimplex of low dimension}
\State{$Points_m:=\{p\}\cup Points_m$}
\State{$patch:=\{S\in\S_m~|~p\in S\}$}
\For{$S\in patch$}
\State{$point_S:=p$}
\EndFor
\State{$Remainder:=Remainder\setminus \bigcup
patch$}
\EndWhile
\State{$Points_m$
are the points of type $m$.}
\EndFor
\end{algorithmic}
\paragraph{Step by step division of simplices.}
\begin{algorithmic}
\State{$\Help_{n+1}:=\{(\Vertices T,())~|~T\in\T_0\}$}\\
\Comment{New data Structure; a simplex is represented by a pair $(O,N)$, where $O$ is the set of its old vertices and $N$ is the \emph{list} of its new vertices}
\For{$m=n,m\geq 2,m${-}{-}}
\For{$T=(O,(q_{m+1},\dots,q_n))\in\Help_{m+1}$}
\State{$\Hull:=\text{subset of $O$ with }point_{\conv O}\in\rel\Int(\conv\Hull)$}
\Comment{is unique}
\State{$parts_T:=\left\{\big(O{\setminus} \{p_m\}\,,\,(point_{\conv O},q_{m+1},\dots,q_n)\big)~|~p_m\in\Hull\right\}$}
\EndFor
\State{$\Help_m:=\bigcup_{T\in\Help_{m+1}} parts_T$}
\EndFor
\end{algorithmic}
\paragraph{T-arrays for the simplices.}
\begin{algorithmic}
\For{$T=(O,(q_2,\dots,q_n))\in\Help_2$}
\State{Provide an order in $O=\{p_0,p_1\}$.}
\State{$T\text{-}array_T:=
\begin{pmatrix}
p_0\quad p_1\\
q_2\\
\vdots\\
q_n
\end{pmatrix}
$}
\EndFor
\State{$\T:=\{T\text{-}array_T~|~T\in\Help_2\}$}
\end{algorithmic}
\end{algorithm}
After this initialisation of T-arrays, further descendants are generated by standard bisection.
\begin{bemn}
\begin{itemize}
\item
If it succeeds to put the marked points solely on vertices of the initial triangulation, there will be no division at all, but just 
an arrangement of the vertices of the simplices into T-arrays of type 1.
\item
Starting with a cube divided into $n!$ reference simplices of type $n$, 
let 
each centre of an $m$-dimensional face of the cube be a type $m$ point. Then every initial division step is already 
standard bisection. 
\item
Hence the presented division fills the space between an ordinary initialisation of a triangulation into tagged simplices, the standard bisection and the initialisation division of Kossaczký and Stevenson in a sense.

\item
Kossaczk\'y chooses different T-arrays for the divided simplices.
\end{itemize}
\end{bemn}
\newpage
\section{Restricted T-arrays and a generalisation of a theorem of Gallistl, Schedensack and Stevenson}\label{sec:ReTaCo}
\subsection{Introduction: the GSS theorem}
Gallistl, 
Schedensack and 
Stevenson proved the following 
\cite[Theorem 3.3]{GSS}:
\begin{thm}[GSS theorem]\label{GSS}\index{GSS theorem}
Suppose SIC. There is some universal constant $C(\T_0)$, such that: 

Let $
\T$ be an arbitrary 
admissible triangulation and 
$S'\in\refine(\T,T)$ be a descendant of $S\in\T$. Then $l(S')\leq l(S)+C$.
\end{thm}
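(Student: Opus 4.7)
The plan is to work in the forest language of Table \ref{tab:conversion table} and exploit the level structure supplied by SIC. By Lemma \ref{refineforest} one has $\Wald(\refine(\T,T)) = \Wald(\T) \cup \Tu(T_1)$ for $T_1$ a child of $T$, so any descendant of $S$ that is created during the refinement lies in $\Tu(T_1)$. Applying the second characterisation (Lemma \ref{ra-closed is ar01-closed}) presents such a descendant $S'$ as the endpoint of a chain
\[
T_1 \ar{0} V_0 \ar{1} V_1 \ar{1} \cdots \ar{1} V_k = S'.
\]
Under SIC the remark following Definition \ref{ar} forces $l(V_i) = l(T)+1-i$, so $l(S') = l(T)+1-k$; this already yields the coarse bound $l(S') \le l(T)+1$, and the real task is to sharpen it to a bound relative to $l(S)$.

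The next step is a geometric estimate on the chain. Writing $q_i := \Vnew V_i$, the definition of $\ar{1}$ gives $q_{i+1} = \Vnew \pa V_i \in \Vertices V_i$, so together with $q_i \in \Vertices V_i$ the segment $\overline{q_i q_{i+1}}$ is an edge of $V_i$. Corollary \ref{edgelengths} combined with Lemma \ref{lem:dD} bounds its length by $D \cdot 2^{-(l(T)+1-i)/n}$, and summing the resulting geometric series yields $\lVert q_0 - q_k \rVert \le C_1 \cdot 2^{-l(S')/n}$ for some $C_1 = C_1(\T_0)$. Since $q_0 \in T_1 \subset T$ and $q_k \in S' \subset S$, the leaves $T$ and $S$ of $\T$ must lie within this Euclidean distance of each other.

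The main obstacle is converting this geometric closeness into the combinatorial bound $l(T)-l(S) \le C-1$. I would proceed by letting $j$ be the smallest index with $V_j \subset S$: by Corollary \ref{intersectancestor} the previous simplex $V_{j-1}$ has interior disjoint from $S$ but shares the vertex $q_j$ with $V_j$, and Corollary \ref{SubUSub} identifies $S \cap V_{j-1}$ as a common subsimplex. In the generic case where this subsimplex lies in $\Sub S$, the vertex $q_j$ belongs to $\Vertices S$, the level identity $l(q_j)=l(V_j)$ combined with $l(q_j)\le l(S)$ then forces $l(V_j)=l(S)$, hence $V_j = S$ and the bound is trivial. In the remaining \emph{hanging} case, $q_j$ is a legitimate new vertex in the relative interior of $S$ (Corollary \ref{hanging->new}), and the task reduces to bounding the number of successive $\ar{1}$-steps whose $V_i$ still descend from $S$. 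Here one uses the cube geometry of Theorem \ref{Cubetheorem} together with the finiteness of similarity classes (Corollary \ref{similarity classes}): the types along an $\ar{1}$-segment cycle with period at most $n$, so after at most a constant (depending only on $\T_0$) number of steps the chain is forced either to leave $S$ or to reach a vertex of $S$, which bounds $l(V_j)-l(S)$ and hence $l(S')-l(S)$.

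Putting the three steps together gives the universal constant $C = C(\T_0)$. The hardest step is the hanging-node case in the third paragraph: the naive estimate $l(S') \le l(T)+1$ is much too weak, and extracting a bound in terms of $l(S)$ really does require the interplay between the admissibility of $\T$ (through Corollaries \ref{SubUSub}, \ref{hanging->new} and \ref{intersectancestor}) and the cube-induced bisection structure of Theorem \ref{Cubetheorem}. The thesis subsequently sharpens this constant to $C = 2n$.
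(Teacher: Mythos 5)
Your first two steps are sound: the chain $T_1\ar0 V_0\ar1\cdots\ar1 V_k=S'$ does exist by Lemma \ref{ra-closed is ar01-closed}, the level arithmetic $l(V_i)=l(T)+1-i$ follows from the remark after Definition \ref{ar}, and the geometric-series estimate on $\lVert q_0-q_k\rVert$ is correct. Your Case A treatment is essentially right too — if $q_j$ is a vertex of $S$ then $l(q_j)\le l(S)$ and $l(V_j)=l(q_j)$ forces $V_j=S$, hence $S'=S$.

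The gap is in the hanging case of the third paragraph. You assert that ``the types along an $\ar1$-segment cycle with period at most $n$, so after at most a constant number of steps the chain is forced either to leave $S$ or to reach a vertex of $S$, which bounds $l(V_j)-l(S)$,'' but this is not an argument: type cycling with period $n$ is a property of a \emph{single} bisection tree, whereas the chain $V_j,V_{j+1},\dots$ jumps between unrelated simplices at each $\ar1$-step, and there is no mechanism forcing the chain to exit $S$ or touch $\Vertices S$ within a bounded number of steps. Moreover, even if such a bound on the length of a sub-chain inside $S$ were established, it would bound $l(V_j)-l(V_m)$ for the last in-$S$ index $m$, not $l(V_j)-l(S)$, which is the quantity you actually need. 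The geometric closeness of $T$ and $S$ from your second step is also insufficient on its own, since closeness in $\R^n$ does not control the level difference of two leaves of an admissible triangulation.

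The thesis proves the bound by a genuinely different route. It does not analyse chains at all: Theorem \ref{quasi-uniform} (proved via the ReTaCo machinery) constructs a \emph{single} admissible refinement $\T'$ of $\T$ in which every simplex of $\T$ is bisected at least $n$ and at most $2n-1$ times, namely the refinement that bisects exactly the original edges of $\T$ together with the necessarily arising edges $(H_1H_2,V)$. Because this $\T'$ is strictly finer than every $T\in\T$, one has $\refine(\T,T)\le\T'$ for every $T$, so $\Wald\refine(\T,T)\subset\Wald\T'$, and every node of $\Wald\T'$ that descends from $S\in\T$ has level $\le l(S)+2n-1$. That yields $C=2n-1$ directly and avoids the delicate casework you were attempting. (Your closing remark ``$C=2n$'' is the abstract's informal rounding; the proof of Theorem \ref{quasi-uniform} gives $2n-1$.) If you want to salvage your chain argument, you would essentially have to re-derive the content of Theorem \ref{quasi-uniform}: show that the set of admissible edges demanded by a fixed simplex inside $S$ is bounded combinatorially, which is exactly what the global-refinement construction encodes.
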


In uniform refinements $\T$ of $\T_0$, it is possible to bisect every simplex once, getting the (regular) next level uniform refinement $\hat \T$. Therefore, $\refine(\T,T)$, which is coarser than $\hat \T$, bisects every simplex in $\T$ at most \emph{once}.

This estimation principle is crucial in this section:
Recall that $\refine(\T,T)$ is the coarsest refinement of $\T$ strictly finer than $T$. So 
for 
a 
given
refinement $\hat \T$ of $\T$ strictly finer than $T$, $\refine(\T,T)$ is certainly coarser than $\hat\T$.

In 2D, $C=2$ does the job generally: Bisect every triangle twice as on the 
left of Figure \ref{fig:refine}
below%
. Then every edge of $\T$ gets bisected but no further edge is bisected and thus hanging nodes do not arise.

This is more complicated in higher dimensions: Consider the 
T-array
$S=\begin{pmatrix}
a\quad b\\c\\d
\end{pmatrix}$. Assume it has a neighbour with refinement edge $\overline{cd}$ which should be refined. So $S$ has to be bisected into descendants not including $\overline{cd}$ as a whole edge. 
Follow
a path in the binary tree:
\begin{align*}
\begin{pmatrix}
a\quad b\\c\\d
\end{pmatrix}
\ra
\begin{pmatrix}
a\\\frac{a+b}2\\c\\d
\end{pmatrix}
\ra
\begin{pmatrix}
\frac{a+b}2&c&d\\ 
&\frac{a+d}2
\end{pmatrix}
\ra
\begin{pmatrix}
c\quad d\\
\frac{a+b}4 {+}\frac d2\\
\frac{a+d}2
\end{pmatrix}
\ra
\begin{pmatrix}
d\\
\frac{c+d}2\\
\frac{a+b}4 {+}\frac d2\\
\frac{a+d}2
\end{pmatrix}.
\end{align*}
Note that after this refinement, the face $\triangle abd$ 
is more than twice uniformly bisected as depicted on the right of Figure \ref{fig:refine} below. 
\begin{figure}[b]
\centering{
\begin{tikzpicture}[information text/.style={fill=gray!10,inner sep=1ex}, scale=.5]
\draw (0,0) 
-- (4,0) 
-- (4,4) 
-- cycle;
\draw 
(3,1) -- (4,0);
\draw 
(2,0) -- (2,2) -- (4,2)  (3,1) -- (2,2);
\draw (4,2) node[right=
2mm
,text width=2.8cm,
	information text]
	  {
	  	 After 2 uniform refinements of a triangle, all original edges are bisected.
	  };
\end{tikzpicture}
\begin{tikzpicture}[information text/.style={fill=gray!10,inner sep=1ex}, scale=.5]
\draw (0,0) node [left] {$a$}  -- (4,0) node [right] {$d$} -- (4,4) node [right] {$b$} -- cycle;
\draw (2,0) 
-- (3,1) -- (4,0);
\draw 
(2,0) -- (2,2) -- (4,2) -- (3,1) -- (2,2);
\draw (4,2) node[right=
7mm
,text width=2.8cm,
	information text]
	  {
	  	 The face $abd$ after refinement.
	  };
\end{tikzpicture}
}
\caption{}\label{fig:refine}
\end{figure}
In this case,
$S$ must be bisected 4 times successively, not only every edge of $S$ is bisected, but also the new edge $(d,\frac{a+b}2)$. That is why the GSS theorem is not trivial. 

The major part of this section unfolds some basic theory for the restriction of T-arrays and for the the initial condition that restricted T-arrays coincide (ReTaCo). ReTaCo is satisfied by every admissible triangulation, if $\T_0$ satisfies SIC (see Lemma \ref{SIC=>ReTaCo}). Proofs are technical and lengthy there. The main result, the generalisation of the GSS theorem, is Theorem \ref{quasi-uniform} in Subsection \ref{sec:quasi-uniform refinement} at the end on page \pageref{quasi-uniform}, together with Corollary \ref{SIC=>ReTaCo} before. 
The proof of Theorem \ref{quasi-uniform} can be understood without the former ReTaCo theory with $\T_0$ being any admissible refinement of an initial triangulation which satisfies SIC.
\subsection{
Restricted
T-array coincidence (ReTaCo)}
\subsubsection{Restriction of T-arrays}\index{restriction of T-arrays}\index{restricted T-array}\index{restriction}
\begin{defn}[restriction of a T-array to a subsimplex]
A T-array can be \emph{restricted }to subsimplices. For that purpose, erase some entries from the T-array and push the remaining horizontal and vertical part together. 
For example, the restrictions of
\begin{align*}
\begin{pmatrix}
p_0\quad p_1\quad p_2 \quad p_3\\
p_4\\
p_5
\end{pmatrix}
\end{align*}
to $
\triangle p_1p_3p_5
$ and $
\overline {p_4 p_5}$ are the T-arrays
\begin{align*}
\begin{pmatrix}
p_1\quad p_3\\
p_5
\end{pmatrix}
\quad\text{and}\quad
\begin{pmatrix}
 p_4\\
 p_5
\end{pmatrix},
\end{align*}
respectively. The restriction of a T-array $T$ to a subsimplex $S$ is denoted by $\rstr(T,S)$.
For brevity, the words \emph{to a subsimplex} are sometimes dropped.
Using the expression $\rstr(T,S)$\index{rstr@$\rstr(T,S)$ -- restriction of T-array $T$ to subsimplex $S$} always implies the statement that $S$ is a subsimplex of $T$.
The set of all restrictions of $T$ is denoted by $\Rstr(T)$\index{Rstr@$\Rstr$ -- set of restrictions}.
\end{defn}
%
%
As 
tools to prove the next lemmas, \emph{gappy restrictions} 
are introduced.
\begin{defn}[gappy T-array, gappy restriction
]
A \emph{gappy }T-array has both usual entries (being \emph{vertices} of the simplex) and \emph{gaps}, denoted by $\square$.
If some entries of a T-arrays are entries erased but the gaps are \emph{not} pushed together, it is called \emphindex{gappy restriction}\index{gappy T-array} of the T-array. One can apply Maubach's bisection on gappy T-arrays, extending the arithmetic by the rule that the midpoint between an arbitrary entry and a gap is a gap again (as usual for $\infty$). 
\end{defn}
This way, the tree of a T-array $T$ is mapped bijectively to the tree of a gappy restriction $G$ and even entry by entry. (That is, every node (i.e.\ T-array) $D$ in one tree is mapped to a node $e(D)$ in the other one and furthermore every entry in a T-array $D$ is associated with the entry at the same place in the image $e(D)$.) Here, one and the same gappy T-array can appear several times in the tree as different nodes of the tree.
\begin{lem}\label{restricted children}
Let $S$ be a restriction of a T-array $T$. 
\begin{enumerate}
\item
If $t(T)=0$, then $\rstr(T^T,S)=S^T$.
\item\label{it:Eref not included}
If $\Eref T\not \subset S$, then it holds $\rstr(T_1,\conv S)=S$ for one child $T_1$ of $T$ and $\rstr(T_2,T_2\cap S)=\rstr(S,T_2\cap S)$ for the other child.
\item\label{it:Eref included}
If $\Eref T\subset S$, then the restrictions of the children of $T$ to their intersection with $S$ are the children of $S$.
\end{enumerate}
\end{lem}
\begin{proof}
\begin{enumerate}
\item is obvious.
\end{enumerate}
\emph{2. and 3.} Let $G$ be the gappy restriction of 
$T$ to $\conv(S)$. It is clear that all entries in the 
children
of $G$ which are not gaps equal the associated entries in the 
children
of $T$, because 
the calculation
leading to this entry equal%
s
here and there. Furthermore, it is clear that exactly these vertices (i.e.\ entries) in the 
children
of $T$ correspond to non-gaps in the 
children
of $G$, which lie in $\conv(S)$, the convex hull of all non-gaps. 

The remaining assertion is that pushing together the 
children
of the gappy restriction $G$ 
yields what is stated
for the restrictions of the children of $T$.

%
Let 
$G=
\begin{pmatrix}
p_0&\dots&p_k\\
&\vdots\\
&p_n
\end{pmatrix}$
be a gappy T-array. 

\begin{enumerate}
\item[2.]
If at least one of the vertices of the refinement edge of $G$ is a gap, say $p_0=\square$, then $G$ has the children
\begin{align*}
G_1=\begin{pmatrix}
p_1\quad \dots \quad p_{k}\\
\square\\
p_{k+1}\\
\vdots\\
p_n
\end{pmatrix} \text{ and } G_2=\begin{pmatrix}
\square\quad p_1\quad \dots \quad p_{k-1}\\
\square\\
p_{k+1}\\
\vdots\\
p_n
\end{pmatrix}
\end{align*}
and pushing together $G_1$ still yields 
$S$. 
Pushing together $G_2$ yields
$$
\rstr(T_2,S\cap T_2)
=\rstr\left(S,\conv\left(\Vertices S{\setminus} \{p_k\}\right)\right)=\rstr(S,S\cap G_2).
$$

\item[3.]
If both $p_0$ and $p_k$ are non-gaps, it makes no difference, whether first to bisect or first to push together the T-array obviously. 
\end{enumerate}
\end{proof}
\begin{defn}[restricted descendant]\index{restricted descendant}
The set of \emph{restricted descendants} of a T-array $T$ is $\Rstr\Descext T$,
the set of restrictions of extended descendants $D$ of $T$.
\end{defn}
The following seemingly more general notion is only tentative. It is equivalent to the restricted descendant 
as the next lemma shows.
\begin{defn}[generalised descendant]
A \emph{generalised descendant }of a T-array $S$ is a T-array, which arises by a finite recursive application of choosing one of the children in the extended binary forest and 
restriction.
\end{defn}
%

\begin{lem}\label{lem:restricted descendants}
\begin{enumerate}
\item\label{generalised is restricted}
The set of restricted descendants of a T-array equals the set of generalised descendants.
\item
Let $S$ be a restriction of a T-array $T$. Then 
\begin{align*}
\left\{\Rstr(D,D\cap S)~\middle|~D\in\Rstr\Descext T\right\}=\Rstr\Descext S, 
\end{align*}
i.e.\ the following diagram commutes.
\begin{align*}
\begin{matrix}
T & \xmapsto
{\text{restriction}} & S\\
\Big\downarrow && \Big\downarrow\\
\Rstr\Descext T& \xrightarrow[\text{to }S\cap \bullet]{\text{restriction}}&
\Rstr\Descext S
\end{matrix}
\end{align*}
(The down-pointing arrows are actually $\mapsto$.)
\end{enumerate}
\end{lem}
\begin{proof}[Proof of 1.]
A generalised descendant arises by recursive application of choosing a child and restricting to a subsimplex. A restricted descendant as well, but at first only children can be chosen and only at the end the T-array is restricted. Thus, all what has to be shown is that the choices of children can be brought forward before the restrictions, i.e.\ for a T-array $T$, a restriction $S$ of $T$ and a child $Z$ of $S$, wanted is a descendant $D$ of $T$ which can be restricted to $Z$. 

If the refinement edge of $T$ is not included in $S$, then $T$ has a child with the same restriction $S$ according to Lemma \ref{restricted children}.\ref{it:Eref not included}. This child is chosen recursively until the type becomes 0 or the refinement edge of $T$ is included in 
$S$. 
If $t(T)=0$, also $t(S)=0$ and also $T^T=:D$ is restricted to $S^T=Z$.
If $\Eref T\subset 
S$, choosing the child can be brought forward according to Lemma \ref{restricted children}.\ref{it:Eref included}.

\emph{Proof of 2.} 
For a restricted descendant $D$ of $T$, $S\cap D$ is a subsimplex of $D$,
according to Lemma \ref{UntermengeTeilmenge}. 
\emph{2nd statement: $\Rstr\Descext S$ are included in the restrictions of $\Rstr\Descext T$ to their intersections with $S$.} $\Rstr\Descext S$ are generalised descendants of $T$ and according to 1., also restricted descendants of $T$.
\emph{3rd statement: 
For a given
restricted 
descendant $D$ of $T$
, $\rstr(D,D\cap S)$ is a restricted descendant of $S$.%
}
According to 1., it suffices to find a \emph{generalised} descendant $Z$ of $S$ with this property. At first, \emph{descendants (without restriction)} $D$ of $T$ are considered. As induction assumption, assume that $\rstr(\pa D,\allowbreak {\pa (D)\cap S})$ is a generalised descendant $Z^*$ of $S$. Then 
the cases 
in Lemma \ref{restricted children} 
together imply
that $Z:=\rstr(D,D\cap Z^*)$ is a generalised descendant of $Z^*$ and hence of $S$, while $D\cap Z^*=D\cap\pa D\cap S=D\cap S$, so indeed $Z=\rstr(D,D\cap Z^*)=\rstr(D,D\cap S)$ as needed. Secondly, \emph{restrictions} of descendants $D^*$ of $T$ to subsimplices $U$ of $D^*$ are considered for $D$. This means that $\rstr(D^*,D^*\cap S)$ is further restricted to $U\cap S$, which 
yields a restricted descendant of $S$, as well as $\rstr(D^*,D^*\cap S)$ is.
\end{proof}

\subsubsection{Identification of some T-arrays}
\index{identical T-array}
Equivalently to Stevenson \cite{Stevenson},
some
T-arrays are identified with each other. 
Firstly, note that a type 0 array has to be transposed if it shall be bisected further. Hence identify it with its transposed. Their descendants are the same anyhow. Write $S\stackrel{T}{\sim} S^T$.

Secondly, 
identify $T$ with the reflected T-array $T_R$ (the T-array having the horizontal part of $T$ reflected) writing $T\stackrel{R}{\sim} T_R$.

Together, these identifications identify also
\begin{align*}
\begin{pmatrix}
p_0\\
\vdots\\
p_m
\end{pmatrix}
\stackrel{T}{\sim}
\begin{pmatrix}
p_0&\ldots& p_m
\end{pmatrix}
\stackrel{R}{\sim}
\begin{pmatrix}
p_m&\ldots& p_0
\end{pmatrix}
\stackrel{T}{\sim}
\begin{pmatrix}
p_m\\
\vdots\\
p_0
\end{pmatrix}.
\end{align*}
\begin{bem}
If $S\sim S'$, the nodes of the binary tree of $S$ (i.e.\ the descendants of $S$) are identical to the nodes of the binary tree of $S'$.
\end{bem}
\subsubsection{A weaker initial condition: restricted T-array coincidence.}
\begin{defn}[restricted T-array coincidence (ReTaCo)]\label{def:ReTaCo}
\index{restricted T-array coincidence}\index{ReTaCo -- restricted T-array \\co\-in\-ci\-dence}
The \emph{restricted T-arrays} of 
the initial triangulation 
$\T_0$ \emph{coincide} (\hypertarget{ReTaCo}{\emph{ReTaCo}}), if 
for each pair $S,T\in\T_0$ with $S\cap T\neq\emptyset$, the restrictions of the T-arrays of $S$ and $T$ to $S\cap T$ are identical.
\end{defn}
\begin{lem}\label{ReTaCoinheritance}
Assume ReTaCo for $\T_0$ and let $\{S,T\}\subset \Rstr\Simplexe$ be a regular pair of restrictions of admissible simplices from the extended binary forest to subsimplices of theirs. Then their restrictions to $S\cap T$ coincide.

Especially, every admissible triangulation satisfies ReTaCo.
\end{lem}
\begin{proof}
%
%
It is proven by induction on $n{:=}\dim \T_0$.

If $S=T$ is one $n$-simplex, the T-arrays coincide anyway. 
In the base case $n=1$, all T-arrays are identical.
Otherwise, let $S_0\supset\dots \supset S_M$ with $S\in\Rstr (S_M)$ and $T_0\supset\dots\supset T_N$ with $T\in\Rstr (T_N)$ be rooted paths 
in the forest of admissible simplices $\Simplexe$ and let $j$ be the first index with 
$S_j\not\sim T_j$. 

If $j=0$, then $S_0\neq T_0$ and according to the initial condition (ReTaCo), the restrictions of $S_0$ and $T_0$ to $S_0\cap T_0$ are identical. Otherwise, $S_j$ and $T_j$ are the two distinct children of $S_{j-1}$ and their restrictions to $S_j\cap T_j$ coincide. (The restriction of the two children of 
$
\begin{pmatrix}
p_0	&\dots&p_k\\
	&\vdots\\
	&p_n
\end{pmatrix}
$
to their intersection is
$
\begin{pmatrix}
p_1	&\dots&p_{k-1}\\
	&\frac12{(p_0+p_k)}\\
	&p_{k+1}\\
	&\vdots\\
	&p_n
\end{pmatrix}.$)
Whether $j=0$ or not, $S_j$ and $T_j$ are two simplices sufficing ReTaCo with an intersection of lower dimension. 

Then Lemma \ref{lem:restricted descendants} says that 
$\rstr(S,S\cap (S_j\cap T_j))$ and $\rstr(T,T\cap(S_j\cap T_j))$ is a restricted descendant 
of $S_j':=\rstr(S_j,S_j\cap T_j)$ and of $T_j':=\rstr(T_j,S_j\cap T_j)$, respectively.
$S_j'$ and $T'_j$ are identical T-arrays
and have lower dimension than 
$n$.
So the induction assumption, applied for the lower-dimensional initial triangulation $\{S'_j\}$ instead of $\T_0$ and $\rstr(S,S\cap(S_j\cap T_j))$ and $\rstr(T,T\cap(S_j\cap T_j))$ instead of $S$ and $T$, deals with the rest of the proof.
\end{proof}
\begin{folg}[ReTaCo $\Ra$ PC]\label{ReTaCo=>PC}\index{ReTaCo $\Ra$ PC}
If $\T_0$ satisfies ReTaCo, then $\Simplexe$ is pairwise compatible.
\end{folg}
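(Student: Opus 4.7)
The plan is to verify the three defining properties of pairwise compatibility in turn, drawing the two nontrivial ones from Lemma~\ref{ReTaCoinheritance} and from the shape-regularity results of Section~\ref{RefCo}.

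\textbf{Regularity of $\T_0$.} The definition of ReTaCo speaks of restricting the T-arrays of $S$ and $T$ to $S\cap T$ whenever $S\cap T\neq\leer$; this presupposes that $S\cap T$ is a common subsimplex of $S$ and $T$. Hence regularity of $\T_0$ is built into ReTaCo, which yields the first PC axiom directly.

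\textbf{Refinement-edge coincidence.} Let $\{S,T\}\subset \Simplexe$ be a regular pair with $\Eref S,\Eref T\subset S\cap T$. Lemma~\ref{ReTaCoinheritance} applies (with $S$ and $T$ taken as restrictions of themselves) and gives that the restrictions of the T-arrays of $S$ and of $T$ to the common subsimplex $S\cap T$ are identical (up to the identification $\sim$). The key observation is that, in any T-array, the refinement edge joins the first and last horizontal entries; and if $\Eref S=\overline{p_0 p_k}$ is entirely contained in $S\cap T$, then both $p_0$ and $p_k$ survive the restriction and remain the first and last horizontal entries of the restricted T-array (erasing intermediate horizontal entries does not affect this). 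Hence $\Eref S$ equals the refinement edge of the common restricted T-array, and the same argument applied to $T$ yields $\Eref T$ as the same edge. Thus $\Eref S=\Eref T$, giving the second PC axiom. The identification of a T-array with its reflected copy causes no trouble, since reflection leaves the refinement edge invariant.

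\textbf{No infinite path fixing an edge.} Let $e\in\Edges(\Simplexe)$ and suppose $S_1\supset S_2\supset\dots$ is a rooted path with $e$ an edge of every $S_j$. Along the path the level $l(S_j)$ is strictly increasing, and Corollary~\ref{similarity classes} (finiteness of similarity classes together with volume halving at each bisection) forces $\diam(S_j)\to 0$ as $j\to\infty$. But $e\subset S_j$ implies $|e|\leq \diam(S_j)$, contradicting $|e|>0$ once $j$ is sufficiently large. Hence the path is finite, which is the third PC axiom. This step is combinatorial and independent of ReTaCo; it is inherited directly from Maubach bisection, consistent with the remark following Definition~\ref{def:PC}.

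The main delicate point is the second axiom: one has to see that restriction of a T-array preserves the identity of its outer horizontal entries, so that the refinement edge is detected from the restricted T-array. Once this is observed, Lemma~\ref{ReTaCoinheritance} does all the work and the rest is bookkeeping.
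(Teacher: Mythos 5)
Your proof is correct and takes the approach the paper implicitly intends. The corollary is stated in the paper with no proof of its own: it follows immediately from Lemma~\ref{ReTaCoinheritance} together with the remark preceding Definition~\ref{def:PC} that the third PC axiom holds for any Maubach bisection. Your treatment supplies exactly the missing bookkeeping: regularity of $\T_0$ is built into the mere well-posedness of the restrictions $\rstr(S,S\cap T)$ that ReTaCo invokes; the refinement-edge axiom follows from Lemma~\ref{ReTaCoinheritance} via reading off the refinement edge from the coinciding restricted T-arrays; and the no-infinite-path axiom follows from volume halving and Corollary~\ref{similarity classes}.

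One minor imprecision: the claim that "in any T-array, the refinement edge joins the first and last horizontal entries" is not literally true for type-$0$ T-arrays, which have a single horizontal entry and are assigned in the binary tree the refinement edge of their transposed. Since simplices of $\Simplexe$ may have type $0$, that case deserves a sentence: pass to $S^T$ via the identification $\stackrel{T}{\sim}$, after which the refinement edge again joins the outermost (now horizontal) entries and the same argument applies. You address the reflection identification $\stackrel{R}{\sim}$ but not the transposition one; both are harmless, but both should be flagged.
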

Recall the \hyperlink{SIC}{strong initial conditions} (Definition \ref{SIC}
on page \pageref{SIC}).
In preparation to prove that SIC imply ReTaCo, it is shown that reference coordinates leave their mark also on subsimplices.
\begin{lem}\label{ReTa from RefCo}
Let $T$ be a reference simplex with reference coordinates $\vi$. The T-array of a restriction $S$ of $T$ to a subsimplex can be reconstructed by the reference coordinates $\vi(\Vertices S)$ of the vertices of that subsimplex up to identity.
\end{lem}
\begin{proof}
Formally, a T-array can be described as a partition of the vertices of a simplex into a non-empty horizontal part $H$ and a vertical part $V$ and a total order $<$ on each. Roughly speaking, a T-array is restricted by restricting both the partition and the order to the subset.
Let denote a total order by writing $p_0<\dots<p_k$. For an $m$-dimensional T-array $T$ of type $k\in\{1,\dots,m{-}1\}$, the equivalence class $\{T,T_R\}$ of $\stackrel{R}{\sim}$ can be described by the partition into the horizontal and the vertical part, the \emph{unordered pair} of the total orders $\{p_0<\dots<p_k,p_k<\dots<p_0\}$ on the horizontal part and the total order $p_{k+1}<\dots<p_m$ on the vertical part; while the equivalence class $\left\{T,T^T,T_R,(T_R)^T\right\}$ of both $\stackrel{R}{\sim}$ and $\stackrel{T}{\sim}$ of a full-type $m$-dimensional T-array $T$ can be described by the unordered pair of orders $\{p_0<\dots<p_m,p_m<\dots<p_0\}$ on the (unpartitioned) set of its vertices. The congruous equivalence class of a restriction of the T-array 
is obtained by restricting the partition and the orders to a subset of the vertices and, if the type of the restricted T-array is 0 or it has no horizontal vertices at all, converting $(\{p_0\},\{p_1<\dots<p_n\})$ or $(\{\},\{p_0<\dots<p_n\})$ into $\{p_0<\dots<p_n,p_n<\dots<p_0\}$.
The information of such a pair $\{p_0<\dots<p_n,p_n<\dots<p_0\}$ of orders is obtained, if for any triple of vertices $a,b,c$, it can be decided, which one is in both orders \emph{between} the other two, i.e.\ whether $\{a<b<c,c<b<a\}$ or $\{b<a<c,c<a<b\}$ or $\{a<c<b,b<c<a\}$ are the restrictions of the orders to $\{a,b,c\}$. 

Now the task is to recover the restricted T-array $S$ from the reference coordinates of the vertices of a subsimplex of a tagged simplex $T$. To recover the \emph{type}, recall Lemma \ref{edgelengths}. It implies that 
the Chebyshev distance between two horizontal vertices is twice the Chebyshev distance of the other pairs of vertices. If two or more horizontal vertices and at least one vertical vertex are contained in the subsimplex (i.e.\ if the type of $S$ is neither 0 nor full), horizontal and vertical vertices can be distinguished that way. If all vertices have the same Chebyshev distance from each other, the type is 0 or full. So the type is determined up to the equivalence of type 0 and full type.

The pair of orders on the horizontal part or the equivalence class of a 0/full-type simplex is recovered as follows. The horizontal part is a $2^{-h}$-scaled Kuhn simplex, i.e.\ $p_j-p_{j-1}=\epsilon_je_{\sigma(j)}$ for some signs $\epsilon_j\in\{1,-1\}$ and a permutation $\sigma$
. Hence $\|p_j-p_i\|_1=\left\|\sum_{a=i+1}^j \epsilon_ae_{\sigma(a)}\right\|_1=(j{-}i) 2^{-h}$ for $j>i$. This shows that among three distinct horizontal vertices 
$p,q,r$ there is always a unique one, for example $q$ with $\|p-q\|_1+\|q-r\|_1=\|p-r\|_1$ and this one must be between the other two in the (restricted) order.

For a type in $\{1,\dots,m{-}1\}$, the order of the vertical part is wanted yet. As usual, this can be reduced to the task on the horizontal part: Remove all horizontal vertices but one vertex $p_h$ and transpose the T-array into a full-type one. 
The
pair of 
orders
of the vertices of the resulting T-array
can be obtained by the procedure described above. One of these two orders is led by the retained horizontal vertex $p_h$. This is the correct order on the vertical part.
\end{proof}
\begin{folg}[SIC $\Ra$ ReTaCo]\label{SIC=>ReTaCo}\index{SIC $\Ra$ ReTaCo}
The strong initial conditions 
for $\T_0$ imply restricted T-array coincidence for every admissible triangulation.
\end{folg}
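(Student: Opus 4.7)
The plan is to combine Lemma \ref{ReTa from RefCo} with Lemma \ref{ReTaCoinheritance}, so that it suffices to verify ReTaCo for $\T_0$ itself, from which the statement for every admissible triangulation follows automatically.

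First I would prove that SIC implies ReTaCo for $\T_0$. Fix any two simplices $S,T\in\T_0$ with $S\cap T\neq\emptyset$. By the third clause of SIC, there are reference coordinates $\vi_S$ and $\vi_T$ for $S$ and $T$, respectively, which coincide on $S\cap T$. In particular, for every vertex $p\in\Vertices(S\cap T)$, one has $\vi_S(p)=\vi_T(p)$. Now apply Lemma \ref{ReTa from RefCo} to both $S$ (with reference coordinates $\vi_S$) and $T$ (with reference coordinates $\vi_T$), restricted to the common subsimplex $S\cap T$. The lemma states that the restricted T-array is determined up to identity by the images of the vertices under the reference coordinates. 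Since these images agree, the two restricted T-arrays coincide, which is precisely ReTaCo for $\T_0$.

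Second, I would invoke Lemma \ref{ReTaCoinheritance}, which asserts that whenever ReTaCo holds for $\T_0$, every admissible triangulation inherits the property. Combining the two steps yields the corollary.

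The only subtle point—and hence what I would regard as the main obstacle if it had not already been settled earlier—is to make sure that the notion of ``identical'' T-array used in Lemma \ref{ReTa from RefCo} (which produces the T-array only up to the identifications $\stackrel{T}{\sim}$ and $\stackrel{R}{\sim}$) matches the notion of coincidence used in the definition of ReTaCo. Since by the time ReTaCo is defined those identifications have already been imposed on T-arrays, the two notions agree, and no additional work is required. Everything else is a direct invocation of the two lemmas.
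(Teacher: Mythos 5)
Your proof is correct and follows essentially the same path as the paper's own: establish ReTaCo for $\T_0$ via the coinciding reference coordinates and Lemma~\ref{ReTa from RefCo}, then transfer to every admissible triangulation via Lemma~\ref{ReTaCoinheritance}. The observation about matching the notion of identity in Lemma~\ref{ReTa from RefCo} with the notion of coincidence in Definition~\ref{def:ReTaCo} is exactly the right check, and, as you note, it is settled by the identification of T-arrays introduced before ReTaCo is defined.
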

\begin{proof}
Let $S$, $T$ be two simplices from a triangulation sufficing SIC. 
The
restrictions of suitable reference coordinates to $S\cap T$ coincide and determine
the restriction of $S$ and $T$ to $S\cap T$ up to identical T-arrays according to Lemma \ref{ReTa from RefCo}. Hence, the restrictions of $S$ and $T$ to $S\cap T$ coincide up to identification as well. Lemma \ref{ReTaCoinheritance} transfers ReTaCo to each admissible refinement.
\end{proof}
\subsection{Quasi-uniform refinements for ReTaCo}\label{sec:quasi-uniform refinement}
The following theorem generalises both the GSS theorem
(Theorem \ref{GSS}) and the general refineability of \cite[Definition 2.14]{Gaspoz}.
\begin{thm}[Quasi-uniform refinements for ReTaCo]
\label{quasi-uniform}
\index{quasi-uniform refinement!ReTaCo}
\index{GSS theorem!generalisation}
Suppose ReTaCo for $\T_0$. Then the following statements hold true:
\begin{enumerate}
\item\label{quasi-uniform1}
There is an admissible 
refinement $\T$ of $\T_0$, where all simplices have levels in $\{n,\dots,2n-1\}$ (with $n=\dim \T_0$). 
If type $n$ is excluded, i.e.\ all type $n$ simplices are transposed, all T-arrays of $\T$ have hyperlevel 1.
Especially, $\Simplexe$ is generally refineable.
\item\label{quasi-uniform_n}
There are even quasi-uniform refinements: For all $k\in\N$, there exists a refinement $\T$ of $\T_0$ with
\begin{align*}
l(\T)=\{l(S)~|~S\in \T\}\subset \{kn,\dots,kn+n{-}1\}.
\end{align*}
If the type $n$ is excluded, $h(\T)=\{k\}$.
\end{enumerate}
\end{thm}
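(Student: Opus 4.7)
I would derive part~\ref{quasi-uniform_n} from part~\ref{quasi-uniform1} by induction on $k$, and prove part~\ref{quasi-uniform1} by induction on the dimension $n$ of $\T_0$, with ReTaCo providing the compatibility needed on shared subsimplices.

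\textbf{Iteration $k \to k{+}1$.} Given a hyperlevel-$k$ quasi-uniform refinement $\T_k$ of $\T_0$, Lemma~\ref{ReTaCoinheritance} says that $\T_k$ itself satisfies ReTaCo. Applying part~\ref{quasi-uniform1} with $\T_k$ now playing the role of the initial triangulation produces a refinement whose T-arrays are all at hyperlevel~$1$ relative to $\T_k$, hence at hyperlevel $k{+}1$ relative to $\T_0$, with levels shifted by $n$. General refineability of $\Simplexe$ is an immediate corollary: for any $S \in \Simplexe$, the tower $\Tu(S)$ must sit inside any sufficiently high quasi-uniform refinement, so is finite.

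\textbf{Construction for part~\ref{quasi-uniform1}.} After first transposing every type-$n$ initial simplex, I may assume every $T \in \T_0$ has type $t(T) \in \{0,\ldots,n-1\}$. For each such $T$, I would list all its level-$(t(T)+n)$ descendants in the binary tree: these are precisely the type-$0$, hyperlevel-$1$ descendants, and they partition $T$ into $2^{t(T)+n}$ pieces. Let $M$ be the union of these leaves over $T \in \T_0$; their levels land precisely in $\{n,\ldots,2n-1\}$. By Theorem~\ref{Knotenschritt}, a regular refinement is obtained by passing to the $\ar{01}$-closure $W \subset \Simplexe$ of $M \cup \T_0$ and setting $\T := \leaves(W)$; the content of the theorem is then that $W$ remains finite and its leaves still lie within $\{n,\ldots,2n-1\}$ and at hyperlevel~$1$.

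\textbf{Compatibility via dimensional induction.} To bound the $\ar{01}$-closure, I would induct on $n$. For $n=1$ the initial partition is already regular and one bisection suffices. For the inductive step, consider two simplices $S, T \in \T_0$ sharing a subsimplex $F$ of dimension $m < n$. By Lemma~\ref{ReTaCoinheritance}, $\rstr(S,F)$ and $\rstr(T,F)$ coincide as $m$-dimensional T-arrays, call this common restriction $F^*$. By Lemma~\ref{lem:restricted descendants}, the leaves of $M$ lying in $F$, viewed as descendants of $\rstr(S,F)$, coincide with those viewed as descendants of $\rstr(T,F)$: both are the analogous ``hyperlevel-$1$, type-$0$'' refinement of $F^*$. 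By the inductive hypothesis applied to the one-element initial triangulation $\{F^*\}$, this $m$-dimensional refinement of $F$ is already regular and lies within $\{m,\ldots,2m-1\}$. Consequently no $\ar{01}$-demand can cross $F$ from the $M$-descendants of $S$ into those of $T$ (or vice versa), so the closure $W$ cannot push any descendant of an initial simplex beyond level $2n{-}1$ or past hyperlevel~$1$.

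\textbf{Main obstacle.} The principal difficulty is the bookkeeping of types and hyperlevels under restriction: the restriction of an $n$-dimensional T-array of hyperlevel~$1$ and type~$0$ to a hyperface $F$ is a priori just \emph{some} T-array on $F$, and need not have hyperlevel~$1$ as an $(n{-}1)$-dimensional T-array. Verifying that the family of restrictions is precisely the target of the inductive hypothesis on $F^*$---that the commuting square of Lemma~\ref{lem:restricted descendants} really sends ``level-$(t+n)$, hyperlevel-$1$ descendants'' of the ambient simplex to ``level-$(t^*+m)$, hyperlevel-$1$ descendants'' of the restriction---is the key technical lemma I would need to extract from the reference-cube geometry in Subsection~\ref{RefCo} and the restriction calculus of Lemma~\ref{restricted children}. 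Once this is in hand, Theorem~\ref{Knotenschritt} and the overlay lattice of Corollary~\ref{overlay} assemble the proof.
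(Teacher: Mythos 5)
There is a genuine gap, and it sits exactly where you flagged your ``main obstacle'': the claim that the leaves of $M$ lying in a shared face $F$, read through $\rstr(S,F)$ and $\rstr(T,F)$, are both ``the type-$0$, hyperlevel-$1$ refinement of $F^*$''. Under ReTaCo this is false, because the depth $t(T)+n$ at which you stop depends on the \emph{type of the ambient simplex}, and neighbouring initial simplices may have different types (this is precisely what ReTaCo permits and SIC forbids). Concrete $2$D counterexample: let $S=\begin{pmatrix}a\ b\\ c\end{pmatrix}$ (type $1$) and $T=\begin{pmatrix}a&d&b\end{pmatrix}$ (full type, identified with type $0$) share the edge $\overline{ab}$; their restrictions to $\overline{ab}$ are both $\begin{pmatrix}a&b\end{pmatrix}$, so ReTaCo holds. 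Your $M$ takes level-$3$ descendants of $S$ but only level-$2$ descendants of $T$; the $S$-side trace on $\overline{ab}$ has the nodes $\frac{3a+b}4,\frac{a+b}2,\frac{a+3b}4$, while the $T$-side trace has only $\frac{a+b}2$. So demands \emph{do} cross $F$: the $\ar{01}$-closure forces further bisection of the $T$-side level-$2$ simplices along $\overline{ab}$, producing leaves of hyperlevel $2$, which already refutes the asserted ``hyperlevel $1$'' property of your construction, and the finiteness/level-range argument loses its only support (it happens that levels stay in $\{2,3\}$ here, but nothing in your argument shows this in general, since it rests entirely on the no-crossing claim). The reduction of part~\ref{quasi-uniform_n} to iterating part~\ref{quasi-uniform1} via Lemma \ref{ReTaCoinheritance} is fine and matches the paper, but it inherits the unproven part~\ref{quasi-uniform1}.

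The paper avoids this trap by \emph{not} refining each simplex uniformly down to type $0$. Instead it characterises, globally and simplex-independently, the set of edges that get bisected: all edges of $\T_0$ together with all edges joining the midpoint of a horizontal--horizontal edge to a vertical vertex, recognised via the restricted T-array of the containing $2$-subsimplex (which ReTaCo makes well defined across neighbours). Because the bisected edge set is global, the traces on shared faces agree automatically, regularity holds by construction without any closure step, and a direct count of successive bisections gives levels in $\{n,\dots,2n-1\}$, exactly one transposition per path (hyperlevel $1$), and hence general refineability. In the example above this refinement cuts $\overline{ab}$ only at $\frac{a+b}2$ on \emph{both} sides, which is exactly the consistency your per-tree uniform refinement destroys. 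To salvage your approach you would need to replace $M$ by such a face-consistent stopping rule; as written, the key restriction lemma you defer to is not just unproven but false.
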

\begin{proof}[Proof of \ref{quasi-uniform1}]
Investigate,
in which order the edges are bisected. Recall
that
after several refinements but before transposition, the horizontal part is a centrepiece of the original horizontal part and the vertical part consists of centres of horizontal--horizontal edges, succeeded by the 
initial vertical part. 
Denote
such a descendant by
\begin{align*}
\begin{pmatrix}
H&\dots& H\\
&HH\\
&\vdots\\
&HH\\
&V\\
&\vdots\\
&V
\end{pmatrix}.
\end{align*}
(Equal letters can mean different points. $H$ stands for an original horizontal vertex, $V$ for an original vertical one and $AB$ for the centre between $A$ and $B$.) 
Consequently, the
type 
$n$
descendant after the first transposition will look like
\begin{align*}
\begin{pmatrix}
H &HH &\ldots &HH &V &\ldots&V
\end{pmatrix}.
\end{align*}
\emph{
In those descendants,
every original edge}
should be bisected
exactly once, 
to ensure that $\T$ is \emph{strictly finer than every simplex of $\T_0$}. If there are several $V$s, say 
$
(
\ldots 
HH 
\ldots 
$
$V_1 
\ldots 
V_2 
\ldots
)
$,
the edge $(HH,V_2)$ has to be bisected before $(V_1,V_2)$. For that reason (and for simplicity), just bisect \emph{every} arising edge $(HH,V)$, i.e.\ bisect, until the horizontal part of the descendant looks either like 
$\begin{pmatrix}
H &HH &\ldots &HH
\end{pmatrix}$
or like $(V)$.

Note that until that state, 
bisected are
\begin{itemize}
\item
every edge $(H,H)$ \emph{before} transposition,
\item
then edges $(H,V)$,
\item
then edges $(HH,V)$,
\item
and finally edges $(V,V)$.
\end{itemize}
All of these edges (distributed to many descendants) are bisected and only these.

This would characterise the edges to 
bisect%
, if the classification of vertices into horizontal and vertical ones were \emph{global}, i.e.\ independent of the simplex. But 
it is not, 
because 
$\begin{pmatrix}
p_0\\
\vdots\\
p_m
\end{pmatrix}$ and
$\begin{pmatrix}
p_0&
\ldots&
p_m
\end{pmatrix}$
have been identified. 
Nevertheless a global characterisation of the \emph{edges which are 
bisected} is possible: 
The 
edge $(H_1H_2,V)$ lies in the interior of a 2-subsimplex (a triangle) of some $S\in\T_0$ and the restricted T-array of this triangle must be either $\begin{pmatrix}
H_1\quad H_2\\
V
\end{pmatrix}$ or $\begin{pmatrix}
H_2\quad H_1\\
V
\end{pmatrix}$ (this T-array cannot be transposed), which is a global characterisation. Hence the edges which are bisected are all such edges $(H_1H_2,V)$, all the edges of $\T_0$ and no other ones.

Every simplex is bisected at least $n$ times (because every edge must be bisected and every level $n{-}1$ descendant of $S$ still contains at least $2$ vertices of $S$). The largest number of successive bisections happens to a type $n{-}1$ simplex 
$\begin{pmatrix}
H&
\hdots&H\\
&V
\end{pmatrix}$. It is bisected into level $n{-}1$ and type 0 simplices of the form $\begin{pmatrix}
H\\
HH\\
\vdots\\
HH\\
V
\end{pmatrix},$
which are transposed and bisected further, finally into type 0 arrays $\begin{pmatrix}
V\\
V(HH)\\
\vdots\\
V(HH)\\
VH
\end{pmatrix}$ (among others of larger types). This descendant has level $n{-}1+n=2n-1$.

General refineability: The so defined $\T$ is a refinement of $\T_0$ strictly finer than each simplex $S$ of $\T_0$, so it is finer than $\refine(\T_0,S)$, hence the latter must be finite. Since ReTaCo transfers to every admissible triangulation $\T$, the same holds for $\T$ instead of $\T_0$.

Using only 
T-arrays of type ${<}n$
for the just described refinement,
note carefully that every 
path in the extended binary forest
$\T_0\ni S_0\supset\dots\supset S\in \T$ 
contains exactly one transposition, i.e.\ $h(S)=1$.

\emph{Proof of \ref{quasi-uniform_n}.} 
Applying
the above defined refinement $k$ times successively on $\T_0$ without type $n$ T-arrays leads to a triangulation of hyperlevel $k$ simplices solely. 
Since type $n$ is excluded, the hyperlevel must increase, whenever the level increases by $n$. Conversely, the level of a hyperlevel $k$ simplex is at most $kn+n{-}1$. 

On the other hand, 
the above defined refinement bisects every simplex 
at least $n$ times. 
Consequently, $k$ successive applications of this refinement produce simplices of levels $\geq k\cdot n$.
\end{proof}

\newpage
\section{Binev--Dahmen--DeVore for the AGK initialisation.}
\label{sec:IsoCoChange}
\subsection{Introduction}
There is a 3-dimensional regular triangulation $\T_0$ of simplices which cannot be equipped with T-arrays satisfying SIC (see \cite[Lemma 1.7.14, p.~26]{Schoen}). To provide an alternative to initial division (as it was presented in Section \ref{sec:initial division}), Karkulik, Pavlicek and Praetorius proved the BDV theorem for a 2-dimensional regular initial triangulation with arbitrary T-arrays without any further initial condition \cite{Karkulik,KarkulikE}. Alkämper, Gaspoz and Klöfkorn \cite{Gaspoz} proposed a weaker initial condition which still guarantees general refineability and a simple algorithm 
that assigns tags (i.e.\ T-arrays) to every simplex of an untagged triangulation fulfilling that initial condition. The aim of this section is to prove the BDV theorem for an arbitrary output 
of that algorithm, namely Algorithm \ref{AGK algorithm}. 
\paragraph{Specialties of this chapter.}
It should be emphasised that some definitions, namely Definition \ref{restricted T-array!IsoCoChange} of restricted T-arrays, Definition \ref{Wk iota!IsoCoChange} of the layer $W^k$ of a set $W\subset \Simplexe$, the embedding $\iota$ of $W$ into $\Omega\times \N$, Definition \ref{quasi-uniform} of a quasi-uniform refinement and Definition \ref{def:refsimplex} of a reference simplex are varied in this chapter. Definition \ref{def:refsimplex} is compatible with the old definition. The main difference to the previous chapters is that instead of considering reference coordinates, each simplex is embedded into its \emph{Chebyshev lattice} (see 
Subsubsection \ref{Chebyshev lattice} below). In this chapter, the hyperlevel $hS$ is part of the structure of a T-array $S$, it is not only the place of the T-array in the forest of admissible simplices. 
Consequently, the hyperlevel of an initial simplex can vary from 0 and restricted T-arrays need a hyperlevel, too. 

\begin{algorithm}[h!]
\caption{AGK algorithm}\label{AGK algorithm}
\begin{algorithmic}[1]
\State 
Start with a regular triangulation (without T-arrays) $\T_0$ of $\Omega$. Divide $\mathcal V 
\T_0
$ into disjoint subsets $\mathcal V_i\subset \mathcal V
\T_0, i\in \{0,1\},\mathcal V_0\cup \mathcal V_1=\mathcal V
\T_0$.
\allowbreak
\State 
Every simplex $S\in \T_0$ with $\Vertices S\cap \Vertices_0\neq \leer$ gets the hyperlevel 0 and the T-array 
$\begin{pmatrix}
p_0 &\dots&p_k\\
&\vdots\\
&p_n
\end{pmatrix}$,
where the horizontal vertices $\{p_0,\dots,p_k\}$ are $\Vertices S\cap\Vertices_0$, the vertical vertices $\{p_{k+1},\dots,p_n\}$ are $\Vertices S\cap\Vertices_1$, $p_0<_0\dots<_0 p_k$ and $p_{k+1}<_1\dots<_1 p_n$.
\State
If $S\cap \mathcal V_0 = \leer$, $S$ gets the hyperlevel $hS:=1$ and the T-array 
$\begin{pmatrix}
p_0&\dots&p_n
\end{pmatrix}$ where $p_0<_1\dots<_1 p_n$. 
\end{algorithmic}
\end{algorithm}
\begin{thm}[BDV theorem for 
AGK initialisation]\label{BDV4IsoCoChange}
\index{BDV theorem!IsoCoChange}
Let the tagged triangulation
$\T_0$ be 
the 
output
of the AGK algorithm
and 
\[
D:=\max
\limits_{\substack{S\in\Simplexe
}} 
2^{hS}\diam S,
\quad d:=\min_{S\in\Simplexe}
2^{n\cdot hS+n{-}tS}
\lvert S\rvert\index{d@$d$!IsoCoChange}\index{D@$D$!IsoCoChange}.
\]
Then there is a constant 
$C>0$, 
depending only on 
$n$ and linearly on ${D^n}/{d}$, 
such that for every refinement sequence of triangulations $\T_0,\T_1,\dots,\T_N$, it holds 
that
\begin{align}
\#\T_N- \#\T_0\leq {\underbrace{\frac12\# \left\{S\in \Simplexe\setminus\T_0~\middle|~hS\leq 2+\log_2(n)\right\}}_{\leq (
8
n)^{n}\#\T_0}}+
{C
\cdot
 N}.\label{eq:BDV4IsoCoCh}
\end{align}
If 
$\Vertices_0=\leer$ (or, equivalently, if $\Vertices_1=\leer$),
the first summand in \eqref{eq:BDV4IsoCoCh} can be 
estimated by
$(4n)^n\#\T_0$.
\end{thm}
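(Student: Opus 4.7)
I would mirror the BDV proof of Section~\ref{sec:BDV}, with hyperlevels replacing bisection levels and the quasi-uniform refinements of Theorem~\ref{quasi-uniform} (specifically the towerstep assertion) playing the role that uniform refinements play under SIC. Set $h_0 := 2+\lfloor\log_2 n\rfloor$ and split $\Simplexe\setminus\T_0 = L \sqcup H$ with $L := \{S : hS\leq h_0\}$. Theorem~\ref{count forest not triangulation} then gives
\[
\#\T_N - \#\T_0 \;=\; \tfrac12\#(W_N\setminus\T_0) \;\leq\; \tfrac12\#L \;+\; \tfrac12\#(W_N\cap H),
\]
and the first summand is already the first term of~\eqref{eq:BDV4IsoCoCh}. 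The bound $\#L\leq 2(8n)^n\#\T_0$ comes from counting: between initialisation and the first hyperlevel exceeding $h_0$, at most $n(h_0{+}1)$ bisections have occurred, so each initial simplex has at most $2^{n(h_0+1)}\leq (8n)^n$ non-initial descendants in $L$. If $\Vertices_0=\leer$ every initial simplex already carries hyperlevel~$1$, so the depth drops by $n$ and one obtains $(4n)^n\#\T_0$.

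For $\#(W_N\cap H)$ I would imitate the four-step recipe of Section~\ref{sec:BDV}. Embed $S\in H$ by $\iota S := S\times\{hS-h_0\}\subset\Omega\times\N_{\geq 1}$ and define a measure $\mu$ on $\Omega\times\N_{\geq 1}$ which scales Lebesgue measure on layer $k$ by $2^{nk}/d'$, where $d' := d/\bigl((n{+}1)!\,2^{n(h_0+1)}\bigr)$; the lower bound $\lvert S\rvert\geq d\cdot 2^{-n\cdot hS-n+tS}$ encoded in the definition of $d$ then forces $\mu(\iota S)\geq 1$ for every $S\in H$, hence $\#(W_N\cap H)\leq\mu\bigl(\iota(W_N\cap H)\bigr)$. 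For the est-tower, note that for every $T\in H$ and every $j\in\{h_0+1,\dots,hT\}$ the towerstep assertion supplies a vertex $p_j$ of a hyperlevel-$(j{-}h_0)$ ancestor of $T$ with $\Tu^j T\subset\bigcup\Simplexe^{n,j-h_0}p_j$; since every simplex of $\Simplexe^{n,j-h_0}$ has diameter at most $2^{-(j-h_0)}D$, the patch lies in the Chebyshev ball $p_j + 2^{-(j-h_0)}D\cdot[-1,1]^n$. Define
\[
\tu T := \bigcup_{j=h_0+1}^{hT}\bigl(p_j + 2^{-(j-h_0)}D\cdot[-1,1]^n\bigr)\times\{j-h_0\};
\]
then $\iota(\Tu T\cap H)\subset\tu T$. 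Because $\ar{01}$ never raises the hyperlevel, $T_i$ with $hT_i\leq h_0$ contributes nothing to $W_N\cap H$, so writing $w_j:=\bigcup_{i<j,\,hT_i>h_0}\tu T_i$ gives $\iota(W_N\cap H)\subset w_N$.

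The telescoping step will exploit that the centres $p_j$ in the definition of $\tu T$ are chosen at ancestors of $T$, so that $\tu T$ and $\tu T^\ast$ share every layer strictly below the topmost, where $T^\ast$ denotes the most recent ancestor of $T$ with $hT^\ast<hT$. The setminus-trick of~\eqref{setminustrick2} then yields $w_{j+1}\setminus w_j\subset\tu T_j\setminus\tu T_j^\ast$, whose $\mu$-measure is just the top layer:
\[
\mu\bigl(\tu^{hT-h_0}T\bigr) \;=\; \frac{2^{n(hT-h_0)}}{d'}\cdot\bigl(2\cdot 2^{-(hT-h_0)}D\bigr)^n \;=\; \frac{(2D)^n}{d'}\;=:\;2C,
\]
a constant linear in $D^n/d$. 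Summation over the $N$ rounds gives $\tfrac12\#(W_N\cap H)\leq\tfrac12\mu(w_N)\leq CN$, finishing the proof.

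The delicate point -- and the main obstacle I anticipate -- is guaranteeing the coherence of the centres $p_j$ along ancestor chains, so that the lower layers of $\tu T$ and $\tu T^\ast$ really do coincide and the telescope collapses to a single bounded term rather than to a sum over hyperlevels that would grow with $hT$. This ought to follow from the fact that the towerstep assertion chooses $p_j$ on the hyperlevel-$(j{-}h_0)$ ancestor of $T$, an ancestor shared with $T^\ast$ whenever $j<hT$; but one must verify that a coherent choice is actually available, or else weaken the telescope so that any residual overlap is absorbed into the constant $C$ (at the possible cost of a larger but still $D^n/d$-linear $C$).
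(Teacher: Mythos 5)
Your overall blueprint matches the paper's: split simplices by hyperlevel at $h_0=2+\lfloor\log_2 n\rfloor$, bound the low-hyperlevel part by counting descendants of initial simplices, then imitate the pile-game argument for the high-hyperlevel part via an embedding $\iota$, a scaled Lebesgue measure $\mu$, estimated towers $\tu$, and the towerstep assertion of Theorem~\ref{milestone}. Your counting of $\#\{hS\leq h_0\}$ and the verification $\mu(\iota S)\geq 1$ are sound (the paper uses the sharper type-sum $2^1+\dots+2^n$ instead of $(n{+}1)!$, but your cruder $d'$ still works).

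The gap is exactly where you flag it, and it is not a minor verification but a missing design idea. You want the telescope $w_{j+1}\setminus w_j\subset\tu T_j\setminus\tu T_j^\ast$ where $T_j^\ast$ is the nearest lower-hyperlevel \emph{ancestor} of $T_j$. This requires $\tu T_j^\ast\subset w_j$, but $w_j=\bigcup_{i<j}\tu T_i$ contains only estimated towers of simplices \emph{chosen} in earlier rounds, and $T_j^\ast$ is merely an ancestor, not one of those choices. From $T_j^\ast\in W_j$ you only get $T_j^\ast\in\Tu T_i$ for some $i<j$; but $T_i$ is typically a simplex far from $T_j^\ast$ in a different tree, so even a coherent choice of the towerstep vertices $p_k$ along the ancestor chain of $T_j$ does not give $\tu T_j^\ast\subset\tu T_i$. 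Your two hopes---coherent $p_k$ and an ancestor-based telescope---are in tension: coherence along ancestors does not propagate to the far simplex $T_i$ whose estimated tower actually lies in $w_j$.

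The paper resolves this with two design choices you did not anticipate. First, the estimated tower $\tu S$ uses the towerstep centres $p^k_S$ only for the \emph{top $n$ layers}; for all deeper layers it switches to balls of doubled radius around a \emph{single fixed} point $p_S\in S$. Second, instead of an ancestor, the paper picks the round-$M$ simplex $S_M$ with $\tilde S_N\in\Tu S_M$ (so $\tu S_M\subset w_{N-1}$ holds trivially by construction), and then bounds $\tu S_N\setminus\tu S_M$ layerwise. The top $n{+}1$ layers are charged whole; the lower layers are concentric balls of equal radius around $p_{S_N}$ and $p_{S_M}$, whose set-difference is a thin lens of measure $\leq 2cV_{n-1}$ where $c$ decays geometrically, because $\|p_{S_N}-p_{S_M}\|$ is controlled via $\tilde S_N\subset\Tu^{hS_N-1}S_M$. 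Summing yields a $C$ that is roughly $n{+}2^n$ times your naive $(2D)^n/d$, but finite and hyperlevel-independent---precisely the price of the non-coincidence you would need to absorb "into the constant".
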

\begin{bem}
The well-definedness of $d$ and a number similar to $D$ has been the subject of Lemma \ref{lem:dD} on page \pageref{lem:dD}.
\end{bem}
\paragraph{Schedule of the proof.}
Before setting up the theory in detail in the following subsection, a schedule for the proof may provide an overview. 
It is shown that the output of the AGK algorithm satisfies the initial condition \emph{\hyperlink{ReTaHyCo}{ReTaHyCo} (restricted T-arrays and hyperlevels coincide)}.  The premise that $\T_0$ is an output of the AGK algorithm in the BDV theorem can be replaced by ReTaHyCo. ReTaHyCo is further transformed into \emph{\hyperlink{IsoCoChange}{IsoCoChange} (isometric coordinate changes)}, a geometrical condition introduced in Subsubsection \ref{subsec:IsoCoChange} below. 
The notions of IsoCoChange are the material the proof is made of.

\begin{defn}[restricted T-array, hyperlevel of restricted T-array]
\index{restricted T-array!IsoCoChange}\index{restriction!IsoCoChange}\index{hyperlevel!restricted T-array}
\label{restricted T-array!IsoCoChange}
Let $S$ be a T-array, $U$ a non-empty subsimplex of $S$. The \emph{restriction $\rstr(S,U)$ of $S$ to $U$} is made by wiping out all the vertices from $S$, which are not vertices of $U$, pushing the remaining vertices together \emph{and transposing $U$, if $U$ does not contain any horizontal vertex of $S$}.
The \emph{hyperlevel $h(\rstr(S,U))$ of that restriction }equals the hyperlevel of $S$, if $U$ contains a horizontal vertex of $S$ and is incremented in the case of that latter transposition.
\end{defn}
\begin{defn}
[restricted T-array and hyperlevel coincidence \hypertarget{ReTaHyCo}{(ReTaHyCo)}]\label{def:ReTaHyCo}
\index{ReTaHyCo}
\emph{Restricted T-array and hyperlevel coincidence with equal hyperlevels (ReTaHyCo)}
are the following conditions.
\begin{itemize}
\item
$\T_0$
is conforming.
\item
The hyperlevels of simplices of $\T_0$ are 0 or 1. Hyperlevel 1 simplices have type $n$.
\item
A hyperlevel $hp$ can be assigned to each vertex $p\in\Vertices\T_0$ independently of the simplex such that for every T-array $S$
\begin{itemize}
\item
$p$ is horizontal in $S$ and $hp=hS$ or
\item
$p$ is vertical in $S$ and $hp=hS{+}1$.
\end{itemize}
\item
For each pair $S,T\in \T_0$ of initial 
T-arrays (touching in at least one vertex), the restrictions of 
$S$ to $S\cap T$ and of $T$ to $S\cap T$ coincide up to reflexion.
\end{itemize}
\end{defn}
ReTaHyCo is assumed for any initial triangulation $\T_0$ in this section, 
except in the Propositions \ref{pps:klebt} and \ref{pps:imPatch}.
\begin{lem}
Every output of the AGK algorithm satisfies ReTaHyCo.
\end{lem}
\begin{proof}
If the vertices in $\Vertices_j$ get hyperlevel $j$ (for $j\in\{0,1\}$), the first three conditions are fulfilled.


To restrict a T-array to a subsimplex means to restrict the orders to subsets of the horizontal and the vertical part, and to transpose the T-array, if all of its vertices are vertical. Since the orderings are global, they coincide on the restrictions to the intersection of two T-arrays.
If a subsimplex of a simplex contains only vertices of $V_1$, the restriction of any T-array to it is a full-type hyperlevel 1 T-array.
\end{proof}

If all initial vertices belong to $\mathcal V_1$, the initial triangulation consists of type $n$ simplices only.

To anticipate the idea of \hyperlink{IsoCoChange}{IsoCoChange}, it \emph{could} be defined as follows: Reference coordinates $\vi: \R^n\ra \R^n$ are called to \emph{preserve the lattice}, if the horizontal vertices of $\vi(S)$ lie in $2^{-hS}\Z^n$. (It follows that $\vi$ maps the vertical vertices to $2^{-hS-1}\Z^n$.) IsoCoChange means that two arbitrary simplices $S,T\in\T_0$ with common restriction $U$ to their intersection $S\cap T$ and reference coordinates $\vi_S,\vi_T$ for them preserving the lattice 
fulfil two conditions:
\begin{itemize}
\item
The pull backs of the lattice points coincide on their intersection:
\[\vi_S\inv(2^{-hU}\Z^n)\cap\aff(S\cap T)=\vi_T\inv(2^{-hU}\Z^n)\cap\aff(S\cap T).\]
\item
Pull back the metric induced by the Chebyshev norm $\|\bullet\|_\infty$ by 
$\vi_S$ and $\vi_T$, respectively. 
Then these metrics coincide 
on $S\cap T$.
\end{itemize}
The 
most 
important 
stepping stone
to prove Theorem \ref{BDV4IsoCoChange}
is Theorem \ref{milestone} and
was inspired by Karkulik, Pavlicek and Praetorius \cite{Karkulik}. They proved the BDV theorem for an arbitrary regular 2D mesh $\T_0$ with an arbitrary initial configuration of refinement edges, 
which is equivalent to the 2D case of IsoCoChange. 
The definitions of the tower layer $W^k$ 
and 
the embedding $\iota$ 
are changed for it.
\begin{defn}[$W^k$, $\iota$]
\index{W^k@$W^k$ -- $k$th layer of $W$!IsoCoChange}\index{i@$\iota$ -- embedding of $\Simplexe$!IsoCoChange}
\label{Wk iota!IsoCoChange}
For a subset 
of the admissible simplices $W\subset \Simplexe$, $W^k$ no longer denotes the simplices of \emph{level} $k$ but instead the simplices of \emph{hyper}level $k$ \emph{and type $\leq n{-}1$} 
(i.e.\ simplices of type $n$ are transposed).
$W^{j,k}$ denotes the set of type $j$ hyperlevel $k$ simplices in $W$.
$\Simplexe^{n,j}$ denotes the set of all admissible type $n$ hyperlevel $j$ simplices.
Furthermore, $\iota$ no longer denotes the map $S\mapsto S\times \{lS\}$ but instead $S\mapsto S\times\{hS\}
\subset \Omega\times\N
$.
\end{defn}
Recall the notion of a \hyperlink{tower}{tower} which is not changed here.

\begin{thm}\label{milestone}
\milestone1
\end{thm}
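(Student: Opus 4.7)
I would prove both parts together using the Chebyshev-lattice structure that ReTaHyCo provides via IsoCoChange (introduced earlier in this section). The global orderings of horizontal and vertical vertices coming from ReTaHyCo furnish consistent Chebyshev coordinates on every initial simplex which agree on common faces, and this agreement is inherited by every admissible refinement by an analogue of Lemma~\ref{ReTaCoinheritance}.

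For part 1, I would iterate the argument in the proof of Theorem~\ref{quasi-uniform}. The base case $\Simplexe^{n,1}$ is obtained as follows: on each hyperlevel-$0$ simplex of $\T_0$ apply the same sequence of bisections as in the ReTaCo proof (split every edge between two horizontal vertices, then every edge between an $(HH)$-midpoint and a vertical vertex) until all resulting type-$n$ descendants have hyperlevel $1$, while any hyperlevel-$1$ type-$n$ simplex already in $\T_0$ stays unchanged. Under ReTaHyCo the edges selected for bisection are characterised \emph{globally} by the restricted T-arrays (as in the proof of Theorem~\ref{quasi-uniform}), so neighbouring simplices agree on which common edges to split and the result is regular. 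For $j \geq 2$ the same argument applies to the hyperlevel-$(j{-}1)$ triangulation, using the inheritance of ReTaHyCo.

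For part 2, let $A$ be the hyperlevel-$(j{-}h_0)$ ancestor of $S$ and view it in its Chebyshev coordinates, so that $\Vnew S$ lies on the refined lattice at scale $2^{-j}$ inside $A$. I would choose $p_j$ as the vertex of $A$ nearest to $\Vnew S$. Two geometric estimates must then cooperate. First, the tower layer $\Tu^j S$ sits in the Chebyshev ball around $\Vnew S$ of radius controlled by a geometric series over hyperlevels $\geq j$, analogous to the ball estimate in the proof of the BDV theorem in Section~\ref{sec:BDV}. Second, by part 1 the patch $\Simplexe^{n,j-h_0} p_j$ geometrically covers a full Chebyshev neighbourhood of $p_j$ of radius of order $2^{-(j-h_0)}$ (a regular star of cells around $p_j$). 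The containment inequality then reduces to $2^{h_0}$ exceeding an explicit constant depending at most linearly on $n$ (the linear factor coming from the Euclidean--Chebyshev conversion and from the patch-size estimate), and $h_0 = 2 + \lfloor \log_2 n \rfloor$ is precisely the smallest integer that makes this work.

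The main obstacle is the simultaneous combinatorial--geometric translation: interpreting the abstract tower closure $\Tu^j S$ as a concrete ball around $\Vnew S$ and showing that the combinatorial patch $\Simplexe^{n,j-h_0} p_j$ realises a Chebyshev neighbourhood of $p_j$ of the expected radius. Both translations rely jointly on IsoCoChange and on the regularity from part 1, so the two statements are genuinely coupled and must be carried through in tandem. The secondary difficulty is tracking constants sharply enough that the logarithmic threshold $h_0 = 2 + \lfloor \log_2 n \rfloor$ (rather than some larger multiple of $\log n$) suffices; this is handled by choosing $p_j$ as a nearest vertex of $A$ (saving a factor of $2$) and by using the hyperlevel-based geometric series, which contracts faster than the level-based series used in Section~\ref{sec:BDV}.
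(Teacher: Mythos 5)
Your proposal for part~1 is plausible and close in spirit to the paper's, though less direct: the paper simply observes that bisecting \emph{every} edge of hyperlevel $\leq j$ gives an $\ar\Edges$-closed (hence admissible) forest because, by Lemma~\ref{hyperlevel of an edge is unique}, edges never demand higher-hyperlevel edges; the leaves are then all type~$0$ at hyperlevel~$j$, and transposing yields $\Simplexe^{n,j+1}$. Iterating the explicit bisection recipe from Theorem~\ref{quasi-uniform} works too, but the paper's route avoids re-deriving that recipe under the weaker hypothesis.

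For part~2 there is a genuine gap. Your plan is to show that $\bigcup\Tu^{j}S$ lies in a Chebyshev ball around $\Vnew S$ of radius controlled by ``a geometric series over hyperlevels $\geq j$, analogous to the ball estimate in Section~\ref{sec:BDV}.'' That analogy fails here for a structural reason you have not accounted for: in Section~\ref{sec:BDV}, the tower chain is a sequence of $\ar1$-steps in which the \emph{level} strictly decreases at every step, so the displacements $D_{l}$ form a convergent geometric series. Under ReTaHyCo/IsoCoChange the level is not globally consistent, and the replacement chain $\Eref\pa S=e_1\ar\Edges\cdots\ar\Edges e_m=\Eref\pa T$ (from Theorem~\ref{Knotenschritt}.\ref{it:Eref}) satisfies only $he_1\geq he_2\geq\dots$ \emph{with equality allowed}: a step $f\ar\Edges e$ witnessed by a simplex $S$ with $t(\pa S)\geq 2$ keeps the hyperlevel fixed. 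Such constant-hyperlevel stretches are not bounded in length by any $n$-dependent constant (the chain can hop between trees through shared edges), and each such step still moves the ``active'' vertex by $2^{-h}$ in the Chebyshev metric. So a sum over hyperlevels does not control the total displacement, and the ball estimate does not close.

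What replaces the geometric series is precisely the content of Propositions~\ref{pps:klebt} and~\ref{pps:imPatch}, which your proposal does not use. The paper takes $q$ to be the oldest vertex of $S$ (a vertex of $e_1=\Eref\pa S$), forms its $2^{-(h_0-1)}$-closure $U$, and uses Proposition~\ref{pps:imPatch} (which is where the threshold $2^{h_0-1}>n$ enters) to find a vertex $p$ of $S_0=$ root$(S)$ with $U\subset\Int(\T_0 p)$. Proposition~\ref{pps:klebt} then shows the whole chain $e_1\ar\Edges\cdots\ar\Edges e_m$ has a vertex in $U$ at each step — this is the combinatorial ``sticking'' lemma that substitutes for a displacement bound — so $e_m=\Eref\pa T$ has a vertex in $\Int(\T_0 p)$, forcing the root $T_0$ of $T$ into the patch $\T_0 p$. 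The general $j$ then follows by the rescaling trick applied to $\Simplexe^{n,j-h_0}$ (which is regular by part~1). Note also that $p$ is \emph{not} chosen as the vertex of the ancestor nearest to $\Vnew S$, as you propose, but as the vertex produced by Proposition~\ref{pps:imPatch} through the relation $r_S$ (``cube avoids the opposite hyperface''); the nearest-vertex choice need not make the $\frac1N$-closure land inside the corresponding patch, so your choice of $p_j$ would also need to be revised even if a displacement bound were available.
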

Theorem \ref{milestone}.\ref{it:quasi-uniform} was already proven in \cite{Gaspoz} for every output of 
the AGK 
algorithm. Using Theorem \ref{milestone}, the proof of the BDV theorem is just another version of the pile game, where the hyperlevel takes over the role of the level.
Two propositions subdivide the proof of Theorem \ref{milestone}. 
To understand how they interlock does not require understanding of 
the 
notion of a \emph{$\frac1N$-closed set}
which is properly defined in Subsection \ref{sec:e->f}.
It can be seen as a placeholder for the time being. To give a possibility to imagine Proposition \ref{pps:klebt} anyhow, 
assume that $\T_0$ is a 
set of reference simplices
with horizontal vertices in $\Z^n$. Then a $\frac1N$-closed set is a union of subcubes of the cubes $\frac1N(a+[0,1]^n)$ (with $N\in\N_{\geq 1}$ and $a\in\Z^n$).


\begin{defn}[hyperlevel of an edge]\label{def:hyperlevel of an edge}
\index{hyperlevel of an edge}
The \emph{hyperlevel of an edge }is defined by $h\Eref S:=hS$. Here, simplices of type 0 are excluded. They are considered as not having a refinement edge, but have to be transposed to a simplex of incremented hyperlevel.

\end{defn}
It is shown below in Lemma \ref{hyperlevel of an edge is unique} that this definition is well-defined.

Recall the Definition \ref{ar} of $\ar\Edges$ and Theorem \ref{Knotenschritt}.\ref{it:Eref} on pages \pageref{ar} and \pageref{it:Eref}.
\begin{pps}[$f\ar\Edges e$ sticks on $2^{-k}$-closed sets]\label{pps:klebt}
Assume that $\T_0$ satisfies ReTaHyCo and consists only of hyperlevel 0 T-arrays. 
Let $f\ar\Edges e$ for admissible edges $e$ and $f$ 
of hyperlevel $\geq 1$, 
$k\leq he{-}1$ be an integer and $U\subset \Omega$ be $2^{-k}$-closed in $\T_0$. 
Then the following implication holds true: If one vertex of $f$ lies in $U$, then also one vertex of $e$ does.
\end{pps}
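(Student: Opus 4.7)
The plan is to work in reference coordinates for the parent $\pa S$, which by Lemma~\ref{reference parent}.\ref{it:reference coordinates parent} are also valid for $S$ itself. Transposing $\pa S$ if it has type $0$, I may assume $\pa S$ has type $j\geq 1$ and hyperlevel $h=he$. By Theorem~\ref{Cubetheorem}, the horizontal vertices $p_0,\dots,p_j$ of $\pa S$ are the corners of a Chebyshev cube $C_j$ of edge length $2^{-h}$; the refinement edge $e=\overline{p_0p_j}$ is a diagonal of $C_j$; its midpoint $\mid e=\Vnew S$ is the centre of $C_j$; and every vertical vertex $p_i$ of $\pa S$ is the centre of a cube $C_i\supseteq C_j$ of the same edge length $2^{-h}$, hence at Chebyshev distance at most $2^{-h-1}$ from both $p_0$ and $p_j$.

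Next I would enumerate the vertices of $f=\Eref S$ by a short case analysis on $j$. If $j\geq 2$, then $t(S)=j-1\geq 1$ and the two endpoints of $f$ are horizontal vertices of $\pa S$, of the form $\{p_1,p_j\}$ or $\{p_0,p_{j-1}\}$: one of them is already an endpoint of $e$ (trivially concluding the proof), the other is another corner of $C_j$. If $j=1$, the child $S$ has type $0$ and its refinement edge (taken from the transposed) connects an endpoint of $e$ to the deepest vertical vertex $p_n$ of $\pa S$, which by the previous paragraph sits in the cube $C_n\supseteq C_1$ of edge length $2^{-h}$ together with $p_0$ and $p_1$. In either non-trivial case, each vertex $v$ of $f$ lies in a single closed Chebyshev cube of edge length $2^{-h}$ together with both endpoints of $e$.

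The proof would then close by invoking the hypothesis that $U$ is $2^{-k}$-closed with $k\leq h-1$: the basic building blocks defining $U$ have edge length $2^{-k}\geq 2\cdot 2^{-h}$, so any such block containing a point $x$ also contains the closed Chebyshev ball of radius $2^{-h}$ about $x$. Applied to a vertex $v\in U$ of $f$, this forces both endpoints of $e$ into the same block of $U$, and hence into $U$ itself.

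The main obstacle I anticipate is not the geometric core above but the bookkeeping needed to reconcile the local Chebyshev reasoning in reference coordinates for $\pa S$ with the global notion of $2^{-k}$-closedness of $U\subset\Omega$, which is defined relative to $\T_0$. Lemma~\ref{reference parent} chains $S$, $\pa S$ and an initial ancestor of $\pa S$ via a common reference frame, inside which the estimate applies directly; in the subcase where the cube $C_j$ straddles several initial simplices, one invokes restricted T-array and hyperlevel coincidence on their intersections to check that the respective $2^{-k}$-blocks glue consistently. This is precisely where the hypothesis ReTaHyCo, and not merely the cube structure inside a single admissible simplex, enters the proof.
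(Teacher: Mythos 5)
There is a genuine gap in your closing step. You argue that because the blocks of a $2^{-k}$-closed set have width $2^{-k}\geq 2\cdot 2^{-h}$, any block containing a point $x$ must contain the closed Chebyshev ball of radius $2^{-h}$ about $x$. That is false: if $x$ sits on the boundary of the $2^{-k}$-subcube (for instance at one of its corners, which happens precisely when the relevant coordinates of $x$ lie on the coarser grid), the ball about $x$ protrudes from the block, and a $2^{-k}$-closed set can contain a vertex of $f$ without containing a prescribed nearby lattice point. The statement cannot be obtained from a metric estimate at all; what matters is the \emph{position} of the vertex inside the grid, not its distance to the endpoints of $e$. Symptomatically, your intermediate claim that \emph{both} endpoints of $e$ are forced into $U$ is stronger than what is true: in the ancestor numbering with $\Vertices f=\{p_{a+1},p_b\}$, $\Vertices e=\{p_a,p_b\}$ and $p_{a+1}\in U$, nothing forces $p_b\in U$; only the ``older'' endpoint $p_a$ is demanded.

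The paper's proof differs exactly at this point. For $t(S)\in\{1,\dots,n{-}1\}$ it does not stay in the cube picture of $\pa S$ at scale $2^{-he}$ (where the endpoints of $e$ and $f$ are mere \emph{corners} of $C_j$), but passes to the type-$0$ ancestor $\tilde S$ of hyperlevel $he{-}1\geq k$: the endpoints of $f$ and $e$ are then vertices $p_i$ of $\tilde S$, and each $p_i$ is the \emph{centre} of the cube $C_i$ in the nested chain $C_0\subset\dots\subset C_n$ of $\tilde S$ (width $2^{-(he-1)}\leq 2^{-k}$). A grid-aligned $2^{-k}$-subcube containing the centre of $C_i$ must contain all of $C_i\supset C_{i'}\ni p_{i'}$ for $i'\leq i$; this is Lemma \ref{lem:Mittenspringen}, and it yields exactly that the lower-index vertex of $e$ lies in $U$. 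For $t(S)\in\{0,n\}$ the same centre argument is applied to $p_n$, the centre of the cube $C_n$ of $\pa S$, which has both endpoints of $e$ among its vertices. Your concluding worry about gluing the argument across several initial simplices is actually moot: since $2^{-k}$-closedness is required simplex-wise in $\T_0$ (Definition \ref{def:frac1N-closed}), it suffices to argue inside the Chebyshev lattice of the single root simplex of $S$, which is what the relation $\arsquare{2^{-k}}$ encodes; ReTaHyCo is not needed for any gluing step here.
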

To illustrate the statement: 
If $e_1 \ar\Edges\cdots\ar\Edges e_N$ and $e_1$ touches $U$, then all $e_i$ also do, see Figure \ref{fig:klebt}. The sequence cannot leave $U$.

\newsavebox{\typdrei}
\savebox{\typdrei}{$\begin{pmatrix} p_0&p_1&p_2&p_3 \end{pmatrix}$}
\newsavebox{\typzwei}
\savebox{\typzwei}{$\begin{pmatrix} p_0&p_1&p_2\\&p_4 \end{pmatrix}$}
\begin{SCfigure}[10][ht]
\centering{
\begin{tikzpicture}[information text/.style={fill=gray!10,inner sep=1ex}, 
					cube1/.style=dashed, 
					simplex/.style={very thin},
					diag/.style=dotted,
					canonical1/.style={very thick},
					canonical2/.style=double,
					scale=.6
					]
\def\ys{-1};
\def\cube{
\draw[diag] (3,3) rectangle (0,0);
\draw[diag] (1,1) rectangle (4,4);
\draw[diag] (0,0) -- (1,1); 
\draw[diag] (3,3) -- (4,4); 
\draw[diag] (3,0) -- (4,1); 
\draw[diag] (0,3) -- (1,4);
}

\def\largecube{
\draw[cube1] (6,3) rectangle (0,-3);
\draw[cube1] (2,-1) rectangle (8,5);
\draw[cube1] (0,-3) -- (2,-1); 
\draw[cube1] (6,3) -- (8,5); 
\draw[cube1] (6,-3) -- (8,-1); 
\draw[cube1] (0,3) -- (2,5);
}

\begin{scope}[xshift=0cm]
	\coordinate (p0) at (0,3);
	\coordinate (p1) at (1,4);
	\coordinate (p2) at (1,1);
	\coordinate (p3) at (4,1);

	\fill[gray!20] (p0)--(p2)--(p3)--(p1)--cycle;
	\cube;
	\draw[canonical1, 
	]  
	(0,3) -- (1,4)--(1,1) -- (4,1); 
	
	\draw[style=simplex] 
	(1,4) node [above]{$p_1$}-- (4,1)node [right]{$p_3$} --node[left]{$e$} (0,3)node [above left]{$p_0$} -- (1,1)node [below]{$p_2$};
	\coordinate (p3) at (-1,2);
	
	\fill[fill=gray!35] (p0)--(p1)--(p3)--(p2)--cycle;
	\fill[pattern=north east lines,pattern color=gray!50] (p0)--(p1)--(p2)--(0,0)--cycle;

	\draw[style=simplex] 
	(p0) 
	--node{$f$} 
	(p2) 
	;
	\draw[simplex] (p0) -- (p3) node [left]{$p_4$};
	\draw[simplex] (p1) 
	-- (p3);
	\draw[simplex] (p2) -- (p3);
	
	\draw (p3) circle [radius=.05];
\end{scope}
\begin{scope}[xshift=-3cm]
	\draw[diag] (0,3) -- (4,1);
	\draw[diag] (3,0) -- (1,4);
	\draw[diag] (1,1) -- (3,3);
	\cube;
\end{scope}
\begin{scope}[xshift=-6cm]
	\largecube;
	\draw (2.5,-.5) node{$U$};
\end{scope}
\end{tikzpicture}
\caption{The refinement edge $f$ of the left simplex, \usebox{\typzwei}, is also the refinement edge of one of the children of the right simplex, \usebox{\typdrei}, so $f$ demands $e$. 
Note that $f$ touches the large cube $U$ and, in accordance with Proposition \ref{pps:klebt}, $e$ also does.}
\label{fig:klebt}
}
\end{SCfigure}
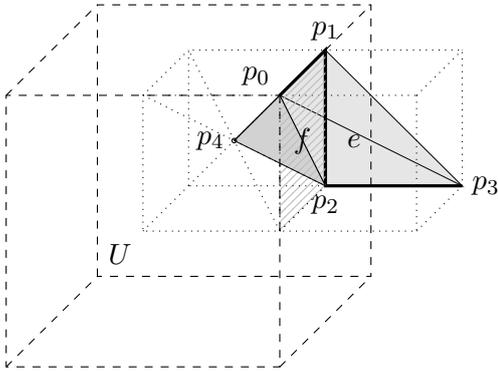

\begin{pps}[$\frac1N$-closure is bounded]\index{1N@$\frac1N$-closure is bounded}\label{pps:imPatch}
Suppose an initial triangulation $\T_0$ of type $n$ hyperlevel $0$ simplices. 
For every 
integer $N>
n$, 
every point $x\in\Omega$ is contained in a $\frac1N$-closed set which
is included in the interior $\Int
\T_0 
p$ of a patch for some $p\in\Vertices
\T_0
$. 
\end{pps}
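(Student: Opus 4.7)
For Proposition \ref{pps:imPatch}, I plan the following. The goal is to exhibit, for each $x \in \Omega$, a vertex $p \in \Vertices\T_0$ and a $\frac{1}{N}$-closed set $A \ni x$ contained in $\Int(\T_0 p)$. The structural tool is that each $S \in \T_0$, being of type $n$ and hyperlevel $0$, is a Kuhn $n$-simplex in its reference coordinates $\vi_S$, with vertices $p_k = e_1 + \dots + e_k$ for $k \in \{0, \ldots, n\}$.

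The first step chooses $p$. Fix any $S \in \T_0$ containing $x$ and set $\vi_S(x) = (x_1, \ldots, x_n)$ with $x_0 := 1$ and $x_{n+1} := 0$. The barycentric coordinates of $x$ with respect to the Kuhn vertices are $\alpha_k(x) = x_k - x_{k+1}$, non-negative and summing to $1$. By pigeonhole on these $n+1$ values, some $k$ satisfies $\alpha_k(x) \geq \frac{1}{n+1}$, and the hypothesis $N > n$ (i.e., $N \geq n+1$) gives $\alpha_k(x) \geq \frac{1}{N}$. Set $p := \vi_S^{-1}(p_k) \in \Vertices\T_0$; positivity of $\alpha_k(x)$ implies that $p$ is a vertex of the smallest subsimplex $U$ with $x \in \relint U$, hence of every simplex of $\T_0$ containing $x$, and thus $x \in \Int(\T_0 p)$.

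The second step constructs $A$. Let $Q_S := \prod_{i=1}^n \bigl[\tfrac{a_i}{2N}, \tfrac{a_i+1}{2N}\bigr]$ with $a_i := \lfloor 2N\, x_i \rfloor$ be the $\tfrac{1}{2N}$-subcube of the $\tfrac{1}{N}$-grid containing $\vi_S(x)$, and set $A := \vi_S^{-1}(Q_S) \cap \Omega$. Being a subcube of a $\tfrac{1}{N}$-cube, $A$ is $\tfrac{1}{N}$-closed. The bound $\alpha_k(x) \geq \tfrac{1}{N}$ forces $a_k - a_{k+1} \geq 2$, placing $Q_S$ strictly inside the open-star slab $\{y \in \vi_S(S) : y_k > y_{k+1}\}$ of $p$ within $\vi_S(S)$. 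The main obstacle is verifying $A \subset \Int(\T_0 p)$ across simplex boundaries: any piece of $A$ outside $S$ crosses some hyperface $F$ of $S$, but $F$ cannot be the $p$-opposite hyperface (strictly avoided by $Q_S$), so $F$ contains $p$, making the adjacent simplex $T$ lie in $\T_0 p$. Iterating this slab argument in the reference coordinates of each simplex that $A$ may reach, using the invariance of the $p$-barycentric coordinate across simplices sharing $U$, ensures $A \subset \bigcup \T_0 p$ and that no simplex of $\T_0 \setminus \T_0 p$ meets $A$. The argument relies only on regularity of $\T_0$ and the Kuhn structure of each simplex, not on ReTaHyCo (which is not assumed here).
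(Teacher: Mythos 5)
Your first step — the pigeonhole argument producing a vertex $p$ with $\alpha_k(x)\geq\tfrac1{n+1}\geq\tfrac1N$ and $x\in\Int(\T_0 p)$ — is correct and is the same idea the paper uses in Lemma \ref{vertexrelationcube}.\ref{item:every}, phrased via interval lengths instead of barycentric coordinates. The gap is in the second step.

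The claim that $A=\vi_S^{-1}(Q_S)\cap\Omega$ is $\tfrac1N$-closed does not hold, for two independent reasons. First, $Q_S$ is a full-dimensional $\tfrac1{2N}$-cube; by Definition \ref{def:frac1N-closed} a $\tfrac1N$-closed set must be, inside each simplex, a union of $\tfrac1N$-\emph{subcubes}, i.e.\ faces of $\tfrac1N$-cubes, and a $\tfrac1{2N}$-cube is neither a face of a $\tfrac1N$-cube nor a union of such faces; at best $A$ is $\tfrac1{2N}$-closed inside $S$, which is strictly weaker. Second, and more seriously, $\tfrac1N$-closedness is a condition that must hold \emph{simultaneously in every simplex of $\T_0$}: for each $T\in\T_0$, $A\cap T$ must be a union of $\tfrac1N$-subcubes in $T$'s \emph{own} Chebyshev lattice $Z_T$, not in $Z_S$. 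You observe yourself that $A$ can cross a hyperface of $S$ into a neighbour $T$, yet there is no reason why a cube built in $Z_S$ should intersect $T$ in anything resembling subcubes of $Z_T$ — this is precisely the failure illustrated by the paper's Figure \ref{fig:closedsetpicture}, and it is the entire content of Lemma \ref{vertexrelationcube}.\ref{item:closed}, which in turn relies on the sublattice machinery (Lemma \ref{SPTSG}, Lemma \ref{lem:intersection of cube and sublattice}, Lemma \ref{relation on subsimplices}) and on IsoCoChange to ensure that the lattices $Z_S$ and $Z_T$ induce the same sublattice on $\aff(S\cap T)$. Your proposal treats this as a minor "obstacle" to be handled by a slab argument that assumes invariance of the $p$-barycentric coordinate across simplices — but that invariance is exactly what IsoCoChange furnishes and what you explicitly claim you do not need.

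The paper sidesteps this asymmetry entirely: it does not take the $\tfrac1N$-closure of a small cube near $x$. Instead, for each vertex $p$ it defines a \emph{global} set $U_p$ as the union, over all $S\in\T_0 p$ and all $\tfrac1N$-subcubes $c$ of $Z_S$ with $c\cap HP(S,p)=\leer$, of the pieces $c\cap S$. This set is manifestly contained in $\Int(\T_0 p)$ by construction; the pigeonhole argument shows the $U_p$ cover $\Omega$; and the delicate part — that $U_p$ is $\tfrac1N$-closed — is reduced, via stretches and projections, to showing that a cube of $T$'s lattice intersected with $S$ lies inside a cube of $S$'s lattice satisfying the same non-intersection relation. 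If you wish to rescue your approach, you would essentially have to replace $A$ by the $\tfrac1N$-closure of the $\tfrac1N$-subcube of $Z_S$ containing $\vi_S(x)$, and then prove the closure stays inside $\Int(\T_0 p)$ — at which point you would be reproving the paper's Lemma \ref{vertexrelationcube} anyway.
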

While the challenge of Proposition \ref{pps:klebt} is to find this statement and its proof consists of a simple analysis of the bisection rule, this is vice versa in the case of Proposition \ref{pps:imPatch}: 
Its 
statement is not surprising, but additional ideas are required to prove it.

Proposition \ref{pps:klebt} is proved 
in Subsection \ref{sec:e->f},
followed by the proof of Proposition \ref{pps:imPatch} 
and Theorem \ref{milestone}
in Subsection \ref{sec:imPatch}
and the 
proof of the BDV theorem in Subsection \ref{sec:finish}.

\subsection{Preliminaries}
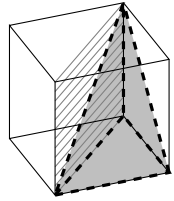
\begin{figure}[h!]
\centering{
\begin{tikzpicture}[scale=.3]
\tikzmath{\h=5;
			\ax=2;\ay=-2.5;
			\bx=5;\by=1;}
\def\base { -- ++(\ax,\ay) -- ++(\bx,\by) -- ++(-\ax,-\ay) -- ++(-\bx,-\by)};
\def\vertical {-- ++(0,\h)};
\fill[gray!50] (\ax,\ay)--++(\bx,\by) --++(-\ax,-\ay+\h)--cycle;
\foreach \z in {0,.416,...,\h}
	\draw[gray!100] (\ax,\ay+\z) -- (\bx,\by+\z);
\draw (0,0) \base\vertical\base;
\draw (\ax,\ay) \vertical;
\draw (\bx,\by)\vertical;
\draw (\ax+\bx,\ay+\by)\vertical;
\draw[dashed,very thick] (\ax,\ay) --++(\bx,\by) --++(-\ax,-\ay) \vertical --cycle;
\draw[dashed,very thick] (\ax,\ay) --++(\bx-\ax,\by-\ay);
\draw[dashed,very thick] (\ax+\bx,\ay+\by) --++ (-\ax,-\ay+\h);
\end{tikzpicture}

\caption{(What this subsection is about in one picture)}
}
\end{figure}
At first, some additional notions are introduced. The definition of a reference simplex is revised (see Definition \ref{def:refsimplex})
. While the notion of a Chebyshev lattice in Subsubsection \ref{Chebyshev lattice} and Lemma \ref{referencesubsimplex} are essential, most lemmas and proofs in this subsection can be skipped by a reader with a good intuition.

\subsubsection{Parallelotopes}
\begin{defn}[parallelotope, edge vectors, edge basis, vertices, face, subparallelotope, hypersubparallelotope]
\index{parallelotope}\index{edge vectors}\index{edge basis}\index{vertices!parallelotope}\index{face!parallelotope}\index{subparallelotope}\index{hypersubparallelotope}
An $m$-dimensional \emph{parallelotope} in $\R^n$ is a set of the form $P=a+[0,1]v_1+\dots+[0,1]v_m$ for a point $a\in\R^n$ and linearly independent $v_1,\dots,v_m\in\R^n$, the \emph{edge vectors} of $p$, forming an \emph{edge basis} of the parallelotope.

The set of \emph{vertices} of the parallelotope $P$ is $a+\{0,1\}v_1+\dots+\{0,1\}v_m$. A parallelotope is obviously convex and a non-empty \emph{face} of 
$P$
is given by $a+I_1v_1+\dots+I_mv_m$ with $\{I_1,\dots,I_m\}\subset\{
[0,0],[0,1],[1,1]\}$.

Let ${{T_1}\,\dot\cup\, \cdots \,\dot\cup\, {T_k}}\subset \{1,\dots,m\}$ be a partition of a subset of $\{1,\dots,m\}$ into non-empty subsets. Then 
\begin{align}
a+[0,1]\sum_{j\in T_1}v_j+\dots+[0,1]\sum_{j\in T_k} v_j\label{eq:subparallelotope}
\end{align}
is a $k$-dimensional \emph{subparallelotope} of $P$. A \emph{hypersubparallelotope} of $P$ is a subparallelotope of dimension $\dim P-1$. 

Note that there are different representations of a parallelotope, e.g.\
\begin{align}
a+[0,1]v_1+\dots+[0,1]v_m = (a{+}v_1)+[0,1](-v_1)+\dots+[0,1]v_m
\end{align}
and not every subparallelotope of $P$ can be represented like \eqref{eq:subparallelotope} using a single 
representation of $P$
(namely a subparallelotope of $P$ can be written in the form \eqref{eq:subparallelotope}, if and only if it contains the point $a$).
\end{defn}

Figure \ref{fig:projection} might help to understand the next lemma.
\begin{lem}[Characterisation of hypersubparallelotopes]\label{hypersubparallelotope}
\index{Characterisation of hypersubparallelotopes}
If $H$ is a 
hyper%
subparallelotope of a cube $C$, 
then $H$ is either a face (i.e.\ a subcube) of $C$, or there are edge vectors $e_1{\neq} e_2$ of $C$ such that the projection $\pi_{e_1,e_2}$ of $C$ on these directions (for a rigorous definition of these projections see Definition \ref{def:proj} below) maps $H$ to a diagonal in the square $\pi_{e_1,e_2} C$ and all remaining edge vectors of $C$ are also edge vectors of $H$, thus
\begin{align}
H=
C\cap\pi_{e_1,e_2}\inv \pi_{e_1,e_2} H 
.
\end{align}
\end{lem}

{\centering
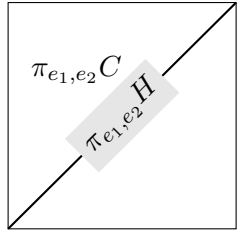
\begin{SCfigure}[1.35][ht]
\begin{tikzpicture}[scale=.5]
\tikzmath{\abstand=-3pt;
			\lang=6;
			\kurz=3;}
\draw (0,0) -- ++(\lang,0) -- ++(0,\lang) -- ++(-\lang,0) -- cycle;
\draw[thick] (0,0) --node[sloped,fill=gray!20]{$\pi_{e_1,e_2} H$} (\lang,\lang);
\draw(.3*\lang,.7*\lang) node{$\pi_{e_1,e_2} C$};
\end{tikzpicture}
\caption{A hypersubparallelotope which is not a subcube can be projected to a diagonal of a square.}
\label{fig:projection}
\end{SCfigure}

}
\begin{proof}
Let $n:=\dim C$.

Per definition, there are an edge basis $v_1,\dots,v_n$ of $C$ and a partition $T_1\,\dot\cup\,\cdots\,\dot\cup\, T_{n-1}$ of a subset of $\{1,\dots,n\}$ into index sets, such that $\sum_{i\in T_j} v_j$ is an edge vector of $H$ for each $j$. At most one of the index sets $T_j$ contains two indices, the other ones only one.

If all $T_j$ consist of only one index, all edges of $H$ are also edges of $C$, so $H$ is a subcube of $C$.

If otherwise one of the $T_j$ has two elements, then let $e_1$ and $e_2$ be the corresponding edge vectors.
\end{proof}
\subsubsection{Chebyshev lattices}\label{Chebyshev lattice}
The notion of a Chebyshev lattice is simple: Take $\R^m$ with the Chebyshev norm $\|\bullet\|_\infty$ and a subset $\frac1M\Z^m\subset \R^m$ for some $M\in\N_{\geq 1}$.
To obtain a Chebyshev lattice from that, forget all but 
\begin{itemize}
\item
this pair of space and subset, 
\item
the metric induced by the Chebyshev norm and 
\item 
the affine structure on $\R^m$.
\end{itemize}
\small{(One may ask, what can by gained by forgetting structure. This becomes clear when looking for sublattices.)}

\normalsize
Formally, this can be described as follows.
\begin{defn}[Chebyshev lattice, lattice width]
\index{Chebyshev lattice}\index{lattice width}\index{refinement of a Chebyshev lattice}
An \emph{$m$-dimensional $\frac1N$-Chebyshev lattice} (for $m\in\N_{\geq 0},N\in\N_{\geq1}$) 
is a triple $(R,Z,\|\bullet\|)$ of an $m$-dimensional real affine space $R$, 
a norm $\|\bullet\|$ on the vector space corresponding to $R$ such that $(R,\|\bullet\|)$ is isometric to $\left(\R^m,\|\bullet\|_\infty\right)$, and a subset $Z\subset R$ isometric to $\left(\frac1N\Z^m,\|\bullet\|_\infty\right)$. 
If the affine space or the norm are given, Chebyshev lattices are often denoted by the pair $(Z,\|\bullet\|)$, skipping the affine space $R$, or only by $Z$.
The \emph{lattice width} is $\frac1N$
.
\end{defn}
\begin{defn}[$\frac1M Z$, refinement of a Chebyshev lattice]
\index{1M@$\frac1M Z$}\index{Z@$\frac1M Z$}\index{refinement of a Chebyshev lattice}\index{refine(Z,1/M)@$\refine(Z,\frac1M)$}
Let $\frac1M Z$ denote the uniform scaling of a Chebyshev lattice $Z$ at one of 
its
points (still embedded in the same $R$).

Sometimes, for some multiplier $M$ of $N$, a $\frac 1N$-Chebyshev lattice $Z$ shall be turned into an $\frac1M$-Chebyshev lattice $Z'$. This is given by 
$Z'=\frac NM Z$
and labelled $\refine(Z,\frac1M)$, the \emph{$\frac1M$-refinement of $Z$}. 
\end{defn}
\begin{defn}[isometry respecting the Chebyshev lattice]
\index{isometry respecting the \\Chebyshev lattice}
\index{lattice isometry}
For two Chebyshev lattices 
$(R,Z,\|\bullet\|)$ and $(R',Z',\|\bullet\|')$ in $R'$, 
an isometry between $R$ and $R'$ is said to \emph{respect} the Chebyshev lattice and called \emph{lattice isometry}, if it maps $Z$ to $Z'$. 
\end{defn}
\begin{defn}[canonical unit vectors]\index{canonical unit vectors}
Given a lattice isometry 
which maps
$\left(\frac1N\Z^m,\|\bullet\|_\infty\right)$ 
to a general $\frac1N$-Chebyshev lattice $(Z,\|\bullet\|)$, the images of the canonical unit vectors $(0,\dots,0,1,0,\dots,0)$ are called \emph{canonical unit vectors}, as well. 
\end{defn}
\begin{defn}[sublattice]
\index{sublattice}
Let $(R,Z,\|\bullet\|)$ be a $\frac1N$-Chebyshev lattice and $U$ an affine subspace of $R$. If $(U,Z\cap U,\|\bullet\|)$ is also a $\frac1N$-Chebyshev lattice (with the same lattice width), then it is called a \emph{sublattice }of $(R,Z,\|\bullet\|)$. For a given Chebyshev lattice $(R,Z,\|\bullet\|)$, $U$ determines the sublattice uniquely. Therefore, it is also written that ``$U$ is a sublattice of $(Z,\|\bullet\|)$''.
\end{defn}
\begin{defn}[cube, subcube in a Chebyshev lattice]\label{def:cube in a Chebyshev lattice}
\index{cube}\index{subcube}
A \emph{1-cube in $(\R^n,\Z^n,\allowbreak {\|\bullet\|_\infty})$} is a set of the form $a+[0,1]^n$ for some $a\in \Z^n$. 
A \emph{subcube }in $(\R^n,\Z^n,\allowbreak\|\bullet\|_\infty)$ is a face of a cube therein.

For $M,N\in\N_{\geq 1}$, a \emph{$\frac 1{M\cdot N}$-(sub)cube in $\left(\R^n,\frac1N\Z^n,\|\bullet\|_\infty\right)$ }is a 1-(sub)cube as above 
multiplied by $\frac 1{M\cdot N}$.


Let $(R,Z,\|\bullet\|)$ be a Chebyshev lattice with a lattice isometry 
$
\vi: \R^m
\ra R
,\allowbreak	\frac1N\Z^m
\ra Z.
$
Then a \emph{$\frac 1{M\cdot N}$-(sub)cube in $(R,Z,\|\bullet\|)$ }is the image of a $\frac 1{M\cdot N}$-(sub)cube in $\left(\frac1N\Z^m,\|\bullet\|_\infty\right)$ under $\vi$. 
\end{defn}
Note that a subcube in $(Z,\|\bullet\|)$ is a subset of the affine space $R$, not a subset of $Z$. A subcube in $Z$ can contain points of $Z$ only as vertices.
\begin{lem}\label{SPTSG}
Let $C$ be a $\frac 1N$-cube in a $\frac 1N$-Chebyshev lattice  $(Z,\|\bullet\|)$ and $U$ be an $m$-dimensional subparallelotope of $C$. Then 
$\aff(U)$
is a 
sublattice of $(Z,\|\bullet\|)$
and $U$ is a $\frac 1N$-cube in 
that sublattice.
See Figure \ref{fig:sublattice}.
\end{lem}

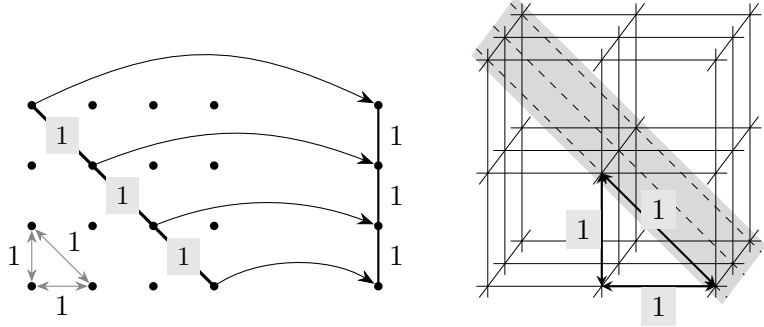
\begin{figure}[ht]
\centering{
\begin{tikzpicture}
\begin{scope}[xshift=-6cm, scale = .8]
\foreach \x in {0,...,3}
	{\foreach \y in {0,...,3}
		{\fill (\x,\y) circle [radius=.07];}
	}
\draw[very thick] (0,3) --node[fill=gray!20]{\small{1}} (1,2) --node[fill=gray!20]{\small{1}} (2,1) --node[fill=gray!20]{\small{1}} (3,0);

\draw[gray!96,<->,>={Stealth[sep=.06cm]}]  (0,0) --node[black,below]{1}(1,0);
\draw[gray!96,<->,>={Stealth[sep=.06cm]}]  (0,0) --node[black,left]{1}(0,1);
\draw[gray!96,<->,>={Stealth[sep=.06cm]}]  (1,0) --node[black,above right=-2pt]{1}(0,1);

\foreach \y in {0,...,3}
	{\fill (3+2.7,\y) circle [radius=.07cm];}
\draw[thick] (3+2.7,0) --node[right]{1} ++(0,1) --node[right]{1} ++(0,1) --node[right]{1} (3+2.7,3);

\foreach \y in {0,...,3}
	{\draw[->,>={Stealth[scale=1.3,sep=.06cm]}] (3-\y,\y) .. controls (3.7-.5*\y,.1*\y*\y+.9*\y+.5) 
	and (4.9-.5*\y,.1*\y*\y+.9*\y+.5) .. (3+2.7,\y);}
\end{scope}

\begin{scope}[scale=1.5,z={(.15,.2)},very thin]
\tikzmath {\xyend=.1;}
\fill[gray!30] (-\xyend,2+\xyend,-.3) -- (2+\xyend,-\xyend,-.3) -- (2+\xyend,-\xyend,2.3) -- (-\xyend,2+\xyend,2.3) -- cycle;
\foreach \z in {0,1,2} {
\draw[shift={(0,0,\z)}] (-\xyend,-\xyend,0) grid (2+\xyend,2+\xyend,0);
}
\foreach \x in {0,1,2}{
	\foreach \y in {0,...,2}{
		\draw (\x,\y,-.5) -- (\x,\y,2.5);
	}
}
\foreach \z in {0,1,2}{
	\draw[dashed] (-\xyend,2+\xyend,\z)--(2+\xyend,-\xyend,\z);
}
\draw[<->,>=stealth,thick]  (2,0,0) --node[black,fill=gray!20,above]{1} (1,1,0);
\draw[<->,>=stealth,thick]  (1,0,0) --node[black,fill=gray!20,below]{1}(2,0,0);
\draw[<->,>=stealth,thick]  (1,0,0) --node[black,fill=gray!20,left]{1}(1,1,0);
\end{scope}
\end{tikzpicture}
\caption{Sublattices}\label{fig:sublattice}
}
\end{figure}

\begin{proof}
It suffices to prove the case $N=1$. 
Let $C=a+\sum_{i=1}^n [0,1]v_i$ with $a\in Z$,
$T_1\,\dot\cup\,\cdots\,\dot\cup\, T_m\subset \{1,\dots,n\}$ and $U=a+\sum_{j=1}^m[0,1]\sum_{i\in T_j} v_i$. Then
\begin{align*}
\aff U&=a+
\sum_{j=1}^m\R\sum_{i\in T_j} v_i,\\
Z\cap\aff U&=a+\sum_{j=1}^m\Z\sum_{i\in T_j} v_i
\end{align*} and 
\begin{align*}
\vi:\R^m&\ra \aff U\\
\Z^m&\ra Z\cap\aff U\\
\beta &\mapsto a+\sum_{j=1}^m\beta_j\sum_{i\in T_j} v_i
\end{align*}
is an isometry mapping $[0,1]^m$ to $U$, thus $U$ is a cube in the sublattice $Z\cap \aff U$.
\end{proof}
\begin{lem}\label{lem:intersection of cube and sublattice}
Let $c_Z$ be a $\frac1N$-subcube in a 1-Chebyshev lattice $Z$ and $U$ be a sublattice of $Z$ spanned by a subparallelotope of a 1-cube $C$ in $Z$. Then $c_Z\cap U$ is a subcube in $U$.
\end{lem}
\begin{proof}[Sketch of a proof]
For any subparallelotope $P$ of $C$ there is a sequence $C=P_n\supset\dots\supset P_k=P$ such that $P_j$ is a hyperparallelotope of $P_{j+1}$. For that reason, it suffices to prove the statement for a \emph{hyper}subparallelotope $P$. (The actual statement is obtained by a recursive application to the Chebyshev lattice $\aff P_j$, $c_Z\cap P_j$ and the sublattice spanned by $P_{j-1}$ with $j$ decreasing from $n$ to $k+1$.)

If $P$ is a subcube, the statement is more or less obvious.

Otherwise, the projection from the Characterisation of hypersubparallelotopes yields Figure \ref{fig:intersection of cube and subgrid}, where the projection of $c_Z$ is one of the small gray squares or line segments.
\begin{SCfigure}[][ht]
\begin{tikzpicture}[scale=.4]
\draw[step=2,gray] (0,0) grid (6,6);
\draw (0,0) rectangle (6,6);
\draw (1,5) node{$\pi C$};
\draw[thick] (-.5,-.5) -- node[sloped,fill=gray!30]{$\pi U$} (6.5,6.5);
\end{tikzpicture}
\caption{}\label{fig:intersection of cube and subgrid}
\end{SCfigure}
$\pi c_Z\cap \pi U$ can be a diagonal $d$ or a vertex $v$ of $\pi c_Z$ or empty. Since $U=\pi\inv\pi U$, $c_Z\cap U$ is the full preimage $c_Z\cap \pi\inv(c_Z\cap U)$ of $e$ or $v$ or $\leer$ in $c_Z$ which is a subparallelotope in $c_Z$ and a subcube in $U$.
\end{proof}

\begin{defn}[reference simplex]\label{def:refsimplex}
\index{reference simplex!in a Chebyshev lattice}
A \emph{%
$(k,h)$ 
reference simplex} 
in an $m$-dimensional $2^{-h}$-Chebyshev lattice $(Z,\|\bullet\|)$ is a type $k$ hyperlevel $h$ T-array
$\begin{pmatrix}
p_0&\dots& p_k\\
&\vdots\\
&p_m
\end{pmatrix}$
for which there exists a sequence of $2^{-h}$-subcubes $C_k\subset\dots\subset C_m$ in $(Z,\|\bullet\|)$ such that
\begin{itemize}
\item
each $C_j$ is a $j$-dimensional subcube (i.e.\ a hyperface) of $C_{j+1}$,
\item
for $j=k{+}1,\dots,m$, the vertical vertex $p_j$ is the centre of $C_j$,
\item
and $C_k=p_0+\sum_{j=1}^k[0,1](p_j{-}p_{j-1})$.
\end{itemize}
\end{defn}
\begin{lem}\label{lem:uniquelattice}
A T-array $S$ of 
type $k$ 
in an affine space $R$ uniquely determines a lattice $Z_S$, a norm $\|\bullet\|_S$ and cubes $C_k^S,\dots,C^S_m$ such that $S$ is a reference simplex in $(Z_S,\|\bullet\|_S)$ and $C^S_k,\dots,C^S_m$ fulfil the properties claimed in Definition \ref{def:refsimplex}.
\end{lem}
\begin{proof}
Let $S=:\begin{pmatrix}
p_0&\dots& p_k\\
&\vdots\\
&p_m
\end{pmatrix}$.
At first, the cubes are determined: The cube $C_k$ has to be $p_0+\sum_{j=1}^k[0,1](p_j{-}p_{j-1})$. For $j\in\{k,\dots,m{-}1\}$, the cube $C_{j+1}$ has to be $C_j+[0,2](p_{j+1}{-}
\mid C_j)$ to have $C_j$ as a hyperface and 
$p_{j+1}$ as center.

Finally, the cube $C_m$ determines the lattice: If $C_m=p_0+\sum_{j=1}^m [0,1]v_j$ for edge vectors $v_j$, then $Z=p_0+\sum_{j=1}^m \Z v_j$ with the norm $\left\|\sum_{j=1}^m \beta_jv_j\right\|:=2^{-hS}\|\beta\|_\infty$ is the only $2^{-hS}$-Chebyshev lattice wherein $C_m$ is a cube.
\end{proof}
\begin{defn}[lattice, cubes of a T-array]\label{lattice of a T-array}
\index{lattice!of a T-array}\index{Chebyshev lattice!of a T-array}\index{cubes of a T-array}\index{Zs@$Z_S$ -- Chebyshev lattice of $S$}
The lattice in Lemma \ref{lem:uniquelattice} 
is called \emph{the lattice $Z_S$ of the T-array $S$}, 
the cubes there 
are \emph{the cubes of $S$}
.
In contrast, \emph{a $\frac1N$-subcube $c$ of a T-array $S$} (always denoted by a lower-case $c$) represents an arbitrary subcube in the Chebyshev lattice of $S$ according to Definition \ref{def:cube in a Chebyshev lattice}.
\end{defn}
\begin{bemn}
\begin{itemize}
\item
The Chebyshev lattice of a (sub-)simplex fills 
its whole affine hull, 
even though far simplices of the triangulation have nothing to do with this lattice. 
Consequently, a subcube of a simplex is not restricted to be a subset of the simplex.
\item
Also the reflexion of a reference simplex is a reference simplex, and has the same cubes $C_k, \dots, C_m$.
\end{itemize}
\end{bemn}
\begin{lem}[Conservation of a Chebyshev lattice in children]\label{lem:conservationchildren}
If $S$ is a 
$(k,h)$
reference simplex 
in a Chebyshev lattice $(Z,\|\bullet\|)$ for $k\geq 1$ with the cubes $C_k,\dots,C_m$, then each of the children of $S$ is a 
$(k{-}1,h)$
reference simplex with some cube $C_{k-1}$ and the same cubes $C_k,\dots,C_m$.

If $S$ is a 
$(0,h)$
reference simplex 
in a Chebyshev lattice $(Z,\|\bullet\|)$, then its transposed $S^T$ (i.e.\ its child in the extended binary tree) is a 
$(n,h{+}1)$
reference simplex in $\left(\frac12 Z,\|\bullet\|\right)$.
\end{lem}
A proof of this is not outlined. It is similar to the proof of Theorem \ref{Cubetheorem} about the cube-related geometry of reference simplices.

The next lemma, the main result of this preliminary section here, states that a subsimplex of a reference simplex spans a sublattice in which the subsimplex is a reference simplex itself.

\emph{Preliminary remark on Lemma \ref{referencesubsimplex}.}
Let $S$ be a reference simplex in a Chebyshev lattice $Z$ and $U$ a restriction of $S$. By Definition \ref{restricted T-array!IsoCoChange} of $hU$, 
\begin{align*}
\refine\left(Z,2^{-hU}\right)=
\begin{cases}
Z,&\text{if $U$ contains a horizontal vertex of $S$,}\\
\frac12Z,&\text{otherwise}.
\end{cases}
\end{align*}
\begin{lem}[Restrictions are reference simplices in sublattices.]\label{referencesubsimplex}
\index{restrictions of reference simplices}
Let $S$ be a reference simplex in a $2^{-hS}$-Chebyshev lattice $
\left(Z,\|\bullet\|\right)$. Let $U$ be the restriction of $S$ to an $m$-dimensional subsimplex 
of $S$. 
Then
\begin{enumerate}
\item
the intersection of
the affine subspace $\aff (U)$ 
with $Z$
is an $m$-dimensional sublattice of 
$\refine\left(Z,2^{-hU}\right)$%
,
\item
the T-array $U$ is a reference simplex in $\left(
\refine\left(Z,2^{-hU}\right)
\cap\aff U,\|\bullet\|\right)$, 
\item\label{subsimplex subparallelotope}
if $U$ contains some horizontal vertex of $S$, the cube $C^U_m$ of $U$ is a subparallelotope of the cube $C^S_n$ of $S$.
Otherwise, 
$C^U_m$ 
is a subparallelotope of the cube $C_n^{S'}$ of some descendant $S'$ of $S$ of hyperlevel $hS{+}1$ and full type.
\end{enumerate}
\end{lem}
The proof is prepared by some basic investigations.
\begin{defn}[possible parent, $\sim$ ancestor]
\index{possible parent}\index{possible ancestor}
A T-array 
$S= 	
\begin{pmatrix}
p_0&\dots& p_k\\
&\vdots\\
&p_n
\end{pmatrix}$
of a type $\leq n{-}1$ can be the child either of 
$S_1=
\begin{pmatrix}
2p_{k+1}{-}p_k& p_0&\dots& p_k\\
&&p_{k+2}\\
&&\vdots\\
&&p_n
\end{pmatrix}$ or of 
$S_2=
\begin{pmatrix}
p_0&\dots& p_k&2p_{k+1}{-}p_0\\
&p_{k+2}\\
&\vdots\\
&p_n
\end{pmatrix}$.
\end{defn}
These two T-arrays are the \emph{possible parents} of $S$. The \emph{possible ancestors} of $S$ of the same hyperlevel are defined in the same way.
\begin{lem}\label{possibleancestorequalhyperlevel}
Let $T$
 be a reference simplex
. Then a possible ancestor $S$ of $T$ of the same hyperlevel is also a reference simplex.
\end{lem}
\begin{proof}
It suffices to prove the statement for a possible \emph{parent} (instead of \emph{ancestor}) of reference simplex 
$T=
\begin{pmatrix}
p_0&\dots& p_k\\
&\vdots\\
&p_n
\end{pmatrix}$ of type $\leq n{-}1$.

Note that a lattice isometry maps reference simplices to reference simplices and a reference simplex can be mapped to an arbitrary other one by a lattice isometry, because there exists always a lattice isometry mapping $C_n$ to $[0,1]^n$, $C_{n-1}$ to $[0,1]^{n-1}{\times}\{0\}$, etc.\ simultaneously.

In this way, an arbitrary child $T'$ of an arbitrary reference simplex $S'$ of hyperlevel $hT=hS$ and type $tT'=tT=tS'{-}1=tS{-}1$ is a reference simplex according to Lemma \ref{lem:conservationchildren} and can be mapped to $T$. This
lattice isometry maps $S'$ to \emph{one} of the possible fathers of $T$. Therefore, \emph{this} possible father is also a reference simplex.

Furthermore, there exists a lattice isometry interchanging $p_0$ and $p_k$, $p_1$ and $p_{k-1}$, etc.\ and fixing the vertical vertices $p_{k+1},\dots,p_n$. 
It maps the possible parents 
$\begin{pmatrix}
2p_{k+1}{-}p_k& p_0&\dots& p_k\\
&&p_{k+2}\\
&&\vdots\\
&&p_n
\end{pmatrix}$ 
and
$\begin{pmatrix}
p_0&\dots& p_k&2p_{k+1}{-}p_0\\
&p_{k+2}\\
&\vdots\\
&p_n
\end{pmatrix}$ of $T$ to each other. Hence, 
\emph{both} of them are reference simplices.
\end{proof}
\begin{proof}[Proof for Lemma \ref{referencesubsimplex}]
Throughout the proof, let the reference simplex 
$S$ 
in $
(Z,\left\|\bullet\right\|)$ be $
\begin{pmatrix}
p_0&\dots&p_k\\
&\vdots\\
&p_n
\end{pmatrix}$.

Since several reference simplices are used here, the cubes corresponding to a (supposed) reference simplex $T$ are generally denoted by $C^T_j$.

\emph{1st case: $S$ has full type.}
Let $S=\begin{pmatrix}p_0&\dots&p_n\end{pmatrix}$ and $U=\begin{pmatrix}p_{i_0}&\dots&p_{i_m}\end{pmatrix}$ (with $0\leq i_0<\dots<i_m\leq n$). In this case, to record is only the sublattice and the cube $C^U_m$.
Then, with 
\begin{align*}
e_j:=\begin{cases}
p_{j-1}-p_j&\text{ for $j\leq i_0$}\\
p_j-p_{j-1}&\text{ otherwise},
\end{cases}
\end{align*} 
$C^S_n=p_0+\sum_{i=1}^n [0,1](p_{i-1}-p_i)=p_{i_0}+\sum_{i=1}^n [0,1]e_i$ and, with
\begin{align*}
T_1&:=\{i_0+1,\dots,i_1\}\\
\vdots\\
T_m&:=\{i_{m-1}+1,\dots,i_m\},
\end{align*}
the following subparallelotope of $C^S_n$
\begin{align}
p_{i_0}+\sum_{j=1}^m[0,1]\sum_{i\in T_j}e_i
=\,p_{i_0}+\sum_{j=1}^m[0,1]\left(p_{i_j}{-}p_{i_{j-1}}\right)\label{edgebasis}
=:\,C^U_m
\end{align}
arises. According to Lemma \ref{SPTSG}, $\aff U=\aff C^U_m$ is a sublattice of $(Z,\|\bullet\|)$, and according to \eqref{edgebasis}, $C^U_m$ is the cube for the full type reference simplex $U$ in that sublattice.

\emph{2nd case: $U=
\begin{pmatrix}
p_{i_0} &p_{i_0+1}\quad\dots&p_k\\
&p_{k+1}\\
&\vdots\\
&p_n
\end{pmatrix}$ containing a horizontal vertex of $S$ (supposing $i_0\leq k$).
}

$S$ and $U$ have the possible ancestors
\begin{align*}
\begin{pmatrix}
2p_n-p_k&\dots&2p_{k+1}-p_k&p_0&\dots&\dots&\dots&p_k
\end{pmatrix}
&=:\tilde S\text{ and}\\
\begin{pmatrix}
2p_n-p_k&\dots&2p_{k+1}-p_k&\quad p_{i_0}&p_{i_0+1}&\dots&p_k
\end{pmatrix}
&=:\tilde U\text{, respectively}.
\end{align*}
$\tilde U$ is the restriction of $\tilde S$ to a subsimplex.
According to the first 
case,
$\tilde U$ is a reference simplex in the sublattice $(Z\cap\aff\tilde U,\|\bullet\|)$. As a descendant of a reference simplex, $U$ is a reference simplex in the same sublattice.
Since $U$ has equal hyperlevel as $\tilde U$, its cube $C_m^U$ is the same as the cube $C_m^{\tilde U}$, i.e.\ a subparallelotope of $C_n^S$.

\emph{3rd case: $U=
\begin{pmatrix}
p_{i_0} &\dots&p_{i_l}\\
&p_{i_{l+1}}\\
&\vdots\\
&p_{i_m}
\end{pmatrix}$ (with $0\leq i_0<\dots <i_l\leq k<i_{l+1}<i_m\leq n$) containing a horizontal vertex of $S$ (i.e.\ $l\geq 0$).
}

Let $U':=\begin{pmatrix}
p_{i_0} &p_{i_0+1}\quad\dots&p_k\\
&p_{k+1}\\
&\vdots\\
&p_n
\end{pmatrix}$
be the T-array from the second 
case. $U'$ and $U$ have the possible ancestors
\begin{align*}
\tilde U'&:=\begin{pmatrix}
p_{i_0}&p_{i_0+1}&\dots&p_k&2p_{k+1}-p_{i_0}&\dots&2p_{n}-p_{i_0}
\end{pmatrix}\text{, and}\\
\tilde U&:=\begin{pmatrix}
p_{i_0}&p_{i_1}&\dots&p_{i_l}& &2p_{i_{l+1}}-p_{i_0}&\dots&2p_{i_m}-p_{i_0}
\end{pmatrix}, \text{ respectively.}
\end{align*}
$\tilde U$ is the restriction of $\tilde U'$ to a subsimplex. The statement is concluded as in the second 
case.

\emph{4th case: $U$ does not contain any horizontal vertex of $S$.}
All vertical vertices of $S$ are contained in every descendant $S'$ of $S$ of hyperlevel $hS{+}1$ and type $n$, so $U$ is also a restriction of $S'$ to a subsimplex. $S'$ is a reference simplex in $\left(\frac 12Z,\|\bullet\|\right)$. The statement for $U$ follows by applying the first 
case
above on $S'$, this lattice and $U$.
\end{proof}
\begin{defn}[canonical projection of a Chebyshev lattice]\label{def:proj}
\index{canonical projection of a Cheby\-shev lattice}

The \emph{canonical projection of $\left(\R^n,\frac1N\Z^n,\|\bullet\|_\infty\right)$ on the canonical unit vectors $e_{i_1},\dots,e_{i_m}$} (with $1\leq i_1<\dots<i_m\leq n$)
 is the map
\begin{alignat*}2
\pi_{i_1,\dots,i_m}: &&\R^n&\ra \R^m\\
					&&(x_1,\dots,x_n)&\mapsto (x_{i_1},\dots,x_{i_m}).
\end{alignat*}
It maps 
$\left(\frac1N\Z^n,\|\bullet\|_\infty\right)$ to $\left(\frac1N\Z^m,\|\bullet\|_\infty\right)$.


For general Chebyshev lattices, \emph{canonical projections }are obtained by composition of the above defined canonical projection with lattice isomorphisms (in arbitrary order). 
\end{defn}
\begin{bem}
A canonical projection maps $\frac1N$-cubes to $\frac1N$-cubes, subcubes of cubes to subcubes of the image cubes
and
subparallelotopes of cubes to subparallelotopes of the image cubes. 
\end{bem}
\subsubsection{IsoCoChange}\label{subsec:IsoCoChange}
\begin{defn}[$\|\bullet\|_\infty$-isometric coordinate changes (\hypertarget{IsoCoChange}{IsoCoChange})]\label{def:IsoCoChange}
\index{isometric coordinate changes}
\index{IsoCoChange -- iso\-met\-ric co\-or\-di\-na\-te \\chan\-ges}
A (regular) triangulation $\T$ satisfies \emph{IsoCoChange}, if for each pair $S,T\in\T$ the following holds true: 
Let $U$ be the restriction of $S$ to $S\cap T$, $V$ be the restriction of $T$ to $S\cap T$ and $\alpha:=\max\{hU,hV\}$.
Let 
$(Z_S,\|\bullet\|_S)$ and 
$(Z_T,\|\bullet\|_T)$
 be the Chebyshev lattice of $S$ and $T$, respectively. Then the 
 sublattice of 
$(\refine(Z_S,2^{-\alpha}),\|\bullet\|_S)$
 and 
of
$(\refine(Z_T,2^{-\alpha}),\|\bullet\|_T)$
, each spanned by $\aff (S\cap T)$, must coincide.
\end{defn}
\begin{bemn}
\begin{itemize}
\item
Normally, $hU=hV$. There is only one exception: If $tU=0$ and $tV=\dim V$, then $hV=hU{+}1$.
\item
The name ``IsoCoChange'' refers to an alternative definition,
which was hinted in the introduction of this chapter.
\end{itemize}
\end{bemn}
\begin{lem}\label{lem:IsoCoCh}
\begin{enumerate}
\item\label{twodefs}
If $\T_0$ satisfies 
ReTaHyCo, then it also satisfies IsoCoChange.
\item\label{item:refinementconservation}
If $\T$ satisfies 
IsoCoChange, then every admissible refinement of $\T$ also does.
\end{enumerate}
\end{lem}

\begin{proof}[Proof of Lemma \ref{lem:IsoCoCh}.\ref{twodefs}]
According to Lemma \ref{referencesubsimplex}, the restriction of $S$ to $S\cap T$ is a reference simplex in $(Z_S\cap \aff (S\cap T),\|\bullet\|_S)$ and the restriction of $T$ to $S\cap T$ is a reference simplex in $(Z_T\cap \aff (S\cap T),\|\bullet\|_T)$. According to Lemma \ref{lem:uniquelattice}, there is only one Chebyshev lattice of this restriction each. Both restrictions coincide up to reflexion, so the two Chebyshev lattices must coincide.
\\
\emph{Proof of Lemma \ref{lem:IsoCoCh}.\ref{item:refinementconservation}.}
This is clear from Lemma \ref{lem:conservationchildren}: If a simplex $S$ is bisected, its Chebyshev lattice is conserved. If $S$ is transposed, its Chebyshev lattice is scaled by $\frac12$. However, in the definition of IsoCoChange, the Chebyshev lattices are refined to the same lattice width before they are compared. 
\end{proof}
\begin{bem}
IsoCoChange is a weaker compatibility condition than 
\hyperlink{SIC}{SIC} in Definition \ref{SIC} on page \pageref{SIC}
. To demonstrate 
the difference,
consider an arbitrary 2-dimensional triangulation with hyperlevel 0 and type 2 T-arrays. While SIC claims that every edge is refinement edge of \emph{none} or of \emph{both} adjacent triangles, IsoCoChange is always satisfied, because the sublattices on the edges are always isometric to $\Z$ with the two vertices being two neighbouring lattice points.

With SIC, an edge is in reference coordinates an edge of a cube, a face diagonal, or a diagonal of a higher dimensional subcube, but is the same for each simplex. So levels could be assigned to each edge. This is different 
for 
IsoCoChange: An edge can be an edge of a cube in the lattice of one simplex, but a diagonal in another. That is why the level function $l$ is not very useful in this chapter. The hyperlevel $h$ 
proves more useful
(sometimes together with the type $t$). 
\end{bem}

\begin{lem}[Hyperlevel of an edge is well-defined]\label{hyperlevel of an edge is unique}\index{Hyperlevel of an edge is unique}
IsoCoChange implies that the definition of the hyperlevel of an edge $h\Eref S:=hS$ (Definition \ref{def:hyperlevel of an edge})
is unique. In the Chebyshev lattice of an 
initial
simplex 
$S\in\T_0$, the length of a refinement edge $e$ of a descendant of $S$ is $\|e\|_S=2^{-he}$.
\end{lem}
\begin{proof}
IsoCoChange implies that the metrics $\|\bullet\|_S,\|\bullet\|_T$ of the Chebyshev lattices of two simplices $S,T\in\Simplexe$ coincide on their intersection. So this edge length also coincides. The refinement edge of a simplex $S$ connects two distinct vertices of the $2^{-hS}$-cube $C_n$ of $S$, thus $\|e\|_S=2^{-hS}=2^{-he}$.
\end{proof}

\subsection{A $\ar\Edges$-sequence sticks to a $2^{-k}$-closed set.}
\label{sec:e->f}
The purpose of this subsection is to prove Proposition \ref{pps:klebt} 
above.
Unlike the output of the AGK algorithm, $\T_0$ consists of hyperlevel 0 simplices in this proposition. This will be minded when the proposition is used in the proof of Theorem \ref{milestone}.
\begin{defn}[$\frac1N$-closed,$\frac1N$-closure]\label{def:frac1N-closed}
\index{1N@$\frac1N$-closed}\index{closed@$\frac1N$-closed}
\index{1N@$\frac1N$-closure}\index{closed@$\frac1N$-closure}

A subset $U\subset\Omega$ is \emph{$\frac1N$-closed }in $\T_0$, if for each $S\in\T_0$, there is a union $U_S$ of $\frac1N$-subcubes in the Chebyshev lattice $(Z_S,{\|\bullet\|_S})$ such that $U\cap S=U_S\cap S$. Since closed sets are considered only in $\T_0$, the attribute ``in $\T_0$'' is omitted.

The \emph{$\frac1N$-closure }of a set $A\subset\Omega$ is the smallest $\frac1N$-closed set containing $A$.
\end{defn}
Figure \ref{fig:closedsetpicture} provides an example.

\begin{figure}[hbt]
\centering{
\begin{tikzpicture}
\tikzmath{
	coordinate \A,\B,\Ce,\D,\e,\E,\F,\G,\H,\I,\J;
	\A=(0,0);
	\B=(2,2);
	\Ce=(0,2);
	\e=.25*(\A)+.75*(\Ce);
	\E=(.7,1.3);
	\F=(1,3.5);
	\G=.5*(\B)+.5*(\Ce);
	\H=.5*(\B)+.5*(\F);
	\I=.5*(\Ce)+.5*(\F);
	\J=.5*(\F)+.5*(\G);
}
\def\Dreieck{
\clip[draw] (\A) -- (\B) -- (\Ce) -- cycle;
\draw[gray!150,semitransparent] (\A) grid (\B);
}
\def\rechtes{
\begin{scope}
\fill[gray!30] (0,1) rectangle (1,2);
\draw 	(\A)node[below]{$a$} 
		(\B)node[right]{$b$}
		(\Ce)node[above]{$c$}
		(\E)node{$B$}
		(\e)--++(.1,0)node[right]{$e$};
\Dreieck
\end{scope}
}
\rechtes

\begin{scope}[cm={.5,.5,-.5,.5,(0,0)}]
\draw (\Ce)node[left]{$d$}
		(\E)node{$A$};
\draw[dashed] (\B) -- (\F) -- (\Ce);
\draw[dashed,gray!96] (\H) -- (\G) -- (\I);
\draw (\J) node {$C$};
\Dreieck
\fill[pattern=dots] (1,1) rectangle (2,2);
\end{scope}

\draw (-1,-.5)node[below right]{T-arrays: $A=
\begin{pmatrix}
a&d&c
\end{pmatrix}, B=
\begin{pmatrix}
a&b&c
\end{pmatrix}$};
\end{tikzpicture}
}
\caption{Examples for a not $\frac12$-closed and a $\frac12$-closed set in $\{A,B\}$: Let the T-arrays have hyperlevel 0, so the drawn grids in them are clips of $\frac12$-Chebyshev lattices. The grey set 
is not $\frac12$-closed, because its intersection with the left triangle $A$, the line segment $e$, is ``diagonal'' in the Chebyshev lattice of $A$ and hence not a subcube in 
it. To make it $\frac1N$-closed, the small 
dotted
triangle 
(which is the intersection of a $\frac12$-cube with $A$) has to be added. Note that there could be a further triangle $C$ touching $A$ at $\overline{cd}$ such that the dotted triangle is not closed in $C$ and so on.
}
\label{fig:closedsetpicture}
\end{figure}
\begin{bem}
For naturals $N,M\in \N_{\geq 1}$, $\frac 1N$-closed sets are also $\frac 1{MN}$-closed, because a $\frac1N$-subcube is a union of $\frac1{MN}$-subcubes.
\end{bem}

As it is familiar from $\ar0,\ar1$ etc., now a relation $\arsquare{\frac1N}$ is defined, such that the $\frac1N$-closed sets are the $\arsquare{\frac1N}$-closed sets. 
\begin{defn}[$\arsquare{\frac1N}$]
\index{->1N@$\arsquare{\frac1N}$}
Let 
$\arsquare{\frac1N}$ be the following relation on 
$\Omega\times\Omega$:
\begin{align*}
q\arsquare{\frac1N} p\quad:\LRq 	&\text{Each $\frac1N$-closed set containing $q$ contains also $p$.}
\end{align*}
\end{defn}
\begin{bemn}
\begin{enumerate}
\item
If $q$ lies in a simplex $S\in\T_0$ and every $\frac1N$-subcube of $S$ containing $q$ contains also $p$, then $q\arsquare{\frac1N} p$.
\item
$\arsquare{\frac 1N}$ is reflexive and transitive.
\item
For naturals $M,N\in\N_{\geq 1}$, it holds that $q\arsquare{\frac 1{MN}}p\quad\Rq q\arsquare{\frac 1N}p$.
\end{enumerate}
\end{bemn}
\begin{lem}\label{lem:Mittenspringen}
If $\tilde S=
\begin{pmatrix}
p_0\\
\vdots\\
p_n
\end{pmatrix}
$
is a reference simplex of type 0, then for all indices $0\leq i\leq j\leq n$ it holds that
$
p_j\arsquare{2^{-h\tilde S}} p_i\text{, moreover }\fa k\leq h\tilde S.p_j\arsquare{2^{-k}} p_i.
$
\end{lem}
\begin{proof}
$p_j$ is the centre and therefore a relatively interior point of the cube $C_j$ of $\tilde S$. 
Hence, each $2^{-h\tilde S}$-subcube in the Chebyshev lattice of 
$\tilde S$
containing $p_j$ includes $C_j$, while $C_j$ includes $C_i$ and hence contains $p_i$.
\end{proof}
Recall Proposition \ref{pps:klebt}: 
\emph{Assume that $\T_0$ satisfies ReTaHyCo and consists of hyperlevel $0$ simplices solely. Let $f\ar\Edges e$ for admissible edges $e$ and $f$ 
of hyperlevel $\geq 1$, 
\label{item:klebt}
$k\leq he{-}1$ be an integer and $U\subset \Omega$ be $2^{-k}$-closed in $\T_0$. 
If a vertex of $f$ lies in $U$, then a vertex of $e$ also does.}
\begin{proof}[Proof of Proposition \ref{pps:klebt}]
According to the definition of $\ar\Edges$ on page \pageref{ar}, there exists an admissible simplex
$S\in\Simplexe$ 
with $f=\Eref S, \allowbreak e=\Eref\pa S$.
Consider the following in the Chebyshev lattice of the root of $S$.

\emph{1st case: $t(S)\in\{1,\dots,n{-}1\}$.} Let $\tilde S=
\begin{pmatrix}
p_0\\
\vdots\\
p_n
\end{pmatrix}
$
be the ancestor of $S$ of hyperlevel $hS{-}1$ and type 0. (
Since 
$he\geq 1$, 
there exists
such an ancestor%
.) 
The assumption $k\leq he{-}1$ of the proposition turns into $k\leq h\tilde S$. 
The horizontal part of $S$ is a centrepiece of $\begin{pmatrix}
p_0&\dots&p_n
\end{pmatrix}$.
Let $p_i$ and $p_j$ (with $i<j$) 
be the vertices of $f=\Eref S$, the ends of that centrepiece. Let a $2^{-k}$-closed set $U$ contain one of them. 
There are two 
possibilities,
what $e=\Eref\pa S$ could be: $\Vertices e=\{p_{i-1},p_j\}$ or $\Vertices e=\{p_i,p_{j+1}\}$. According to Lemma \ref{lem:Mittenspringen}, 
the vertex in $\Vertices f\cap U$ $\arsquare{2^{-k}}$-demands the 
vertex of $e$
with the minimal index (i.e.\ $p_{i-1}$ in the first and $p_i$ in the second case)  
to lie in $U$
as well. 

\emph{2nd case: $t(S)\in\{0,n\}$.}
Let $\pa (S)=:
\begin{pmatrix}
p_0\quad p_1\\
\vdots\\
p_n
\end{pmatrix}$. Then $\Vertices e=\Vertices\Eref\pa S=\{p_0,p_1\}$ and $\Vertices f=\{p_0,p_n\}$ or $\Vertices f=\{p_1,p_n\}$. The vertex $p_n$ is the centre of the $2^{-he}$-cube $C_n$ for $\pa (S)$, which has $p_0$ and $p_1$ as vertices, therefore $p_n$ $\arsquare{2^{-he}}$-demands $p_0$ and $p_1$. 
Consequently, for a vertex $q$ of $f$ supposed to lie in $
U$ (namely $p_0,p_1$ or $p_n$), one of the vertices $p$ in $\{p_0,p_1\}=\Vertices e$ fulfils $q\arsquare{2^{-he}} p$. 
Furthermore, this implies $q\arsquare{2^{-k}} p$, which shows that also $p\in U$
.
\end{proof}
\subsection{The $\frac1N$-closure and tower layers are included in patches}\label{sec:imPatch}
This subsection is devoted to prove Proposition \ref{pps:imPatch} and Theorem \ref{milestone}.
\subsubsection{The $\frac1N$-closure is included in a patch}
Suppose a fixed regular initial triangulation $\T_0$ of 
type $n$ 
hyperlevel 0 simplices satisfying IsoCoChange throughout this subsubsection unlike the output of the AGK algorithm.

\begin{defn}[$\CoN$]
\index{C1@$\CoN$ -- set of the $\frac1N$-subcubes}
Let $\Cubes_{\frac1N}S$ be the set of non-empty $\frac1N$-subcubes in the Chebyshev lattice of a hyperlevel 0 simplex $S\in\Simplexe$.

\begin{bem}
Note that the elements of $\Cubes_{\frac1N}S$ are not necessarily subsets of $S$.
\end{bem}


\end{defn}
\begin{defn}[hyperplane ${HP}(S,p)$ spanned by the hyperface opposite to $p$]
\index{HP@${HP}(S,p)$ -- hyperplane opposite to $p$}
For a given simplex $S$ and a vertex $p$ of it, let $HP(S,p):=\aff\left(\Vertices(S)\setminus \{p\}\right)$ be the hyperplane spanned by the hyperface of $S$ opposite to $p$.
\end{defn}
Recall the definition of the interior of a patch:
$
\Int \T p:=\Omega\setminus \bigcup\left(\T\setminus\T p\right).
$
A subset of 
this
is given by
$\bigcup_{S\in\T p}S\setminus HP(S,p)$ obviously. 

\emph{Idea of the proof of Proposition \ref{pps:imPatch}.}
For each vertex $p\in\Vertices\T_0$, a 
set 
of $\frac1N$-subcubes $\Cubes_p\subset \CoN(\T_0 p)$ of the Chebyshev lattices of the simplices in $\T_0 p$ is going to be assigned
to $p$%
, such that 
the union $U_p$ of these subcubes, 
each intersected with its simplex,
is $\frac1N$-closed,
contained in the interior of the patch of $p$
and these unions $U_p$ 
for all of the vertices in $\Vertices\T_0$ altogether cover $\Omega$.

The cubes of $\Cubes_p$ are chosen by the following rule.	
\begin{defn}[relation $r_S$]
\index{r@$r_S$}
For 
a T-array $S$ of hyperlevel 0, 
let $r_S$ be the following relation on $\Vertices S\times \Cubes_{\frac1N}S$:
\begin{align*}
r_S&:=\left\{(p,c)~\middle|~
c\cap HP(S,p)=\leer
\right\},
\\
\text{so}\quad p\,r_Sc&:\Leftrightarrow~ c\text{ 
does not intersect the hyperplane $HP(S,p)$.}
\end{align*}
\end{defn}
\begin{defn}[$\Cubes_p,U_p$]\index{U@$U_p$}\index{C@$\Cubes_p$}

For an initial vertex $p\in\Vertices\T_0$, the set $\Cubes_p\subset\Cubes_{\frac1N}(\T_0p)$ of $\frac 1N$-subcubes 
and the set $U_p\subset\Omega$ are
\begin{align*}
\Cubes_p:=&\,\bigcup_{S\in\T_0p}\left\{c\in\Cubes_{\frac1N}S ~\middle|~ p\,r_Sc\right\},\\
U_p:=&\,\bigcup\left\{c\cap S~\middle|~
S\in\T_0 p,c\in\Cubes_{\frac1N}S \text{ with } c\in \Cubes_p\right\}\\
=&\,\bigcup\left\{c\cap S~\middle|~S\in\T_0 p,c\in\Cubes_{\frac1N}S \text{ with } p\,r_Sc\right\}.
\end{align*}
\end{defn}
Only $U_p$ will be used in the following.
\begin{lem}\label{vertexrelationcube}
The relation $r_S$ and the set $U_p$ have the following properties:
\begin{enumerate}
\item\label{item:patch}
$U_p\subset\Int\T_0 p$,
\item\label{item:every}
$\fa S\in\T_0~\fa
N>n
~\fa c\in\CoN S~\allowbreak\ex p\in\Vertices S\,.\,p\,r_S c$.
\item\label{item:closed}
$U_p$ is $\frac1N$-closed in $\T_0$.
\end{enumerate}
\end{lem}
\begin{proof}[Proof of Lemma \ref{vertexrelationcube}.\ref{item:patch}]
By definition of $U_p$,
$
U_p\subset\bigcup_{S\in\T_0 p} S\setminus HP(S,p)\subset \Int \T_0 p.
$
\end{proof}
\begin{proof}[Proof of Lemma \ref{vertexrelationcube}.\ref{item:every}]
In the representative 1-Chebyshev lattice $\left(
\Z^n,\|\bullet\|\right)$, a non-empty $\frac1N$-subcube is given by 
\begin{align}
c=\prod_{i=1}^n \underbrace{[a_i,b_i]}_{=:I_i}
\text{ with }
a_i,b_i\in\frac 1N\Z, 
|I_i|=b_i-a_i\in\left\{0,\frac1N\right\}\label{intervallength}.
\end{align}

The vertices of a%
n $(n,0)$ reference simplex 
$S=
\begin{pmatrix}
p_0&\dots&p_n
\end{pmatrix}
$
in $\Z^n$
are given for example by
\begin{align*}
p_j=\sum_{i=1}^j e_i.
\end{align*}
All
reference simplices in arbitrary 1-Chebyshev lattices are lattice-isometric, 
so
regarding only this one does not 
affect
generality.
Using $x_0:=1$ and $x_{n+1}:=0$
as auxiliary coordinates, 
the hyperplanes $HP_j
:=HP(S,p_j)
$ 
 opposite to $p_j$ are given by
\begin{align}
HP_j=\left\{x=(x_1,\dots,x_n)\in\R^n~\middle|~x_j=x_{j+1}\right\} 
\end{align}
for $j\in\{0,\dots,n\}$
(which can 
be verified by checking that $HP_j$ contains all vertices except $p_j$).
The simplex $S$ itself is the intersection of closed half spaces bounded by these hyperplanes $HP_j$ and also containing $p_j$. 
Therefore, it is given by
\begin{align*}
S=\left\{x
\in\R^n~\middle|~
1\geq x_1\geq\dots\geq x_{n}\geq 0
\right\}. 
\end{align*}

If the subcube $c=\prod_{i=1}^n {[a_i,b_i]}=\prod_{i=1}^n I_i$ from above 
did
not satisfy any of the relations $p_jr_S c$, it 
would
intersect all these hyperplanes%
. 
%
%

The intersection $c\cap HP_j$ is not empty if and only if $I_j\cap I_{j+1}\neq \leer$ (with $I_{0}:=[1,1],I_{n+1}=[0,0]$), 
which implies $a_j\leq b_{j+1}$.
The conjunction of all 
these inequalities would lead to
\begin{align}
1=a_0-b_{n+1}=&\underbrace{a_0-b_1}_{\leq 0}+\underbrace{b_{1}-a_{1}}_{|I_{1}|}+\dots+\underbrace{b_n-a_n}_{|I_n|}+\underbrace{a_n-
b_{n+1}
}_{\leq 0}\\
&\leq |I_1|+\dots+ |I_n|\leq \frac nN,\label{ineq:3}
\end{align}
%
contradicting $N>n$.
%
\end{proof}
Proving Lemma \ref{vertexrelationcube}.\ref{item:closed} requires the most extensive investigation of reference simplices and their relation 
between subsimplices, subparallelotopes and sublattices among all theory here. Some preparation seems convenient.


Let $p\in\Vertices\T_0$. It must be shown that 
the intersection $S\cap U_p$ is a union of $\frac 1N$-subcubes in the Chebyshev lattice of $S$
for every $S\in\T_0 p$. 
Have a closer look at the set of interest
\begin{align}
U_p\cap S=\bigcup\left\{c_T\cap T\cap S~\right|~\underbrace{T\in \T_0 p,c_T \text{ with } 
p\,r_Tc_T
}_{(A)}\}. \label{replace other}
\end{align}
The aim is to show that in this union, the intersections $S\cap c_T$ of $S$ with subcubes of other simplices $T\neq S$ are already included in subcubes of $S$ itself, such that in the 
condition
$(A)$ in equation \eqref{replace other} one can restrict to $T=S$ without 
reducing
the 
union. It follows that $S\cap U_p=S\cap \bigcup \{c~|~p\,r_Sc\}$ is a union of $\frac1N$-subcubes of $S$, i.e.\ a $\frac1N$-closed set.

For this aim, the relation $r_S$ is studied on subcubes of \emph{subsimplices}.
\begin{defn}[stretch of a subcube in a sublattice]\index{stretch}
Let $(R,Z,\|\bullet\|)$ be an $n$-dimensional Chebyshev lattice with an $m$-dimensional sublattice $U$ 
with the same lattice width. Let $c_U$ be a $\frac1N$-subcube in $U$. Then the 
\emph{stretch 
of $c_U$ in $Z$}
is the 
smallest subcube in $Z$ including $c_U$. For $(R,Z,\|\bullet\|)=(\R^n,\Z^n,\|\bullet\|_\infty)$,
it is
given by
\begin{align*}
\pi_1c_U\times\dots\times \pi_n c_U
\end{align*}
(for $\pi_j: (x_1,\dots,x_n)\mapsto x_j$ being the $j$-th coordinate).

The \emph{stretch in a simplex $S$}
means the 
stretch
in the Chebyshev lattice of $S$ etc.
\end{defn}
Figure \ref{fig:stretch} provides an example for the stretches of subcubes.
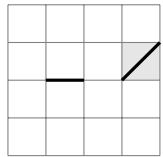
\begin{SCfigure}[4.1][ht]
\begin{tikzpicture}[scale=.5]
\fill[gray!20,very thin] (3,2) rectangle (4,3);
\draw[gray] (0,0) grid (4,4);
\draw[very thick] (3,2) -- (4,3);
\draw[very thick] (1,2) -- (2,2);
\end{tikzpicture}
\caption{The stretch of the diagonal line segment is the grey square, while the stretch of the horizontal line segment is the line segment itself, because it is already a subcube in the lattice.}
\label{fig:stretch}
\end{SCfigure}

\begin{lem}\label{relation on subsimplices}
Let $c_U$ be a $\frac1N$-subcube in the Chebyshev lattice of a subsimplex $U$ of a
n $(n,0)$
reference simplex $S$. 
Let $c_S$ be the 
stretch of 
$c_U$ in $S$, $p$ be a vertex of $U$ and $r_S$ and $r_U$ be the above defined relations between vertices and subcubes in $S$ and $U$ respectively.
Then
\begin{align*}
p\,r_S c_S\quad\Longleftrightarrow\quad p\,r_Uc_U.
\end{align*}
\end{lem}


\begin{proof}
The definition of $r_S$ and $r_U$ turns the statement (with $H$ being the hyperface opposite to $p$) into this: For a hyperface $H$ of $S$ not including $U$ (this extra condition is equivalent to $p\in\Vertices U$), it holds that
\begin{align*}
c_S\cap\aff H\neq \leer\quad\LRq c_U\cap \aff H\neq \leer.
\end{align*}
Because of $c_U\subset c_S$ the implication `$\Leftarrow$' is trivial. It is left to prove ‘$\Rightarrow$’.
Since $S$ has only horizontal vertices,
Lemma \ref{referencesubsimplex}.\ref{subsimplex subparallelotope} yields a subparallelotope $P_H$ of a cube $C
=C^S_n
$, such that $\aff H=\aff P_H$. 
 According to the characterisation of hypersubparallelotopes, Lemma \ref{hypersubparallelotope}, two cases can occur.

\emph{1st case: $P_H$ is a hyperface of $C$.} 
W.l.o.g.\ say $C=[0,1]^n$, $\aff H=\{x\in \R^n~|~\allowbreak x_1=0\}$. Then 
\begin{align*}
c_S\cap\aff H\neq \leer \quad\LRq 0\in\pi_1 c_S 
~\xLeftrightarrow[]{\text{def.\ \emph{stretch
}}}~ 
0\in \pi_1 c_U\quad\LRq c_U\cap\aff H\neq \leer.
\end{align*}
\emph{2nd case: $P_H$ is not a 
hyperface 
of $C$.}

The projection $\pi$ from the characterisation of hypersubparallelotopes, Lem\-ma \ref{hypersubparallelotope}, yields 
Figure \ref{fig:2squares}.
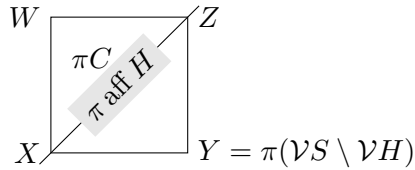
\begin{SCfigure}[.8][h!]
\centering{
\begin{tikzpicture}[scale=
.3]
\tikzmath{\abstand=-3pt;
			\lang=6;
			\kurz=3;}
\begin{scope}			
\end{scope}
\draw (0,0) node[left]{$X$} -- ++(\lang,0)node[right]{$Y=\pi (\Vertices S\setminus \Vertices H)$} -- ++(0,\lang) node[right]{$Z$}-- ++(-\lang,0) node[left]{$W$}-- cycle;
\draw (-.5,-.5) --node[sloped,
fill=gray!20]{$\pi \aff H$} 
(\lang+.5,\lang+.5);
\draw(.3*\lang,.7*\lang) node{$\pi C$};
\end{tikzpicture}
\caption{
$\pi C
$ and $
\pi \aff H$. 
}
\label{fig:2squares}
}
\end{SCfigure}

Lemma \ref{hypersubparallelotope} particularly says that $P_H=C\cap\pi\inv\pi P_H$ and thus $\aff(H) = \pi\inv\pi \aff H$. 
Suppose there is an $y\in\pi c_U\cap \pi(\aff H)$ with a preimage $x\in c_U$ under $\pi$. Then $x\in\pi\inv\pi \aff H=\aff H$ and thus $x\in c_U\cap\aff H$.
Hence, $\pi (\aff H)\cap\pi c_U\neq \leer$ already implies that $\aff (H)\cap c_U\neq \leer$ and it suffices to prove $\pi c_S\cap\pi\aff H\neq \leer ~\LR~ \pi c_U\cap\pi\aff H\neq \leer$.

Since $S$ has full type, all vertices of $S$ are vertices of $C$ and projected to vertices of $\pi C$. $S$ has exactly one vertex which is not in $H$ and thus not projected to $\pi P_H$, but, say to $Y$ w.l.o.g. Since $U\not\subset H$, $U$ must contain this vertex. 
Consequently, $\pi U$ can be $\overline {XY}$, $\overline {YZ}$ or $\triangle XYZ$. In all theses three cases, a subcube in the Chebyshev lattice of $U$ is already a subcube in the Chebyshev lattice of $\pi S$. 
%
Since $\pi c_S$ must 
also
be 
the stretch of $\pi c_U$ in $\pi S$, 
$\pi c_S=\pi c_U$
and 
the proof is done.
\end{proof}

\begin{bem}
If the condition $\dim H
=\dim S{-}1$ is omitted in the proof, the statement does not hold true anymore: For instance (see Figure \ref{fig:dimH}), let
$S:=\begin{pmatrix}
a&b&c&d
\end{pmatrix}$
be a T-array of type 3, $U:=c_U:=
\overline{ac}$ and $H:=
\overline{bd}$. Then $c_S$ is the face $a+[0,1](b{-}a)+[0,1](c{-}b)$ of the cube and meets $H$ in $b$, while $c_U$ is disjoint from $\aff(H)$.
\end{bem}

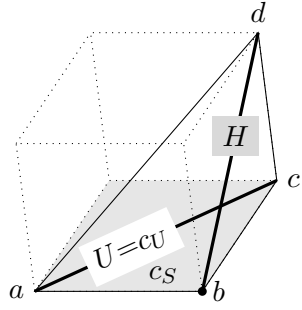
\begin{SCfigure}[][ht]
\begin{tikzpicture}[scale=.7,
					information text/.style={fill=gray!10,inner sep=1ex}, 
					cube1/.style=dashed, 
					simplex/.style={very thick,dashed},
					diag/.style=dotted,
					canonical1/.style={very thick},
					canonical2/.style=double,
					scale=.7
					]
\tikzmath{
	coordinate \h,\x,\y,\A,\B,\Ce,\D,\E,\F,\G,\H,\mitte,\punkt;
	\h=(-.5,4);
	\x=(4.5,0);
	\y=(2,3);
	\A=(0,0);
	\B=(\A) + (\x);
	\Ce=(\A) + (\y);
	\D=(\B) + (\y);
	\E=(\A)+(\h);
	\F=(\B)+(\h);
	\G=(\Ce)+(\h);
	\H=(\D)+(\h);
	\mitte=.5*(\A) + .5*(\H);
	\punkt=.3*(\A) + .7*(\B) + .15*(\y)
;
}
\begin{scope}[dotted]
\fill[gray!20] (\A) -- (\B) -- (\D) -- (\Ce) -- cycle;
\draw (\A) -- (\B) -- (\D) -- (\Ce) -- (\G) -- (\H) -- (\F) -- (\E) -- cycle;
\draw (\A) --++(\y);
\draw (\E) -- (\G);
\draw (\B) -- ++(\h);
\draw(\D) --++(\h);
\end{scope}
\draw (\A)node[left]{$a$} -- (\B)node[right]{$b$} -- (\D)node[right]{$c$} -- (\H)node[above]{$d$} -- cycle;
\draw[very thick] (\A) --node[sloped,fill=white,pos=.4]{$U{=}c_U$} (\D);
\draw[very thick] (\B) --node[fill=gray!30,pos=.6]{$H$} (\H);
\draw (\punkt) node{$c_S$};
\fill (\B) circle [radius=.12];
\end{tikzpicture}
\caption{$\dim H
=\dim S{-}1$ is necessary in Lemma 
\ref{relation on subsimplices}.}
\label{fig:dimH}
\end{SCfigure}

The following lemma simplifies
\eqref{replace other} on page \pageref{replace other}.
\begin{lem}
For a simplex $S\in\T_0$, a vertex $p\in\Vertices S$, 
$N\in\N_{\geq 1}$ and a $\frac1N$-cube $c_T$ in the Chebyshev lattice of another simplex $T\in\T_0p$ with $p\,r_Tc_T$ intersecting $S$, there is a $\frac1N$-cube $c_S$ in 
$Z_S$
with $p\,r_Sc_S$ such that $c_T\cap T\cap S\subset c_S\cap S$.
\end{lem}
\begin{proof}
Let $U:=S\cap T, c_U:=c_T\cap\aff U$, which is a subcube in the sublattice $Z_U$ of 
$Z_T$ according to Lemma \ref{lem:intersection of cube and sublattice}.
Furthermore, let $\tilde c_T$ and $c_S$ be the 
stretches of
$c_U$ in $T$ and $S$, respectively. 
Since $\tilde c_T\subset c_T$, $p\,r_Tc_T$ implies $p\,r_T\tilde c_T$. Then Lemma \ref{relation on subsimplices} deduces that $p\,r_Uc_U$ and also $p\,r_Sc_S$.
Finally, $c_T\cap T\cap S =c_T\cap\aff U\cap U= c_U\cap U\subset c_S\cap S$, as deserved.

\end{proof}
\begin{proof}[Proof of Lemma \ref{vertexrelationcube}.\ref{item:closed}]
For each $S\in\T_0$, the preceding lemma allows 
to simplify \eqref{replace other}.
\begin{align*}
U_p\cap S=\bigcup\left\{c_T\cap T\cap S~\middle|~{T\in \T_0 p,p\,r_Tc_T}\right\}=\bigcup\left\{S\cap c_S~\middle|~p\,r_Sc_S\right\}. 
\end{align*}
Therefore, $U_p$ is $\frac1N$-closed.
\end{proof}
Recall Proposition \ref{pps:imPatch}, which is 
concluded
now:
\emph{Suppose an initial triangulation $\T_0$ of type $n$ hyperlevel $0$ simplices. 
For every 
integer $N>
n$, 
every point $x\in\Omega$ is contained in a $\frac1N$-closed set which is included in the interior $\Int
\T_0 
p$ of a patch for some $p\in\Vertices
\T_0
$. 
}
\begin{proof}
\label{pr:imPatch}
According to Lemma \ref{vertexrelationcube}, the sets $U_p$ cover all subcubes, thus cover $\Omega$, are contained in the interior of a patch and $\frac 1N$-closed each, i.e.\ each point is contained in a $\frac 1N$-closed set included in the interior of a patch
. This concludes the proof.
\end{proof}
\subsubsection{The stepping stone: Tower layers are included in patches}
Recall the stepping stone Theorem \ref{milestone}: 
\emph{\milestone2}

%
\begin{proof}[Proof of 
\ref{it:quasi-uniform}.]
To get 
$\Simplexe^{n,j+1}$,
simply
bisect all edges of hyperlevel $\leq j$ (as long as there are some). 
By the Definition \ref{ar} of $\ar\Edges$ and the well-definedness of $he$, edges never demand higher (hyper)level edges. Bisection of hyperlevel $\leq j$ edges results in hyperlevel $\leq j$ simplices (
because a simplex of type 0 does not have a refinement edge, but only its transposed which already has the next hyperlevel). On the other hand, the resulting triangulation does not contain any hyperlevel $\leq j$ edge anymore, so all simplices have hyperlevel $j$ and type 0. Transposing all T-arrays yields simplices of hyperlevel $j{+}1$ and type $n$.
\end{proof}

%

\begin{proof}[Proof of \ref{it:towerstep}.]
According to Lemma \ref{lem:IsoCoCh}.\ref{twodefs}, $\T_0$ also satisfies IsoCoChange. 

\emph{1st step: Assume $\T_0$ would consist of type $n$ hyperlevel 0 simplices solely. Then the statement is proven for all simplices $S$ with $hS\geq h_0$ and $j=h_0$.}
At first, note that $hS\geq 2+\lfloor\log_2 n\rfloor$ 
is equivalent to $2^{hS}\geq 2^{2+\lfloor\log_2 n \rfloor}>2n$ 
and $T\in\Tu^{h_0} (S)$ means $hT=h_0$ and $tT\leq n{-}1$. According to the characterisation of admissible forests (Theorem \ref{Knotenschritt} on page \pageref{Knotenschritt}) and the remark on it, there is a chain 
 $\Eref \pa S=e_1\ar\Edges\cdots\ar\Edges e_m=\Eref \pa T$.

%

Let $q$ be the ``oldest" vertex of $S$, i.e.\ this vertex of the refinement edge, which is also a vertex of $\Eref\pa S$; let 
$k:=
h_0
{-}1$ 
and $c$ be a $2^{-k}$-cube in the root $S_0$ of $S$ containing $q$.

Since $2^k= 
2^{h_0-1}
=2^{1+\lfloor\log_2 n\rfloor}
>n$, 
Proposition \ref{pps:imPatch} yields a $p\in\Vertices S_0$ such that the $2^{-k}$-closure of $q$ is included in $\Int(\T_0 p)$. It contains $q\in\Vertices\Eref\pa S=\Vertices e_1$, hence, according to Proposition \ref{pps:klebt}, a vertex of $\Vertices e_m=\Vertices\Eref\pa T$ as well, because 
$k
\leq h T-1
=h\Eref\pa T-1$. (Note that $tT\leq n{-}1$ ensures that $T$ has a parent of equal hyperlevel with type in $\{1,\dots,n\}$, thus $hT=h\pa T=h\Eref\pa T$.)

Consequently, $\pa(T)$ contains a point lying in $\Int(\T_0 p)$, i.e.\ only in simplices of $\T_0 p$, but not in others. The same must hold true for the root $T_0\in\T_0$ of $T$, 
thus also this must belong to $\T_0 p$.

\emph{2nd step: $j\in\{h_0{+}1,\dots,hS\}$.}
Just start with 
$\Simplexe^{n,j}$ instead of $\T_0$, shift all the hyperlevels by $h_0-j$ and scale the reference coordinates by $2^{j-h_0}$. Then it is the same statement as in 
the first step.
\end{proof}
\subsection{Another pile game}\label{sec:finish}

Recall the forest formulation of Theorem \ref{BDV4IsoCoChange} (see the Conversion Table \ref{tab:conversion table} on page \pageref{tab:conversion table}):
\emph{Suppose ReTaHyCo for $\T_0$ and 
$
D:=\max
_{\substack{S\in\Simplexe
}} 
2^{hS}\diam S
$,
$
d:=\min_{S\in\Simplexe}
2^{n\cdot hS+n{-}tS}
\lvert S\rvert$
and 
$h_0:=2+\lfloor \log_2(n)\rfloor$. 
Then there is a constant 
$C>0$, depending only on $n$ and linearly on $D^n/d$,
such that for every \hyperlink{refinement sequence of admissible forests}{refinement sequence of admissible forests} $\T_0=W_0, W_1,\dots, W_N$, it holds that
\begin{align*}
\#\left(W_N\setminus \T_0\right)\leq {\underbrace{\#\left\{S\in \Simplexe\setminus\T_0~\middle|~hS\leq h_0\right\}}_{\leq 2(
8
n)^{n}\#\T_0}}+
{2
C N}.
\end{align*}
}

\begin{proof}\index{BDV theorem!IsoCoChange!proof}
For the tower layers of hyperlevel $h_0$ or below, the theory here does not make any statement. So the worst case is assumed that all the simplices of hyperlevel ${\leq}h_0$ are in the forest. This is the first summand in the stated inequality.

This number 
is estimated by
\begin{align}
&\#\left\{S\in \Simplexe\setminus\T_0~\middle|~hS \leq h_0\right\}\\
&\leq\#\T_0\big(\overbrace{1
}^{\#\Simplexe^{n,0}}+
\overbrace{2}^{\#\Simplexe^{n-1,0}}+\dots+\overbrace{2^n}^{\#\Simplexe^{0,0}}+\dots+\overbrace{2^{
h_0
\cdot 
n+n}}^{\#\Simplexe^{0,h_0
}}
\overbrace{-1}^{\#\T_0}\big)\\
&\leq \left(2\cdot 2^{n\cdot 
(h_0+1)
}-2\right)\#\T_0\label{eq:simplices below h_0}\\
&\stackrel{2^{h_0}\leq 4n}{\leq} 2 (8n)^{n}\#\T_0.
\end{align}
In case of 
$\Vertices_0=\leer$, the hyperlevel is bounded from below by 1, which leads to
\begin{align*}
\#\left\{S\in \Simplexe~\middle|~hS \leq h_0\right\}\leq 2(4n)^n\#\T_0.
\end{align*}

The remaining part of the proof again follows similar steps as the proof for the pile game.

Recollect the function $\iota$, which 
maps
a subset $W\subset \Simplexe$ to a subset of $\R^n\times\N
$ by 
\[\iota W:=\bigcup_{S\in W}\{S\}{\times}\{hS\}
.\]

\emph{1st step: Estimate the counting measure $\#$ by a measure $\mu$ on $\R\times \N$.}
The number $d$ is the minimal volume of a simplex of hyperlevel 0 and type 
$n$, 
so the number of simplices of hyperlevel 0 and type 
$n$ 
in a set $W^{n,0}\subset\Simplexe$ is at most $\frac{|\iota W^{n,0}|}d$. Analogically, $\#W^{k,j}
$ is at most $\frac{|\iota W^{k,j}|}d 
2^{nj+n-k}$ and the total number of simplices in $W^{h_0+1}\cup W^{h_0+2}\cup \cdots$ with $\iota W=\bigcup_{j=0}^\infty w^j{\times}\{j\}$
is bounded by 
\begin{align}
\sum_{j=h_0+1}^\infty \frac{|w^j|}{d}2^{nj} \big(
\overbrace{2^1}^{\text{type }n-1}+\dots+\overbrace{2^n}^{\text{type }0}
\big)
&=\frac{
2^{n+1}-2
}{d}\sum_{j=h_0+1}^\infty 2^{nj}|w^j|\\
&=:\mu
\bigcup_{j=0}^\infty w^j{\times}\{j\}
.
\label{mudef IsoCoChange}
\end{align}\index{m@$\mu$ -- measure estimating $\#$!IsoCoChange}
Therefore, \eqref{mudef IsoCoChange} defines a measure on $\R^n\times\N$
such that $\mu \iota W\geq \# W^{h_0+1}+\#W^{h_0+2}+\dots$.

%


\emph{2nd step: Estimate the towers $\Tu$ and the forests $W_k$ by supersets $\tu$ and $w_k$, respectively.}
For each $S\in\Simplexe$, choose an arbitrary point $p_S\in S$. Theorem \ref{milestone} 
gives $p^k_S$ with
$
\bigcup\Tu^k S\subset 
\bigcup\Simplexe^{n,k-h_0}
p_S^k,
$
which implies
\begin{align*}
S\subset \bigcup\Tu^k S 	&\subset \bigcup 
\Simplexe^{n,k-h_0}
p_S^k\\
							&\subset B\left(p_S^k,2^{h_0-k}D\right),
\end{align*}
hence $\left\|p_S-p_S^k\right\|\leq 2^{h_0-k} D$ and 
\begin{align}
\bigcup\Tu^k S 	&\subset B\left(p_S^k,2^{h_0-k} D\right)\nonumber\\
				&\subset B\left(p_S,2^{h_0-k} D+\|p_{S}-p_S^k\|\right)\nonumber\\
				&\subset B\left(p_S,2^{1+h_0-k} D\right)\label{ball around pS}.
\end{align}
Therefore, the following estimated tower $\tu(S)$ includes $\iota\Tu(S)$.

\begin{defn}\index{tw@$\tu$ -- estimates $\Tu$!IsoCoChange}
The estimated tower of $S$ is defined by
\begin{align}
\tu^k S:=
\begin{cases}
\leer, \text{ for } k>hS,\\
B\left(p^k_S,2^{h_0-k}D\right), \text{ for } 
hS{-}n< k\leq hS ,\\
B\left(p_S,2^{1+h_0-k}D\right), \text{ for } h_0
<k\leq hS{-}n,\\
\R^n, \text{ for }k\leq h_0.
\end{cases}
\end{align}
\end{defn}
\begin{bem}
The index $k=hS-n$ of the change from $B(p_S^k,2^{h_0-k}D)$ to $B(p_{S},\allowbreak 2^{h_0-k+1}D)$ could be chosen arbitrarily. This choice turns out to be convenient in \eqref{eq:BDV constant IsoCoChange} below.
\end{bem}
For a given refinement sequence of forests $\T_0=W_0,\dots,W_N$ with corresponding simplices $S_j\in \children(\leaves W_j)$ such that 
$W_{k}=W_{k-1}\cup \Tu S_k=\T_0\cup\bigcup_{j=1}^{k} \Tu S_j$, let $w_{k}:=\bigcup_{j=1}^k \tu S_j$, which is a superset of $\iota W_k$.\index{w@$w$ -- estimates $W$!IsoCoChange}
\emph{3rd step: Find a superset of $w_N\setminus w_{N-1}$ in terms of estimated towers and measure it.}
Let $\tilde S_N$ be an ancestor of $S_N$ of hyperlevel $hS_N{-}1$. By definition of a refinement sequence, the simplex $\tilde S_N$ must be included in $W_{N-1}$ as well as $\pa (S_N)$. Let $M<N$ be an index such that $\tilde S_N\in \Tu S_M$. Since simplices cannot demand simplices of higher hyperlevel, $h(S_M)\geq h\tilde S_N=hS_N-1$. Figure \ref{fig:estimation of tower difference} might give an overview about the following splitting of the estimation.
\begin{align}
w_{N}\setminus w_{N-1}
						&\subset \tu S_N\setminus \tu S_M\nonumber\\
						&\subset {\bigcup_{k=hS_N-n}^{hS_N}\tu^{k} S_N 
						\times\{k\}\cup 
							\bigcup_{k=h_0+1}^{hS_N-n-1}
								\left(\tu^k S_N\setminus \tu^k S_M\right)
								\times\{k\}}\label{tu of the layers}
\end{align}

\begin{figure}[hbt]
\centering{
\begin{tikzpicture}[scale=.2,yscale=.6,
					information text/.style={
					inner sep=1ex}]
\def\horizontale{++(55,0)}
\def\yshift{16}
\def\erste{.5*\yshift+1}
\def\yshifteins{10}
\def\yshiftzwei{6}
\def\yshiftdrei{4}
\def\zweite{.5*\yshifteins+2}
\def\gesamt{\yshift+\yshifteins+\yshiftzwei+\yshiftdrei}
\def\senkrechte{\draw (0,0) -- (0,\gesamt+2) node[above]}
\def\textbreite{4.5cm}
\fill[gray] (0,0) circle [radius=8];
\draw (-8,\erste) -- \horizontale 
;
\begin{scope}[yshift=\yshift cm]
\filldraw[fill=gray] (0,0) circle [radius=5.7];
\draw (-8,\zweite) -- \horizontale;
\filldraw[fill=gray] 	++(.7,\yshifteins) circle [radius=2]
					++(-1.3,\yshiftzwei) circle [radius=1.4]
					++(1.2,\yshiftdrei) circle [radius=1];
\end{scope}
\draw (-8,\gesamt+2) -- \horizontale 
;
\begin{scope}[xshift=6cm]
\filldraw[fill=white] (0,0) circle [radius=8];
\draw (8,\erste) node[below right]{\small $k<hS_N-n$};	
\draw		(20,0)node[right, text width=\textbreite]{difference of two large equally-sized balls centered at fixed points $p_{S_N}$ and $p_{S_M}$ resp.};
\begin{scope}[yshift=\yshift cm]
\draw 	(8,0) node[right]{\small $k=hS_N-n$} 
		(8,\zweite)	node[above right]{\small $k>hS_N-n$};
\draw (1,0) circle [radius=2.8]
	 ++(-1.7,\yshifteins) circle [radius=2]
	 ++(1.3,\yshiftzwei) circle [radius=1.4]
	 ++(-1.2,\yshiftdrei) circle [radius=1];
\draw (20,0) node[right
]{large ball centred at $p_{S_N}$}
	 ++(0,\yshifteins+\yshiftzwei) node[right, text width=\textbreite]{small balls centred at changing points $p^k_{S_N}$}
	 ++(0,\yshiftdrei+5) node[right]{$\tu^k S_N\setminus \tu^k S_M\subset$};
\end{scope}
\draw (8,\gesamt+2) node[below right]{\small $k\geq hS_N$}
					node[above right]{Layer};
\draw (0,0) node[below] {$p_{S_M}$};
\senkrechte{$\tu{S_M}$};
\end{scope}
\draw (0,0) node[below] {$p_{S_N}$};
\senkrechte{$\tu{S_N}$};
\draw (0,0) circle [radius=8];
\end{tikzpicture}
\caption{Estimation of the differences between the forests. Differently from the figure, the radius doubles with decremented layer actually, except of the step from $k=hS_N{-}n{+}1$ to $k=hS_N{-}n$, where it quadruples. The left circles represent the new tower $\tu S_N$, the right circles the old tower $\tu S_M$. The set $w_N\setminus w_{N-1}$, which is added in the $N$-th round to the estimated forest, is estimated by the grey set, i.e.\ only the \emph{lower} layers of $\tu S_M$, the large balls centred at $p_{S_M}$, are subtracted in the estimate, while the upper ones, the small non-concentric circles, are ignored.}
\label{fig:estimation of tower difference}
}
\end{figure}
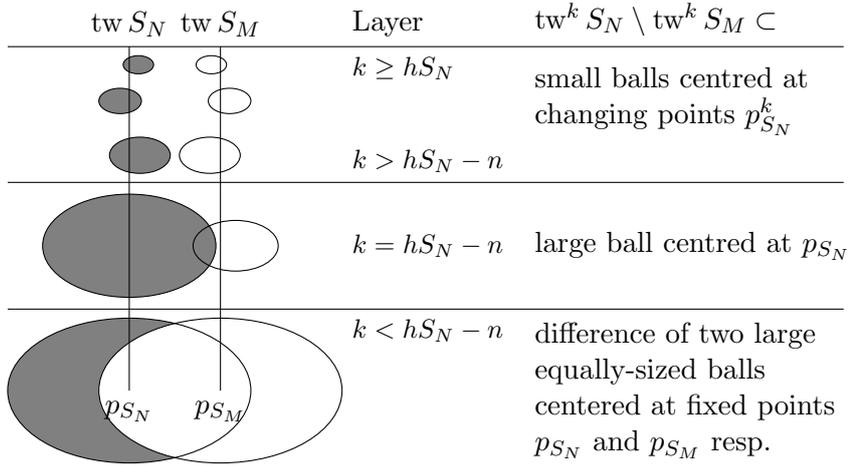

The measure of this is estimated piecewise.
From here, 
$
V_m
=
\frac{\pi^{m/2}}{\Gamma({m/2+1})}
$\index{V_n@$V_n$ -- volume of the $n$-dimensional unit ball} denotes the volume of the $m$-dimensional 
unit
ball. 
The total measure 
of the top layers 
$\mu\left(\bigcup_{k=hS_N-n+1}^{hS_N}\tu^{k} S_N
\times\{k\}
\right)$
is 
\begin{align}
\frac {
2^{n+1}{-}2
}d \sum_{k=hS_N-n+1}^{hS_N}
2^{n k} \left|B\left(p^k_{S_N},2^{h_0-k}D\right)\right|
=\frac{D^n}d \left(
2^{n}{-}1
\right)2^{nh_0+1}n V_n,\label{top layers}
\end{align}
the measure of the layer $\tu^{hS_N-n} S_N$ is
\begin{align}
\frac {2^{n+1}{-}2}d 
2^{n (hS_N-n)} \left|B\left(p^k_{S_N},2^{h_0+1-hS_N+n}D\right)\right|
=\frac{D^n}d \left(2^{n}{-}1\right)
2^{nh_0+1}2^n
V_n.\label{intermediate layer}
\end{align}
Since $hS_M\geq hS_N{-}1$,
the 
measure of the layers $\bigcup_{k=h_0+1}^{hS_N-n-1}\left(\tu^k S_N\setminus \tu^k S_M\right)\allowbreak\times\allowbreak \{k\}$ is (with $e_n$ being the $n$-th canonical unit vector)
\begin{align}
						&\frac{2^{n+1}{-}2}{d}
							\sum_{k=h_0+1}^{hS_N-n-1}2^{nk}
								\left|B\left(p_{S_N},2^{1+h_0-k}D\right)\setminus 
									B\left(p_{S_M},2^{1+h_0-k}D\right)\right|
\label{eq:lower layers untransformed}\\
						&=\frac{D^n}d \left(2^{n}{-}1\right)2^{n
						h_0+1}2^n
							\sum_{k=h_0+1}^{hS_N-n-1}
								\left|B(0,1)\setminus 
										B\left(2^{k-h_0-1}\frac{\left\|p_{S_N}-p_{S_M}\right\|}De_n,
												1\right)\right|.\label{eq:lower layers}
\end{align}
(From \eqref{eq:lower layers untransformed} to \eqref{eq:lower layers} the set difference between the two balls is rotated and scaled by the inverse of the radius.) 

How can the differences of the balls be estimated?
The sequence of inclusions 
(with \eqref{ball around pS} from above)
\begin{align*}
S_N\subset\tilde S_N
\subset\Tu^{hS_N-1} S_M
\subset B\left(p_{S_M},2^{1+h_0-(hS_N-1)}D\right)
\end{align*}
implies that $\|p_{S_N}-p_{S_M}\| \leq 2^{2+h_0-hS_N}D$ and the volume of the set difference $B(0,1)\setminus B(ae_n,1)$ is monotonically increasing in $a$, thus the summand in \eqref{eq:lower layers} can be estimated from above by 
$
\left|B(0,1)\setminus B\left(2^{1+k-hS_N}e_n,1\right)\right|.
$
The substitution $j=hS_N{-}k
-1
$ turns the sum into
\begin{align*}
\sum_{j=n
}^{hS_N-h_0
-2
}\left|B(0,1)\setminus B\left(2^{
-j}e_n,1\right)\right|.
\end{align*}
%
In the following calculation, $\{x_n\geq c\}$ denotes the half-space $\{(x_1,\dots,x_n)~|~\allowbreak x_n\geq c\}$ and the like.
To calculate the volume of the set difference $B(0,1)\setminus B(2ce_n,1)$ of two balls, note that (see Figure \ref{fig:difference of two balls})
\begin{align*}
B(0,1)\cap B(2ce_n,1)
\allowbreak=\allowbreak\left(B(0,1)\cap \left\{x_n\geq c\right\}\right)
\allowbreak\dot\cup\allowbreak \left(B(2ce_n,1)\cap \left\{x_n < c\right\}\right).
\end{align*}
\begin{SCfigure}[2]
\begin{tikzpicture}[scale=1]
\def\c{.65};
\fill[gray!30] (0,0) circle [radius=1cm];
\fill[white] (0,2*\c) circle [radius =1cm];
\begin{scope}
\clip (-1,-\c) rectangle (1,\c);
\fill[pattern=dots] (0,0) circle [radius=1cm];
\end{scope}
\draw (0,0) circle [radius=1cm] ++(-1,0) -- ++(-.1,0) node[left]{$B(0,1)$};
\draw (0,2*\c) circle [radius =1cm] ++(-1,0) -- ++(-.1,0) node[left]{$B(2ce_n,1)$};
\draw (-1.2,\c) -- (2.7,\c) node[below left]{$\{x_n< c\}$} node[above left]{$\{x_n> c\}$};
\draw (-1.2,-\c) -- (2.7,-\c) node[below left]{$\{x_n< -c\}$};
\end{tikzpicture}
\caption{The volume of the grey set ${B(0,1)\setminus B(2ce_n,0)}$ equals the volume of the dotted set ${B(0,1)\cap \{-c\leq x_n\leq c\}}$, because the white dotted set ${B(2ce_n,1)\cap\{x_n<c\}}$ is congruent to the grey undotted set ${B(0,1)\cap \{x_n<-c\}}$.}
\label{fig:difference of two balls}
\end{SCfigure}
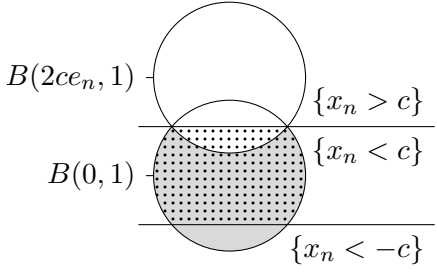
The set $B(2ce_n,1)\cap \left\{x_n < c\right\}$ is congruent to 
$B(0,1)\cap\{x_n< -c\}$
and thus
\begin{align*}
\left|B(0,1)\setminus B(2c,1)\right|
&=\left|B(0,1)\setminus \left(B(0,1)\cap B(2c,1)\right)\right|\\
&=\left|B(0,1)\cap \left\{-c\leq x_n \leq c\right\}\right|\\
&=\int_{-c}^{c} 
\Big|
\underbrace{\left\{(x_1,\dots,x_{n-1})~\middle|~x_1^2+\dots+x_{n-1}^2=1-x_n^2\right\}}_
{
B^{n-1}\left(0,\sqrt{1-x_n^2}\right)
}
\Big|
dx_n\\
&=V_{n-1}\int_{-c}^{c} \underbrace{\left(1{-}x^2\right)^{\frac{n-1}{2}}}_{\leq 1} dx\leq 2cV_{n-1}.
\end{align*}
With $2c=2^{
-j}$, the above sum is bounded by
\begin{align}
\sum_{j=n
}^{\infty}\left|B(0,1)\setminus B\left(2^{
-j}e_n,1\right)\right|
\leq 
V_{n-1}
\sum_{j=n
}^\infty 2^{-j}=2^{1-n}V_{n-1}
.\label{sum of ball differences}
\end{align}
Finally, summing up the estimates for \eqref{top layers}, \eqref{intermediate layer} and \eqref{eq:lower layers} using \eqref{sum of ball differences} (note the cancellation of the factor $2^n$ in \eqref{eq:lower layers} using \eqref{sum of ball differences})
leads to the estimate
\begin{align}
\mu\left(w_N\setminus w_{N-1}\right)\leq
\frac{D^n}{d}\left(2^{n}{-}1\right) 2^{nh_0+1}\left(\left(n{+}2^n\right)V_n+2V_{n-1}\right)=:2C.\label{eq:BDV constant IsoCoChange}
\end{align}
\index{BDV constant!IsoCoChange!formula}

Estimating $2^{h_0}\leq 4n$ and $(n+2^n)V_n+2V_{n-1}< 2^{n+1}$ yields the simpler but coarser estimate
%
\begin{align*}
C< \frac {D^n}d2^{4n+1}n^n V_n
\end{align*}
for the BDV constant.

Having found a bound for $\mu w_{N}-\mu w_{N-1}$, the proof is concluded exactly as the proof under SIC: 
\begin{align*}
\#\T_N-\#\T_0 	&=\frac 12(\#W_N-\#W_0)\\
				&\leq \frac12 \left(\#\left\{S\in\Simplexe\setminus\T_0~|~hS\leq h_0\right\}+\mu w_N\right)\\
				&=\frac12\left(\#\left\{S\in\Simplexe\setminus\T_0~|~hS\leq h_0\right\}+\sum_{K=1}^N\mu(w_K\setminus w_{K-1})\right)\\
				&\stackrel{\eqref{eq:simplices below h_0}\eqref{eq:BDV constant IsoCoChange}}{\leq} \left(2^{nh_0}-1\right)\#\T_0
+CN
\end{align*}
\end{proof}
Table \ref{tab:Beispielzahlen IsoCoChange} shows exemplary values for the BDV constant. 
The values are much larger than for SIC.
\begin{table}[ht]
\bgroup
\def\arraystretch{1.5}
\begin{tabular}{c|c|c|c|c|c|c}
$n$	&	$h_0=2{+}\lfloor\log_2 n\rfloor$& $D$ & $\frac1d$ & $V
															_n$ &$\#\T_N
\leq$&$\frac{C_{\text{\tiny{IsoCoCh}}}}{C_{\text{SIC}}}\approx$\\
\hline
2&3	&$\sqrt 2$&
				2&$\pi$&$64\#\T_0+18{,}000N$& 240\\
\hline
3&3	& $\sqrt 3$ & 
				6& $\frac43\pi$ & $512\#\T_0+5.9{\cdot}10^6N$ & 800\\
\hline
4&4	&2& 
			24& $\frac{\pi^2}2$ &  $65{,}536\#\T_0+4.1{\cdot}10^{10}N$ & 30{,}000
\end{tabular}
\egroup
\caption{Examplary values for 
the BDV theorem 
for an initial triangulation of $\frac12$-scaled Kuhn simplices
of type $n$ and hyperlevel $1$.
(For other shapes of initial simplices, $\frac {D^n}{d}$ is probably larger.)}
\label{tab:Beispielzahlen IsoCoChange}
\index{BDV constant!IsoCoChange!values}
\end{table}
\newpage
\section{Conclusion and Outlook}\label{Conclusion and Outlook}
The notions of the demanding relations $\ra,\ar0,\ar1$ etc.\ and the tower are a fundamental concept. Whoever is going to propose a refinement algorithm in the future should analyse not only its general refineability, as one usually did, but in addition also its towers. 

The question, if 
it is possible to impose \hyperlink{SIC}{SIC} to an arbitrary triangulation has to be answered yet. 

The step in the original proof of Binev, Dahmen and DeVore \cite{BDV} explaining why each simplex gets at least $c$ dollars was overestimating. 
For 2D, the resulting BDV constant was greater than 5000, even with a single isosceles right-angled triangle as initial triangulation.
The new proof yields a constant smaller than 37 for this initial triangulation which is not far from the lower bound 24 given by an example hinted at the title page.

Such examples for lower bounds for the constant should also be collected for higher dimensions.

For regular initial grids (i.e.\ all initial simplices are reference simplices in the grid $\Z^n$), the estimated towers can be estimated with other shapes than Euclidean balls to estimate them sharper. Moreover, a metric could be used which is based on the reference coordinates of each simplex instead of the underlying coordinates where $\Omega$ is usually embedded in. This carries the potential for even sharper bounds for the constant and is necessary to apply the theory to 
combinatorial triangulations without an embedding into $\R^n$. This ground has already been broken by Holst, Licht and Lyu in \cite{Holst}. Their techniques should be combined with these in this thesis.
Note that an uneven underlying metric
complicates the last step of the proof here, because then the measures of the 
tower layers differ from each other.

The generalised initial division algorithm of Section 6 is only a sketch. An actual algorithm how to mark the initial triangulation is highly desirable.

From a theoretical point of view, the question arises what are the weakest possible initial conditions which imply general refineability, some kind of quasi-uniform refinements or a BDV theorem, especially if the BDV theorem even holds true for ReTaCo in Section 7. ReTaCo implies an analogue of IsoCoChange, namely that the coordinate changes are \emph{similarities}. A similar approach to prove (with subcuboids of size $1\times \dots\times 1\times\allowbreak
 \frac12\times\dots\times\frac12\times \allowbreak
 0\times\dots\times 0$ instead of subcubes) seems possible, but Lemma \ref{relation on subsimplices} fails at the end.

Sections 6 and 8 proclaim a competition between initialisation refinements leading to meshes satisfying SIC and T-array initialisations satisfying 
IsoCoChange.
The question, if there could be a hybrid of both, suggests itself. 
A serious disadvantage of
IsoCoChange
is 
that the hyperlevel could grow by 1 in each refinement step, while SIC let only the level grow by 1 each round. The global order of the vertices in the AGK algorithm which causes $\ar\Edges$ to be a partial order has not been used yet. 
Additionally, a skilful choice of the order of the vertices can improve the BDV constant.

A third approach would be to triangulate the domain (i.e.\ the polytope) $\Omega$ in a way that is especially suitable for 
adaptive mesh refinement by Kossaczký--Maubach bisection.
The embedding of simplices into cubes (Theorem \ref{Cubetheorem}) suggests to partition the domain into cubes firstly which are partitioned into reference simplices after that.

Is it possible to generalise the concept of a T-array to include all the types of simplices proposed by Bänsch \cite{Baensch} and Arnold, Mukherjee and Pouly \cite{Arnold}? In the light of the proved BDV theorem for the always realisable 
\hyperlink{IsoCoChange}{IsoCoChange},
these questions seem unimportant at the moment, but one should bear in mind that the estimate of the BDV constant for 
IsoCoChange
here is still too large for practical considerations.

One may ask, if it is appropriate to bound only the worst case for the BDV ratio or if there could be an investigation of a reasonable average case. This would require to simulate the marking as a random process.

Is it possible to prove BDV for \normalindex{Longest Edge Bisection} (LEB) in a similar way?

It shall not be discussed here why LEB differs from standard bisection basically, but what about a modified LEB?

A modified LEB (mLEB) is a rule deciding how to bisect a simplex taking based on the geometry of it. In contrast to standard LEB, mLEB should bisect a reference simplex exactly as standard bisection would do, even if the vertices are slightly deviated.
It could be defined as follows:
\paragraph{1st suggestion.}
Modified LEB bisects the longest edge of a simplex according to the $p$-norm $\|\bullet\|_p$ where $p$ is large enough that the refinement edge of a reference simplex is longer than every other edge, i.e.\ $2^p>n$.
\paragraph{2nd suggestion.}
An edge $f=\overline{CD}$ of a simplex $S\subset\R^n$ $\eps$-\emph{demands} another 
edge $e=\overline{AB}\in\Edges S$, if
\begin{align*}
\frac{\|A-B\|_\infty}{\|C-D\|_\infty}\geq (1{-}\eps)\cdot 2
\quad\text{or}\quad
\frac{\max_{V\in\Vertices e}\|\mid f-V\|_\infty}{\max_{V\in\Vertices f}\|\mid e-V\|_\infty}
\geq (1{-}\eps)\cdot 2
.
\end{align*}
(Maybe the first condition could even be skipped.)
\emph{Modified LEB with para\-meter $\eps$} allows bisection of every edge which does not $\eps$-demand other edges.

Unconventionally, mLEB is not rotation-invariant.
Many questions immediately arise: If the rule ensures shape regularity, if it is generally refineable etc.
\newpage
\renewcommand*{\thesection}{(\Alph{section})}
\setcounter{part}{0}
\setcounter{section}{0}
\section{ Appendix. Faces of convex sets}


\begin{lem}
The relative interior of a non-empty and finite-dimensional set is non-empty.
\end{lem}
\begin{proof}
A convex set $C$ with $\dim(C)=n$ contains $n{+}1$ affine independent points $p_0,\dots,p_n$, which span an $n$-simplex. The relative interior of $C$ is at least as large as the relative interior of this simplex, given by
\begin{align*}
\left\{\sum_{i=0}^n a_ip_i~\middle|~\sum_{i=0}^n a_i=1,\fa i.a_i>0\right\}=\bigcap_{j=0}^n \underbrace{\left\{\sum_{i=0}^n a_ip_i ~\middle|~\sum_{i=0}^n a_i=1, a_j>0\right\}}_{\text{open half-space}}
\end{align*}
(since the latter expression is 
a finite intersection of open half spaces, 
it is in fact open) and contains the point $\sum \frac1{n+1}p_i$.
\end{proof}
\begin{thm}\label{Seite}
Let $F$ be a subset of a convex set $C$. Consider the following properties:
\begin{enumerate}
\item\label{item:konvex}
$F$ is convex.
\item
\begin{enumerate}
\item\label{item:Strecke}
$\fa x,y\in C. x\in F\text{ or }F\cap \relint\overline{xy}=\leer.$
\item\label{item:konvexeTeilmenge}
For 
each convex subset $T\subset
 C$, it holds: 
\begin{align*}
T\subset F\text{ or } {F\cap\relint T}=\leer.
\end{align*}
\end{enumerate}
\end{enumerate}
Then \ref{item:konvex} 
implies the equivalence of \ref{item:Strecke} and \ref{item:konvexeTeilmenge}.
\end{thm}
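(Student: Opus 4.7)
My plan is to prove the equivalence by handling each direction separately, with the direction (2b) $\Rightarrow$ (2a) being essentially a direct specialisation and the converse requiring a little work with relative interiors. Note that convexity of $F$ does not seem to play an essential role in either direction, but (2b) is formulated with arbitrary convex subsets, whereas (2a) only mentions line segments; so the content of the equivalence is really that the property for segments propagates to all convex subsets.

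For the easy direction (2b) $\Rightarrow$ (2a), the plan is to apply (2b) to the convex subset $T := \overline{xy} \subset C$. The first alternative, $T \subset F$, gives $x \in F$, which is the first alternative of (2a). The second alternative, $F \cap \relint T = \emptyset$, is literally the second alternative of (2a).

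For the converse (2a) $\Rightarrow$ (2b), assume $T \subset C$ is convex and that $F \cap \relint T \neq \emptyset$; I aim to show $T \subset F$. Pick $z \in F \cap \relint T$ and let $y \in T$ be arbitrary. The key geometric step is to produce a point $x \in T$ with $z \in \relint \overline{xy}$, obtained by prolonging the segment from $y$ through $z$ slightly into $T$: since $z$ lies in the relative interior of the convex set $T$ and $y \in T$, one can move a small distance from $z$ in the direction $z - y$ while remaining in $T$, and any such prolongation $x \in T$ realises $z \in \relint \overline{xy}$. Once $x$ is found, observe that $z \in F \cap \relint \overline{xy}$, so the second alternative of (2a) fails for the pair $(x,y) \in C \times C$; hence the first alternative holds, i.e.\ $x \in F$. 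Applying (2a) to the pair $(y, x)$ as well yields $y \in F$ by the symmetric argument (since $\relint \overline{xy} = \relint \overline{yx}$ still meets $F$ at $z$). Thus every $y \in T$ lies in $F$, which is the desired conclusion $T \subset F$.

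The main obstacle is the prolongation step: the claim that from $z \in \relint T$ and $y \in T$ one can find $x \in T$ with $z \in \relint \overline{xy}$. This is a standard fact about relative interiors of convex sets (the line-segment principle: if $z \in \relint T$ and $y \in \cl T$, then $[y,z) \subset \relint T$, and moreover one can continue past $z$ while remaining in $T$). I will either cite this as a basic property of convex sets or give a short self-contained argument: choose an affine chart for $\aff T$ in which $z$ is the origin, so some ball $B_\varepsilon(0) \cap \aff T \subset T$; then for small $\lambda > 0$ the point $x := z + \lambda(z - y) = -\lambda y$ (in the chart) lies in $B_\varepsilon(0) \cap \aff T \subset T$, and by construction $z = \tfrac{\lambda}{1+\lambda} y + \tfrac{1}{1+\lambda} x$ with both coefficients strictly positive, so $z \in \relint \overline{xy}$. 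Everything else is just the bookkeeping of matching the alternatives in (2a) against the alternatives in (2b).
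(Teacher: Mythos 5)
Your proof is correct and takes essentially the same approach as the paper: for the nontrivial direction (2a) $\Rightarrow$ (2b), fix a point of $F\cap\relint T$, take an arbitrary point of $T$, prolong the segment through the interior point, and invoke (2a). The only minor inefficiency is the redundant first application of (2a) to the pair $(x,y)$, which shows $x\in F$ — a conclusion you do not need; only the second application, to $(y,x)$, is required, and it is exactly what the paper does (with the letters $x,y,z$ permuted).
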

\begin{proof}
\ref{item:konvexeTeilmenge} implies \ref{item:Strecke} obviously. 
Conversely, let
$y\in F\cap\relint T$ for a convex $T\subset C$. 
Take an arbitrary $x\in T$.
Since $y\in\relint T$, there exists a $z\in T$ with $y\in\relint\overline{xz}$. Then \ref{item:Strecke} implies that also $x\in F$. also
%
%
%
%
%
%
\end{proof}
\begin{defn}[face of a convex set]\label{def:face}\index{face}
If a subset $F$ of a convex set $C$ satisfies the equivalent conditions of Theorem \ref{Seite}, it is a \emph{face} of $C$.
\end{defn}
\begin{thm}\label{SeitenUntersimplexe}\index{faces are subsimplices}
The faces of a simplex are its subsimplices.
\end{thm}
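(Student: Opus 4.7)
The plan is to prove both inclusions between the set of faces of a simplex $S=\triangle p_0\dots p_m$ and the set of its subsimplices.

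\textbf{Subsimplices are faces.} Fix a subsimplex $F=\conv\{p_i:i\in I\}$ with $I\subset\{0,\dots,m\}$. To verify condition \ref{item:Strecke} of Theorem \ref{Seite}, take $x,y\in S$ and suppose some point $z\in\relint\overline{xy}$ lies in $F$. Write $x=\sum_{j=0}^m a_jp_j$, $y=\sum_{j=0}^m b_jp_j$ with $a_j,b_j\geq 0$, $\sum a_j=\sum b_j=1$, and $z=tx+(1-t)y$ for some $t\in(0,1)$, so $z=\sum_j(ta_j+(1-t)b_j)p_j$. Since $z\in F$, there is an expression $z=\sum_{j\in I}c_jp_j$. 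By affine independence of $p_0,\dots,p_m$ the coefficients agree, forcing $ta_j+(1-t)b_j=0$ for every $j\notin I$. As $t,1-t>0$ and $a_j,b_j\geq 0$, this yields $a_j=b_j=0$ for $j\notin I$, so $x,y\in F$. Hence $x\in F$, giving condition \ref{item:Strecke}; since $F$ is also convex, it is a face.

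\textbf{Faces are subsimplices.} Let $F$ be a face of $S$ and set $V_F:=\Vertices(S)\cap F$; I shall show $F=\conv(V_F)$, which is a subsimplex. The inclusion $\conv(V_F)\subset F$ is immediate from convexity of $F$. For the converse, pick $x\in F$ and write $x=\sum_{j=0}^m a_jp_j$ as a convex combination; let $I:=\{j:a_j>0\}$ and $T:=\conv\{p_j:j\in I\}$. By construction $x\in\relint T$ (indeed, $x$ is a strictly positive convex combination of the affinely independent vertices of $T$, which characterises the relative interior of a simplex). Since $x\in F\cap\relint T\neq\emptyset$ and $T\subset S$ is convex, the equivalent characterisation \ref{item:konvexeTeilmenge} of a face from Theorem \ref{Seite} forces $T\subset F$. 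In particular each $p_j$ with $j\in I$ lies in $V_F$, so $x\in\conv(V_F)$.

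\textbf{Main obstacle.} The only non-routine input is the passage from ``$x\in F$'' to ``the vertices of the carrying subsimplex of $x$ lie in $F$''. This is exactly where the equivalence of the two formulations of a face pays off: the one-dimensional version \ref{item:Strecke} alone is cumbersome, whereas the convex-subset version \ref{item:konvexeTeilmenge} handles the whole subsimplex $T$ in one stroke. I will therefore invoke Theorem \ref{Seite} explicitly at that step and otherwise rely only on affine independence of the vertices of $S$.
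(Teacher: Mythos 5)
Your proof is correct and takes essentially the same route as the paper: both directions verify the conditions of Theorem~\ref{Seite}, using affine independence of the vertices for the forward inclusion and invoking the convex-subset characterisation~\ref{item:konvexeTeilmenge} (applied to the simplex spanned by the vertices carrying a given point) for the reverse. Your $V_F=\Vertices(S)\cap F$ coincides with the paper's set of ``demanded'' vertices, so the two arguments are the same up to packaging.
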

\begin{proof}
\emph{First statement: A subsimplex of a simplex is a face.} 
A subsimplex $\subsetmarking S$ is $\conv (\subsetmarking \Vertices)$ for some $\subsetmarking \Vertices\subset\Vertices(S)=:\{p_0,\dots,p_m\}$.
\emph{\ref{item:konvex}.} Per definition, a subsimplex is convex.
\emph{\ref{item:Strecke}.} Since $\Vertices(S)$ is affinely independent, the representation of a point as affine combination of $p_0,\dots,p_m$ is unique. 
A point of $S$, $x=\sum_{i=0}^m a_i p_i$ 
lies in $\subsetmarking S$, if and only if $\fa p_i\notin \subsetmarking \Vertices$, $a_i=0$. Let $y=\sum_{i=0}^m b_i p_i$ be 
also
a 
point of $S$. The relative interior of their convex hull is
\begin{align*}
\relint\overline{xy}=\left\{\sum_{i=0}^m (\lambda a_i+(1{-}\lambda)b_i)p_i~\middle|~\lambda\in (0,1)\right\}.
\end{align*}
The point $x
$
lies in $
\subsetmarking S$, or, if $\ex p_i\notin \subsetmarking \Vertices. a_i>0$
, it follows that $\lambda a_i+(1{-}\lambda)b_i>0$ for each $\lambda\in(0,1)$ (because $\sum a_ip_i,\sum b_ip_i\in \conv\Vertices$ are \emph{convex} combinations, so generally $a_i, b_i\geq0$), i.e.\ $\subsetmarking S\cap\relint
\overline{xy}
=\leer$.

\emph{Second statement: A face of a simplex is a subsimplex.}
For some vertices $p_0,\dots,p_k$ of a simplex $S$ and \emph{positive} real coefficients $a_0,\dots,a_k$ summing up to 1, let $x=\sum_{j=0}^k a_jp_j$ be a point of a (non-empty) face $F$ of $S$. 
%
$x$ lies in the intersection of $F$ and $\relint
\triangle p_0\dots p_k
$. According to \ref{item:konvexeTeilmenge}, $F$ must include the subsimplex 
$\triangle p_0\dots p_k$.
It is written
that 
$x$ \emph{demands} the vertices $p_0,\dots,p_k$. Remark that $x\in
\conv\{p_0\dots p_k
\}$.
Hence
\begin{align*}
F=\{x\in F\}\subset\conv \underbrace{\{p\in \Vertices(S)~|~\ex x\in F, x \text{ demands }p\}}_{=:\subsetmarking\Vertices}\subset \conv F=F,
\end{align*}
using the convexity of $F$ in the last equation
. 
Thus, $F$ is the subsimplex $\conv(\subsetmarking\Vertices)$.
\end{proof}
\begin{lem}\label{UntermengeTeilmenge}
Let $C$ be a convex set with a face $F$. If $T$ is a convex subset of $C$, then $F\cap T$ is a face of $T$.
\end{lem}
\begin{proof}
$F\cap T$ is convex. If for a convex subset $U$ of $T$, the intersection $\relint U\cap (F\cap T)$ is non-empty, $U$ must be a subset of $F$ according to Theorem \ref{Seite}.\ref{item:konvexeTeilmenge} and thus a subset of $F\cap T$, so \ref{item:konvexeTeilmenge} holds true for $F\cap T$.
\end{proof}
Instead of the latter Lemma, the following corollary is almost exclusively used.
\begin{folg}\label{NachkommeUntersimplex}
Suppose $T$ and $S$ are simplices, $T'$ is a descendant of $T$ and $T\cap S$ is as subsimplex of $T$. Then $S\cap T'$ is a subsimplex of $T'$.
\end{folg}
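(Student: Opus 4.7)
The plan is to derive this as a short corollary of Lemma \ref{UntermengeTeilmenge} combined with Theorem \ref{SeitenUntersimplexe}. The only ingredients really needed are: (i) descendants are contained in their ancestors, (ii) faces of a simplex are subsimplices, and (iii) the lemma that faces restrict to faces of convex subsets.

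First I would observe that $T'\subset T$. This is already recorded in the proof of Lemma \ref{binary tree well-defined}, where it is shown that the children $S_1,S_2$ of any T-array $S$ satisfy $S_1,S_2\subsetneq S$, and hence, by a trivial induction on the length of the path from $T$ to $T'$ in the binary tree, every descendant $T'$ of $T$ is a subset of $T$. In particular $T'$ is a convex subset of $T$.

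Next, by hypothesis $T\cap S$ is a subsimplex of $T$, so by Theorem \ref{SeitenUntersimplexe} it is a face of the convex set $T$. Now apply Lemma \ref{UntermengeTeilmenge} with the roles $C:=T$, $F:=T\cap S$, and convex subset $T'\subset T$ (in place of the ``$T$'' appearing in the statement of that lemma). The lemma yields that $F\cap T'=(T\cap S)\cap T'$ is a face of $T'$. Since $T'\subset T$, this face simplifies to $S\cap T'$.

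Finally, a face of a simplex is a subsimplex by Theorem \ref{SeitenUntersimplexe}, so $S\cap T'$ is a subsimplex of $T'$, as claimed. There is no real obstacle here; the only point to be careful about is the tiny bookkeeping step $(T\cap S)\cap T'=S\cap T'$, which uses exactly the inclusion $T'\subset T$ established in the first paragraph.
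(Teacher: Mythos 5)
Your argument is correct and is precisely the deduction the paper leaves implicit — the paper states the corollary immediately after Lemma \ref{UntermengeTeilmenge} without supplying a proof, and your chain ($T'\subset T$, then Theorem \ref{SeitenUntersimplexe} to turn the subsimplex $T\cap S$ into a face of $T$, then Lemma \ref{UntermengeTeilmenge}, then Theorem \ref{SeitenUntersimplexe} again) is the intended route. The care you take with the variable clash in Lemma \ref{UntermengeTeilmenge} and with the identity $(T\cap S)\cap T'=S\cap T'$ is exactly right.
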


\newpage
\section*{Acknowledgements}
First of all, I want to thank the good Lord for the continuous grace to save, to send good ideas down from heaven, to show me mistakes, to liberate from distraction and to wake me up every morning. I thank my supervisors Ma Rui and Carsten Carstensen for their patience to listen to my cluttered sketches in the early phase of the work. I want to thank Martin Alkämper, Fernando Gaspoz and Robert Klöfkorn, who preserved me from claiming, the weakest compatibility condition would already imply \hyperlink{SIC}{SIC}. I want to thank many people for providing inspiring environments: Ana%
, Natalia%
, Danuta and Maciej%
, Irina %
and Giorgi%
, further Julie %
and the Flixbus and the Polskibus corporations. I want to thank Sebastian and Hermann for listening and reading, my brother Albrecht for the advice to start the introduction with the aim from the large scale perspective, from Karin and Collin for grammatical support, my flatmates Jonatan and Steven for some LaTeX and layout advice and for feeding me, and my parents for financial support.
\newpage
\phantomsection
\addcontentsline{toc}{section}{References}
\bibliography{literatur}

\begin{thebibliography}{10}

\bibitem{Gaspoz}
Martin Alk{\"a}mper, Fernando Gaspoz, and Robert Kl{\"o}fkorn.
\newblock A weak compatibility condition for newest vertex bisection in any
  dimension.
\newblock {\em SIAM J. Sci. Comput.}, 40(6):a3853--a3872, 2018.
\newblock \href {https://doi.org/10.1137/17M1156137}
  {\path{doi:10.1137/17M1156137}}.

\bibitem{Arnold}
Douglas~N. Arnold, Arup Mukherjee, and Luc Pouly.
\newblock Locally adapted tetrahedral meshes using bisection.
\newblock {\em SIAM J. Sci. Comput.}, 22(2):431--448, 2000.
\newblock \href {https://doi.org/10.1137/S1064827597323373}
  {\path{doi:10.1137/S1064827597323373}}.

\bibitem{Atalay}
F.~Betul Atalay and David~M. Mount.
\newblock Bounds on the cost of compatible refinement of simplex decomposition
  trees in arbitrary dimensions.
\newblock {\em Comput. Geom.}, 79:14--29, 2019.
\newblock \href {https://doi.org/10.1016/j.comgeo.2019.01.004}
  {\path{doi:10.1016/j.comgeo.2019.01.004}}.

\bibitem{Baensch}
Eberhard B{\"a}nsch.
\newblock Local mesh refinement in 2 and 3 dimensions.
\newblock {\em IMPACT Comput. Sci. Eng.}, 3(3):181--191, 1991.
\newblock \href {https://doi.org/10.1016/0899-8248(91)90006-G}
  {\path{doi:10.1016/0899-8248(91)90006-G}}.

\bibitem{biedl}
Therese~C. Biedl, Prosenjit Bose, Erik~D. Demaine, and Anna Lubiw.
\newblock Efficient algorithms for {Petersen}'s matching theorem.
\newblock {\em J. Algorithms}, 38(1):110--134, 2001.
\newblock \href {https://doi.org/10.1006/jagm.2000.1132}
  {\path{doi:10.1006/jagm.2000.1132}}.

\bibitem{BDV}
Peter Binev, Wolfgang Dahmen, and Ronald DeVore.
\newblock Adaptive finite element methods with convergence rates.
\newblock {\em Numer. Math.}, 97(2):219--268, 2004.
\newblock \href {https://doi.org/10.1007/s00211-003-0492-7}
  {\path{doi:10.1007/s00211-003-0492-7}}.

\bibitem{axioms}
C.~Carstensen, M.~Feischl, M.~Page, and D.~Praetorius.
\newblock Axioms of adaptivity.
\newblock {\em Comput. Math. Appl.}, 67(6):1195--1253, 2014.
\newblock \href {https://doi.org/10.1016/j.camwa.2013.12.003}
  {\path{doi:10.1016/j.camwa.2013.12.003}}.

\bibitem{Cascon}
J.~Manuel Cascon, Christian Kreuzer, Ricardo~H. Nochetto, and Kunibert~G.
  Siebert.
\newblock Quasi-optimal convergence rate for an adaptive finite element method.
\newblock {\em SIAM J. Numer. Anal.}, 46(5):2524--2550, 2008.
\newblock \href {https://doi.org/10.1137/07069047X}
  {\path{doi:10.1137/07069047X}}.

\bibitem{GSS}
Dietmar Gallistl, Mira Schedensack, and Rob~P. Stevenson.
\newblock A remark on newest vertex bisection in any space dimension.
\newblock {\em Comput. Methods Appl. Math.}, 14(3):317--320, 2014.
\newblock \href {https://doi.org/10.1515/cmam-2014-0013}
  {\path{doi:10.1515/cmam-2014-0013}}.

\bibitem{Holst}
Michael Holst, Martin Licht, and Zhao Lyu.
\newblock Newest vertex bisection over general triangular meshes.
\newblock Preprint, 2010.
\newblock URL: \url{"https://www.math.ucsd.edu/~mlicht/pdf/preprint.nvb.pdf"}.

\bibitem{Karkulik}
Michael Karkulik, David Pavlicek, and Dirk Praetorius.
\newblock On 2d newest vertex bisection: optimality of mesh-closure and {{\(H
  ^{1}\)}}-stability of {{\(L _{2}\)}}-projection.
\newblock {\em Constr. Approx.}, 38(2):213--234, 2013.
\newblock \href {https://doi.org/10.1007/s00365-013-9192-4}
  {\path{doi:10.1007/s00365-013-9192-4}}.

\bibitem{KarkulikE}
Michael Karkulik, David Pavlicek, and Dirk Praetorius.
\newblock Erratum to: ``{On} 2d newest vertex bisection: optimality of
  mesh-closure and {{\(H^1\)}}-stability of {{\(L_2\)}}-projection''.
\newblock {\em Constr. Approx.}, 42(3):349--352, 2015.
\newblock \href {https://doi.org/10.1007/s00365-015-9309-z}
  {\path{doi:10.1007/s00365-015-9309-z}}.

\bibitem{Kossaczky}
Igor Kossaczk{\'y}.
\newblock A recursive approach to local mesh refinement in two and three
  dimensions.
\newblock {\em J. Comput. Appl. Math.}, 55(3):275--288, 1994.
\newblock \href {https://doi.org/10.1016/0377-0427(94)90034-5}
  {\path{doi:10.1016/0377-0427(94)90034-5}}.

\bibitem{Maubach}
Joseph~M. Maubach.
\newblock Local bisection refinement for {{\(n\)}}-simplicial grids generated
  by reflection.
\newblock {\em SIAM J. Sci. Comput.}, 16(1):210--227, 1995.
\newblock \href {https://doi.org/10.1137/0916014} {\path{doi:10.1137/0916014}}.

\bibitem{Mitchell}
William~F. Mitchell.
\newblock A comparison of adaptive refinement techniques for elliptic problems.
\newblock {\em ACM Trans. Math. Softw.}, 15(4):326--347, 1989.
\newblock URL: \url{www.acm.org/pubs/contents/journals/toms/1989-15/}, \href
  {https://doi.org/10.1145/76909.76912} {\path{doi:10.1145/76909.76912}}.

\bibitem{Schoen}
Patrick Sch{\"o}n.
\newblock {\em Scalable adaptive bisection algorithms on decomposed simplicial
  partitions for efficient discretizations of nonlinear partial differential
  equations}.
\newblock PhD thesis, Univ. Freiburg, Fakult{\"a}t f{\"u}r Mathematik und
  Physik, 2017.
\newblock URL: \url{d-nb.info/1159877963/34}, \href
  {https://doi.org/10.6094/UNIFR/15576} {\path{doi:10.6094/UNIFR/15576}}.

\bibitem{Stevenson}
Rob Stevenson.
\newblock The completion of locally refined simplicial partitions created by
  bisection.
\newblock {\em Math. Comput.}, 77(261):227--241, 2008.
\newblock \href {https://doi.org/10.1090/S0025-5718-07-01959-X}
  {\path{doi:10.1090/S0025-5718-07-01959-X}}.

\bibitem{Traxler}
C.~T. Traxler.
\newblock An algorithm for adaptive mesh refinement in {{\(n\)}} dimensions.
\newblock {\em Computing}, 59(2):115--137, 1997.
\newblock \href {https://doi.org/10.1007/BF02684475}
  {\path{doi:10.1007/BF02684475}}.

\bibitem{Whitney}
Hassler Whitney.
\newblock {\em Geometric integration theory}, volume~21 of {\em Princeton Math.
  Ser.}
\newblock Princeton University Press, Princeton, NJ, 1957.

\end{thebibliography}
\printonly{
\newpage
\section*{Selbstständigkeitserklärung}
Ich erkläre, dass ich die vorliegende Arbeit selbstständig verfasst und
noch nicht für andere Prüfungen eingereicht habe. Sämtliche Quellen,
einschließlich Internetquellen, die unverändert oder abgewandelt
wiedergegeben werden, insbesondere Quellen für Texte, Grafiken,
Tabellen und Bilder, sind als solche kenntlich gemacht. Mir ist bekannt,
dass bei Verstößen gegen diese Grundsätze ein Verfahren wegen
Täuschungsversuchs bzw.\ Täuschung eingeleitet wird.

\vspace{1.5cm}
\noindent\makebox[\linewidth]{\rule{\textwidth}{0.4pt}}
Ort, Datum\hspace*{\fill} Lukas Gehring
}
\newpage
\phantomsection
\addcontentsline{toc}{section}{Index}

\newgeometry{margin=2.5cm}
\small
\begin{multicols}{3}
\printsubindex*
\end{multicols}
\end{document}